\definecolor{my-linkcolor}{rgb}{0.75,0,0}
\definecolor{my-citecolor}{rgb}{0.1,0.57,0}
\definecolor{my-urlcolor}{rgb}{0,0,0.75}
\title[Non-semisimple Kazhdan-Lusztig category]{The non-semisimple Kazhdan-Lusztig category for affine $\mathfrak{sl}_2$ at admissible levels}
 \author{Robert McRae and Jinwei Yang}
\date{}
 \address{(R. M.) Yau Mathematical Sciences Center, Tsinghua University, Beijing 100084, China}
  \email{rhmcrae@tsinghua.edu.cn}
  \address{(J. Y.) School of Mathematical Sciences, Shanghai Jiao Tong University, Shanghai 200240, China}
  \email{jinwei2@sjtu.edu.cn}
 \subjclass{Primary 17B67, 17B69, 17B37, 18M15, 81R10, 81T40}
\newtheorem{thm}{Theorem}[section]
\newtheorem{cor}[thm]{Corollary}
\newtheorem{lem}[thm]{Lemma}
\newtheorem{prop}[thm]{Proposition}
\newtheorem{conj}[thm]{Conjecture}
\newtheorem{ques}[thm]{Question}
\theoremstyle{definition}\newtheorem{defi}[thm]{Definition}
\theoremstyle{definition}\newtheorem{rem}[thm]{Remark}
\theoremstyle{definition}
\theoremstyle{definition}
\newcommand{\cD}{\mathcal{D}}
\newcommand{\cE}{\mathcal{E}}
\newcommand{\cY}{\mathcal{Y}}
\newcommand{\cV}{\mathcal{V}}
\newcommand{\cA}{\mathcal{A}}
\newcommand{\cR}{\mathcal{R}}
\newcommand{\cS}{\mathcal{S}}
\newcommand{\cF}{\mathcal{F}}
\newcommand{\cJ}{\mathcal{J}}
\newcommand{\cL}{\mathcal{L}}
\newcommand{\cO}{\mathcal{O}}
\newcommand{\cP}{\mathcal{P}}
\newcommand{\cC}{\mathcal{C}}
\newcommand{\cG}{\mathcal{G}}
\newcommand{\cT}{\mathcal{T}}
\newcommand{\til}{\widetilde}
\newcommand{\fg}{\mathfrak{g}}
\newcommand{\fsl}{\mathfrak{sl}}
\newcommand{\CC}{\mathbb{C}}
\newcommand{\ZZ}{\mathbb{Z}}
\newcommand{\RR}{\mathbb{R}}
\newcommand{\QQ}{\mathbb{Q}}
\newcommand{\Id}{\mathrm{Id}}
\newcommand{\even}{\bar{0}}
\newcommand{\odd}{\bar{1}}
\newcommand{\tens}{\boxtimes}
\newcommand{\vac}{\mathbf{1}}
\newcommand{\ind}{\mathrm{Ind}}
 \DeclareMathOperator{\im}{Im}
 \let\ker\relax
 \let\hom\relax
 \DeclareMathOperator{\ker}{Ker}
 \DeclareMathOperator{\hom}{Hom}
 \newcommand{\ghat}{\widehat{\mathfrak{g}}}
 \newcommand{\slhat}{\widehat{\mathfrak{sl}}}
\begin{document}
\bibliographystyle{alpha}

\numberwithin{equation}{section}

 \begin{abstract}
We show that Kazhdan and Lusztig's category $KL^k(\mathfrak{sl}_2)$ of modules for the affine Lie algebra $\widehat{\mathfrak{sl}}_2$ at an admissible level $k$, equivalently the category of finite-length grading-restricted generalized modules for the universal affine vertex operator algebra $V^k(\mathfrak{sl}_2)$, is a braided tensor category. Although this tensor category is not rigid, we show that the subcategory of all rigid objects in $KL^k(\fsl_2)$ is equal to the subcategory of all projective objects, and that every simple module in $KL^k(\fsl_2)$ has a projective cover. Moreover, we show that the full subcategory of projective objects in $KL^k(\fsl_2)$ is monoidal equivalent to the category of tilting modules for quantum $\fsl_2$ at the root of unity $\zeta=e^{\pi i/(k+2)}$. Using this, we establish a universal property of the tensor category $KL^k(\fsl_2)$, and as an application, we prove a weak Kazhdan-Lusztig correspondence, that is, we obtain an exact essentially surjective (but not full or faithful) tensor functor from $KL^k(\fsl_2)$ to the category of finite-dimensional weight modules for the quantum group associated to $\fsl_2$ at the root of unity $\zeta $. We also use the universal property to classify the categories $KL^k(\fsl_2)$ up to (braided) tensor equivalence and to obtain a tensor-categorical version of quantum Drinfeld-Sokolov reduction, that is, we construct a braided tensor functor from $KL^k(\fsl_2)$ to a category of modules for the Virasoro algebra at central charge $1-\frac{6(k+1)^2}{k+2}$.
\end{abstract}

\maketitle

\tableofcontents

\section{Introduction}

In the celebrated series of papers \cite{KL1}-\cite{KL4}, Kazhdan and Lusztig constructed braided tensor categories of modules at fixed level for the affine Lie algebra $\ghat$ associated to a finite-dimensional simple Lie algebra $\fg$. Indeed, given $\fg$ and a level $k=-h^\vee+\kappa$ (where $h^\vee$ is the dual Coxeter number of $\fg$ and $\kappa$ is the \textit{shifted level}), they defined a category, which we denote $KL^k(\fg)$, whose objects are finite-length $\ghat$-modules of level $k$, all of whose composition factors are irreducible quotients of generalized Verma $\ghat$-modules induced from finite-dimensional irreducible $\fg$-modules (see \cite[Definition 2.15]{KL1}). For $\kappa\notin\mathbb{Q}_{\geq 0}$, Kazhdan and Lusztig showed that $KL^k(\fg)$ is naturally a braided tensor category. Moreover, they showed that $KL^k(\fg)$ is tensor equivalent to the category $\cC(\zeta,\fg)$ of finite-dimensional weight modules for the quantum group of $\fg$ at parameter $\zeta=e^{\pi i/r\kappa}$, where $r$ is the lacing number of $\fg$. This tensor equivalence is commonly called the \textit{Kazhdan-Lusztig correspondence}. 

It is still unclear whether there is a braided tensor category structure on $KL^k(\fg)$ for $\kappa \in \mathbb{Q}_{>0}$, or whether there is a relation with $\cC(\zeta,\fg)$. A natural approach to these problems is the vertex algebraic tensor category theory developed by Huang-Lepowsky-Zhang \cite{HLZ1}-\cite{HLZ8}, because $KL^k(\fg)$ is a category of modules for the universal affine vertex operator algebra $V^k(\fg)$ at level $k$ (see for example \cite[Theorem 6.2.23]{LL}). Vertex operator algebras are algebraic structures which were first introduced by Borcherds \cite{Bo} and Frenkel-Lepowsky-Meurman \cite{FLM} in the context of the monstrous moonshine problem, but they also provide a mathematically rigorous approach to two-dimensional conformal quantum field theories in physics. The tensor product of modules for a vertex operator algebra $V$ is crucially \textit{not} based on the vector space tensor product of $V$-modules, but is rather the ``fusion product'' of conformal field theory. Mathematically, this fusion product is defined by a universal property in terms of intertwining operators \cite{FHL, HLZ2, HLZ3}, which are building blocks of correlation functions in the conformal field theory associated to $V$. Under certain conditions, the fusion tensor product of $V$-modules gives suitable categories of $V$-modules the structure of braided tensor categories \cite{HLZ8}.

For a level $k=-h^\vee+\kappa$, $KL^k(\fg)$ is precisely the category of finite-length grading-restricted generalized modules for the universal affine vertex operator algebra $V^k(\fg)$. When $\kappa\notin\QQ_{\geq 0}$, $V^k(\fg)$ is simple (see \cite[Proposition 2.12]{KL1}), but this is not usually the case when $\kappa\in\QQ_{>0}$. Thus for $\kappa\in\QQ_{>0}$, most previous work has concentrated on braided tensor category structure for the smaller category $KL_k(\fg)$ of finite-length grading-restricted generalized modules for the simple quotient vertex operator algebra $L_k(\fg)$; see especially the recent progress in \cite{CHY,CY}. In this case, $KL_k(\fg)$ is usually a small subcategory of $KL^k(\fg)$, and little is known in general about tensor structure on the larger category $KL^k(\fg)$ of $V^k(\fg)$-modules.

\subsection{Main results on \texorpdfstring{$KL^k(\fsl_2)$}{KLk(sl2)}}

In this work, we take $\fg=\fsl_2$ (so $h^\vee=2$) and show that the Kazhdan-Lusztig category $KL^k(\fsl_2)$ of finite-length grading-restricted generalized $V^k(\fsl_2)$-modules is naturally a braided tensor category for all $k=-2+\kappa$ with $\kappa \in\QQ_{>0}$. We mainly focus on the case that $k$ is a Kac-Wakimoto \textit{admissible} level \cite{KW}, that is, $\kappa=p/q$ for relatively prime $p\in\ZZ_{\geq 2}$ and $q\in\ZZ_{\geq 1}$, since the case $p=1$ has already been handled in \cite{CY}. For admissible levels $k=-2+p/q$, we determine the structure and properties of $KL^k(\fsl_2)$ in considerable detail, and as a consequence, we obtain a weak Kazhdan-Lusztig correspondence, that is, an exact and essentially surjective (but not fully faithful) tensor functor from $KL^k(\fsl_2)$ to the quantum group category $\cC(\zeta,\fsl_2)$ at $\zeta=e^{\pi i q/p}$.

For $\fsl_2$, admissible levels are interesting because $V^k(\fsl_2)$ is non-simple if and only if $k$ is admissible. Thus the smaller category $KL_k(\fsl_2)$ of modules for the simple quotient vertex operator algebra $L_k(\fsl_2)$ is a proper subcategory of $KL^k(\fsl_2)$ if and only if $k$ is admissible. On the one hand, $KL^k(\fsl_2)$ at admissible level $k$ is not semisimple and has infinitely many simple objects $\cL_r$ for $r \in \mathbb{Z}_{\geq 1}$; each $\cL_r$ is the simple quotient of the generalized Verma module (also called Weyl module) $\cV_r$ induced from the $r$-dimensional simple $\fsl_2$-module. On the other hand, $KL_k(\fsl_2)$ is semisimple with finitely many simple objects $\cL_r$ for $1\leq r\leq p-1$ \cite{AM}; moreover, $KL_k(\fsl_2)$ is a rigid braided tensor category \cite{CHY}.

To prove that $KL^k(\fsl_2)$ for $k=-2+p/q$ is also a braided tensor category, we use \cite[Theorem~3.3.4]{CY}. The key point is to show that any finitely-generated grading-restricted generalized $V^k(\fsl_2)$-module also has finite length and thus is an object of $KL^k(\fsl_2)$. For this, it is enough to show that the generalized Verma modules $\cV_r$ have finite length, and in fact, it follows straightforwardly from Malikov's structural results on Verma modules for rank-$2$ Kac-Moody Lie algebras \cite{Ma} that $\cV_r$ is simple if $p\mid r$ and has length $2$ if $p\nmid r$ (see Theorem \ref{thm:gen_Verma_structure}). Thus we prove:
\begin{thm}[Theorem \ref{thm:existencebtc}, Corollary \ref{cor:KL_k_tens_ideal}, Theorem \ref{thm:inclusion_is_lax_monoidal}]
Let $k=-2+\kappa$ for $\kappa\in\QQ_{>0}$. Then $KL^k(\fsl_2)$ admits the braided tensor category structure of \cite{HLZ8}, and the embedding $KL_k(\fsl_2)\hookrightarrow KL^k(\fsl_2)$ is a lax monoidal functor. Moreover, $KL_k(\fsl_2)$ is both a tensor ideal and a tensor subcategory of $KL^k(\fsl_2)$ (with a different unit object if $k$ is admissible).
\end{thm}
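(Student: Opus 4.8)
The plan is to establish the four assertions in turn, leveraging the finite-length result (Theorem \ref{thm:gen_Verma_structure}) and the general machinery of \cite{CY}. First, for the existence of braided tensor category structure, I would invoke \cite[Theorem~3.3.4]{CY}, which reduces the problem to verifying that $V^k(\fsl_2)$ satisfies the hypotheses guaranteeing that the category of finitely-generated grading-restricted generalized modules is closed under the $P(z)$-tensor product and coincides with $KL^k(\fsl_2)$. The essential input is that every finitely-generated grading-restricted generalized $V^k(\fsl_2)$-module has finite length; since such a module is a quotient of a finite direct sum of generalized Verma modules $\cV_r$ (this uses that the conformal weights are bounded below and that the degree-zero subspace generates), and since each $\cV_r$ has length at most $2$ by Theorem \ref{thm:gen_Verma_structure}, finite length follows. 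One then checks the remaining technical conditions of \cite{CY} (e.g. the relevant modules lie in the category $\mathcal{O}$ and the tensor product functor is well-behaved), which are either immediate or already verified in the $\fsl_2$ setting.

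Next, for the statement that $KL_k(\fsl_2)$ is a tensor ideal in $KL^k(\fsl_2)$: an object $M$ of $KL_k(\fsl_2)$ is precisely an $L_k(\fsl_2)$-module, i.e.\ a $V^k(\fsl_2)$-module annihilated by the maximal ideal $J$ of $V^k(\fsl_2)$ (where $L_k(\fsl_2)=V^k(\fsl_2)/J$). I would show that for any $W$ in $KL^k(\fsl_2)$, the fusion product $W \tens M$ is again an $L_k(\fsl_2)$-module. The natural approach is to use the universal property of the fusion product via intertwining operators: the singular vector generating $J$ acts as an intertwining operator component, and one argues that it must act trivially on $W \tens M$ because it already acts trivially on $M$. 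Concretely, one can use that $L_k(\fsl_2)$-modules form a full subcategory closed under subquotients and that the fusion product of a $V^k$-module with an $L_k$-module is a quotient of an object on which $J$ acts compatibly; alternatively, one invokes the general principle (see e.g.\ \cite{CKM} type arguments) that the category of modules for a quotient VOA, viewed inside the ambient module category, is a tensor ideal whenever it is closed under fusion with the ambient category. Then $KL_k(\fsl_2)$ being a tensor \emph{subcategory} (with unit $L_k(\fsl_2)$) is the statement that $KL_k(\fsl_2)$ is itself a braided tensor category under the inherited fusion product — this is already known from \cite{CHY} — and that the inclusion into $KL^k(\fsl_2)$ intertwines the two fusion products up to the canonical comparison, which follows from the universal properties.

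For the lax monoidal structure on the embedding $KL_k(\fsl_2) \hookrightarrow KL^k(\fsl_2)$: the lax structure morphism $M \tens_{KL_k} N \to M \tens_{KL^k} N$ is the canonical surjection coming from the fact that intertwining operators among $L_k(\fsl_2)$-modules are in particular intertwining operators among $V^k(\fsl_2)$-modules, hence the $V^k$-fusion product receives a map from the $L_k$-fusion product by the universal property; and the unit morphism $V^k(\fsl_2) \to L_k(\fsl_2)$ is the quotient map. Naturality and the coherence hexagons/pentagons for lax monoidality reduce to the corresponding coherence for the fusion products, together with functoriality of the universal-property comparison maps, which is routine.

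The main obstacle I expect is the tensor ideal claim — specifically, showing cleanly that fusing an arbitrary $V^k(\fsl_2)$-module with an $L_k(\fsl_2)$-module yields an $L_k(\fsl_2)$-module. While morally clear, making this rigorous requires carefully tracking how the singular vector generating the maximal ideal $J$ acts through the intertwining-operator description of the fusion product, and ruling out the possibility that ``new'' extensions appear on which $J$ acts nontrivially. The cleanest route is probably to observe that $W \tens M$ is a finite-length module all of whose composition factors are simple objects $\cL_r$, and then to show that only those $\cL_r$ with $1 \le r \le p-1$ can appear — equivalently, that $\cL_r \tens \cL_s$ for $1 \le s \le p-1$ has composition factors only among $\cL_1,\dots,\cL_{p-1}$ — which can be extracted from the known fusion rules in $KL_k(\fsl_2)$ \cite{CHY} combined with the block decomposition of $KL^k(\fsl_2)$ governed by the structure of the $\cV_r$.
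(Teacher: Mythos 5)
The overall architecture of your proposal matches the paper's, and part (1) (existence of the braided tensor category structure via \cite[Theorem~3.3.4]{CY} plus finite length of the $\cV_r$) is essentially the paper's argument. However, there are two substantive issues.

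First, you have the direction of the lax monoidality structure morphism backwards. A lax monoidal structure on the inclusion $\iota: KL_k(\fsl_2)\to KL^k(\fsl_2)$ consists of a natural transformation $\iota(M)\tens^k\iota(N)\to\iota(M\tens_k N)$, i.e.\ $M\tens^k N\to M\tens_k N$, together with the unit map $\cV_1\to\cL_1$. You wrote the map as $M\tens_k N\to M\tens^k N$ and justified it by saying ``the $V^k$-fusion product receives a map from the $L_k$-fusion product by the universal property.'' But the universal property of $\tens^k$ gives maps \emph{out of} $M\tens^k N$, not into it: since the $L_k$-tensor product intertwining operator $\cY_k$ of type $\binom{M\tens_k N}{M\,N}$ is in particular a $V^k$-intertwining operator, the universal property of $\tens^k$ produces a unique $\Phi: M\tens^k N\to M\tens_k N$ with $\Phi\circ\cY_\tens^k=\cY_k$, and this $\Phi$ is the lax structure morphism. (The map in the opposite direction also exists, but to invoke the universal property of $\tens_k$ for it you must already know $M\tens^k N$ is an $L_k$-module, i.e.\ the tensor ideal property. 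The paper constructs $\Phi$ first and then proves it is an isomorphism.)

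Second, your proposed ``cleanest route'' to the tensor ideal claim has a gap. You suggest showing that $W\tens M$ has composition factors only among $\cL_1,\dots,\cL_{p-1}$ and concluding that $W\tens M$ is an $L_k(\fsl_2)$-module. But this inference requires an additional step: a priori, $KL^k(\fsl_2)$ could contain non-semisimple modules all of whose composition factors come from $\{\cL_1,\dots,\cL_{p-1}\}$, and such a module would not lie in $KL_k(\fsl_2)$ (which is semisimple). One must additionally show $\mathrm{Ext}^1_{KL^k(\fsl_2)}(\cL_r,\cL_s)=0$ for $1\le r,s\le p-1$ — this can be extracted from Theorem \ref{thm:gen_Verma_structure} by analyzing lowest-weight-space generation, but you do not flag this as necessary. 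The paper avoids the whole issue by working directly at the level of intertwining operators (Lemma \ref{lem:KL_k_tensor_ideal}): if $\cY$ is a surjective intertwining operator of type $\binom{W_3}{W_1\,W_2}$ with $W_1$ an $L_k(\fsl_2)$-module, then for any $v$ in the maximal proper ideal of $V^k(\fsl_2)$, the intertwining-operator analogue of \cite[Proposition~4.5.7]{LL} shows $v_n(w_1)_{h;k}w_2$ is a linear combination of $(v_m w_1)_{j;k}w_2$, all of which vanish because $v_m w_1=0$. This is cleaner, works uniformly, and does not require any extension-group analysis. Your first route for the tensor ideal gestures in this direction (``the singular vector acts as an intertwining operator component...''), which is closer to the paper's argument, but you should carry it out rather than fall back to the composition-factor route.
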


Unlike $KL_k(\fsl_2)$, the larger category $KL^k(\fsl_2)$ is not rigid for $k$ admissible, that is, not every object has a dual in the sense of tensor categories. For example, objects of $KL_k(\fsl_2)$ are not rigid when considered as objects of $KL^k(\fsl_2)$ because the unit object $\cL_1=L_k(\fsl_2)$ of the tensor category $KL_k(\fsl_2)$ is not the same as the unit object $\cV_1=V^k(\fsl_2)$ of $KL^k(\fsl_2)$.
However, by \cite[Theorem 2.12]{ALSW}, vertex algebraic contragredient modules \cite{FHL} give $KL^k(\fsl_2)$ a weaker duality structure, that of a ribbon Grothendieck-Verdier category \cite{BD} whose dualizing object is the contragredient of $\cV_1$. We will use this Grothendieck-Verdier category structure on $KL^k(\fsl_2)$ to prove that the weak Kazhdan-Lusztig correspondence mentioned above is an exact functor.

Although $KL^k(\fsl_2)$ as a whole is not rigid, determining which particular objects are rigid turns out to be critical for exploring the detailed tensor structure of $KL^k(\fsl_2)$. The first non-trivial rigid object we obtain in $KL^k(\fsl_2)$ is the generalized Verma module $\cV_2$ induced from the standard $2$-dimensional simple $\fsl_2$-module, which we prove is self-dual by using Knizhnik-Zamolodchikov (KZ) equations \cite{KZ} to derive explicit expressions for compositions of intertwining operators involving $\cV_2$; similar methods for proving rigidity have been used in many recent works, including \cite{TW, CMY-singlet, CMY3, MY2, MY3, MS}. The basic properties of $\cV_2$ are summarized in the following theorem, where we use $\tens$ to denote the tensor product operation on $KL^k(\fsl_2)$:
\begin{thm}[Theorem \ref{thm:V12_times_Vrs}, Theorem \ref{thm:V12_rigid}, Theorem \ref{thm:V12_times_Vrp}]\label{thm:intro_V2}
Let $k=-2+p/q$ for relatively prime $p\in\ZZ_{\geq 2}$ and $q\in\ZZ_{\geq 1}$, and let $\zeta=e^{\pi iq/p}$.
\begin{enumerate}
\item The generalized Verma module $\cV_2$ is rigid and self-dual in $KL^k(\fsl_2)$.
\item The self-dual module $\cV_2$ has intrinsic dimension $-\zeta-\zeta^{-1}$, that is, if $e_{\cV_2}: \cV_2\tens\cV_2\rightarrow\cV_1$ and $i_{\cV_2}: \cV_1\rightarrow\cV_2\tens\cV_2$ denote evaluation and coevaluation maps for $\cV_2$, then
\begin{equation*}
e_{\cV_2}\circ i_{\cV_2}=(-\zeta-\zeta^{-1})\cdot\Id_{\cV_1}.
\end{equation*}

\item For $r\in\ZZ_{\geq 2}$, there is a short exact sequence
\begin{equation}\label{eqn:intro_exact_seq}
0\longrightarrow \cV_{r-1} \longrightarrow\cV_2\tens\cV_r\longrightarrow\cV_{r+1}\longrightarrow 0
\end{equation} 
which splits if and only if $p\nmid r$.
\end{enumerate}
\end{thm}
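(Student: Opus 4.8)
The plan is to establish the three parts in the order (3) (existence of the short exact sequence), then (1)--(2) (rigidity, self-duality, and the intrinsic dimension of $\cV_2$), and finally the splitting assertion in (3), since the last will use rigidity of $\cV_2$. For the first step I would analyze the fusion product $\cV_2\tens\cV_r$ directly: it lies in $KL^k(\fsl_2)$ and hence has finite length by Theorem \ref{thm:gen_Verma_structure}. By Frenkel--Zhu theory applied to the \emph{universal} vertex operator algebra $V^k(\fsl_2)$ --- where the spaces of intertwining operators among generalized Verma modules reproduce the classical $\fsl_2$ Clebsch--Gordan rule $\CC^2\otimes\CC^r\cong\CC^{r+1}\oplus\CC^{r-1}$ --- there is a nonzero intertwining operator of type $\binom{\cV_{r+1}}{\cV_2\ \cV_r}$ whose leading term realizes the highest-weight coupling $\CC^2\otimes\CC^r\twoheadrightarrow\CC^{r+1}$ of lowest conformal weight spaces; since $\CC^{r+1}$ is an irreducible $\fsl_2$-module that generates $\cV_{r+1}$, the induced morphism $\cV_2\tens\cV_r\to\cV_{r+1}$ is surjective. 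It then remains to identify its kernel with $\cV_{r-1}$, which I would do by bounding the graded dimension of $\cV_2\tens\cV_r$ (so that $\cV_{r-1}$ and $\cV_{r+1}$ exhaust its Weyl-module subquotients), using the contragredient of the sequence already obtained, and the fact from Theorem \ref{thm:gen_Verma_structure} that $\cV_{r-1}$ is simple or uniserial of length $2$, so that its isomorphism type is pinned down by its head and socle. This yields \eqref{eqn:intro_exact_seq}.

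Next, rigidity and dimension. I would take $e_{\cV_2}\colon\cV_2\tens\cV_2\to\cV_1$ to come from a nonzero intertwining operator of type $\binom{\cV_1}{\cV_2\ \cV_2}$ (which exists since $\CC^2\otimes\CC^2$ contains the trivial $\fsl_2$-module and the target is the vertex algebra itself) and $i_{\cV_2}\colon\cV_1\to\cV_2\tens\cV_2$ to be a nonzero element of $\hom(\cV_1,\cV_2\tens\cV_2)$, which is nonzero because $\cV_1$ embeds in $\cV_2\tens\cV_2$ by the $r=2$ case of \eqref{eqn:intro_exact_seq}; for $p\ge3$ these are simply the projection onto and the inclusion of the $\cV_1$-summand. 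Since $\cV_2$ is generated by its lowest conformal weight space and $\Endo_{\fsl_2}(\CC^2)=\CC$, we have $\Endo_{KL^k}(\cV_2)=\CC$, so each of the two rigidity (zig-zag) composites $\cV_2\to\cV_2$ is a scalar, and it suffices to prove these scalars are nonzero and then rescale $e_{\cV_2}$ or $i_{\cV_2}$. To evaluate them I would rewrite the composites, using the associativity isomorphism of $KL^k(\fsl_2)$, as compositions of intertwining operators applied to four copies of the lowest-weight vectors of $\cV_2$ (with the intermediate modules $\cV_1,\cV_3$ occurring in $\cV_2\tens\cV_2$). The resulting four-point function solves the Knizhnik--Zamolodchikov equation for $\fsl_2$ with all four insertions in the $2$-dimensional representation, which after restriction to the $\fsl_2$-invariant channels reduces to an explicitly solvable hypergeometric system in the cross-ratio; comparing the expansions of the solution in the two singular channels --- which, via associativity, correspond to the two iterated-intertwining-operator presentations of the composite --- yields the structure constants, from which the zig-zag scalars come out as nonzero hypergeometric connection coefficients (ratios of Gamma-function values that simplify for $\kappa=p/q$), establishing rigidity and self-duality of $\cV_2$. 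The ``bubble'' $e_{\cV_2}\circ i_{\cV_2}\in\Endo_{KL^k}(\cV_1)=\CC$ is the $(\cV_1,\cV_1)$-entry of the same monodromy data, and with $e_{\cV_2},i_{\cV_2}$ normalized to satisfy the zig-zag identities it evaluates to $-\zeta-\zeta^{-1}=-2\cos(\pi q/p)$ (the quantum dimension of the $2$-dimensional $U_\zeta(\fsl_2)$-module, up to the expected overall sign); in particular it vanishes when $p=2$, consistently with $\cV_2=\cL_2$ being simple there.

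Finally, the splitting criterion. Once $\cV_2$ is rigid and self-dual, $\cV_2\tens-$ is exact and $\hom(\cV_2\tens A,B)\cong\hom(A,\cV_2\tens B)$, so \eqref{eqn:intro_exact_seq} exhibits $\cV_2\tens\cV_r$ as an extension of the Weyl module $\cV_{r+1}$ by $\cV_{r-1}$, and whether it splits is controlled by $\mathrm{Ext}^1_{KL^k}(\cV_{r+1},\cV_{r-1})$. Using the linkage/block decomposition of $KL^k(\fsl_2)$ (where $\cL_a$ and $\cL_b$ link iff $a\equiv\pm b$ modulo $2p$), the composition factors of $\cV_{r-1}$ and of $\cV_{r+1}$ lie in the same block exactly when $p\mid r$. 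For $p\nmid r$ they lie in different blocks, whence $\mathrm{Ext}^1_{KL^k}(\cV_{r+1},\cV_{r-1})=0$ and \eqref{eqn:intro_exact_seq} splits; one may also see this by induction on $r$, with base case $\cV_2\tens\cV_2\cong\cV_1\oplus\cV_3$ for $p\ge3$ (since $e_{\cV_2}\circ i_{\cV_2}$ is then invertible, making $\cV_1$ a direct summand), propagated using exactness of $\cV_2\tens-$ and the associativity identity $(\cV_2\tens\cV_2)\tens\cV_{r-1}\cong\cV_2\tens(\cV_2\tens\cV_{r-1})$. For $p\mid r$, $\cV_r=\cL_r$ is simple and lies in a semisimple block, hence is projective; since $\cV_2$ is rigid, $\cV_2\tens\cV_r$ is then projective, so if \eqref{eqn:intro_exact_seq} split, $\cV_{r+1}$ would be a direct summand of a projective and hence projective --- but $\cV_{r+1}$ (with $p\nmid r+1$) is not projective, its projective cover being strictly larger, of length $3$ --- a contradiction; equivalently, $\cV_2\tens\cV_r$ is the projective cover of $\cV_{r+1}$ and the extension class of \eqref{eqn:intro_exact_seq} is nonzero.

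The main obstacle is the explicit Knizhnik--Zamolodchikov/hypergeometric computation underlying parts (1) and (2): one must control the monodromy of the four-point block with four $\mathbf{2}$-insertions precisely enough to verify both zig-zag identities simultaneously and to pin the bubble to exactly $-\zeta-\zeta^{-1}$ rather than to some other scalar differing by a choice of normalization or a root of unity, and this must be done uniformly over all admissible levels $k=-2+p/q$, including degenerate values --- for instance $p=2$, where $\cV_2$ is simple of vanishing intrinsic dimension, and small $p$ where some spaces of intertwining operators change dimension.
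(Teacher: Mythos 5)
Your overall strategy tracks the paper's: part (3)'s existence statement via intertwining operators among generalized Verma modules (the paper cites \cite{McR,MY1}), parts (1)--(2) via Knizhnik--Zamolodchikov four-point functions and hypergeometric connection formulas, and the non-splitting via rigidity. But there are three gaps worth naming.

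First, the $p=2$ case is not just an ``obstacle'' to be flagged but a genuinely different argument. Your evaluation $e_{\cV_2}$ is supposed to come from ``a nonzero intertwining operator of type $\binom{\cV_1}{\cV_2\ \cV_2}$'' whose leading coefficient realizes the $\fsl_2$-invariant pairing on $M_2\otimes M_2$; but when $p=2$ no such \emph{surjective} intertwining operator exists, since $\cV_1$ is not a quotient of $\cV_2\tens\cV_2$ (the degree-zero image of any such operator lands in $\cL_3\subset\cV_1$, not in $M_1=\CC\vac$). The paper instead builds $\cE$ through $\binom{\cV_3}{\cV_2\ \cV_2}$ composed with $\cV_3\twoheadrightarrow\cL_3\hookrightarrow\cV_1$, and because the hypergeometric system acquires logarithmic solutions, the rigidity composition is shown nonzero by contradiction (vanishing would force $\cV_2$ to be an $L_k(\fsl_2)$-module) rather than by an explicit connection-coefficient computation. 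You also use the $r=2$ case of \eqref{eqn:intro_exact_seq} to justify $i_{\cV_2}\ne0$, which for $p=2$ is the $p\mid r$ case and hence not available before rigidity -- another manifestation of the same problem.

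Second, your proposed order of argument has a circularity for $p\mid r$: you want to first pin the kernel of $\cV_2\tens\cV_r\twoheadrightarrow\cV_{r+1}$ down to \emph{exactly} $\cV_{r-1}$ (not a proper quotient), then prove rigidity, then discuss splitting. But the identification of the kernel with all of $\cV_{r-1}$ is precisely what the paper cannot do before rigidity: Theorem \ref{thm:V12_times_Vrs} only gives kernel $\cV_{np-1}/\cJ_n$, and Theorem \ref{thm:V12_times_Vrp} later shows $\cJ_n=0$ by computing $\hom(\cV_{np-1},\cV_2\tens\cV_{np})\cong\hom(\cV_2\tens\cV_{np-1},\cV_{np})$ and $\hom(\cL_{np-1},\cV_2\tens\cV_{np})\cong\hom(\cV_2\tens\cL_{np-1},\cL_{np})$, both of which use rigidity and self-duality of $\cV_2$. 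Your ``bound graded dimension and use the contragredient'' suggestion is too vague to close this; there is no a priori upper bound on the graded dimension of a fusion product that would force the kernel to be full. Third, your alternative non-splitting argument for $p\mid r$ (``$\cL_r$ lies in a semisimple block, hence is projective, so $\cV_2\tens\cV_r$ is projective, so $\cV_{r+1}$ would be projective'') presupposes the rigidity/projectivity of $\cV_{np}$ for all $n$ and the structure of $\cP_{np+1}$, both established in the paper only \emph{after} this theorem (Proposition \ref{prop:V1s_rigid} and Theorem \ref{thm:Prs_properties} depend on Theorem \ref{thm:V12_times_Vrp}); the paper's direct Hom-vanishing argument $\hom(\cV_2\tens\cV_{np},\cL_{np-1})\cong\hom(\cL_{np},\cV_2\tens\cL_{np-1})=0$ avoids this. (Minor: the projective cover of $\cL_{np+1}$ has length $4$, not $3$.)
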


Next, we use rigidity of $\cV_2$ and \eqref{eqn:intro_exact_seq} to deduce that $\cV_{r}$ is rigid and self dual for $1 \leq r \leq p$. 
Then $\cV_2\tens\cV_p$ is a rigid indecomposable module which turns out to be projective in $KL^k(\fsl_2)$. In fact, we construct all indecomposable projective objects as follows. First recall that an abelian category has \textit{enough projectives} if every object is a homomorphic image of a projective object. Since every object of $KL^k(\fsl_2)$ has finite length, $KL^k(\fsl_2)$ has enough projectives if and only if every simple module $\cL_r$ has a \textit{projective cover}, which is an indecomposable projective module that surjects onto $\cL_r$. It is easy to prove that the generalized Verma module $\cV_1=V^k(\fsl_2)$ is projective in $KL^k(\fsl_2)$, and in any tensor category with projective unit object, all rigid objects are projective (Corollary \ref{cor:rigid_is_projective}). Therefore, the generalized Verma module $\cV_{r}$ is projective and a projective cover of $\cL_r$ for $1 \leq r \leq p$. In general, we obtain a projective cover for any $\cL_r$ as a direct summand of $\cV_2^{\tens (r-1)}$. Such direct summands of tensor powers of the rigid module $\cV_2$ are also rigid, and it follows that all projective objects of $KL^k(\fsl_2)$ are also rigid. We summarize the main properties of rigid and projective objects in $KL^k(\fsl_2)$ in the following theorem:
\begin{thm}[Theorem \ref{thm:V12_times_Vrp},   Theorem \ref{thm:Prs}, Theorem \ref{thm:Prs_properties}, Corollary \ref{cor:projective_is_rigid}, Proposition \ref{prop:Pr_self_contra}, Theorem \ref{thm:Pr_log}]\label{thm:intro_Pr}
Let $k=-2+p/q$ for relatively prime $p\in\ZZ_{\geq 2}$ and $q\in\ZZ_{\geq 1}$. Then an object of $KL^k(\fsl_2)$ is rigid if and only if it is projective. Moreover, $\cL_r$ for all $r\in\ZZ_{\geq 1}$ has a projective cover $\cP_r$ in $KL^k(\fsl_2)$ as follows:
\begin{enumerate}

\item For $1\leq r\leq p-1$ and for $p\mid r$, $\cP_r=\cV_r$. In particular, $\cP_r$ is simple if $p\mid r$.

\item For $n\in\ZZ_{\geq 1}$ and $1\leq r\leq p-1$, there is a non-split short exact sequence
\begin{equation*}
0\longrightarrow \cV_{np-r} \longrightarrow \cP_{np+r}\longrightarrow \cV_{np+r}\longrightarrow 0,
\end{equation*}
and $\cP_{np+r}$ has Loewy diagram
\begin{equation*}
\begin{matrix}
  \begin{tikzpicture}[->,>=latex,scale=1.5]
\node (b1) at (1,0) {$\cL_{np+r}$};
\node (c1) at (-1, 1){$\cP_{np+r}:$};
   \node (a1) at (0,1) {$\cL_{np-r}$};
   \node (b2) at (2,1) {$\cL_{(n+2)p-r}$};
    \node (a2) at (1,2) {$\cL_{np+r}$};
\draw[] (b1) -- node[left] {} (a1);
   \draw[] (b1) -- node[left] {} (b2);
    \draw[] (a1) -- node[left] {} (a2);
    \draw[] (b2) -- node[left] {} (a2);
\end{tikzpicture}
\end{matrix} .
\end{equation*}
Also, $\cP_{np+r}$ is self-contragredient and logarithmic, that is, the Virasoro operator $L(0)$ acts non-semisimply on $\cP_{np+r}$.
\end{enumerate}
\end{thm}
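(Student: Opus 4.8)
The plan is to prove the theorem by constructing, for every $r\in\ZZ_{\geq 1}$, a projective cover $\cP_r$ of $\cL_r$ explicitly as a direct summand of a tensor power $\cV_2^{\tens(r-1)}$, and then reading off its structure from the exact sequences \eqref{eqn:intro_exact_seq}. The implication ``rigid $\Rightarrow$ projective'' is immediate from Corollary~\ref{cor:rigid_is_projective}, since the unit object $\cV_1=V^k(\fsl_2)$ is projective. For the converse, note that any indecomposable projective object has simple top $\cL_r$ for some $r$, hence is the projective cover $\cP_r$; so once each $\cP_r$ is realized as a summand of some $\cV_2^{\tens n}$, every projective object --- being a finite direct sum of such $\cP_{r_i}$ by Krull--Schmidt --- is a direct summand of $\bigoplus_i\cV_2^{\tens(r_i-1)}$. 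Since $\cV_2$ is rigid (Theorem~\ref{thm:intro_V2}), each $\cV_2^{\tens n}$ is rigid, $KL^k(\fsl_2)$ is Karoubian (it is abelian), and direct summands of rigid objects in a braided Grothendieck--Verdier category are again rigid, so every projective object is rigid. Everything thus reduces to the construction and structure of the $\cP_r$, which we obtain by induction on $r$; along the way this also shows $KL^k(\fsl_2)$ has enough projectives.

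For the base cases, recall from Theorem~\ref{thm:intro_V2} and the discussion following it that $\cV_r$ is rigid and self-dual for $1\leq r\leq p$, hence projective by the previous paragraph; since $\cV_r$ has length $2$ with simple top $\cL_r$ when $1\leq r\leq p-1$ (Theorem~\ref{thm:gen_Verma_structure}), it is indecomposable, so $\cP_r=\cV_r$ for $1\leq r\leq p-1$, while $\cP_p=\cV_p=\cL_p$ is simple and projective. The first genuinely new projective is $\cP_{p+1}$, which I claim equals $\cV_2\tens\cV_p$: this module is projective because $\cV_p$ is projective and tensoring with the rigid object $\cV_2$ is exact and preserves projectivity; by \eqref{eqn:intro_exact_seq} with $r=p$ it sits in a \emph{non-split} short exact sequence $0\to\cV_{p-1}\to\cV_2\tens\cV_p\to\cV_{p+1}\to 0$; and a count of composition factors, combined with the structures of the Weyl modules $\cV_{p\pm 1}$ and indecomposability, forces the diamond Loewy diagram of part (2) (in particular a simple socle $\cL_{p+1}$), whence $\cP_{p+1}$ is the projective cover of $\cL_{p+1}$ and is self-contragredient by the argument given below.

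For the inductive step, assume $\cP_1,\dots,\cP_{r-1}$ have been built as summands of $\cV_2^{\tens 0},\dots,\cV_2^{\tens(r-2)}$ with the asserted structure. Tensoring the defining short exact sequence of $\cP_{r-1}$ with $\cV_2$ and using \eqref{eqn:intro_exact_seq} to rewrite each term $\cV_2\tens\cV_m$, one computes the class $[\cV_2\tens\cP_{r-1}]$ in the Grothendieck ring, where $[\cV_2]\cdot[\cV_m]=[\cV_{m-1}]+[\cV_{m+1}]$ for $m\geq 2$ so that the classes $[\cV_m]$ obey the $\fsl_2$-character (Chebyshev) recursion; matching this against the classes $[\cP_{r'}]$ and $[\cL_{mp}]$ with $r'<r$ identifies $\cV_2\tens\cP_{r-1}$, up to composition factors, with $\cP_r$ plus strictly smaller projectives, exactly mirroring the Clebsch--Gordan rule for tensoring with the $2$-dimensional module in the category of tilting modules for quantum $\fsl_2$. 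Since $\cV_2\tens\cP_{r-1}$ is projective and, by \eqref{eqn:intro_exact_seq}, surjects onto $\cV_r$ and hence onto $\cL_r$, the indecomposable summand of $\cV_2\tens\cP_{r-1}$ with top $\cL_r$ is a projective cover $\cP_r$ of $\cL_r$; Krull--Schmidt (valid since $KL^k(\fsl_2)$ has finite-dimensional morphism spaces and objects of finite length) identifies the remaining summands with smaller $\cP_{r'}$'s by induction, and in particular produces the simple projectives $\cP_{mp}=\cV_{mp}=\cL_{mp}$, giving part (1). For $r=np+r'$ with $n\geq 1$ and $1\leq r'\leq p-1$, extracting the $\cP_r$-summand from the $\cV_2$-tensored sequence of $\cP_{r-1}$ yields the short exact sequence $0\to\cV_{np-r'}\to\cP_{np+r'}\to\cV_{np+r'}\to 0$ of part (2); combining the known structures of the Weyl modules $\cV_{np-r'}$ (top $\cL_{np-r'}$, socle $\cL_{np+r'}$) and $\cV_{np+r'}$ (top $\cL_{np+r'}$, socle $\cL_{(n+2)p-r'}$, from Malikov's results) with indecomposability pins down the diamond Loewy diagram, the point being that $\cL_{np+r'}$ already lies in the socle via $\cV_{np-r'}$ and a short argument (using indecomposability together with the inductive structure of $\cP_{np+r'-1}$, or directly the explicit realization) rules out a larger socle. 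Finally, $\cP_{np+r'}$ has simple socle $\cL_{np+r'}$, so it embeds into the injective hull $I(\cL_{np+r'})$, which, $I(\cL_{np+r'})$ being the contragredient of the projective cover of $\cL_{np+r'}'=\cL_{np+r'}$, equals $\cP_{np+r'}'$; since contragredience preserves composition factors, this embedding $\cP_{np+r'}\hookrightarrow\cP_{np+r'}'$ is an isomorphism, so $\cP_{np+r'}$ is self-contragredient.

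It remains to show $\cP_{np+r'}$ is logarithmic. A direct computation with conformal weights --- using $h_m=\tfrac{(m^2-1)q}{4p}$ --- gives $h_{np+r'}\equiv h_{np-r'}\equiv h_{(n+2)p-r'}\pmod{\ZZ}$, so all four composition factors of $\cP_{np+r'}$ have the same image under $\theta=e^{2\pi i L(0)}$; one then shows, from the explicit realization of $\cP_{np+r'}$ (e.g.\ inside $\cV_2\tens\cP_{np+r'-1}$), that the two composition factors isomorphic to $\cL_{np+r'}$, occurring in the head and the socle, are linked by a nontrivial $L(0)$-Jordan block, so $L(0)$ does not act semisimply. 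I expect the main obstacle to be less the Grothendieck-ring bookkeeping (whose answer is dictated by the tilting-module analogy) than (i) upgrading that bookkeeping to the precise \emph{Loewy} structure of $\cP_{np+r'}$ --- especially that its socle is simple, which feeds the self-contragredience argument --- and (ii) the separate, computational verification that $L(0)$ acts non-semisimply.
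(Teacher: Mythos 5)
Your overall strategy matches the paper's: build each $\cP_r$ as a direct summand of $\cV_2^{\tens(r-1)}$, deduce projectivity from rigidity of $\cV_2$ together with ``rigid $\Rightarrow$ projective'' (Corollary~\ref{cor:rigid_is_projective}), and then get the converse by observing that every projective is a summand of some tensor power of $\cV_2$, hence rigid. The self-contragredience argument via injective hulls is a valid alternative to the paper's (Proposition~\ref{prop:Pr_self_contra} instead constructs a map $\cP_{np+r}\to\cP_{np+r}'$ using projectivity and shows it is surjective by a $\hom$-space count); both need that injectives exist, which follows from enough projectives plus exactness of contragredients. One small remark: you do not need ``braided Grothendieck--Verdier'' to conclude that direct summands of rigid objects are rigid --- this holds in any idempotent-complete additive monoidal category. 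The paper's Lemma~\ref{lem:direct_summand_rigid} is the refinement needed to keep track of \emph{self}-duality, which matters later for identifying $\cP^k$ with tilting modules, but for rigidity alone your general claim suffices.

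There are two genuine gaps, which you partly anticipate. First, the Loewy diagram cannot be pinned down by counting composition factors plus indecomposability: the short exact sequence $0\to\cV_{np-r}\to\cP_{np+r}\to\cV_{np+r}\to 0$ leaves several a priori possible socle/head configurations (for instance it does not by itself rule out $\cL_{np-r}$ being a submodule). The paper's Theorems~\ref{thm:V12_times_Vrp} and~\ref{thm:Prs} resolve this by computing $\dim\hom(\cL_s,\cP_{np+r})$ and $\dim\hom(\cP_{np+r},\cL_s)$ for all $s$ via the rigidity adjunction $\hom(\cL_s,\cV_2\tens W)\cong\hom(\cV_2\tens\cL_s,W)$ together with the fusion rules of Theorem~\ref{thm:V12_times_Lrs}; this is the mechanism that shows the socle and head are both the single copy of $\cL_{np+r}$ and that the middle layer is $\cL_{np-r}\oplus\cL_{(n+2)p-r}$. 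Your ``a short argument rules out a larger socle'' needs to be replaced by these computations, and the self-contragredience argument depends on having established simple socle first.

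Second and more seriously, you do not actually prove that $L(0)$ acts non-semisimply on $\cP_{np+r}$. Congruence of the conformal weights mod $\ZZ$ is necessary for a Jordan block but does not produce one, and an ``explicit realization'' of $\cP_{np+r}$ inside a tensor product does not by itself exhibit the nilpotent part of $L(0)$. The paper's Theorem~\ref{thm:Pr_log} requires a genuinely new ingredient beyond the projective-cover bookkeeping: one first needs the explicit self-braiding $\cR_{\cV_2,\cV_2}=e^{\pi i q/2p}\Id + e^{-\pi i q/2p}\,f_{\cV_2}$ of Proposition~\ref{prop:V12_self_braiding}, and then a two-stage argument --- for $\cP_{np+1}$, a contradiction obtained from the balancing equation and the injectivity of $i_{\cV_2}\tens\Id_{\cV_{np}}$; for general $\cP_{np+r}$, an expansion of $\theta_{\cV_2^{\tens r}\tens\cV_{np}}$ via the hexagon and balancing axioms in which all terms involving $f_{\cV_2}$ are shown to die when projected to $\cP_{np+r}$, leaving a nontrivial nilpotent coming from $\theta_{\cP_{np+1}}$. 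None of this is present in your sketch, and it is not a computation one can sidestep.
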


Logarithmic modules for vertex operator algebras are so-called because of their role in logarithmic conformal field theory in physics: non-semisimple actions of $L(0)$ lead to logarithmic singularities in correlation functions. It is worth noting that it is not very easy to construct logarithmic modules for affine Lie algebras directly. Probably the only previously known logarithmic $\widehat{\fsl}_2$-modules at admissible levels are those constructed by Adamovi\'{c} in \cite{Ad-inverse-QH-red} (these modules are not objects of $KL^k(\fsl_2)$). Further logarithmic $\widehat{\fsl}_2$-modules at admissible levels were conjectured in \cite{Ra} but not actually constructed; our Theorem \ref{thm:Pr_log} proves that some of these conjectured modules indeed exist. Our modules $\cP_r$ seem to be the first known logarithmic $\widehat{\fsl}_2$-modules with finite-dimensional generalized $L(0)$-eigenspaces.

As part of the construction of the logarithmic modules $\cP_{np+r}$ and the proof of Theorem \ref{thm:intro_Pr}, we also compute the tensor product of $\cV_2$ with each indecomposable projective module:
\begin{thm}[Theorem \ref{thm:V12_times_Vrp}, Theorem \ref{thm:V12_times_Pr1_p=2}, Theorem \ref{thm:Prs_properties}]\label{thm:intro_V2_times_Pr}
Let $k=-2+p/q$ for relatively prime $p\in\ZZ_{\geq 2}$ and $q\in\ZZ_{\geq 1}$. Then using the convention $\cP_r=0$ if $r\leq 0$:
\begin{enumerate}
\item If $p=2$, then for $n\geq 0$ and $r=1,2$,
\begin{equation*}
\cV_2\boxtimes\cP_{2n+r} =\left\lbrace\begin{array}{lll}
\cP_{2(n-1)}\oplus 2\cdot\cP_{2n}\oplus\cP_{2(n+1)} & \text{if} & r=1\\
\cP_{2(n+1)+1} & \textit{if} & r=2\\
\end{array}\right. .
\end{equation*}

\item If $p\geq 3$, then for $n\geq 0$ and $1\leq r\leq p$,
\begin{equation*}
\cV_{2}\tens\cP_{np+r}\cong\left\lbrace\begin{array}{lll}
2\cdot\cP_{np}\oplus\cP_{np+2} & \text{if} & r=1\\
\cP_{np+r-1}\oplus\cP_{np+r+1} & \text{if} & 2\leq r\leq p-2\\
\cP_{(n-1)p}\oplus\cP_{(n+1)p-2}\oplus\cP_{(n+1)p} & \text{if} & r=p-1\\
\cP_{(n+1)p+1} & \text{if} & r=p
\end{array}\right. .
\end{equation*}
\end{enumerate}
\end{thm}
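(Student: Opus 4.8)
Since $\cV_2$ is rigid and self-dual (Theorem \ref{thm:intro_V2}(1)), the functor $\cV_2\boxtimes(-)$ has both a left and a right adjoint, each naturally isomorphic to $\cV_2\boxtimes(-)$ itself, so it is exact; and for any projective $P$ the functor $\hom(\cV_2\boxtimes P,-)\cong\hom(P,\cV_2\boxtimes(-))$ is a composite of exact functors, so $\cV_2\boxtimes P$ is projective. Hence $\cV_2\boxtimes\cP_{np+r}$ is always projective, and the problem reduces to determining its indecomposable direct summands and their multiplicities. The plan is to do this by bookkeeping in the Grothendieck group $K_0(KL^k(\fsl_2))$ together with two elementary facts: if a projective module surjects onto a simple $\cL_m$ then $\cP_m$ is a direct summand of it, and a projective module of composition length $0$ is zero (this replaces any appeal to linear independence of the $[\cP_m]$). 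The inputs for the $K_0$ computation are $[\cV_2]\cdot[\cV_s]=[\cV_{s-1}]+[\cV_{s+1}]$ from \eqref{eqn:intro_exact_seq} (with $\cV_0=0$), the composition series of the generalized Verma modules (Theorem \ref{thm:gen_Verma_structure}), and the consequent relations $[\cP_{np+s}]=[\cV_{np+s}]+[\cV_{np-s}]$ for $1\le s\le p-1$ and $[\cP_{np}]=[\cV_{np}]$ coming from Theorem \ref{thm:intro_Pr}.

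For $p\ge 3$: when $\cP_{np+r}$ is a generalized Verma module -- the cases $n=0$ with $1\le r\le p-1$, and $r=p$ (where $\cP_{np+p}=\cV_{(n+1)p}$) -- I would apply $\cV_2\boxtimes(-)$ to \eqref{eqn:intro_exact_seq} directly and invoke the splitting criterion: $0\to\cV_{s-1}\to\cV_2\boxtimes\cV_s\to\cV_{s+1}\to 0$ splits iff $p\nmid s$, and when $p\mid s$ the nonsplit extension is projective, surjects onto $\cL_{s+1}$, and has class $[\cP_{s+1}]$, hence equals $\cP_{s+1}$. For $\cP_{np+r}$ with $n\ge 1$ and $1\le r\le p-1$, I would apply the exact functor $\cV_2\boxtimes(-)$ to the defining sequence $0\to\cV_{np-r}\to\cP_{np+r}\to\cV_{np+r}\to 0$; since $p\nmid np\pm r$, the modules $\cV_2\boxtimes\cV_{np\pm r}$ both split by \eqref{eqn:intro_exact_seq}, so $\cV_2\boxtimes\cP_{np+r}$ is projective with a filtration by the generalized Verma modules $\cV_{np\pm r\pm 1}$. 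Regrouping their $K_0$ classes via $[\cP_{np+s}]=[\cV_{np+s}]+[\cV_{np-s}]$ yields $[\cP_{np+r-1}]+[\cP_{np+r+1}]$ when $2\le r\le p-2$, $2[\cP_{np}]+[\cP_{np+2}]$ when $r=1$, and $[\cP_{(n-1)p}]+[\cP_{(n+1)p-2}]+[\cP_{(n+1)p}]$ when $r=p-1$ (with $\cP_r=0$ for $r\le 0$ absorbing the degenerate terms, e.g.\ for $n=1$). From the filtration, $\cV_2\boxtimes\cP_{np+r}$ surjects onto the simple tops of the two generalized Verma quotients, so the corresponding $\cP_m$ are direct summands; splitting them off leaves a projective module whose $K_0$ class is forced to be effective, which pins down all multiplicities and shows the complementary summand is $\cP_{(n-1)p}$ (in the $r=p-1$ case) or $0$ (otherwise), giving the claimed decomposition.

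The only place where $K_0$ bookkeeping does not suffice is the multiplicity $2$ of $\cP_{np}$ when $r=1$. Here I would compute it as $\dim\hom(\cV_2\boxtimes\cP_{np+1},\cL_{np})=\dim\hom(\cP_{np+1},\cV_2\boxtimes\cL_{np})$ by self-duality of $\cV_2$. Since $p\mid np$ we have $\cL_{np}=\cV_{np}$, and $\cV_2\boxtimes\cV_{np}$ is projective, fits in a nonsplit sequence $0\to\cV_{np-1}\to\cV_2\boxtimes\cV_{np}\to\cV_{np+1}\to 0$, surjects onto $\cL_{np+1}$, and has class $[\cP_{np+1}]$; hence $\cV_2\boxtimes\cV_{np}\cong\cP_{np+1}$, and the multiplicity equals $\dim\Endo(\cP_{np+1})=[\cP_{np+1}:\cL_{np+1}]=2$ by the diamond Loewy diagram (using $\dim\hom(\cP_m,N)=[N:\cL_m]$ for projective $\cP_m$). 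The case $p=2$ I would treat separately but along the same lines: now $\cV_2=\cL_2=\cP_2$ is already simple and projective, so $\cV_2\boxtimes(-)$ lands in projectives automatically, and the indecomposable projectives are exactly the simple ones $\cP_{2m}=\cL_{2m}$, the length-two module $\cP_1=\cV_1$, and the diamonds $\cP_{2m+1}$ for $m\ge 1$; the multiplicity-$2$ point is handled by the same $\hom$-computation.

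I expect the main obstacle to be not any single deep step but the simultaneous control of all boundary cases. The four cases in part (2), and the difference between $p=2$ and $p\ge 3$, are dictated entirely by which of the indices $np+r$ and $np\pm r\pm 1$ are divisible by $p$ -- exactly what governs splitting in \eqref{eqn:intro_exact_seq} and the simplicity (or not) of a generalized Verma module -- so care is needed to verify that the convention $\cP_r=0$ for $r\le 0$ correctly absorbs the degenerate terms for small $n$ and that the multiplicities come out exact in each case. The genuinely non-formal ingredients -- rigidity and self-duality of $\cV_2$, the exact sequence \eqref{eqn:intro_exact_seq} with its splitting criterion, and the Loewy structure of the $\cP_{np+r}$ -- are all available from the earlier results, so the theorem follows by assembling them with the computations above.
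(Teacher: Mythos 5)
Your proposal is logically sound as a derivation, but it takes a genuinely different route from the paper and is not independent of it, so let me spell out the comparison. The paper does not first establish the projective covers $\cP_{np+r}$ and their Loewy structure and then compute $\cV_2\boxtimes\cP_{np+r}$ afterwards; rather, for $n\ge 1$ and $2\le r\le p-1$ the module $\cP_{np+r}$ is \emph{defined} as a direct summand of $\cV_2\boxtimes\cP_{np+r-1}$ (Section~\ref{subsec:further_indecomp}), and Theorem~\ref{thm:Prs_properties} proves projectivity, rigidity, the projective-cover property, and the tensor product formulas all in one running induction on $n$ and $r$. So when you cite Theorem~\ref{thm:intro_Pr} to supply the relations $[\cP_{np+s}]=[\cV_{np+s}]+[\cV_{np-s}]$, the existence of projective covers, and the diamond Loewy diagram, you are importing conclusions that the paper obtains only by carrying out precisely the recursion you are trying to avoid. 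Your argument is therefore a valid \emph{reconstruction} of the tensor product formulas \emph{given} the structural package of Theorem~\ref{thm:intro_Pr}, but it does not constitute an alternative proof in the paper's logical order, since the paper has no independent route to that package. If you wanted a self-contained variant you would first need to establish, without reference to $\cV_2\boxtimes(-)$, that $KL^k(\fsl_2)$ has enough projectives and that the projective covers have the stated Loewy diagrams; the paper deliberately gets both of these \emph{from} the tensor computation.

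That said, within its stated hypotheses your bookkeeping is clean and correct, and two points are worth highlighting. First, the adjunction argument that $\cV_2\boxtimes(-)$ sends projectives to projectives is a nice shortcut: the paper instead observes that $\cV_2\boxtimes\cP$ is rigid and self-dual when both factors are, and then applies Corollary~\ref{cor:rigid_is_projective}; your route via exactness of $\hom(\cP,\cV_2\boxtimes(-))$ is more direct and is worth keeping. Second, your worry that ``$K_0$ bookkeeping does not suffice'' for the multiplicity $2$ of $\cP_{np}$ in the $r=1$ case is actually unfounded, and you can drop the auxiliary $\hom$ computation. After splitting off the summand $\cP_{np}\oplus\cP_{np+2}$ coming from the surjection of $\cV_2\boxtimes\cP_{np+1}$ onto the simple tops of $\cV_{np}\oplus\cV_{np+2}$, the complement is a direct summand of a projective (hence projective) with $K_0$-class $[\cL_{np}]$, so it has length one and its unique composition factor is $\cL_{np}$; this forces it to be $\cL_{np}=\cP_{np}$, giving the second copy with no further input. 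The same length-one (or length-zero) argument cleanly handles every residual term in the $r=p-1$, $r=p$, and $p=2$ cases, so the $K_0$ method is uniform across all cases. In summary, the mathematics in your proposal is correct, but you should be explicit that it derives the tensor products from Theorem~\ref{thm:intro_Pr}, whereas the paper proves the two statements jointly; as written, the proposal reads as if it were an independent argument, which it is not.
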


Our final result on the tensor category $KL^k(\fsl_2)$ itself is a classification of its braidings. We show in Theorem \ref{thm:KLk_braidings} that $KL^k(\fsl_2)$ admits four natural braiding isomorphisms $\cR$ which satisfy the hexagon axiom for braided tensor categories, and each is completely determined by the automorphism $\cR_{\cV_2,\cV_2}$ of $\cV_2\tens\cV_2$. The first braiding is the official one specified in \cite{HLZ8}, and a second is the reverse braiding defined by $\cR_{W_1,W_2}^{\mathrm{rev}}=\cR_{W_2,W_1}^{-1}$ for modules $W_1$, $W_2$ in $KL^k(\fsl_2)$. The remaining two braidings are obtained from the first two by changing $\cR_{W_1,W_2}$ by a sign if the Cartan generator $h\in\fsl_2$ acts on both $W_1$ and $W_2$ by odd-integer eigenvalues (in particular $\cR_{\cV_2,\cV_2}$ changes to $-\cR_{\cV_2,\cV_2}$). We also show that for each of the four braidings, $KL^k(\fsl_2)$ admits two ribbon twists $\theta$ which satisfy the balancing equation
\begin{equation*}
\theta_{W_1\tens W_2} =\cR_{W_2,W_1}\circ\cR_{W_1,W_2}\circ(\theta_{W_1}\tens\theta_{W_2})
\end{equation*}
for objects $W_1$, $W_2$ of $KL^k(\fsl_2)$. When $\cR$ is the official braiding specified in \cite{HLZ8}, one of these twists is the official one given by $\theta=e^{2\pi i L(0)}$.

\subsection{Universal property of \texorpdfstring{$KL^k(\fsl_2)$}{KLk(sl2)} and a weak Kazhdan-Lusztig correspondence}

Let $k=-2+p/q$ be an admissible level for $\fsl_2$, and let $\zeta =e^{\pi i q/p}$. It is obvious that $KL^k(\fsl_2)$ is not tensor equivalent to the quantum group category $\cC(\zeta,\fsl_2)$ since $\cC(\zeta,\fsl_2)$ is rigid while $KL^k(\fsl_2)$ is not. However, we show that there is an exact and essentially surjective tensor functor $\cF: KL^k(\fsl_2)\rightarrow\cC(\zeta,\fsl_2)$ which we call a weak Kazhdan-Lusztig correspondence. It turns out that the tensor ideal $KL_k(\fsl_2)$ of modules for the simple affine vertex operator algebra $L_k(\fsl_2)$ is what obstructs $\cF$ from being full or faithful, so one could say that as tensor categories, $KL^k(\fsl_2)$ is something like an extension of $\cC(\zeta,\fsl_2)$ by $KL_k(\fsl_2)$.

To obtain the weak Kazhdan-Lusztig correspondence, we show that $KL^k(\fsl_2)$ satisfies a universal property, analogous to Ostrik's universal property of $\cC(\zeta,\fsl_2)$ from \cite[Remark 2.10]{Os} (see also \cite[Theorem 5.3]{GN}). Such universal properties derive from a universal property of the category of tilting modules for quantum $\fsl_2$. Indeed, let $\cT_\zeta\subseteq\cC(\zeta,\fsl_2)$ denote the subcategory of tilting modules introduced in \cite{An}; one can show that $\cT_\zeta$ is the smallest subcategory of $\cC(\zeta,\fsl_2)$ which contains the irreducible two-dimensional standard $U_\zeta(\fsl_2)$-module $\mathbf{X}$ and is closed under tensor products, finite direct sums, and direct summands. The tilting module $\mathbf{X}$ has intrinsic dimension $-\zeta-\zeta^{-1}$, like the generalized Verma module $\cV_2$ in $KL^k(\fsl_2)$ (recall Theorem \ref{thm:intro_V2}(2)). Then Ostrik showed in \cite[Theorem 2.4]{Os} that a self-dual object $X$ of intrinsic dimension $-\zeta-\zeta^{-1}$ in any tensor category $\cC$ induces a unique tensor functor $\cT_\zeta\rightarrow\cC$ sending $\mathbf{X}$ to $X$. Thus taking $\cC=KL^k(\fsl_2)$ and $X=\cV_2$ yields a tensor functor $\cT_\zeta\rightarrow KL^k(\fsl_2)$.

Let $\cP^k$ be the subcategory of all projective (equivalently, all rigid) objects in $KL^k(\fsl_2)$. Then $\cP^k$ contains $\cV_2$ and is closed under tensor products, finite direct sums, and direct summands, so the image of the tensor functor $\cT_\zeta\rightarrow KL^k(\fsl_2)$ is contained in $\cP^k$. In fact, we prove in Theorem \ref{thm:Tzeta_Pk_equiv} that this functor is a tensor equivalence between $\cT_\zeta$ and $\cP^k$. In other words, we can view $KL^k(\fsl_2)$ as an ``abelianization'' of $\cT_\zeta$ into which $\cT_\zeta$ embeds as the full subcategory of projective objects. This together with the universal property of $\cT_\zeta$ yields the universal property of $KL^k(\fsl_2)$:
\begin{thm}[Theorem \ref{thm:KLk_univ_prop}]\label{thm:intro_univ_prop}
Let $k=-2+p/q$ for relatively prime $p\in\ZZ_{\geq 2}$ and $q\in\ZZ_{\geq 1}$, let $\cC$ be a (not necessarily rigid) tensor category with right exact tensor product $\tens_\cC$, and let $X$ be a rigid self-dual object of $\cC$ with evaluation $e_X: X\tens_\cC X\rightarrow\vac_\cC$ and coevaluation $i_X: \vac_\cC\rightarrow X\tens_\cC X$ such that
\begin{equation*}
e_X\circ i_X = -(e^{\pi i q/p}+e^{-\pi i q/p})\cdot\Id_{\vac_\cC}.
\end{equation*}
Then there is a unique up to natural isomorphism right exact tensor functor $\cF: KL^k(\mathfrak{sl}_2)\rightarrow\cC$, equipped with isomorphism $\varphi:\cF(\cV_{1})\rightarrow\vac_\cC$ and natural isomorphism
\begin{equation*}
F: \tens_\cC\circ(\cF\times \cF)\longrightarrow\cF\circ\tens,
\end{equation*}
 such that $\cF(\cV_{2})=X$ and
 \begin{equation*}
\varphi\circ \cF(e_{\cV_{2}})\circ F_{\cV_{2},\cV_{2}} = e_X, \qquad F_{\cV_{2},\cV_{2}}^{-1}\circ\cF(i_{\cV_{2}})\circ\varphi^{-1}= i_X.
 \end{equation*}
\end{thm}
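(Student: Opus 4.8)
The plan is to deduce Theorem \ref{thm:intro_univ_prop} from the universal property of the tilting category $\cT_\zeta$ combined with the tensor equivalence $\cT_\zeta\simeq\cP^k$ (Theorem \ref{thm:Tzeta_Pk_equiv}) and a general ``abelianization'' principle: a right exact tensor functor out of $KL^k(\fsl_2)$ is determined by its restriction to the subcategory $\cP^k$ of projective objects, because $KL^k(\fsl_2)$ has enough projectives and every object admits a (two-term) projective resolution. Concretely, I would first invoke Ostrik's result \cite[Theorem 2.4]{Os}: the data of a rigid self-dual object $X$ in $\cC$ with $e_X\circ i_X=-(\zeta+\zeta^{-1})\Id_{\vac_\cC}$ produces a unique (up to unique tensor-natural isomorphism) tensor functor $\cF_0:\cT_\zeta\rightarrow\cC$ sending the standard tilting module $\mathbf X$ to $X$ and matching evaluation/coevaluation. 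Composing with the inverse of the equivalence $\cT_\zeta\xrightarrow{\ \sim\ }\cP^k$ (which sends $\mathbf X$ to $\cV_2$ by construction), I obtain a tensor functor $\cP^k\rightarrow\cC$ sending $\cV_2\mapsto X$ with the required compatibility of $e_{\cV_2},i_{\cV_2}$.

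The second and main step is to extend this functor along the inclusion $\cP^k\hookrightarrow KL^k(\fsl_2)$ to a right exact tensor functor $\cF$ on all of $KL^k(\fsl_2)$, and to show the extension is unique. For existence I would use the standard construction of the right-exact extension of an additive functor defined on projectives: given $M\in KL^k(\fsl_2)$, choose a projective presentation $P_1\rightarrow P_0\rightarrow M\rightarrow 0$ (which exists since $KL^k(\fsl_2)$ has enough projectives by Theorem \ref{thm:intro_Pr}), apply the functor on $\cP^k$ to $P_1\rightarrow P_0$, and define $\cF(M)$ as the cokernel in $\cC$; right exactness of $\tens_\cC$ and of the functor on $\cP^k$ (projective objects being rigid, the functor automatically preserves the relevant exactness) ensures this is well-defined up to canonical isomorphism and functorial. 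The monoidal structure $F_{M,N}$ is built by choosing projective presentations of $M$ and $N$, using that $P_0\tens P_0'\rightarrow M\tens N$ is again a projective presentation (tensor product of projectives is projective since $\cP^k$ is a tensor ideal-like subcategory closed under $\tens$, and $\tens$ is right exact), and transporting the monoidal constraint already available on $\cP^k$; the hexagon/pentagon/unit coherence conditions for $F$ then follow from those on $\cP^k$ by a diagram chase using the universal property of cokernels. For uniqueness, any right exact tensor functor $\cF':KL^k(\fsl_2)\rightarrow\cC$ with the stated properties restricts to a tensor functor $\cP^k\rightarrow\cC$ sending $\cV_2\mapsto X$ compatibly, hence agrees with $\cF_0$ up to unique isomorphism by the uniqueness clause of Ostrik's theorem (transported through the equivalence $\cT_\zeta\simeq\cP^k$); then right exactness forces $\cF'\cong\cF$ on all of $KL^k(\fsl_2)$ via the projective presentations, and one checks the isomorphism is monoidal and compatible with $\varphi$.

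The main obstacle I anticipate is the coherence bookkeeping for the monoidal structure $F$ on the extended functor: one must verify that the constraint $F_{M,N}$ built from projective presentations is independent of the chosen presentations and natural in $M,N$, and that it satisfies the associativity hexagon and the unit axioms. The key technical tool here is that $\tens$ is right exact in each variable and $\cP^k$ is closed under $\tens$, so a presentation $P_\bullet\rightarrow M$, $Q_\bullet\rightarrow N$ yields a presentation of $M\tens N$ by the (truncated) total complex $P_1\tens Q_0\oplus P_0\tens Q_1\rightarrow P_0\tens Q_0\rightarrow M\tens N\rightarrow 0$; applying $\cF_0$ (which is already monoidal on $\cP^k$) and comparing cokernels gives $F_{M,N}$, and the comparison-of-resolutions argument (any two projective presentations are chain homotopy equivalent) gives well-definedness. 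The unit compatibility is handled by noting $\cV_1=V^k(\fsl_2)$ is itself projective (stated in the lead-up to Theorem \ref{thm:intro_Pr}), so $\varphi:\cF(\cV_1)\rightarrow\vac_\cC$ is simply the unit isomorphism of $\cF_0$ on the object $\cV_1\in\cP^k$, and no extension is needed for the unit. Everything else is a routine—if lengthy—verification that the extended $(\cF,F,\varphi)$ is a right exact tensor functor and that the compatibilities with $e_{\cV_2}$, $i_{\cV_2}$ survive the extension (they hold already in $\cP^k$ where $\cV_2\tens\cV_2$ and $\cV_1$ live).
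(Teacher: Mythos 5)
Your proposal is correct and follows essentially the same route as the paper: invoke Ostrik's universal property (Theorem \ref{thm:tilt_univ_prop}) together with the equivalence $\cT_\zeta\simeq\cP^k$ (Theorem \ref{thm:Tzeta_Pk_equiv}) to get a tensor functor on $\cP^k$, then extend right exactly to all of $KL^k(\fsl_2)$ using enough projectives. The only difference is presentational: the paper packages your second (extension) step as a citation of the general result Theorem \ref{thm:extend_to_right_exact} (from \cite{McR-Del}), whereas you sketch the projective-presentation construction of that extension and its monoidal structure directly; the content is the same.
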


Taking $\cC=\cC(\zeta,\fsl_2)$ and $X=\mathbf{X}$ in this theorem yields the weak Kazhdan-Lusztig correspondence $\cF: KL^k(\fsl_2)\rightarrow\cC(\zeta,\fsl_2)$. With more work, we can prove a few more properties of $\cF$; in particular, we show that $\cF$ is exact by using the Grothendieck-Verdier category structure \cite{BD, ALSW} on $KL^k(\fsl_2)$ to turn right exact into left exact sequences:
\begin{thm}[Theorem \ref{thm:weak_KL_correspondence}, Lemma \ref{lem:weak_KL_lack_of_faithful_objects}, Proposition \ref{prop:weak_KL_lack_of_faithful}]\label{thm:intro_weak_KL}
Let $k=-2+p/q$ for $p\in\ZZ_{\geq 2}$ and $q\in\ZZ_{\geq 1}$, and let $\zeta=e^{\pi iq/p}$. Then there is a unique exact and essentially surjective tensor functor $\cF: KL^k(\fsl_2)\rightarrow\cC(\zeta,\fsl_2)$ extending the equivalence $\cP^k\xrightarrow{\sim}\cT_\zeta$. Moreover, for $W$ an object in $KL^k(\fsl_2)$,   $\cF(W)=0$ if and only if $W$ is an object of $KL_k(\fsl_2)$, and for $f$ a morphism in $KL^k(\fsl_2)$, $\cF(f)=0$ if and only if $\mathrm{Im}\,f$ is an object of $KL_k(\fsl_2)$.
\end{thm}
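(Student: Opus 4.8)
The plan is to obtain $\cF$ by invoking the universal property of Theorem~\ref{thm:intro_univ_prop} (Theorem~\ref{thm:KLk_univ_prop}) with $\cC=\cC(\zeta,\fsl_2)$ and $X=\mathbf{X}$, the standard two-dimensional $U_\zeta(\fsl_2)$-module: since $\cC(\zeta,\fsl_2)$ is rigid (hence its tensor product is exact, in particular right exact) and $\mathbf{X}$ is self-dual with $e_{\mathbf{X}}\circ i_{\mathbf{X}}=-(\zeta+\zeta^{-1})\cdot\Id$ after a suitable normalization of the evaluation and coevaluation, the hypotheses are met and we get a right exact tensor functor $\cF:KL^k(\fsl_2)\to\cC(\zeta,\fsl_2)$ with $\cF(\cV_2)=\mathbf{X}$, unique up to natural isomorphism. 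First I would check that $\cF$ restricts on $\cP^k$ to the equivalence $\cP^k\xrightarrow{\sim}\cT_\zeta$ of Theorem~\ref{thm:Tzeta_Pk_equiv}: both functors are tensor functors sending $\cV_2$ to $\mathbf{X}$ compatibly with the (co)evaluation data, so by the uniqueness clause in Ostrik's universal property of $\cT_\zeta$ (\cite[Theorem~2.4]{Os}) they agree up to natural isomorphism; essential surjectivity of $\cF$ then follows because every object of $\cC(\zeta,\fsl_2)$ is a quotient of a tilting module (equivalently, $\cT_\zeta$ generates $\cC(\zeta,\fsl_2)$ under quotients) and $\cF$ is right exact with $\cF(\cP^k)\simeq\cT_\zeta$ essentially surjecting onto all tilting modules. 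For uniqueness of $\cF$ as an extension of $\cP^k\xrightarrow{\sim}\cT_\zeta$, I would note that any object of $KL^k(\fsl_2)$ admits a presentation $P_1\to P_0\to W\to 0$ with $P_0,P_1\in\cP^k$ (using that $KL^k(\fsl_2)$ has enough projectives, Theorem~\ref{thm:intro_Pr}), so a right exact functor is determined on objects and morphisms by its restriction to $\cP^k$.

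Next I would prove exactness. Right exactness is already in hand, so it suffices to show $\cF$ preserves injections, equivalently kernels. Here the plan is to exploit the ribbon Grothendieck–Verdier structure on $KL^k(\fsl_2)$ coming from contragredient modules (\cite{ALSW}, as recalled in the introduction): the contragredient duality $(-)^\vee$ is a contravariant exact anti-equivalence of $KL^k(\fsl_2)$, and likewise $\cC(\zeta,\fsl_2)$ is rigid so has its own duality $(-)^*$. The key identity to establish is that $\cF$ intertwines these dualities, i.e.\ $\cF((-)^\vee)\cong(\cF(-))^*$ as functors; this should follow because on the generating rigid/projective objects $\cV_2$ (self-dual under $(-)^\vee$ by Theorem~\ref{thm:intro_V2}(1)) and $\mathbf{X}$ (self-dual under $(-)^*$) the two dualities agree, and both $\cF\circ(-)^\vee$ and $(-)^*\circ\cF$ are right exact contravariant tensor functors, hence determined by their values on $\cP^k$. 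Granting this, given an injection $0\to W'\to W$ in $KL^k(\fsl_2)$, apply $(-)^\vee$ to get a surjection $W^\vee\to W'^\vee\to 0$; apply the right exact $\cF$ to get a surjection $\cF(W^\vee)\to\cF(W'^\vee)\to 0$; rewrite via the duality-intertwining as a surjection $\cF(W)^*\to\cF(W')^*\to 0$; finally dualize in $\cC(\zeta,\fsl_2)$ to conclude $0\to\cF(W')\to\cF(W)$ is exact. Combined with right exactness, this gives exactness of $\cF$.

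Finally, the characterization of the kernel. For objects: $\cF$ is a right exact tensor functor, so $\ker\cF\coloneqq\{W:\cF(W)=0\}$ is a tensor ideal (a Serre subcategory closed under tensoring with arbitrary objects). Since $\cF(\cV_1)=\vac_{\cC}\neq 0$ but $\cF$ kills $\cL_1=L_k(\fsl_2)$ — this is the crucial input, proved by computing $\cF$ on the defining short exact sequence $0\to\cL_1\to\cV_1\to\cV_1/\cL_1\to0$ and identifying $\cF(\cV_1/\cL_1)$ with the image of the corresponding construction on the quantum side, or more directly from the fact that under $\cP^k\simeq\cT_\zeta$ the simple $L_k(\fsl_2)$-summands of projectives map to zero in $\cC(\zeta,\fsl_2)$ — it follows that $KL_k(\fsl_2)$, being the tensor ideal generated by $\cL_1$ (equivalently, the category of $L_k(\fsl_2)$-modules, which by Theorem~1 is both a tensor ideal and a tensor subcategory of $KL^k(\fsl_2)$), is contained in $\ker\cF$. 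For the reverse inclusion I would argue by length induction on $W$ using the classification of simples: the simple objects $\cL_r$ with $p\mid r$ are projective $=\cV_r\in\cP^k$, so $\cF(\cL_r)\neq0$; and for $p\nmid r$ one uses the structure of $\cV_r$ and $\cP_r$ (Theorems~\ref{thm:intro_V2},~\ref{thm:intro_Pr}) together with $\cF(\cP_r)\neq0$ to show that $\cF(\cL_r)\neq0$ exactly when $\cL_r\notin KL_k(\fsl_2)$, i.e.\ when $r\not\equiv 0,\dots$ is outside the finitely many $1\le r\le p-1$ labels; combining with the fact that $\ker\cF$ is Serre then forces $\ker\cF=KL_k(\fsl_2)$. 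For morphisms: factor $f:W_1\to W_2$ as $W_1\twoheadrightarrow\im f\hookrightarrow W_2$; by exactness $\cF(f)=0$ iff $\cF(\im f)=0$ iff (by the object statement) $\im f\in KL_k(\fsl_2)$. The main obstacle I anticipate is the duality-intertwining claim $\cF\circ(-)^\vee\cong(-)^*\circ\cF$ underlying exactness — verifying naturality and monoidality of this isomorphism, and in particular checking it is compatible with the (co)evaluation data rather than merely holding objectwise on $\cP^k$, is the delicate point; the verification that $\cF$ kills $\cL_1$ (hence all of $KL_k(\fsl_2)$) is the other essential computation, but it should reduce cleanly to the already-established equivalence $\cP^k\simeq\cT_\zeta$ and the known vanishing of $L_k(\fsl_2)$-type contributions on the quantum group side.
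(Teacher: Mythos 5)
Your high-level strategy agrees with the paper's: obtain $\cF$ from the universal property, identify $\cF|_{\cP^k}$ with the equivalence to $\cT_\zeta$, prove exactness via the Grothendieck--Verdier duality on $KL^k(\fsl_2)$, and then characterize the kernel by induction on length. The kernel characterizations on objects and morphisms, and essential surjectivity, are handled essentially as the paper does (Lemma \ref{lem:weak_KL_lack_of_faithful_objects} and Proposition \ref{prop:weak_KL_lack_of_faithful}). However, there are two concrete gaps in the exactness step.

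First, you conflate the rigid duality with contragredient duality. Theorem \ref{thm:intro_V2}(1) asserts that $\cV_2$ is self-dual in the sense of rigid tensor categories; it does \emph{not} say $\cV_2' \cong \cV_2$. In fact for $p \geq 3$ the contragredient $\cV_2'$ sits in the non-split sequence $0 \to \cL_2 \to \cV_2' \to \cL_{2p-2} \to 0$, which is not $\cV_2$. So the foundation of the claimed intertwining $\cF\circ(-)' \cong (-)^*\circ\cF$ — that the dualities agree on the generator $\cV_2$ — already fails on the nose. More seriously, the argument that both sides are ``right exact contravariant tensor functors, hence determined by their values on $\cP^k$'' does not go through: $(-)'$ sends projectives to injectives, and the injectives of $KL^k(\fsl_2)$ do not lie in $\cP^k$ (e.g.\ $\cV_r'$ for $1\leq r<p$ is injective but not projective). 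So one cannot run a right-exact-extension argument to pin down $\cF\circ(-)'$ from its restriction to $\cP^k$. The paper's Theorem \ref{thm:right_exact_to_exact} sidesteps this: it never asserts a tensor-natural isomorphism $\cF\circ(-)' \cong \cD\circ\cF$ a priori, but instead constructs for each $W$ a canonical morphism $\varphi_W\colon \cF(W')\to\cD(\cF(W))$ from the Grothendieck--Verdier universal property, checks naturality, proves $\varphi_W$ is an isomorphism for simple $W$ (this is where one needs the three inputs: $\cF(\cV_1')\cong K$, $\cF$ takes simples to simples or zero, and $\cF(\varepsilon_W)\neq 0$ when $\cF(W)\neq 0$ — the last of which is Lemma \ref{eqn:contra_EW_surjective}), and then inducts on length via the short five lemma.

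Second, the exact sequence you cite to get $\cF(\cL_1)=0$ is not correct: $\cL_1$ is the simple quotient of $\cV_1$, not a submodule, so $0\to\cL_1\to\cV_1\to\cV_1/\cL_1\to 0$ does not exist. The paper instead uses the right exact sequence $\cP_{2p-r}\to\cV_r\to\cL_r\to 0$ for $1\leq r\leq p-1$, applies $\cF$, and identifies the resulting map $T_{2p-r-1}\to T_{r-1}$ with the projective cover $P_{r-1}\twoheadrightarrow L_{r-1}$, whose cokernel is zero. Once you correct these two points, you should also be explicit that proving $\cF$ takes simples to simples or zero requires computing $\cF(\cL_{np+r})\cong L_{np-r-1}$ for $n\geq 1$ and $\cF(\cL_{np})\cong L_{np-1}$, not just the vanishing cases; these identifications are precisely what feeds into the base case of the length induction for exactness, and they also imply the ``reverse inclusion'' $\ker\cF\subseteq KL_k(\fsl_2)$ via the splitting argument using projective covers, rather than the slightly vaguer classification-of-simples argument you describe.
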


Thus the subcategory $KL_k(\fsl_2)$ prevents the weak Kazhdan-Lusztig correspondence $\cF$ from being faithful; for similar reasons, $\cF$ is also not full. However, it was pointed out to us by Cris Negron that our embedding of $\cT_\zeta$ into $KL^k(\fsl_2)$ as the full subcategory of projective objects yields a \textit{derived} Kazhdan-Lusztig correspondence, that is, a tensor equivalence between suitable derived categories. In particular, for $\cC$ an abelian category, let $\ind\,\cC$ denote its Ind-category, $D^b(\cC)$ denote its bounded derived category, and $D(\cC)$ denote the unbounded derived category of $\ind\,\cC$. Then using Theorem \ref{thm:Tzeta_Pk_equiv}:
\begin{thm}[Theorem \ref{thm:derived_KL_corr}]\label{thm:intro_derived_KL_corr}
The tensor equivalence $\cT_{\zeta} \xrightarrow{\sim} \cP^k$ induces a tensor equivalence $\ind\,D^b(\cC(\zeta, \fsl_2)) \xrightarrow{\sim} D(KL^k(\fsl_2))$.
\end{thm}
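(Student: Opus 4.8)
The plan is to pass to homotopy categories and reduce the statement to the monoidal equivalence $\cT_\zeta\xrightarrow{\sim}\cP^k$ of Theorem \ref{thm:Tzeta_Pk_equiv}, using one structural fact on each side. The first is that tilting modules compute the bounded derived category: the inclusion $\cT_\zeta\hookrightarrow\cC(\zeta,\fsl_2)$ induces a monoidal triangulated equivalence $K^b(\cT_\zeta)\xrightarrow{\sim}D^b(\cC(\zeta,\fsl_2))$. I would prove this in the standard way. Every tilting module has both a standard (Weyl) filtration and a costandard filtration, so $\mathrm{Ext}^{>0}_{\cC(\zeta,\fsl_2)}(T,T')=0$ for all $T,T'\in\cT_\zeta$; a routine d\'evissage then shows $K^b(\cT_\zeta)\to D^b(\cC(\zeta,\fsl_2))$ is fully faithful, and its essential image, being thick (as $\cT_\zeta$ is Krull--Schmidt, $K^b(\cT_\zeta)$ is idempotent complete), contains every simple module --- for $\fsl_2$ each simple lies in the thick subcategory generated by $\cT_\zeta$, by an easy induction on the highest weight using the two-step standard filtrations $0\to(\text{Weyl})\to T\to(\text{Weyl})\to 0$ of the indecomposable tilting modules (and their duals), along which the linkage level strictly drops --- so, since $D^b(\cC(\zeta,\fsl_2))$ is generated by the simples, the essential image is everything. (This equivalence is standard in the tilting/quantum-group literature and could instead be cited.) The monoidal structure transports because $\otimes$ on $\cC(\zeta,\fsl_2)$ is exact, hence is computed on $K^b(\cT_\zeta)$ by the total complex of the tensor product in $\cT_\zeta$.

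The second fact is that $D(KL^k(\fsl_2))=D(\ind\,KL^k(\fsl_2))$ is a compactly generated tensor-triangulated category whose subcategory of compact objects is $K^b(\cP^k)$. Since $KL^k(\fsl_2)$ has enough projectives (Theorem \ref{thm:intro_Pr}), the subcategory $\cP^k$ is a Krull--Schmidt, idempotent-complete additive category equal to $\mathrm{add}\{\cP_r\}_{r\geq 1}$, and $\ind\,KL^k(\fsl_2)$ is equivalent to the Grothendieck category $\mathrm{Mod}\text{-}\cP^k$ of additive functors $(\cP^k)^{\mathrm{op}}\to\mathrm{Ab}$, with $KL^k(\fsl_2)$ its subcategory of finitely presented objects. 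Each $\cP_r$ is projective and finitely presented, hence compact in $D(\ind\,KL^k(\fsl_2))$, and the $\cP_r$ generate, so by Neeman's theorem the compact objects form the thick subcategory they generate, namely $K^b(\cP^k)$ (bounded complexes of finitely generated projectives, already idempotent complete) --- the functor $K^b(\cP^k)\to D(\ind\,KL^k(\fsl_2))$ being fully faithful, again by d\'evissage from $\mathrm{Ext}^{>0}(\cP_r,\cP_s)=0$. The right-exact tensor product of $KL^k(\fsl_2)$ extends cocontinuously to $\ind\,KL^k(\fsl_2)$ and derives to $\otimes^{\mathbf{L}}$ on $D(\ind\,KL^k(\fsl_2))$; since every object of $\cP^k$ is rigid (Theorem \ref{thm:intro_Pr}), tensoring with it is exact, so $\otimes^{\mathbf{L}}$ restricts on $K^b(\cP^k)$ to the termwise tensor product.

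Assembling these: applying $K^b(-)$ to the monoidal equivalence $\cT_\zeta\xrightarrow{\sim}\cP^k$ gives a monoidal triangulated equivalence $K^b(\cT_\zeta)\xrightarrow{\sim}K^b(\cP^k)$, so the two facts yield $D^b(\cC(\zeta,\fsl_2))\simeq K^b(\cT_\zeta)\simeq K^b(\cP^k)\simeq D(KL^k(\fsl_2))^c$ as monoidal triangulated categories. Since a compactly generated triangulated category, taken with its natural enhancement, is recovered as the Ind-completion of its subcategory of compact objects, passing to Ind-completions yields the desired monoidal triangulated equivalence $\ind\,D^b(\cC(\zeta,\fsl_2))\simeq\ind\bigl(D(KL^k(\fsl_2))^c\bigr)\simeq D(KL^k(\fsl_2))$. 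Concretely, $\ind\,D^b(\cC(\zeta,\fsl_2))=\ind\,K^b(\cT_\zeta)=D(\mathrm{Mod}\text{-}\cT_\zeta)$ and $D(KL^k(\fsl_2))=D(\mathrm{Mod}\text{-}\cP^k)$, so the theorem amounts to applying $\cA\mapsto D(\mathrm{Mod}\text{-}\cA)$ to the equivalence $\cT_\zeta\simeq\cP^k$.

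The hard part here is not any one deep step --- the genuine input, that tilting modules compute $D^b(\cC(\zeta,\fsl_2))$, is classical --- but rather three points of careful bookkeeping: giving precise meaning to ``$\ind$ of a triangulated category'' (sensible only through the evident dg or exact enhancements, which is what legitimizes the identification $\ind\,K^b(\cT_\zeta)=D(\mathrm{Mod}\text{-}\cT_\zeta)$); tracking compactness and idempotent-completeness at each stage; and verifying that the monoidal structures on $D^b(\cC(\zeta,\fsl_2))$, $K^b(\cT_\zeta)$, $K^b(\cP^k)$, and $D(\ind\,KL^k(\fsl_2))$ are compatibly identified, for which the key input is exactness of tensoring with tilting, respectively projective ($=$ rigid), objects.
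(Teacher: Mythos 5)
Your argument takes essentially the same route as the paper's proof (communicated to the authors by Negron): identify $K^b(\cT_\zeta)\simeq D^b(\cC(\zeta,\fsl_2))$, identify $K^b(\cP^k)$ as the compact objects of $D(KL^k(\fsl_2))$, transport along the tensor equivalence $\cT_\zeta\simeq\cP^k$ of Theorem \ref{thm:Tzeta_Pk_equiv}, and pass to Ind-completions of the enhanced triangulated categories. The paper cites \cite[Proposition 2.7]{Os} (after \cite{BBM}) for the first step and makes the Ind-completion precise via Lurie's $\infty$-categorical results \cite{Lur1,BGT,SS}, whereas you reprove the first step directly by Ext-vanishing and d\'evissage for $\fsl_2$ and add the tidy reformulation that both sides are $D(\mathrm{Mod}\text{-}\cA)$ for $\cA\simeq\cT_\zeta\simeq\cP^k$; the logical skeleton is the same.
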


There is a similar derived Kazhdan-Lusztig correspondence for general simple Lie algebras $\mathfrak{g}$ at positive rational shifted levels $\kappa$ stated in \cite[Sections 9.4 and 9.5]{Ga}, where it is obtained through a duality between suitable derived versions of $KL^{\kappa+h^\vee}(\mathfrak{g})$ and $KL^{-\kappa+h^\vee}(\mathfrak{g})$ together with the usual Kazhdan-Lusztig correspondence at negative rational shifted levels. The precise relationship between Theorem \ref{thm:intro_derived_KL_corr} and the $\fsl_2$ case of \cite{Ga} is not entirely clear to us; for example, it does not seem to be explicitly stated in \cite[Section 9.4]{Ga} whether the derived Kazhdan-Lusztig correspondence at positive rational shifted levels considered there preserves monoidal structures. But in any case, the equivalences of derived categories in Theorem \ref{thm:intro_derived_KL_corr} or in \cite{Ga} by themselves are not enough to yield the precise relationship between the abelian tensor categories $KL^k(\fsl_2)$ and $\cC(\zeta,\fsl_2)$ specified in Theorem \ref{thm:intro_weak_KL}.

It is worth mentioning that Theorem \ref{thm:intro_weak_KL} is similar to
a conjectural weak Kazhdan-Lusztig correspondence between the triplet vertex operator algebra $W_{p,q}$ for relatively prime $p,q\in\ZZ_{\geq 2}$ and a quantum group $\mathfrak{g}_{p,q}$ constructed as a quotient of the tensor product of restricted quantum groups of $\fsl_2$ at roots of unity $e^{\pi iq/p}$ and $e^{\pi ip/q}$ \cite{FGST1, FGST2}. The triplet algebra $W_{p,q}$ is a non-simple extension of the universal Virasoro vertex operator algebra at central charge $1-\frac{6(p-q)^2}{pq}$, and similar to $KL^k(\fsl_2)$, its module category $\mathrm{Rep}\,W_{p,q}$ is not rigid and thus cannot be tensor equivalent to $\mathrm{Rep}\,\fg_{p,q}$. However, there appear to be correspondences between fusion rules in $\mathrm{Rep}\,W_{p,q}$ and $\mathrm{Rep}\,\fg_{p,q}$ (see in particular the discussions in \cite[Section 6.3]{FGST1}, \cite[Section 2.4]{GRW} and \cite[Remark 5.9]{Na}), so it is natural to expect an essentially surjective but not fully faithful tensor functor $\mathrm{Rep}\,W_{p,q}\rightarrow\mathrm{Rep}\,\fg_{p,q}$ similar to that in Theorem \ref{thm:intro_weak_KL}. Possibly the methods used to prove Theorem \ref{thm:intro_weak_KL} might contribute to the construction of such a tensor functor.

\subsection{Further applications and open problems}

The universal property of $KL^k(\fsl_2)$ has several applications
besides the weak Kazhdan-Lusztig correspondence. We can also classify the tensor categories $KL^k(\fsl_2)$ for admissible $k$ up to (braided) tensor equivalence:
\begin{thm}[Theorem \ref{thm:KLk_tens_equiv}, Theorem \ref{thm:KLk_class_braided}]\label{thm:intro_classify}
If $k$ is any  admissible level for $\fsl_2$, then $KL^k(\fsl_2)$ is tensor equivalent to $KL^{-2+p/q}(\fsl_2)$ for unique relatively prime $p\in\ZZ_{\geq 2}$ and $q\in\lbrace 1,2,\ldots, p-1\rbrace$. Specifically, for such $p$ and $q$,
\begin{equation*}
KL^{-2+p/q}(\fsl_2) \cong KL^{-2+p/(\pm q+2np)}(\fsl_2)
\end{equation*}
as tensor categories for all $n\in\ZZ_{\geq 1}$. Moreover, $KL^{-2+p/(\pm q+2np)}(\fsl_2)$ equipped with the standard braiding specified in \cite{HLZ8} is braided tensor equivalent to $KL^{-2+p/q}(\fsl_2)$ equipped with one of the four braidings from Theorem \ref{thm:KLk_braidings} that depends on $n$ and the choice of $\pm$ (see Theorem \ref{thm:KLk_class_braided} for details).
\end{thm}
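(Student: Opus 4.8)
The plan is to leverage the universal property of $KL^k(\fsl_2)$ (Theorem \ref{thm:intro_univ_prop}) to produce the desired tensor equivalences, since the universal property characterizes $KL^{-2+p/q}(\fsl_2)$ up to tensor equivalence purely in terms of the intrinsic dimension $-(\zeta+\zeta^{-1})$ of the distinguished self-dual rigid object $\cV_2$. First I would observe that $\zeta = e^{\pi i q/p}$ and $\zeta' = e^{\pi i q'/p}$ satisfy $\zeta+\zeta^{-1} = \zeta'+\zeta'^{-1}$ precisely when $q' \equiv \pm q \pmod{2p}$; this is the elementary trigonometric identity $2\cos(\pi q/p) = 2\cos(\pi q'/p)$ together with the constraint that $p,q'$ are relatively prime with $q'\in\ZZ_{\geq 1}$. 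In particular, for any $n \in \ZZ_{\geq 1}$, taking $q' = \pm q + 2np$ gives an admissible level $-2 + p/q'$ (one checks $\gcd(p, \pm q + 2np) = \gcd(p,q) = 1$, and $\pm q + 2np \geq 1$ since $q \geq 1$ and $n\geq 1$) whose associated parameter $\zeta'$ has the same value of $\zeta'+\zeta'^{-1}$. Hence the object $\cV_2' \in KL^{-2+p/q'}(\fsl_2)$ is a rigid self-dual object of $\cC = KL^{-2+p/q}(\fsl_2)$ with $e_{\cV_2'}\circ i_{\cV_2'} = -(\zeta+\zeta^{-1})\cdot\Id$, so the universal property yields a right exact tensor functor $\cF: KL^{-2+p/q'}(\fsl_2) \to KL^{-2+p/q}(\fsl_2)$ with $\cF(\cV_2') = \cV_2$; symmetrically one gets a functor in the other direction, and uniqueness in the universal property forces the two composites to be naturally isomorphic to the respective identity functors, giving the tensor equivalence.

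Next I would address the normalization statement, that every admissible $KL^k(\fsl_2)$ is tensor equivalent to exactly one $KL^{-2+p/q}(\fsl_2)$ with $1\leq q \leq p-1$. Given an arbitrary admissible level $k = -2 + p/q_0$ with $p \in \ZZ_{\geq 2}$, $q_0 \in \ZZ_{\geq 1}$, $\gcd(p,q_0)=1$, I reduce $q_0$ modulo $2p$ and, if necessary, replace the residue by its negative (mod $2p$) to land in the range $1\leq q\leq p-1$; one must check this representative is unique, which follows because the map $q \mapsto 2\cos(\pi q/p)$ is injective on $\{1,\dots,p-1\}$, and that $q$ in this range is automatically coprime to $p$ and not equal to $0$ or $p$ (using $\gcd(p,q_0)=1$). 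The previous paragraph's equivalences then connect $k$ to this normalized level. For uniqueness of the normalized form itself across the whole classification, I would note that two categories $KL^{-2+p/q}(\fsl_2)$ and $KL^{-2+p'/q'}(\fsl_2)$ with $1\le q\le p-1$, $1\le q'\le p'-1$ being tensor equivalent forces, e.g., matching of the number of simple objects up to isomorphism in the unit component or matching of the Grothendieck ring / the projective object $\cP_2$'s structure; most cleanly, the parameter $p$ is recovered as the smallest $r\geq 2$ with $\cV_r$ simple (equivalently projective and simple), which is a tensor-categorical invariant by Theorem \ref{thm:intro_Pr}, and then $\zeta+\zeta^{-1}$ is recovered as the (negative of the) intrinsic dimension of $\cV_2$, pinning down $q$ in the allowed range.

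For the braided refinement, I would instead invoke the braided version of the universal property together with the classification of braidings on $KL^k(\fsl_2)$ from Theorem \ref{thm:KLk_braidings}: the four braidings there are distinguished by the value of $\cR_{\cV_2,\cV_2} \in \Endo(\cV_2\tens\cV_2)$, and the standard braiding on $KL^{-2+p/q'}(\fsl_2)$ transported across the tensor equivalence $KL^{-2+p/q'}(\fsl_2) \xrightarrow{\sim} KL^{-2+p/q}(\fsl_2)$ must be one of these four. To identify which one, I would compute $\cR_{\cV_2,\cV_2}$ explicitly in both categories — recall that the braiding eigenvalues on $\cV_2\tens\cV_2$ are governed by the conformal weights (monodromy of KZ-type solutions), which differ between levels $-2+p/q$ and $-2+p/q'$ by a root-of-unity factor involving $e^{\pi i(q'-q)/p} = e^{2\pi i(\pm 1 - 1)q'/(2) \cdot \ldots}$; the upshot is a sign and/or a swap with the reverse braiding that depends on the parity of $n$ and on the choice of $\pm$. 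Tracking this requires care with the precise conventions of \cite{HLZ8} and with the sign-twisted braidings described before the theorem statement, and I expect \textbf{the bookkeeping of these braiding data under the equivalence to be the main obstacle}, since the purely tensor-categorical part is a formal consequence of the universal property while the braided part needs an honest comparison of monodromies — deferred to Theorem \ref{thm:KLk_class_braided} where the four cases are spelled out explicitly.
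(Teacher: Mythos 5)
Your main argument is the same one the paper uses.  The paper proves Theorem \ref{thm:KLk_tens_equiv} by taking $\cC$ in Theorem \ref{thm:KLk_univ_prop} to be $KL^{k'}(\fsl_2)$, observing that the intrinsic dimensions of the two copies of $\cV_2$ agree exactly when $p'=p$ and $q'\in\pm q+2p\ZZ$, and then invoking uniqueness in the universal property to upgrade the two right exact tensor functors to mutually inverse equivalences; for Theorem \ref{thm:KLk_class_braided}, the paper applies Theorem \ref{thm:univ_prop_F_braided} and matches $\cR_{\cV_2^k,\cV_2^k}$ against the list in Theorem \ref{thm:KLk_braidings}, with the sign $(-1)^n$ from $e^{\pi i(\pm q+2np)/2p}$ giving the four cases.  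Your proposal reproduces all of this faithfully, including the elementary congruence reduction $q'\equiv\pm q\ (\mathrm{mod}\ 2p)$ and the correct deferral of the braiding bookkeeping.

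The one place where your argument overreaches is the uniqueness paragraph.  You claim that $p$ is a tensor-categorical invariant because it equals the smallest $r\geq 2$ with $\cV_r$ simple, citing Theorem \ref{thm:intro_Pr}.  But that theorem describes the structure of the $\cP_r$ and $\cV_r$ inside a \emph{fixed} $KL^{-2+p/q}(\fsl_2)$; it says nothing about whether a tensor equivalence to another $KL^{-2+p'/q'}(\fsl_2)$ must carry $\cV_r$ to $\cV_r'$, so one cannot directly read off $p$ this way without first arguing that some actually categorical data pins $\cV_2$ (or the generating simple tilting object $\mathbf{X}$) down.  There are reasonable routes to doing this (for instance, $KL_k(\fsl_2)$ is the semisimple tensor ideal of objects killed by every simple projective object, giving $p-1$ intrinsically, and then the negative intrinsic dimension of the distinguished length-one generator of the projective subcategory $\cP^k\simeq\cT_\zeta$ gives $2\cos(\pi q/p)$), but as written the step is not justified.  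It is worth noting that the body of the paper (Theorem \ref{thm:KLk_tens_equiv}) asserts only existence of a normalized representative, not uniqueness; the ``unique'' in the introduction statement is left implicit there as well, so your instinct to flag and attempt this point is good, but the argument you give would need the strengthening above to be airtight.
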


Although the tensor categories $KL^{-2+p/q}(\fsl_2)$ for $1\leq q\leq p-1$ are not tensor equivalent, some are related by $3$-cocycle twists (see for example \cite[Section 1]{KWe}). The $3$-cocycle twist $KL^k(\fsl_2)^\tau$ agrees with $KL^k(\fsl_2)$ as a category and has the same tensor product, but the associativity isomorphism $\cA_{W_1,W_2,W_3}: W_1\tens(W_2\tens W_3)\rightarrow(W_1\tens W_2)\tens W_3$ is changed by a sign if the Cartan generator $h\in\fsl_2$ acts on all three of the modules $W_1$, $W_2$, $W_3$ by odd-integer eigenvalues. In Theorem \ref{thm:KLk_tau_equiv} and Corollary \ref{cor:class_KLk_tau_braid}, we show that for $1\leq q\leq p-1$,
\begin{equation*}
KL^{-2+p/q}(\fsl_2)^\tau\cong KL^{-2+p/(p-q)}(\fsl_2)
\end{equation*}
as tensor categories. In particular, taking $p=2$, $KL^0(\fsl_2)$ is equivalent to its $3$-cocycle twist, while for $p\geq 3$, $KL^{-2+p/q}(\fsl_2)$ for any $q$ relatively prime to $p$ is tensor equivalent to either such a category with $1\leq q<\frac{p}{2}$ or its $3$-cocycle twist.

We can also use the universal property of Theorem \ref{thm:intro_univ_prop} to relate $KL^k(\fsl_2)$ to the Virasoro algebra at central charge $c_{p,q}=1-\frac{6(p-q)^2}{pq}$. For relatively prime $p,q\in\ZZ_{\geq 1}$, let $\cO_{c_{p,q}}$ denote the category of $C_1$-cofinite modules for the universal Virasoro vertex operator algebra at central charge $c_{p,q}$; it was shown in \cite{CJORY} that $\cO_{c_{p,q}}$ is a locally finite braided tensor category. For $q=1$, the detailed tensor structure of $\cO_{c_{p,1}}$ was determined in \cite{MY2}, and it was shown in \cite{GN} that $\cO_{c_{p,1}}$ is tensor equivalent to the quantum group category $\cC(\zeta,\fsl_2)$ for $\zeta=e^{\pi i/p}$. Thus for $q=1$, the weak Kazhdan-Lusztig correspondence yields an exact and essentially surjective tensor functor $\cF: KL^{-2+p}(\fsl_2)\rightarrow\cO_{c_{p,1}}$.

For $p,q\geq 2$, some of the detailed structure of $\cO_{c_{p,q}}$ was determined in \cite{MS}, and using these results together with Theorem \ref{thm:intro_univ_prop}, we show in Theorem \ref{thm:tens_cat_qDS_red} that there are two right exact braided tensor functors $\cF_{p,q}: KL^{-2+p/q}(\fsl_2) \rightarrow \cO_{c_{p,q}}$ and $\cF_{q,p}: KL^{-2+q/p}(\fsl_2) \rightarrow \cO_{c_{p,q}}$. There are in fact exact functors between these categories given by quantum Drinfeld-Sokolov reduction \cite{FKW, Ar}, but it is not known in these cases whether quantum Drinfeld-Sokolov reduction is a tensor functor. Thus we conjecture:
\begin{conj}[Conjecture \ref{conj:tens_cat_qDS_red}]\label{conj:intro}
For relatively prime $p,q\geq 2$, the braided tensor functors $\cF_{p,q}$ and $\cF_{q,p}$ are exact and are naturally isomorphic to the restrictions of quantum Drinfeld-Sokolov reduction to $KL^{-2+p/q}(\fsl_2)$ and $KL^{-2+q/p}(\fsl_2)$. 
\end{conj}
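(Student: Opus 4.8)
The plan is to deduce both assertions---exactness and the identification with quantum Drinfeld-Sokolov (qDS) reduction---from the universal property of Theorem~\ref{thm:intro_univ_prop}. Write $H_{DS}$ for the degree-zero qDS reduction functor, regarded on the Kazhdan-Lusztig category. The point is that it suffices to establish: (i) $H_{DS}$ maps $KL^{-2+p/q}(\fsl_2)$ into $\cO_{c_{p,q}}$; (ii) the restricted functor is right exact; (iii) it carries a tensor structure; and (iv) under this tensor structure $H_{DS}(\cV_2)$ is isomorphic to the rigid self-dual object $X$ of intrinsic dimension $-(e^{\pi iq/p}+e^{-\pi iq/p})$ that is used to construct $\cF_{p,q}$, compatibly with the evaluation $e_X$ and coevaluation $i_X$. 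Granting (i)--(iv), Theorem~\ref{thm:intro_univ_prop} supplies a canonical monoidal natural isomorphism $H_{DS}\cong\cF_{p,q}$ of right exact tensor functors---hence of braided tensor functors, since a monoidal natural isomorphism between braided tensor functors is automatically compatible with the braidings. Exactness of $\cF_{p,q}$ is then inherited from that of $H_{DS}$, which is Arakawa's theorem \cite{Ar} that this reduction functor is exact on category $\cO$ (exactness of $\cF_{p,q}$ could alternatively be sought directly by the Grothendieck-Verdier argument of Theorem~\ref{thm:intro_weak_KL}, as $\cO_{c_{p,q}}$ is also a ribbon Grothendieck-Verdier category by \cite{ALSW}). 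The same reasoning, with the roles of $p$ and $q$ exchanged, handles $\cF_{q,p}$.

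Steps (i), (ii), and (iv) should be routine given known results. For (ii), Arakawa's theorem gives right exactness immediately. For (i), by exactness it is enough to see that $H_{DS}(\cV_r)\in\cO_{c_{p,q}}$ for each $r\in\ZZ_{\geq 1}$; but Arakawa's results also identify the reduction of a simple module in category $\cO$ at admissible level with either zero or a simple Virasoro module, so the finite-length structure of $\cV_r$ (Theorem~\ref{thm:gen_Verma_structure}) forces $H_{DS}(\cV_r)$ to be a finite-length Virasoro module with composition factors in $\cO_{c_{p,q}}$, hence an object of $\cO_{c_{p,q}}$. For (iv), the known computation of the qDS reduction of a generalized Verma module \cite{FKW, Ar} identifies $H_{DS}(\cV_2)$ with $X$; its rigidity, self-duality, and intrinsic dimension $-(e^{\pi iq/p}+e^{-\pi iq/p})$ are among the structural results of \cite{MS} already used in constructing $\cF_{p,q}$; and matching the evaluation and coevaluation is then straightforward, as the relevant morphism spaces in $\cO_{c_{p,q}}$ are one-dimensional.

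The crux is step (iii): no general theorem guarantees that qDS reduction is monoidal for the fusion product, so a tensor structure on $H_{DS}|_{KL^{-2+p/q}(\fsl_2)}$ must be constructed by hand, and this is where I expect the real difficulty to lie. I would attempt it along one of two lines. The first uses Adamovi\'{c}'s inverse qDS reduction \cite{Ad-inverse-QH-red}: there is a conformal vertex algebra embedding of $V^{-2+p/q}(\fsl_2)$ into the tensor product of the universal Virasoro vertex algebra at central charge $c_{p,q}$ with a half-lattice vertex algebra, under which $H_{DS}$ is realized as a relative semi-infinite cohomology; one would then build the tensor structure from induction and restriction along this embedding via the vertex-algebraic tensor category machinery of \cite{HLZ8}, the delicate points being that the half-lattice module category is neither rigid nor finite and that the semi-infinite cohomology must be shown to commute with $P(z)$-tensor products. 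The second line exploits the classical fact that Knizhnik-Zamolodchikov equations for $\slhat_2$ reduce to Belavin-Polyakov-Zamolodchikov equations for the Virasoro algebra under Drinfeld-Sokolov reduction \cite{FKW}: this should upgrade to the statement that $H_{DS}$ takes compositions of $\slhat_2$-intertwining operators to compositions of Virasoro intertwining operators, which yields the tensor structure directly from the construction of fusion products out of intertwining operators. In either approach I would first pin down the tensor structure on the rigid subcategory $\cP^k=\cT_\zeta$---where $H_{DS}$ lands in the rigid subcategory $\cT_\zeta\subseteq\cO_{c_{p,q}}$ and the coherence constraints are most rigid, forcing the unique tensor functor $\cT_\zeta\to\cT_\zeta$ sending $\mathbf{X}$ to $X$---and then extend to all of $KL^{-2+p/q}(\fsl_2)$ using that every object is the cokernel of a morphism of projectives, together with right exactness of $H_{DS}$.

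Finally, although the braided compatibility of the isomorphism $H_{DS}\cong\cF_{p,q}$ is automatic, it may be confirmed independently by checking that $H_{DS}$ intertwines the automorphism $\cR_{\cV_2,\cV_2}$ of $\cV_2\tens\cV_2$ with the corresponding braiding automorphism of $X\tens X$ in $\cO_{c_{p,q}}$---the braidings on $KL^{-2+p/q}(\fsl_2)$ being determined by $\cR_{\cV_2,\cV_2}$ (Theorem~\ref{thm:KLk_braidings})---which reduces to a monodromy computation with hypergeometric-type conformal blocks. Running the same argument with $p$ and $q$ interchanged for $\cF_{q,p}$ completes the proof, the main obstacle throughout being the construction of the tensor structure on qDS reduction in step (iii).
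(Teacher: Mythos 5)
The statement you are trying to prove is stated in the paper as a \emph{conjecture} (Conjecture~\ref{conj:tens_cat_qDS_red}), not a theorem, so there is no proof in the paper to compare against. Your submission is, candidly, a research plan rather than a proof, and you correctly identify where the gap lies.

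Your overall strategy---establish that qDS reduction $H_{DS}$, restricted to $KL^{-2+p/q}(\fsl_2)$, lands in $\cO_{c_{p,q}}$, is right exact, carries a tensor structure, and sends $(\cV_2, e_{\cV_2}, i_{\cV_2})$ to $(X, e_X, i_X)$, and then invoke the universal property of Theorem~\ref{thm:KLk_univ_prop} to obtain $H_{DS}\cong\cF_{p,q}$, with exactness of $\cF_{p,q}$ inherited from Arakawa's exactness of $H_{DS}$---is reasonable, but the genuine gap is exactly where you flag it: step (iii), constructing a monoidal structure on $H_{DS}$ compatible with the fusion product. No such structure is currently known for quantum Drinfeld-Sokolov reduction at these levels, and neither of your two proposed routes (semi-infinite cohomology along the inverse reduction of \cite{Ad-inverse-QH-red}, or the KZ-to-BPZ reduction of \cite{FKW} upgraded to intertwining operators) has been carried out. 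Without step (iii) the universal property cannot be invoked, so the identification $H_{DS}\cong\cF_{p,q}$ remains open, and with it the whole argument. A secondary caveat is step (iv): the qDS reduction of the generalized Verma module $\cV_2$ is not obviously the length-two quotient $\mathcal{K}_{2,1}$ of the Virasoro Verma module used in Theorem~\ref{thm:tens_cat_qDS_red}; one must check both that $H_{DS}(\cV_2)$ has the right composition series and that it is the correct one of the two possible length-two quotients, and this is not an off-the-shelf computation.

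For comparison, the paper's own remarks after Conjecture~\ref{conj:tens_cat_qDS_red} take a different tack for the exactness half alone: rather than importing exactness from $H_{DS}$ (which presupposes step (iii)), they suggest proving exactness of $\cF_{p,q}$ intrinsically by verifying the hypotheses of Theorem~\ref{thm:right_exact_to_exact}, as was done for the weak Kazhdan-Lusztig correspondence in Theorem~\ref{thm:weak_KL_correspondence}, using the ribbon Grothendieck-Verdier structure on $\cO_{c_{p,q}}$. The obstruction they identify is complementary to yours: one would need to know $\cF_{p,q}(\cV_1')$ and the images of all projective (hence logarithmic) modules in $KL^{-2+p/q}(\fsl_2)$, which are not all constructed in \cite{MS}, and one would likely need full faithfulness of $\cF_{p,q}$ on the projective subcategory $\cP^{-2+p/q}$. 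Neither obstruction has been overcome, so the conjecture remains open; your plan is a plausible blueprint but not a proof.
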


Another conjecture was communicated to us by Azat Gainutdinov. Recall that before Theorem \ref{thm:intro_univ_prop}, we remarked that the embedding of the tilting module category $\cT_\zeta$ into $KL^k(\fsl_2)$ as the subcategory of projective objects shows that $KL^k(\fsl_2)$ is an abelianization of $\cT_\zeta$. The category $\cT_\zeta$ contains the Temperley-Lieb category $TL(-\zeta-\zeta^{-1})$, that is, the monoidal subcategory generated by the two-dimensional simple tilting module $\mathbf{X}$, so one could also view $KL^k(\fsl_2)$ as an abelianization of $TL(-\zeta-\zeta^{-1})$. In fact, an abelianization of the Temperley-Lieb category has already been constructed by Gainutdinov and Saleur \cite{GS}. This category is an $N\to\infty$ limit of module categories for the finite-dimensional Temperley-Lieb algebras on $N$ strands, and it has two natural tensor products. Gainutdinov has conjectured that $KL^k(\fsl_2)$ is tensor equivalent to the category in \cite{GS}, for one of its two tensor products. (Note that $KL^k(\fsl_2)$ also has two tensor products, the usual vertex algebraic tensor product $W_1\tens W_2$, and a second using contragredient modules, $(W_2'\tens W_1')'$; see page 2 of \cite{ALSW}. These two tensor products are not equivalent because $KL^k(\fsl_2)$ is not rigid.) As evidence for Gainutdinov's conjecture, projective modules in $KL^k(\fsl_2)$ (recall Theorem \ref{thm:intro_Pr}) have the same structure as projective modules in Gainutdinov-Saleur's category (see \cite[Section 5.7.1]{GS}), indicating that $KL^k(\fsl_2)$ and Gainutdinov-Saleur's category are at least equivalent as abelian categories.

Finally, it is natural to ask whether the results of this paper generalize to higher-rank affine Lie algebras:
\begin{ques}
Let $\fg$ be a finite-dimensional simple Lie algebra and let $k=-h^\vee+\kappa$ for $\kappa\in\QQ_{>0}$.
\begin{enumerate}
\item Does $KL^k(\fg)$ admit the vertex algebraic braided tensor category structure of \cite{HLZ8}? If so, is $KL^k(\fg)$ rigid, and what is the relation between rigid and projective objects of $KL^k(\fg)$?

\item Let $\zeta=e^{\pi i/r\kappa}$ where $r$ is the lacing number of $\fg$. If $KL^k(\fg)$ is a tensor category, is there a natural tensor functor $\cF: KL^k(\fg)\rightarrow\cC(\zeta,\fg)$, and if so, what further properties (such as exactness and essential surjectivity) might this functor have?
\end{enumerate}
\end{ques}

For the first question, the main obstacle is whether generalized Verma modules for $\widehat{\fg}$ have finite length. If they do, then $KL^k(\fg)$ is a (locally finite) braided tensor category by \cite[Theorem 3.3.4]{CY}. If they do not, then $KL^k(\fg)$ is probably not a tensor category, but one could replace $KL^k(\fg)$ with the category of finitely-generated grading-restricted $V^k(\fg)$-modules, which might possibly still be a (finitely cocomplete) braided monoidal category. One could also try replacing $KL^k(\fg)$ with the category $KL_k(\fg)$ of finite-length modules for the simple quotient vertex operator algebra $L_k(\fg)$. In any case, once one shows that $KL^k(\fg)$ (or one of its replacements) is a braided monoidal category, then Proposition \ref{prop:V11_proj} below generalizes to show that the unit object is projective, and therefore all rigid objects are projective. But to show that all projective objects are also rigid, one would need to prove rigidity for suitable non-trivial objects of $KL^k(\fg)$. In particular, as in this paper, one could first try to use KZ equations to prove rigidity of the generalized Verma module induced from the irreducible $\fg$-module whose highest weight is the first fundamental weight of $\fg$. But this depends on how explicitly the KZ equations can be solved.

Related to the second question, it is tempting to conjecture that, as in Theorem \ref{thm:Tzeta_Pk_equiv}, the subcategory of tilting modules in $\cC(\zeta,\fg)$ embeds into $KL^k(\fg)$ as the full subcategory of projective objects, at least in cases where $KL^k(\fg)$ is a tensor category. But our proof of Theorem \ref{thm:Tzeta_Pk_equiv} heavily relies on explicit structural information for indecomposable tilting modules of quantum $\fsl_2$, and this may be difficult to obtain in higher rank cases. Thus it may be difficult to obtain a tensor functor $\cF: KL^k(\fg)\rightarrow\cC(\zeta,\fg)$ using a universal property like Theorem \ref{thm:intro_univ_prop}. It might be worth exploring whether Kazhdan and Lusztig's original methods in \cite{KL3, KL4} could generalize to this situation. 

Another method for proving equivalences between module categories for vertex operator algebras and quantum groups is currently under development in \cite{CLR, Len} and might also possibly be useful here. This method requires a free field realization of the vertex operator algebra whose twisted representation theory is equivalent to representations of the Borel part of the quantum group. For $V^k(\fg)$, one might try to use the Wakimoto module realization \cite{Wa, Fr}. Even in the $\fsl_2$ case, it would interesting to see whether such a free field realization could be used to recover Theorem \ref{thm:intro_weak_KL} without using the universal property of the category of tilting modules for quantum $\fsl_2$.

\subsection{Outline}

The remaining contents of this paper are structured as follows. In Section \ref{sec:tens_cat} we show that $KL^k(\fsl_2)$ is a braided tensor category and discuss some basic properties of its tensor category structure. Section \ref{subsec:sl2_hat} introduces the affine Lie algebra $\widehat{\fsl}_2$ and the universal affine vertex operator algebra $V^k(\fsl_2)$. Then in Section \ref{subsec:Malikov}, we use Malikov's results \cite{Ma} on Verma modules for $\widehat{\fsl}_2$ to determine the structure of generalized Verma modules. Section \ref{subsec:tens_cat} defines and characterizes the Kazhdan-Lusztig category $KL^k(\fsl_2)$, shows that it is a braided tensor category, and then discusses some basic results on projective objects in $KL^k(\fsl_2)$. In Section \ref{subsec:KL_k}, we show that the subcategory $KL_k(\fsl_2)$ of modules for the simple affine vertex operator algebra $KL_k(\fsl_2)$ is a tensor subcategory of $KL^k(\fsl_2)$ with a different unit object, and is also a tensor ideal. Then in Section \ref{subsec:intw_op}, we prove some results on intertwining operators and tensor products involving the generalized Verma module $\cV_2$ in $KL^k(\fsl_2)$.

In Section \ref{sec:rigidity}, we prove that the generalized Verma module $\cV_2$ is rigid and self-dual in $KL^k(\fsl_2)$, and we calculate its intrinsic dimension, using KZ equations.

In Section \ref{sec:proj}, we prove Theorems \ref{thm:intro_Pr} and \ref{thm:intro_V2_times_Pr}. First in Section \ref{subsec:tens_prod_V2}, we completely determine how $\cV_2$ tensors with all generalized Verma and simple modules in $KL^k(\fsl_2)$. Using these results, we construct the indecomposable modules $\cP_r$, $r\in\ZZ_{\geq 1}$, in Section \ref{subsec:further_indecomp}, and we show that they are projective and rigid in $KL^k(\fsl_2)$. As a consequence, we show that rigid objects in $KL^k(\fsl_2)$ are the same as projective objects. Then in Section \ref{subsec:more_prop}, we show that $\cP_r$ is self-contragredient if $r\geq p$ and logarithmic if $r>p$ and $p\nmid r$.

In Section \ref{sec:braiding}, we determine all braidings and ribbon twists on $KL^k(\fsl_2)$ and its $3$-cocycle twist $KL^k(\fsl_2)^\tau$. We also determine a criterion for tensor functors $\cF: KL^k(\fsl_2)\rightarrow\cC$ to be braided, where $\cC$ is any braided tensor category.

In Section \ref{sec:univ_prop}, we derive the universal property of $KL^k(\fsl_2)$. We first discuss the structure of the category $\cT_\zeta$ of tilting modules for quantum $\fsl_2$ at a root of unity $\zeta$ in Section \ref{subsec:tilting}, and we show that as a monoidal category, $\cT_\zeta$ embeds into $KL^k(\fsl_2)$ as the full subcategory of projective objects. 
We then prove Theorem \ref{thm:intro_univ_prop} in Section \ref{subsec:univ_prop}. We also determine criteria for the tensor functors $\cF$ guaranteed by Theorem \ref{thm:intro_univ_prop} to be braided or exact.

In Section \ref{sec:applications}, we give applications of the universal property of $KL^k(\fsl_2)$. First we classify the categories $KL^k(\fsl_2)$ up to (braided) tensor equivalence in Section \ref{subsec:classify}, proving Theorem \ref{thm:intro_classify}.  We then prove the weak and derived Kazhdan-Lusztig correspondences (Theorems \ref{thm:intro_weak_KL} and \ref{thm:intro_derived_KL_corr}) in Section \ref{subsec:weak_KL}. We obtain the tensor functors $\cF_{p,q}$ and $\cF_{q,p}$ to the Virasoro category $\cO_{c_{p,q}}$ and state Conjecture \ref{conj:intro} in Section \ref{subsec:Vir}. 

\medskip

 \noindent{\bf Acknowledgments.} We thank Nicolai Reshetikhin for raising the question of possible relations between tilting modules for quantum $\fsl_2$ at a root of unity and vertex operator algebras, that largely inspired us to begin working on this paper. We thank Cris Negron for explaining to us how Theorem \ref{thm:intro_derived_KL_corr} follows from our results. We thank Azat Gainutdinov for informing us of his construction with Saleur of an abelianization of the Temperley-Lieb category in \cite{GS}. We also thank Azat Gainutdinov, Cris Negron, and Victor Ostrik for discussions on tilting modules for quantum $\fsl_2$, and we thank Yi-Zhi Huang, David Ridout, Siddhartha Sahi, Simon Wood, and the referees for comments.

\section{Tensor category structure on \texorpdfstring{$KL^k(\fsl_2)$}{KLk(sl2)}}\label{sec:tens_cat}

In this section, we will define the Kazhdan-Lusztig category $KL^k(\fsl_2)$ and show that it is a braided tensor category. We begin with the affine Lie algebra $\widehat{\fsl}_2$ and its associated universal affine vertex operator algebras.

\subsection{The affine Lie algebra \texorpdfstring{$\slhat_2$}{sl2-hat}}\label{subsec:sl2_hat}

As usual, $\fsl_2$ is the simple Lie algebra over $\CC$ with basis $\lbrace e,f,h\rbrace$ and Lie brackets
\begin{equation*}
[e,f]=h,\qquad [h,e]=2e,\qquad [h,f]=-2f.
\end{equation*}
We fix the non-degenerate invariant bilinear form $\langle\cdot,\cdot\rangle$ on $\fsl_2$ such that
\begin{equation*}
\langle e, f\rangle =\langle f,e\rangle =1,\qquad\langle h,h\rangle =2,
\end{equation*}
with all other pairings of basis elements $0$. We always use the Cartan subalgebra $\mathfrak{h}=\CC h$, so that the root lattice of $\fsl_2$ is $Q=\ZZ\alpha$ where $\alpha\in\mathfrak{h}^*$ satisfies $\alpha(h)=2$. We denote the weight lattice $\ZZ\frac{\alpha}{2}$ by $P$.

The affine Lie algebra associated to $\fsl_2$ is
\begin{equation*}
\slhat_2=\fsl_2\otimes\CC[t,t^{-1}]\oplus\CC\mathbf{k}
\end{equation*}
with $\mathbf{k}$ central and 
\begin{equation*}
[a\otimes t^m,b\otimes t^n]=[a,b]\otimes t^{m+n}+m\langle a,b\rangle\mathbf{k}
\end{equation*}
for $a,b\in\fsl_2$ and $m,n\in\ZZ$. The affine Lie algebra has a triangular decomposition
\begin{equation*}
\slhat_2 =(\slhat_2)_+\oplus(\slhat_2)_0\oplus(\slhat_2)_-
\end{equation*}
where
\begin{equation*}
(\slhat_2)_{\pm} =\fsl_2\otimes t^{\pm 1}\CC[t^{\pm 1}],\qquad(\slhat_2)_0=\fsl_2\otimes t^0\oplus\CC\mathbf{k}.
\end{equation*}
We also set $(\slhat_2)_{\geq 0}=(\slhat_2)_+\oplus(\slhat_2)_0$.

For any $a\in\fsl_2$ and $n\in\ZZ$, we use the notation $a(n)$ to denote the action of $a\otimes t^n$ on an $\slhat_2$-module. We say that an $\slhat_2$-module $W$ has \textit{level} $k\in\CC$ if the central element $\mathbf{k}$ acts on $W$ by the scalar $k$. For any level $k\in\CC$, generalized Verma $\slhat_2$-modules are constructed as follows: For any $\fsl_2$-module $M$, we extend $M$ to an $(\slhat_2)_{\geq 0}$ module on which $\mathbf{k}$ acts by $k$ and $(\slhat_2)_+$ acts trivially. Then the generalized Verma module $\cV^k_{M}$ is the induced $\slhat_2$-module
\begin{equation*}
\cV^k_{M}=\ind_{(\slhat_2)_{\geq 0}}^{\slhat_2} M.
\end{equation*} 
The generalized Verma module $\cV^k_{M}$ is linearly spanned by vectors of the form
\begin{equation*}
a_1(-n_1)\cdots a_j(-n_j)m
\end{equation*}
where $a_1,\ldots,a_j\in\fsl_2$, $n_1,\ldots,n_j\in\ZZ_{\geq 1}$, and $m\in M$. There is also a natural $\ZZ_{\geq 0}$-grading $\cV^k_{M}=\bigoplus_{n=0}^\infty \cV^k_{M}(n)$ where
\begin{equation*}
\cV^k_{M}(n)=\mathrm{span}\lbrace a_1(-n_1)\cdots a_j(-n_j)m\,\,\vert\,\,n_1+\ldots+n_j=n\rbrace
\end{equation*}
for $n\in\ZZ_{\geq 0}$. In case $M=M_r$ is the $r$-dimensional irreducible $\fsl_2$-module for some $r\in\ZZ_{\geq 1}$, we denote $\cV^k_{M_r}=\cV^k_{r}$, or simply $\cV_r$ if the level $k$ is understood.

The generalized Verma module $\cV^k_{1}$ induced from $M_1=\CC\vac$ is a vertex algebra (\cite{FZ}; see also \cite[Section 6.2]{LL}) with vacuum vector $\vac$. This means in particular there is a linear map
\begin{align*}
Y: \cV^k_1 & \rightarrow\mathrm{End}(\cV^k_1)[[x,x^{-1}]]\nonumber\\
v & \mapsto Y(v,x) =\sum_{n\in\ZZ} v_n\,x^{-n-1},
\end{align*}
called the vertex operator, which satisfies the vacuum property $Y(\vac, x)=\Id_{\cV_1^k}$ and the vertex algebra Jacobi identity of \cite[Chapter 8]{FLM}. In fact, $Y$ is determined by the axioms of a vertex algebra (see for example \cite[Definition 3.3.1]{LL}) together with
\begin{equation}\label{eqn:vertex_operator}
Y(a(-1)\vac,x)=a(x):=\sum_{n\in\ZZ} a(n)\,x^{-n-1}
\end{equation}
for $a\in\fsl_2$. We denote $\cV^k_{1}$ by $V^k(\fsl_2)$ when we consider it as a vertex algebra, and we call $V^k(\fsl_2)$ the \textit{universal affine vertex algebra} associated to $\fsl_2$ at level $k$.

 When $k$ is non-critical, which for $\fsl_2$ means $k\neq -2$, $V^k(\fsl_2)$ is also a vertex operator algebra in the sense of \cite{FLM, LL}, with conformal vector
\begin{equation*}
\omega=\frac{1}{2(k+2)}\left(e(-1)f(-1)\vac+\frac{1}{2}h(-1)^2\vac+f(-1)e(-1)\vac\right).
\end{equation*}
Writing $Y(\omega,x)=\sum_{n\in\ZZ} L(n)\,x^{-n-2}$, the vertex operator modes $L(n)$ define a representation of the Virasoro Lie algebra on $V^k(\fsl_2)$ with central charge $\frac{3k}{k+2}$. The most important Virasoro modes that we will use are
\begin{align}
L(0) & =\frac{1}{2(k+2)}  \left(e(0)f(0)+\frac{1}{2}h(0)^2+f(0)e(0)\right)\nonumber\\
&\hspace{5em}+\frac{1}{k+2}\sum_{n=1}^\infty \left(e(-n)f(n)+\frac{1}{2}h(-n)h(n)+f(-n)e(n)\right),\label{eqn:L0}\\
\label{eqn:L-1}
L(-1) & =\frac{1}{k+2}\sum_{n=0}^\infty\left(e(-n-1)f(n)+\frac{1}{2}h(-n-1)h(n)+f(-n-1)e(n)\right).
\end{align}
In general,
\begin{equation}\label{eqn:Vir_aff_commutator}
[L(m),a(n)]=-n a(m+n)
\end{equation}
for $m,n\in\ZZ$ and $a\in\fsl_2$. From \eqref{eqn:L0} and \eqref{eqn:Vir_aff_commutator}, the conformal weight grading $V^k(\fsl_2)=\bigoplus_{n\in\ZZ} V^k(\fsl_2)_{(n)}$ of $V^k(\fsl_2)$ by $L(0)$-eigenvalues agrees with the $\ZZ_{\geq 0}$-grading on generalized Verma modules discussed above, that is, $V^k(\fsl_2)_{(n)}=\cV^k_{1}(n)$ for all $n\in\ZZ$.

For any finite-dimensional $\fsl_2$-module $M$ and level $k$, the generalized Verma module $\cV^k_{M}$ is a $V^k(\fsl_2)$-module in the sense of \cite[Definition 4.1.1]{LL}. In particular, there is a vertex operator map
\begin{align*}
Y_{\cV_M^k}: V^k(\fsl_2) & \rightarrow \mathrm{End}(\cV^k_M)[[x,x^{-1}]]
\end{align*}
 also characterized by \eqref{eqn:vertex_operator}. Moreover, if $k\neq -2$, then $L(0)$ acts on $\cV^k_{M}(0)$ by $\frac{1}{2(k+2)}\Omega$ where $\Omega=ef+\frac{1}{2}h^2+fe$ is the Casimir operator associated to the bilinear form $\langle\cdot,\cdot\rangle$. Thus if $M=M_r$ is the $r$-dimensional irreducible $\fsl_2$-module for some $r\in\ZZ_{\geq 1}$, then $\cV^k_{r}(n)$ for any $n\in\ZZ_{\geq 0}$ is the $L(0)$-eigenspace with eigenvalue $h_r+n$, where
\begin{equation}\label{eqn:h_lambda}
h_r=\frac{r^2-1}{4(k+2)}.
\end{equation}
So $\cV^k_{r}$ is a module for $V^k(\fsl_2)$ considered as a vertex operator algebra, in the sense of \cite[Definition 4.1.6]{LL}, with a conformal weight grading given by $L(0)$-eigenvalues.

If $k\neq -2$ and $r\in\ZZ_{\geq 1}$, then any submodule of $\cV^k_r$ is $L(0)$-stable and thus graded, so $\cV^k_{r}$ has a unique maximal proper submodule $\cJ^k_{r}$ given by the sum of all (graded) submodules that intersect $\cV^k_{r}(0)=M_r$ trivially. We denote the unique irreducible quotient $\cV^k_{r}/\cJ^k_{r}$ by $\cL^k_r$, or simply $\cL_r$ if the level $k$ is understood. Any irreducible $V^k(\fsl_2)$-module is isomorphic to $\cL^k_{r}$ for some $r\in\ZZ_{\geq 1}$ \cite[Theorem 6.2.23]{LL}. The simple module $\cL^k_{1}$ is the unique simple vertex operator algebra quotient of $V^k(\fsl_2)$, which we denote by $L_k(\fsl_2)$ when we consider it as a vertex operator algebra. When $L_k(\fsl_2)$ is a proper quotient of $V^k(\fsl_2)$, that is, when $V^k(\fsl_2)$ is not simple, only a subset of $\cL^k_{r}$ for $r\in\ZZ_{\geq 1}$ are $L_k(\fsl_2)$-modules.

We close this subsection with a discussion of the affine Kac-Moody algebra associated to $\slhat_2$. For more details on affine Kac-Moody algebras, see for example \cite{Ka,Ca}; here we mainly use the notation of \cite{MY1}. We define
\begin{equation*}
\til{\fsl}_2=\slhat_2\oplus\CC\mathbf{d}
\end{equation*}
where
\begin{equation*}
[\mathbf{d},\mathbf{k}]=0,\qquad [\mathbf{d},a\otimes t^n]=n(a\otimes t^n).
\end{equation*}
The affine Kac-Moody Lie algebra $\til{\fsl}_2$ has Cartan subalgebra
\begin{equation*}
\mathfrak{H}=\mathfrak{h}\oplus\CC\mathbf{k}\oplus\CC\mathbf{d};
\end{equation*}
the simple coroots in $\mathfrak{H}$ are $h_0=-h+\mathbf{k}$ and $h_1=h$. The dual
\begin{equation*}
\mathfrak{H}^*=\mathfrak{h}^*\oplus\CC\mathbf{k}'\oplus\CC\mathbf{d}'
\end{equation*}
has basis $\lbrace\frac{\alpha}{2},\mathbf{k}',\mathbf{d}'\rbrace$ dual to the basis $\lbrace h,\mathbf{k},\mathbf{d}\rbrace$ of $\mathfrak{H}$. The simple roots of $\til{\fsl}_2$ are $\alpha_0=-\alpha+\mathbf{d}'$ and $\alpha_1=\alpha$; the real roots of $\til{\fsl}_2$ have the form $\pm\alpha+m\mathbf{d}'$ for $m\in\ZZ$, and the imaginary roots have the form $m\mathbf{d}'$ for $m\neq 0$. We will need a dominant integral weight $\rho\in\mathfrak{H}^*$ such that $\rho(h_0)=\rho(h_1)=1$; in fact, we can take $\rho=\frac{\alpha}{2}+2\mathbf{k}'$.

By the $m=0$ case of \eqref{eqn:Vir_aff_commutator}, any $\slhat_2$-module of level $k\neq -2$ with a well-defined action of $L(0)$ is also an $\til{\fsl}_2$-module on which $\mathbf{d}$ acts by $-L(0)$. Thus the only highest-weight $\til{\fsl}_2$-modules we will consider will have highest weights of the form 
\begin{equation*}
\Lambda^k_r=(r-1)\frac{\alpha}{2}+k\mathbf{k}'-h_r\mathbf{d}'
\end{equation*}
for $r\in\CC$ and $k\neq -2$. We denote the Verma $\til{\fsl}_2$-module of such a highest weight by $V^{\Lambda^k_r}$. For $r\in\ZZ_{\geq 1}$, the generalized Verma module $\cV^k_r$ is a quotient of $V^{\Lambda^k_r}$, and the proof of \cite[Proposition 2.1]{Le} (see also \cite[Proposition 3.5]{MY1}) shows that there is a short exact sequence
\begin{equation}\label{eqn:gen_Verma_exact_seq}
0\longrightarrow V^{\Lambda^k_{-r}}\longrightarrow V^{\Lambda^k_r}\longrightarrow\cV^k_r\longrightarrow 0.
\end{equation}
Note that $\cL^k_r$ is the unique irreducible quotient of both $V^{\Lambda^k_r}$ and $\cV^k_r$.

\subsection{Structure of generalized Verma modules}\label{subsec:Malikov}

We now fix $k=-2+\kappa$ where $\kappa\in\QQ_{>0}$. We write $\kappa=p/q$ where $p,q\in\ZZ_{\geq 1}$ are relatively prime. In this subsection, we determine the structure of the generalized Verma $\slhat_2$-modules $\cV^k_{r}$ for $r\in\ZZ_{\geq 1}$, showing in particular that they have finite length. This result is easily deduced from the results of Rocha-Caridi and Wallach \cite{RW} and Malikov \cite{Ma} on the structure of Verma modules for rank-$2$ Kac-Moody Lie algebras. Here we use \cite{Ma} as a reference, since the results there are general enough to cover non-integer levels $k$.

Theorem A(1) in \cite{Ma} describes the structure of the Verma module $V^\Lambda$ for $\Lambda\in\mathfrak{H}^*$ satisfying the following conditions:
\begin{itemize}
\item $\Lambda$ is in the Tits cone, that is, $(\Lambda+\rho)(\mathbf{k})\in\RR_{\geq 0}$.

\item $(\Lambda+\rho)(h_\beta)\in\ZZ$ for at least two positive real roots $\beta$.
\end{itemize}
In particular, the weights $\Lambda^k_r =(r-1)\frac{\alpha}{2}+k\mathbf{k}'-h_r\mathbf{d}'$ for $r\in\ZZ$ satisfy these conditions (the first because our shifted level $\kappa$ is a positive rational number). Since the level $k$ is fixed, we will denote $\Lambda^k_r$ by $\Lambda_r$ from now on.

Given a weight $\Lambda_r$ for some $r\in\ZZ$, and following \cite[Section 2]{Ma}, define $\Delta_k^+$ to be the set of positive roots $\beta$ such that $(\Lambda_r+\rho)(h_\beta)\in\ZZ$. Since
\begin{equation*}
(\Lambda_r+\rho)(h_\beta) =\left\lbrace\begin{array}{lll}
r &\text{if} & \beta=\alpha\\
\pm r+mp/q & \text{if} & \beta=\pm\alpha+m\mathbf{d}',\,\,\,m\in\ZZ_{\geq 1}\\
mp/q & \text{if} & \beta=m\mathbf{d}',\,\,\,m\in\ZZ_{\geq 1}
\end{array}\right. ,
\end{equation*}
$\Delta_k^+$ is independent of $r$ and consists of the roots $\alpha$, $\pm\alpha+mq\mathbf{d}'$ for $m\in\ZZ_{\geq 1}$, and $mq\mathbf{d}'$ for $m\in\ZZ_{\geq 1}$. In fact, $\Delta_k^+$ forms the set of positive roots of a root subsystem that is also of type $\til{\fsl}_2$. For this root subsystem, we choose simple roots $\beta_0=-\alpha+q\mathbf{d}'$, $\beta_1=\alpha$ with corresponding coroots $h_{\beta_0}=-h+q\mathbf{k}$, $h_{\beta_1}=h$. Let $W_k=\langle s_0,s_1\rangle$ be the subgroup of the Weyl group of $\til{\fsl}_2$ generated by the reflections $s_0$ and $s_1$ associated to the roots $\beta_0$ and $\beta_1$, respectively.

Recall the dot action of $W_k$ on the weights of $\til{\fsl}_2$:
\begin{equation*}
w\cdot\Lambda=w(\Lambda+\rho)-\rho
\end{equation*}
for $w\in W_k$, $\Lambda\in\mathfrak{H}^*$. In particular,
\begin{equation*}
s_i\cdot\Lambda=\Lambda-(\Lambda+\rho)(h_{\beta_i})\beta_i
\end{equation*}
for $i=0,1$. The following lemma is an elementary calculation:
\begin{lem}\label{lem:dot_action}
For $r\in\ZZ$, we have $s_0\cdot\Lambda_r =\Lambda_{2p-r}$ and $s_1\cdot\Lambda_r = \Lambda_{-r}$.
\end{lem}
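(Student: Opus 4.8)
The plan is to unwind both identities by direct substitution into the defining formula $s_i\cdot\Lambda=\Lambda-(\Lambda+\rho)(h_{\beta_i})\beta_i$, reading off the pairings $(\Lambda_r+\rho)(h_{\beta_i})$ from the displayed computation of $(\Lambda_r+\rho)(h_\beta)$ that immediately precedes the lemma, and then tracking the $\mathbf{d}'$-coefficients using the explicit formula $h_r=\frac{(r^2-1)q}{4p}$ (equivalently \eqref{eqn:h_lambda} with $k+2=p/q$).

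For $s_1$: here $\beta_1=\alpha$ and $h_{\beta_1}=h$, so the $\beta=\alpha$ case of the displayed formula gives $(\Lambda_r+\rho)(h_{\beta_1})=r$. Hence $s_1\cdot\Lambda_r=\Lambda_r-r\alpha$, and since $\Lambda_r=(r-1)\frac{\alpha}{2}+k\mathbf{k}'-h_r\mathbf{d}'$, subtracting $r\alpha$ turns the $\frac{\alpha}{2}$-coefficient $r-1$ into $r-1-2r=(-r)-1$ while leaving the $\mathbf{k}'$- and $\mathbf{d}'$-coefficients unchanged. Because $h_r$ depends only on $r^2$, we have $h_{-r}=h_r$, so the result is exactly $\Lambda_{-r}$.

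For $s_0$: here $\beta_0=-\alpha+q\mathbf{d}'$ and $h_{\beta_0}=-h+q\mathbf{k}$, so the $\beta=-\alpha+m\mathbf{d}'$ case of the displayed formula with $m=q$ gives $(\Lambda_r+\rho)(h_{\beta_0})=-r+qp/q=p-r$. Hence $s_0\cdot\Lambda_r=\Lambda_r-(p-r)(-\alpha+q\mathbf{d}')$. Adding $(p-r)\alpha$ turns the $\frac{\alpha}{2}$-coefficient $r-1$ into $r-1+2(p-r)=(2p-r)-1$, and the $\mathbf{d}'$-coefficient $-h_r$ becomes $-h_r-(p-r)q$. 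It then suffices to check $h_{2p-r}-h_r=(p-r)q$, which follows from $(2p-r)^2-r^2=4p(p-r)$ together with $h_s=\frac{(s^2-1)q}{4p}$; thus $s_0\cdot\Lambda_r=\Lambda_{2p-r}$.

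I expect no real obstacle here, as the lemma says this is an elementary calculation. The only points needing a little care are correctly matching $\beta_0,\beta_1$ to the appropriate cases of the preceding formula for $(\Lambda_r+\rho)(h_\beta)$ (in particular that $\beta_0$ is the \emph{negative} real root $-\alpha+q\mathbf{d}'$, giving $p-r$ rather than $p+r$), keeping the signs straight when applying the dot action, and the small bookkeeping identity $h_{2p-r}=h_r+(p-r)q$.
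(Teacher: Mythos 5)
Your computation is correct and is exactly the elementary verification the paper has in mind (it gives no proof, only the remark that this is an "elementary calculation"). You have matched $\beta_1=\alpha$ and $\beta_0=-\alpha+q\mathbf{d}'$ to the correct cases of the preceding display (getting $r$ and $p-r$ respectively), applied the dot-action formula, and confirmed the $\frac{\alpha}{2}$, $\mathbf{k}'$, and $\mathbf{d}'$ coefficients, including the needed identity $h_{2p-r}-h_r=(p-r)q$ and the symmetry $h_{-r}=h_r$.
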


From \cite[Lemma 4.1]{Ma}, we can deduce that $V^{\Lambda_{r}}$ for any $r\in\ZZ$ embeds in a unique Verma module $V^{\Lambda}$ such that $(\Lambda+\rho)(h_{\beta_i})\in\ZZ_{\geq 0}$ for $i=0,1$. A quick calculation shows that the $\Lambda$ such that $(\Lambda+\rho)(h_{\beta_i})\in\ZZ_{\geq 0}$ for both of $i=0,1$ are precisely the $\Lambda_r$ for $0\leq r\leq p$. For $1\leq r\leq p-1$, $\Lambda_{r}$ is not fixed by the dot action of either $s_0$ or $s_1$, and \cite[Lemma 4.1(1)]{Ma} shows we have an embedding diagram of Verma modules
\begin{equation*}
\xymatrixrowsep{1pc}
\xymatrix{
 & \ar[ld] V^{s_0\cdot\Lambda_{r}} & \ar[l] \ar[ldd] V^{s_1s_0\cdot\Lambda_{r}} & \ar[l] \ar[ldd] V^{s_0s_1s_0\cdot\Lambda_{r}} & \ar[l]\ar[ldd] V^{s_1 s_0 s_1 s_0\cdot\Lambda_r} & \ar[l]\ar[ldd]\cdots \\
V^{\Lambda_{r}} & & & & &\\
 &  \ar[lu] V^{s_1\cdot\Lambda_{r}} & \ar[l] \ar[luu] V^{s_0s_1\cdot\Lambda_{r}} & \ar[l]\ar[luu] V^{s_1s_0s_1\cdot\Lambda_{r}} & \ar[l]\ar[luu] V^{s_0 s_1 s_0 s_1\cdot\Lambda_r} & \ar[l]\ar[luu]\cdots \\
}
\end{equation*}
On the other hand, $\Lambda_{0}$ is fixed by the dot action of $s_1$ and $\Lambda_{p}$ is fixed by the dot action of $s_0$, so \cite[Lemma 4.1(2)]{Ma} gives embedding diagrams
\begin{equation*}
\xymatrix{
V^{\Lambda_{0}} & \ar[l] V^{s_0\cdot\Lambda_{0}} & \ar[l] V^{s_1s_0\cdot\Lambda_{0}} & \ar[l] V^{s_0s_1s_0\cdot\Lambda_{0}} & \ar[l] V^{s_1 s_0 s_1 s_0\cdot\Lambda_0} & \ar[l]\cdots \\
}
\end{equation*}
and
\begin{equation*}
\xymatrix{
V^{\Lambda_{p}} & \ar[l] V^{s_1\cdot\Lambda_{p}} & \ar[l] V^{s_0s_1\cdot\Lambda_{p}} & \ar[l] V^{s_1s_0s_1\cdot\Lambda_{p}} & \ar[l] V^{s_0 s_1 s_0 s_1\cdot\Lambda_p} & \ar[l]\cdots \\
}
\end{equation*}
In these diagrams, each arrow represents the unique (up to scaling) homomorphism from one Verma module to another, and each homomorphism of Verma modules is injective.

By Lemma \ref{lem:dot_action}, the embedding diagrams become
\begin{equation*}
\xymatrixrowsep{1pc}
\xymatrix{
& \ar[ld]  V^{\Lambda_{2p-r}} & \ar[l]\ar[ldd] V^{\Lambda_{-2p+r}} & \ar[l] \ar[ldd] V^{\Lambda_{4p-r}} & \ar[l] \ar[ldd] V^{\Lambda_{-4p+r}} & \ar[l]\ar[ldd]\cdots \\
V^{\Lambda_{r}} & & & & & \\
& \ar[lu] V^{\Lambda_{-r}} & \ar[l] \ar[luu] V^{\Lambda_{2p+r}} & \ar[l]\ar[luu] V^{\Lambda_{-2p-r}} & \ar[l]\ar[luu] V^{\Lambda_{4p+r}} & \ar[l]\ar[luu] \cdots \\
}
\end{equation*}
for $1\leq r\leq p-1$,
\begin{equation*}
\xymatrix{
V^{\Lambda_{0}} & \ar[l] V^{\Lambda_{2p}} & \ar[l] V^{\Lambda_{-2p}} & \ar[l] V^{\Lambda_{4p}} & \ar[l] V^{\Lambda_{-4p}} & \ar[l]\cdots \\
}
\end{equation*}
and
\begin{equation*}
\xymatrix{
V^{\Lambda_{p}} & \ar[l] V^{\Lambda_{-p}} & \ar[l] V^{\Lambda_{3p}} & \ar[l] V^{\Lambda_{-3p}} & \ar[l] V^{\Lambda_{5p}} & \ar[l]\cdots \\
}
\end{equation*}
Now a crucial consequence of \cite[Theorem A(1)]{Ma} (see \cite[Corollary 2.1(2)]{Ma}) is that every submodule of every Verma module $V^{\Lambda_{r}}$ is generated by its singular vectors, and thus we can read off the maximal proper submodule of $V^{\Lambda_{r}}$ from the embedding diagrams. In particular, for $n\in\ZZ_{\geq 0}$ and $1\leq r\leq p-1$, we have an exact sequence
\begin{equation}\label{eqn:Verma_structure_s}
0\longrightarrow K_{np+r}\longrightarrow V^{\Lambda_{np+r}} \longrightarrow\cL_{np+r}\longrightarrow 0
\end{equation}
where $K_{np+r}$ is the sum of two Verma submodules $V^{s_1\cdot\Lambda_{np+r}}=V^{\Lambda_{-np-r}}$ and $V^{\Lambda_{(n+2)p-r}}$. We also have an exact sequence
\begin{equation}\label{eqn:Verma_structure_p}
0\longrightarrow V^{\Lambda_{-np}} \longrightarrow V^{\Lambda_{np}}\longrightarrow\cL_{np}\longrightarrow 0,
\end{equation}
for $n\in\ZZ_{\geq 1}$.

It is now easy to use \eqref{eqn:gen_Verma_exact_seq}, \eqref{eqn:Verma_structure_s}, and \eqref{eqn:Verma_structure_p} to obtain the structure of the generalized Verma modules $\cV_r$ for $r\in\ZZ_{\geq 1}$:
\begin{thm}\label{thm:gen_Verma_structure}
For $n\in\ZZ_{\geq 1}$, the generalized Verma module $\cV_{np}$ is irreducible. For $n\in\ZZ_{\geq 0}$ and $1\leq r\leq p-1$, there is a short exact sequence
\begin{equation}\label{exactseq:vrs}
0\longrightarrow\cL_{(n+2)p-r}\longrightarrow\cV_{np+r}\longrightarrow\cL_{np+r}\longrightarrow 0.
\end{equation}
In particular, $\cV_{r}$ has finite length for all $r\in\ZZ_{\geq 1}$.
\end{thm}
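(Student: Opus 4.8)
The plan is to deduce the theorem entirely from the three short exact sequences \eqref{eqn:gen_Verma_exact_seq}, \eqref{eqn:Verma_structure_s}, and \eqref{eqn:Verma_structure_p} obtained above, together with the fact (recorded in the embedding diagrams) that every nonzero homomorphism between the Verma modules $V^{\Lambda_s}$ is unique up to a scalar, so that each inclusion $V^{\Lambda_{-s}}\subseteq V^{\Lambda_s}$ is canonical and is literally the same submodule in all three sequences. Granting this, the proof is a short diagram chase; the only genuine bookkeeping is the identification of a single subquotient.

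First I would dispose of $\cV_{np}$ for $n\in\ZZ_{\geq 1}$. Taking the index $np$ in \eqref{eqn:gen_Verma_exact_seq} identifies $\cV_{np}$ with $V^{\Lambda_{np}}/V^{\Lambda_{-np}}$, while \eqref{eqn:Verma_structure_p} says precisely that $V^{\Lambda_{-np}}$ is the maximal proper submodule of $V^{\Lambda_{np}}$, with simple quotient $\cL_{np}$. Hence $\cV_{np}\cong\cL_{np}$ is irreducible.

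Next, for $n\in\ZZ_{\geq 0}$ and $1\leq r\leq p-1$, taking the index $np+r$ in \eqref{eqn:gen_Verma_exact_seq} identifies $\cV_{np+r}$ with $V^{\Lambda_{np+r}}/V^{\Lambda_{-np-r}}$. By \eqref{eqn:Verma_structure_s}, $V^{\Lambda_{np+r}}$ contains $K_{np+r}=V^{\Lambda_{-np-r}}+V^{\Lambda_{(n+2)p-r}}$ with quotient $\cL_{np+r}$, so passing to the quotient by $V^{\Lambda_{-np-r}}$ yields a short exact sequence $0\to K_{np+r}/V^{\Lambda_{-np-r}}\to\cV_{np+r}\to\cL_{np+r}\to 0$, giving the two outer terms of \eqref{exactseq:vrs}. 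To identify the kernel I use the second isomorphism theorem,
\[
K_{np+r}/V^{\Lambda_{-np-r}}\;\cong\;V^{\Lambda_{(n+2)p-r}}/\bigl(V^{\Lambda_{(n+2)p-r}}\cap V^{\Lambda_{-np-r}}\bigr),
\]
so that it suffices to check that $V^{\Lambda_{(n+2)p-r}}\cap V^{\Lambda_{-np-r}}$ is the (unique) maximal proper submodule of $V^{\Lambda_{(n+2)p-r}}$. This I would read off the embedding diagrams: inside $V^{\Lambda_{np+r}}$ the two Verma submodules $V^{\Lambda_{(n+2)p-r}}$ and $V^{\Lambda_{-np-r}}$ are incomparable and share a common maximal proper submodule $N$ (the sum of the next pair of Verma submodules down, each of which embeds in both); since $V^{\Lambda_{(n+2)p-r}}\not\subseteq V^{\Lambda_{-np-r}}$, the intersection is a proper submodule of $V^{\Lambda_{(n+2)p-r}}$ containing $N$, and as $N$ is the unique maximal proper submodule it must equal $N$. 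Hence $K_{np+r}/V^{\Lambda_{-np-r}}\cong V^{\Lambda_{(n+2)p-r}}/N\cong\cL_{(n+2)p-r}$, which is \eqref{exactseq:vrs}. (Alternatively, one can avoid tracking the diagram by a Grothendieck-group count: Malikov's multiplicity-one theorem makes the composition series of each $V^{\Lambda_s}$ readable from the embedding diagram, and $[\cV_{np+r}]=[V^{\Lambda_{np+r}}]-[V^{\Lambda_{-np-r}}]$ telescopes to $[\cL_{np+r}]+[\cL_{(n+2)p-r}]$; since $\cV_{np+r}$ has simple top $\cL_{np+r}$, its radical is forced to be $\cL_{(n+2)p-r}$.)

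Finally, finite length of every $\cV_r$, $r\in\ZZ_{\geq 1}$, is immediate from the above: $\cV_{np}$ has length $1$ and $\cV_{np+r}$ has length $2$. The only step that requires care is the identification of the kernel $K_{np+r}/V^{\Lambda_{-np-r}}$ with $\cL_{(n+2)p-r}$ — equivalently, that $V^{\Lambda_{(n+2)p-r}}\cap V^{\Lambda_{-np-r}}$ is maximal proper in $V^{\Lambda_{(n+2)p-r}}$ — and this is purely a matter of tracking the embedding diagrams (or the composition series) established above; everything else is formal.
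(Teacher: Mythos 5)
Your proof is correct and follows essentially the same route as the paper: both identify $\cV_{np+r}$ with $V^{\Lambda_{np+r}}/V^{\Lambda_{-np-r}}$ via \eqref{eqn:gen_Verma_exact_seq}, combine with \eqref{eqn:Verma_structure_s}, reduce (via the second isomorphism theorem, which the paper phrases as a commutative diagram with surjective vertical arrows) to showing $V^{\Lambda_{(n+2)p-r}}\cap V^{\Lambda_{-np-r}}$ is the maximal proper submodule of $V^{\Lambda_{(n+2)p-r}}$, and read this off the embedding diagrams. The one cosmetic difference is in the last step: the paper invokes that submodules of these Verma modules are generated by their singular vectors, whereas you pin down the intersection by noting it is proper and contains the common maximal proper submodule $N$ — a clean rephrasing, but one that still quietly relies on the same input from \cite{Ma} (uniqueness of the embeddings, which makes $N$ a literal common submodule, and the fact that there is no arrow $V^{\Lambda_{(n+2)p-r}}\hookrightarrow V^{\Lambda_{-np-r}}$ in the diagram).
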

\begin{proof}
The irreducibility of $\cV_{np}$ is immediate from \eqref{eqn:gen_Verma_exact_seq} and \eqref{eqn:Verma_structure_p}. For $n\in\ZZ_{\geq 0}$ and $1\leq r\leq p-1$, \eqref{eqn:gen_Verma_exact_seq} and \eqref{eqn:Verma_structure_s} yield a commutative diagram
\begin{equation*}
\xymatrixrowsep{.5pc}
\xymatrix{
0 \ar[r] & K_{np+r} \ar[r] \ar[dd] & V^{\Lambda_{np+r}} \ar[dd] \ar[rd] & & \\
& & & \cL_{np+r} \ar[r] & 0\\
0 \ar[r] & K_{np+r}/V^{\Lambda_{-np-r}} \ar[r] & \cV_{np+r} \ar[ru] &  & \\
}
\end{equation*}
with exact rows and surjective vertical arrows. So the maximal proper submodule $\cJ_{np+r}$ of $\cV_{np+r}$ is given by
\begin{equation*}
\cJ_{np+r}\cong K_{np+r}/V^{\Lambda_{-np-r}}\cong V^{\Lambda_{(n+2)p-r}}/(V^{\Lambda_{-np-r}}\cap V^{\Lambda_{(n+2)p-r}}).
\end{equation*}
Since every submodule of $V^{\Lambda_{(n+2)p-r}}$ is generated by its singular vectors, the embedding diagrams show that $V^{\Lambda_{-np-r}}\cap V^{\Lambda_{(n+2)p-r}}$ is the maximal proper submodule of $V^{\Lambda_{(n+2)p-r}}$, that is, $\cJ_{np+r}\cong\cL_{(n+2)p-r}$ as required.
\end{proof}

\begin{rem}
The $p=1$ case of Theorem \ref{thm:gen_Verma_structure}, that is, $k=-2+1/q$ for $q\in\ZZ_{\geq 1}$, has appeared in \cite{Cr}; in this case, all generalized Verma modules are irreducible. For $k\in\ZZ_{\geq 0}$, the short exact sequence \eqref{exactseq:vrs} has appeared in \cite{MY1}, using the results of \cite{RW} for the structure of Verma modules whose highest weights are Weyl group translates of dominant integral weights (rather than the more general results of \cite{Ma}).
\end{rem}

\subsection{Tensor category structure}\label{subsec:tens_cat}

We first define the Kazhdan-Lusztig category for $\slhat_2$ at positive rational shifted level following \cite[Definition 2.15]{KL1}:
\begin{defi}
Fix a level $k=-2+p/q$ for $p,q\in\ZZ_{\geq 1}$ relatively prime. The \textit{Kazhdan-Lusztig category} for $\fsl_2$ at level $k$ is the category $KL^k(\fsl_2)$ of finite-length level-$k$ $\slhat_2$-modules whose composition factors come from the irreducible modules $\cL_{r}$, $r\in\ZZ_{\geq 1}$.
\end{defi}

Objects of $KL^k(\fsl_2)$ are modules for $V^k(\fsl_2)$ considered as a vertex algebra \cite[Theorem 6.2.13]{LL}, but they might not be modules for $V^k(\fsl_2)$ considered as a vertex operator algebra, in the sense of \cite[Definition 4.1.6]{LL}, because $L(0)$ might act non-semisimply. However, since $L(0)$ acts semisimply on the simple objects $\cL_r$ of $KL^k(\fsl_2)$, and since objects of $KL^k(\fsl_2)$ have finite length, $L(0)$ does act locally finitely on any object of $KL^k(\fsl_2)$. Thus any object $W$ of $KL^k(\fsl_2)$ is a \textit{grading-restricted generalized $V^k(\fsl_2)$-module}: It is a $V^k(\fsl_2)$-module in the sense of \cite[Definition 4.1.1]{LL} that also has a \textit{conformal weight} grading
\begin{equation*}
W=\bigoplus_{h\in\CC} W_{[h]},
\end{equation*}
where $W_{[h]}$ is the generalized $L(0)$-eigenspace with generalized eigenvalue $h$, such that $\dim W_{[h]} <\infty$ for all $h\in\CC$, and for any $h\in\CC$, $W_{[h+n]}=0$ for all sufficiently negative $n\in\ZZ$. In fact, by \eqref{eqn:h_lambda}, all generalized $L(0)$-eigenvalues on $W$ are non-negative rational numbers when $k+2\in\QQ_{>0}$.

Any grading-restricted generalized $V^k(\fsl_2)$-module $W=\bigoplus_{h\in\CC} W_{[h]}$ has a \textit{contragredient} $W'=\bigoplus_{h\in\CC} W_{[h]}^*$ (so as vector spaces, $W'$ is the \textit{graded dual} of $W$). The $V^k(\fsl_2)$-module vertex operator on $W'$ is defined by
\begin{equation*}
\langle Y_{W'}(v,x)w', w\rangle =\langle w', Y_W(e^{xL(1)}(-x^{-2})^{L(0)} v,x^{-1})w\rangle
\end{equation*}
for $v\in V^k(\fsl_2)$, $w\in W$, $w'\in W'$. This means the $\widehat{\fsl}_2$-module structure on $W'$ is given by
\begin{equation}\label{eqn:sl2_contra_structure}
\langle a(n) w', w\rangle =-\langle w', a(-n) w\rangle
\end{equation}
for $a\in\fsl_2$, $n\in\ZZ$, $w\in W$, $w'\in W'$. Contragredients induce an exact contravariant functor on the category of grading-restricted generalized $V^k(\fsl_2)$-modules, with the contragredient of a morphism $f: W_1\rightarrow W_2$ defined in the natural way:
\begin{equation*}
\langle f'(w_2'), w_1\rangle =\langle w_2', f(w_1)\rangle
\end{equation*}
for $w_1\in W_1$ and $w_2'\in W_2'$. The category $KL^k(\fsl_2)$ is closed under taking contragredients because the contragredient module $W'$ has the same length as $W$.

In the next proposition, we characterize $KL^k(\fsl_2)$ as a category of grading-restricted generalized $V^k(\fsl_2)$-modules in three different ways. Before stating this result, we recall the notion of $C_1$-cofinite module for a vertex operator algebra $V$. If $W$ is a grading-restricted generalized $V$-module, let $C_1(W)\subseteq W$ denote the subspace of $W$ spanned by vectors of the form $v_{-n} w$ for $v\in V$ of strictly positive conformal weight, $n\geq 1$, and $w\in W$. Then $W$ is \textit{$C_1$-cofinite} if $\dim W/C_1(W)<\infty$. From \eqref{eqn:vertex_operator}, a grading-restricted generalized $V^k(\fsl_2)$-module is $C_1$-cofinite if it is finitely generated as an $(\widehat{\fsl}_2)_-$-module.

\begin{prop}\label{prop:KL_cat_char}
The category $KL^k(\fsl_2)$ is equal to all of the following categories:
\begin{itemize}
\item The category of finite-length grading-restricted generalized $V^k(\fsl_2)$-modules.

\item The category of finitely-generated grading-restricted generalized $V^k(\fsl_2)$-modules.

\item The category of $C_1$-cofinite grading-restricted generalized $V^k(\fsl_2)$-modules.
\end{itemize}
\end{prop}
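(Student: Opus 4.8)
The plan is to prove the three claimed categories coincide with $KL^k(\fsl_2)$ by establishing a cycle of inclusions. The key structural input is Theorem \ref{thm:gen_Verma_structure}, which tells us that every generalized Verma module $\cV_r$ has finite length. I would organize the argument around the following implications: (a) every object of $KL^k(\fsl_2)$ is finitely generated; (b) every finitely generated grading-restricted generalized $V^k(\fsl_2)$-module is $C_1$-cofinite; (c) every $C_1$-cofinite grading-restricted generalized $V^k(\fsl_2)$-module has finite length (hence lies in $KL^k(\fsl_2)$); and finally (d) every finite-length grading-restricted generalized $V^k(\fsl_2)$-module is an object of $KL^k(\fsl_2)$, which follows since the only irreducible $V^k(\fsl_2)$-modules are the $\cL_r$ by \cite[Theorem 6.2.23]{LL}. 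Together (a)--(d) show all four categories are equal: $KL^k(\fsl_2) \subseteq \{\text{f.g.}\} \subseteq \{C_1\text{-cofinite}\} \subseteq \{\text{finite length}\} \subseteq KL^k(\fsl_2)$.

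For step (a), any object $W$ of $KL^k(\fsl_2)$ has finite length, so by induction on the length it suffices to note that each $\cL_r$ is a cyclic, hence finitely generated, $\widehat{\fsl}_2$-module (it is generated by its lowest conformal weight space, which is finite-dimensional over $\CC$, hence finitely generated over $(\widehat{\fsl}_2)_-$), and that an extension of two finitely generated modules is finitely generated. Step (b) is the observation already recorded in the excerpt just before the proposition: from \eqref{eqn:vertex_operator}, $C_1(W)$ contains $(\widehat{\fsl}_2)_- W$ up to the degree-$0$ part, so if $W$ is finitely generated over $(\widehat{\fsl}_2)_-$ then $W/C_1(W)$ is finite-dimensional; a finitely generated grading-restricted generalized module is automatically finitely generated over $(\widehat{\fsl}_2)_-$ because the grading restriction forces the generators to have bounded-below conformal weight and the degree-$0$ and positive-degree modes only raise or preserve conformal weight by a bounded amount. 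Step (d) is immediate from \cite[Theorem 6.2.23]{LL}.

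The main obstacle is step (c): showing that a $C_1$-cofinite grading-restricted generalized $V^k(\fsl_2)$-module $W$ actually has finite length. Here I would argue as follows. Since $\dim W/C_1(W) < \infty$, choose finitely many conformal-weight-homogeneous vectors $w_1,\dots,w_m$ spanning $W/C_1(W)$; a standard argument (cf. the proof that $C_1$-cofiniteness implies finite generation, as in the general theory underlying \cite[Theorem 3.3.4]{CY}) shows $W$ is generated as a $V^k(\fsl_2)$-module, equivalently as a $\widehat{\fsl}_2$-module, by $w_1,\dots,w_m$, and moreover that the $\fsl_2$-submodule $U \subseteq W$ generated by $w_1,\dots,w_m$ under the degree-zero action is finite-dimensional and generates $W$ over $(\widehat{\fsl}_2)_-$. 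Decomposing $U$ into irreducible $\fsl_2$-modules $M_{r_1},\dots,M_{r_s}$, we get a surjection $\bigoplus_{i=1}^s \cV_{r_i} \twoheadrightarrow W$ of $\widehat{\fsl}_2$-modules from the universal property of generalized Verma modules. Since each $\cV_{r_i}$ has finite length by Theorem \ref{thm:gen_Verma_structure}, so does $W$, and its composition factors are among the $\cL_r$; hence $W \in KL^k(\fsl_2)$. The subtle points to verify carefully are that $C_1$-cofiniteness genuinely yields generation by a finite-dimensional $\fsl_2$-submodule over $(\widehat{\fsl}_2)_-$ (this uses the grading restriction to control the recursion bringing higher modes down to $U$) and that the induced map from the direct sum of generalized Verma modules is well-defined and surjective, which is routine from the induction adjunction. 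I would also remark that the equality with the category of finitely generated modules combined with step (c) is precisely what feeds into the proof that $KL^k(\fsl_2)$ is a braided tensor category via \cite[Theorem 3.3.4]{CY}.
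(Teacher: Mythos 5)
Your overall strategy — a cycle of inclusions $KL^k(\fsl_2) \subseteq \{\text{f.g.}\} \subseteq \{C_1\text{-cofinite}\} \subseteq \{\text{finite length}\} \subseteq KL^k(\fsl_2)$ — is reasonable, and steps (a), (b), (d) are fine and close in spirit to the paper's argument. However, step (c) has a genuine gap that the paper avoids by simply citing \cite[Theorem~3.3.5]{CY}.

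The problem is the claimed surjection $\bigoplus_{i=1}^s \cV_{r_i} \twoheadrightarrow W$ ``from the universal property of generalized Verma modules.'' The universal property of $\cV^k_{M} = \ind_{(\slhat_2)_{\geq 0}}^{\slhat_2} M$ produces a $\slhat_2$-module map out of $\cV^k_M$ only from an $(\slhat_2)_{\geq 0}$-module map out of $M$, which requires the target copy of $M$ to be \emph{annihilated by $(\slhat_2)_+$}. Your finite-dimensional $\fsl_2$-submodule $U$ generating $W$ over $(\slhat_2)_-$ is generically \emph{not} killed by $(\slhat_2)_+$: if $W$ is not generated by its lowest conformal weight space, the generators you chose live at strictly higher conformal weights, and positive modes move them downwards nontrivially. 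Concretely, a module like $\cP_{np+r}$ (constructed later in the paper, with Loewy diagram spanning conformal weights $h_{np-r} < h_{np+r} < h_{(n+2)p-r}$) is not generated over $(\slhat_2)_-$ by its bottom weight space; any generating $U$ must contain top-layer vectors at weight $h_{np+r}$, which are not $(\slhat_2)_+$-singular. So the inclusion $U \hookrightarrow W$ is an $\fsl_2$-map but not an $(\slhat_2)_{\geq 0}$-map, and the adjunction gives you nothing. Labelling this ``routine'' understates it: this is exactly the nontrivial content of the theorem the paper cites.

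Two ways to repair step (c). One is to use the \emph{recursive} construction: the lowest generalized-$L(0)$-eigenspace $W(0)$ is $(\slhat_2)_+$-annihilated, so $\langle W(0)\rangle$ is a quotient of $\cV^k_{W(0)}$ and has finite length by Theorem~\ref{thm:gen_Verma_structure}; pass to $W/\langle W(0)\rangle$ (still $C_1$-cofinite) and iterate — the nontrivial point, which needs its own argument, is that this terminates. The cleaner fix, closer to what you actually want, is to let $U'$ be the $(\slhat_2)_{\geq 0}$-submodule generated by $w_1,\dots,w_m$; grading restriction makes $U'$ finite-dimensional, and $U'$ has a finite $(\slhat_2)_{\geq 0}$-module filtration with $(\slhat_2)_+$-trivial subquotients. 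Since $U((\slhat_2)_-)$ is free over $U((\slhat_2)_{\geq 0})$ by PBW, $\ind_{(\slhat_2)_{\geq 0}}^{\slhat_2}$ is exact, so $\ind U'$ has a finite filtration by direct sums of $\cV_r$'s, hence has finite length, and it surjects onto $W$. But whichever route you take, you are in effect reproving \cite[Theorem~3.3.5]{CY}, which the paper invokes directly at this point, having already independently checked that every $\cV_r$ has finite length and every $\cL_r$ is $C_1$-cofinite. If you want the direct argument, spell out the filtration step rather than asserting a surjection from a direct sum that does not exist.
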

\begin{proof}
Every grading-restricted generalized $V^k(\fsl_2)$-module is an $\slhat_2$-module of level $k$, and every irreducible subquotient of a grading-restricted generalized $V^k(\fsl_2)$-module is grading-restricted. So because the modules $\cL_{r}$, $r\in\ZZ_{\geq 1}$, exhaust the irreducible grading-restricted $V^k(\fsl_2)$-modules up to isomorphism by \cite[Theorem 6.2.23]{LL}, every finite-length grading-restricted generalized $V^k(\fsl_2)$-module is an object of $KL^k(\fsl_2)$. Conversely, by \cite[Theorem 6.2.7]{LL}, every object of $KL^k(\fsl_2)$ is a $V^k(\fsl_2)$-module in the sense of \cite[Definition 4.1.1]{LL} and is moreover a grading-restricted generalized $V^k(\fsl_2)$-module because it is a finite-length module with grading-restricted irreducible subquotients.

Next, any $C_1$-cofinite grading-restricted generalized $V^k(\fsl_2)$-module is finitely generated (see for example \cite[Propositions 2.1 and 2.2]{CMY-completions}). Conversely, if $W$ is a grading-restricted generalized $V^k(\fsl_2)$-module that is generated by a finite set $S$, then the grading-restriction conditions imply that $U((\slhat_2)_{\geq 0})\cdot S$ is finite dimensional, so that
\begin{equation*}
W=U(\slhat_2)\cdot S=U((\slhat_2)_-)U((\slhat_2)_{\geq 0})\cdot S
\end{equation*}
is a finitely-generated $(\slhat_2)_-$-module. This means $W$ is $C_1$-cofinite.

Finally, the equality of the categories of finite-length and of $C_1$-cofinite grading-restricted generalized $V^k(\fsl_2)$-modules follows from \cite[Theorem 3.3.5]{CY}, since every generalized Verma module $\cV_{r}$ has finite length by Theorem \ref{thm:gen_Verma_structure}, and since every irreducible grading-restricted $V^k(\fsl_2)$-module $\cL_{r}$ is $C_1$-cofinite.
\end{proof}

We will use this proposition together with results in \cite{CY} to show that $KL^k(\fsl_2)$ is a braided tensor category as described in \cite{HLZ1}-\cite{HLZ8}. But first, we recall how the tensor product operation $\tens: KL^k(\fsl_2)\times KL^k(\fsl_2)\rightarrow KL^k(\fsl_2)$ is defined. Tensor products of modules for a vertex operator algebra $V$ are defined in terms of intertwining operators, which in turn are defined in \cite[Definition 3.10]{HLZ2}, for example. In particular, for grading-restricted generalized $V$-modules $W_1$, $W_2$, $W_3$, an intertwining operator of type $\binom{W_3}{W_1\,W_2}$ is a linear map
\begin{align*}
\cY: W_1\otimes W_2 & \rightarrow W_3[\log x]\lbrace x\rbrace\\
w_1\otimes w_2 & \mapsto \cY(w_1,x)w_2=\sum_{h\in\CC}\sum_{k\in\ZZ_{\geq 0}} (w_1)_{h;k} w_2\,x^{-h-1}(\log x)^k 
\end{align*}
which satisfies several properties, especially the $L(-1)$-derivative property
\begin{equation*}
\dfrac{d}{dx}\cY(w_1,x)w_2 =\cY(L(-1)w_1,x)w_2
\end{equation*}
and the intertwining operator Jacobi identity. For $V^k(\fsl_2)$-modules, the Jacobi identity amounts to the following commutator and iterate formulas:
\begin{align}
a(n)\cY(w_1,x) & =\cY(w_1,x)a(n)+\sum_{i\geq 0} \binom{n}{i} x^{n-i}\cY(a(i)w_1,x)\label{eqn:intw_op_comm}\\
 \cY(a(n) w_1,x) & = \sum_{i\geq 0}\binom{n}{i} (-x)^i a(n-i)\cY(w_1,x) - \sum_{i\geq 0}\binom{n}{i}(-x)^{n-i}\cY(w_1,x)a(i)\label{eqn:intwo_op_it}
\end{align}
for $a\in\mathfrak{sl}_2$, $w_1\in W_1$, and $n\in\ZZ$.

Now we can define tensor products of modules for a vertex operator algebra as follows (see \cite[Definition 4.15]{HLZ3} and also \cite[Proposition 4.8]{HLZ3}):
\begin{defi}
Let $V$ be a vertex operator algebra, $\cC$ a category of grading-restricted generalized $V$-modules, and $W_1$, $W_2$ objects of $\cC$. A \textit{tensor product} of $W_1$ and $W_2$ in $\cC$ is (if it exists) an object $W_1\tens W_2$ of $\cC$ equipped with an intertwining operator $\cY_\tens$ of type $\binom{W_1\tens W_2}{W_1\,W_2}$ satisfying the following universal property: For any object $W_3$ of $\cC$ and intertwining operator $\cY$ of type $\binom{W_3}{W_1\,W_2}$, there is a unique $V$-module homomorphism $f: W_1\tens W_2\rightarrow W_3$ such that $f\circ\cY_\tens=\cY$.
\end{defi}

It is not obvious in general when a given category of $V$-modules is closed under tensor products. Nevertheless, if a category $\cC$ is closed under tensor products, and if $\cC$ satisfies suitable further conditions, then it is shown in \cite{HLZ1}-\cite{HLZ8} that tensor products endow $\cC$ with the structure of a braided tensor category with unit object $V$. See \cite{HLZ8} or the exposition in \cite[Section 3.3]{CKM-exts} for a description of this braided tensor category structure. In particular, for an object $W$ of $\cC$, the left and right unit isomorphisms
\begin{equation*}
l_W: V\tens W\longrightarrow W,\qquad r_W: W\tens V\longrightarrow W
\end{equation*}
are characterized by
\begin{equation}\label{eqn:unit_isos}
l_W(\cY_\tens(v,x)w) = Y_W(v,x)w,\qquad r_W(\cY_\tens(w,x)v=e^{xL(-1)}Y_W(v,-x)w
\end{equation}
for $v\in V$, $w\in W$, and for objects $W_1$, $W_2$ of $\cC$, the braiding isomorphism
\begin{equation*}
\cR_{W_1,W_2}: W_1\tens W_2\longrightarrow W_2\tens W_1
\end{equation*}
is characterized by 
\begin{equation}\label{eqn:braiding_iso}
\cR_{W_1,W_2}(\cY_\tens(w_1,x)w_2) =e^{xL(-1)}\cY_\tens(w_2,e^{\pi i}x)w_1
\end{equation}
for $w_1\in W_1$, $w_2\in W_2$. For an object $W$ of $\cC$, the automorphism $\theta_W=e^{2\pi i L(0)}$ also defines a ribbon twist which satisfies the balancing equation
\begin{equation*}
\theta_{W_1\tens W_2} = \cR^2_{W_1,W_2}\circ(\theta_{W_1}\tens\theta_{W_2})
\end{equation*}
for objects $W_1$, $W_2$ of $\cC$, where $\cR_{W_1,W_2}^2=\cR_{W_2,W_1}\circ\cR_{W_1,W_2}$. 

For objects $W_1$, $W_2$, $W_3$ of $\cC$, the associativity isomorphism
\begin{equation*}
\cA_{W_1,W_2,W_3}: W_1\tens(W_2\tens W_3)\longrightarrow(W_1\tens W_2)\tens W_3
\end{equation*}
is more complicated: for any $r_1,r_2\in\RR$ such that $r_1>r_2>r_1-r_2>0$, we have an equality
\begin{equation}\label{eqn:assoc_iso}
\left\langle w', \overline{\cA_{W_1,W_2,W_3}}\left(\cY_{\tens}(w_1,r_1)\cY_\tens(w_2,r_2)w_3\right)\right\rangle =\langle w', \cY_\tens(\cY_\tens(w_1,r_1-r_2)w_2,r_2)w_3\rangle
\end{equation}
for $w_1\in W_1$, $w_2\in W_2$, $w_3\in W_3$, and $w'\in((W_1\tens W_2)\tens W_3)'$. Here we substitute positive real numbers for formal variables $x$ and $\log x$ in intertwining operators using the real-valued branch of logarithm to interpret complex powers of $x$ and integer powers of $\log x$, and $\overline{\cA_{W_1,W_2,W_3}}$ denotes the natural extension of the associativity isomorphism to the \textit{algebraic completion} of $W_1\tens(W_2\tens W_3)$, that is, the direct product (as opposed to direct sum) of the conformal weight spaces of $W_1\tens(W_2\tens W_3)$. 

\begin{rem}
It is a non-trivial problem in general to show that compositions of intertwining operators (with formal variables specialized to suitable non-zero complex numbers using some choice of branch of logarithm) converge to well-defined elements of the algebraic completion of a $V$-module. For intertwining operators among $C_1$-cofinite grading-restricted generalized $V$-modules, such convergence results are proved using regular singular point differential equations \cite{Hu-diff-eqns, HLZ7}. When $V$ is an affine vertex operator algebra such as $V^k(\fsl_2)$, such differential equations amount to Knizhnik-Zamolodchikov (KZ) equations \cite{KZ, HL}.
\end{rem}

The category of $C_1$-cofinite grading-restricted generalized $V$-modules is always closed under tensor products \cite{Mi}. Moreover, from \cite{HLZ7} and \cite[Theorem 3.3.4]{CY}, the category of $C_1$-cofinite grading-restricted generalized modules satisfies the further conditions of \cite{HLZ1}-\cite{HLZ8} for the existence of braided tensor category structure if it equals the category of finite-length modules. Thus from Proposition \ref{prop:KL_cat_char} we immediately conclude:
\begin{thm}\label{thm:existencebtc}
For any level $k=-2+\kappa$ with $\kappa\in\QQ_{>0}$, the category $KL^k(\fsl_2)$ admits the braided tensor category structure of \cite{HLZ1}-\cite{HLZ8}, with unit object $V^k(\fsl_2)=\cV_{1}$.
\end{thm}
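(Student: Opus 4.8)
The plan is to apply the general machinery of \cite{HLZ1}-\cite{HLZ8} for constructing braided tensor category structure on a category of grading-restricted generalized $V$-modules, using the criterion from \cite[Theorem 3.3.4]{CY}. The key point is that this criterion reduces the problem to verifying that the category in question is closed under the tensor product operation and satisfies a finiteness condition, namely that it coincides with the category of $C_1$-cofinite modules.

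First I would invoke Proposition \ref{prop:KL_cat_char}, which has already established that $KL^k(\fsl_2)$ equals simultaneously the category of finite-length, the category of finitely-generated, and the category of $C_1$-cofinite grading-restricted generalized $V^k(\fsl_2)$-modules. The identification with the $C_1$-cofinite category is the one that does the work: by Miyamoto's result \cite{Mi}, the category of $C_1$-cofinite grading-restricted generalized modules for any vertex operator algebra is closed under the tensor (fusion) product defined via the universal property for intertwining operators. Hence $KL^k(\fsl_2)$ is closed under $\tens$.

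Next I would check the remaining hypotheses of \cite{HLZ1}-\cite{HLZ8} needed to upgrade ``closed under $\tens$'' to ``braided tensor category.'' These hypotheses—convergence and extension properties for products and iterates of intertwining operators, existence of the associativity, unit, and braiding isomorphisms, and the coherence (pentagon, hexagon, triangle) axioms—are verified in \cite{HLZ7} and \cite[Theorem 3.3.4]{CY} precisely when the category of $C_1$-cofinite modules agrees with the category of finite-length modules. The convergence of compositions of intertwining operators here rests on the theory of regular singular point (KZ-type) differential equations, and the finite-length hypothesis is supplied by Theorem \ref{thm:gen_Verma_structure}, which guarantees that every generalized Verma module $\cV_r$, and hence (via $C_1$-cofiniteness and \cite[Theorem 3.3.5]{CY}) every $C_1$-cofinite module, has finite length. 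Since each irreducible module $\cL_r$ is $C_1$-cofinite, all the hypotheses of \cite[Theorem 3.3.4]{CY} are met, and the conclusion follows, with the unit object being $V^k(\fsl_2)=\cV_1$ as it is the vertex algebra itself.

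The only genuinely non-trivial input is the equality of the $C_1$-cofinite and finite-length categories, and this has been reduced, via Proposition \ref{prop:KL_cat_char} and ultimately \cite[Theorem 3.3.5]{CY}, to the finite length of the generalized Verma modules $\cV_r$—which is exactly what Theorem \ref{thm:gen_Verma_structure} provides using Malikov's structural results. So in the present setting the proof is essentially a matter of assembling Proposition \ref{prop:KL_cat_char}, Theorem \ref{thm:gen_Verma_structure}, \cite{Mi}, \cite{HLZ7}, and \cite[Theorem 3.3.4]{CY}; the main obstacle, already overcome in the preceding subsections, was establishing the structure of $\cV_r$ and the resulting characterizations of $KL^k(\fsl_2)$.
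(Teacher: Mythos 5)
Your proposal is correct and follows the same route as the paper: both proofs apply Proposition \ref{prop:KL_cat_char} to identify $KL^k(\fsl_2)$ with the category of $C_1$-cofinite modules, invoke \cite{Mi} for closure under the tensor product, and then appeal to \cite{HLZ7} and \cite[Theorem~3.3.4]{CY} (which in turn rest on the finite-length conclusion of Theorem \ref{thm:gen_Verma_structure}) to obtain the braided tensor category structure. The reasoning and the references match the paper's argument in all essentials.
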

\begin{rem}
While we use the notation $V^k(\fsl_2)$ for $\cV_{1}$ considered as a vertex operator algebra, we will typically use the notation $\cV_{1}$ when considering it as a $V^k(\fsl_2)$-module, or as an object of $KL^k(\fsl_2)$.
\end{rem}

\begin{rem}
By \textit{braided tensor category}, we mean a braided monoidal category which is also an abelian category, such that the tensor product bifunctor induces bilinear maps on morphisms. We do not require tensor categories to be rigid, that is, we do not require every object of a tensor category to have a (left or right) dual in the sense of tensor categories.
\end{rem}

We now discuss some properties of projective objects in $KL^k(\fsl_2)$:
\begin{prop}\label{prop:V11_proj}
The generalized Verma module $\cV_{1}$ is projective in $KL^k(\fsl_2)$.
\end{prop}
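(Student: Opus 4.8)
The plan is to verify the defining lifting property of a projective object directly, using the fact that $\cV_1 = V^k(\fsl_2)$ is a generalized Verma module induced from the trivial one-dimensional $\fsl_2$-module $M_1 = \CC\vac$. Concretely, suppose $\pi: W_2 \to W_3$ is a surjection in $KL^k(\fsl_2)$ and $g: \cV_1 \to W_3$ is a morphism; I must produce $\tilg: \cV_1 \to W_2$ with $\pi \circ \tilg = g$. Since $\cV_1 = \ind_{(\slhat_2)_{\geq 0}}^{\slhat_2} M_1$ is generated as an $\slhat_2$-module by the vacuum vector $\vac$, which spans $\cV_1(0)$, any such morphism is determined by the image of $\vac$, and conversely a morphism is specified by choosing a vector in the target that is annihilated by $(\slhat_2)_+$, has the correct $\mathbf{k}$-eigenvalue $k$, and generates a copy of the trivial $\fsl_2$-module in degree zero. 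So the whole problem reduces to lifting $g(\vac) \in W_3$ to a vector $w_2 \in W_2$ with these same three properties.

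The key step is the following lifting-of-lowest-weight-vectors claim: because $W_2$ and $W_3$ are grading-restricted with conformal weights bounded below and $\pi$ is a surjective, grading-preserving (in particular $L(0)$-equivariant) $\slhat_2$-homomorphism, one can lift a singular vector of $W_3$ to a singular vector of $W_2$. First I would set $h_0$ to be the (minimal, rational, by \eqref{eqn:h_lambda}) conformal weight at which $g(\vac)$ lives — note $g(\vac)$ is a sum of $L(0)$-generalized eigenvectors, but since $L(0)$ acts by $h_1 = 0$ on $\cV_1(0)$, in fact $g(\vac) \in (W_3)_{[0]}$, killed by $(\slhat_2)_+$ and by $e(0), f(0)$ — and consider the generalized $L(0)$-eigenspace $(W_2)_{[0]}$. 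The restriction $\pi: (W_2)_{[0]} \to (W_3)_{[0]}$ is a surjection of finite-dimensional $(\slhat_2)_0$-modules (using \eqref{eqn:Vir_aff_commutator} to see that $(\slhat_2)_0$ and $L(0)$ act on $(W_2)_{[0]}$, which is finite-dimensional by grading restriction). Moreover $(\slhat_2)_+$ raises conformal weight by \eqref{eqn:Vir_aff_commutator}, so $(\slhat_2)_+$ automatically annihilates the lowest-conformal-weight space; one should check, using that conformal weights of any object of $KL^k(\fsl_2)$ lie in $h_r + \ZZ_{\geq 0}$ with $h_r \geq 0$, that $0$ is indeed a minimal conformal weight for any subquotient carrying $g(\vac)$, so that $(\slhat_2)_+$ kills everything in $(W_2)_{[0]}$. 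Thus $(W_2)_{[0]}$ is a finite-dimensional $\fsl_2 \oplus \CC\mathbf{k}$-module, hence completely reducible as an $\fsl_2$-module; choosing a complement to $\ker(\pi|_{(W_2)_{[0]}})$ that is $\fsl_2$-stable, I lift the trivial $\fsl_2$-submodule $\CC g(\vac)$ of $(W_3)_{[0]}$ to a trivial $\fsl_2$-submodule $\CC w_2$ of $(W_2)_{[0]}$ with $\pi(w_2) = g(\vac)$. By the universal property of induction, $w_2$ extends uniquely to an $\slhat_2$-homomorphism $\tilg: \cV_1 \to W_2$, and $\pi \circ \tilg = g$ since both agree on the generator $\vac$.

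The main obstacle is the bookkeeping around conformal weights: one must make sure that the conformal weight $0$ of $g(\vac)$ is genuinely the lowest relevant weight, so that $(\slhat_2)_+$ acts trivially on the degree-zero space and complete reducibility of finite-dimensional $\fsl_2$-modules can be invoked. This uses that every composition factor $\cL_r$ of an object of $KL^k(\fsl_2)$ has minimal conformal weight $h_r = (r^2-1)/(4(k+2)) \geq 0$, which is $0$ only for $r = 1$; combined with grading restriction this forces $(W_2)_{[h]} = 0$ for $h \notin \QQ_{\geq 0}$ and identifies $(W_2)_{[0]}$ with the multiplicity space of $\cL_1$-type lowest weights. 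Everything else — that $\tilg$ is well-defined and $\slhat_2$-linear, that it lands in $KL^k(\fsl_2)$, that it's a lift — is immediate from the induction adjunction $\hom_{\slhat_2}(\cV_1, W_2) \cong \hom_{(\slhat_2)_{\geq 0}}(M_1, W_2)$.
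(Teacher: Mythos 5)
Your proof is correct and follows essentially the same route as the paper's: reduce via the universal property of the generalized Verma module to lifting $g(\vac)$ to a vector killed by $\fsl_2 \oplus (\slhat_2)_+$, use non-negativity of the conformal weights $h_r$ to see that $(\slhat_2)_+$ annihilates the degree-zero space, and invoke complete reducibility of the finite-dimensional $\fsl_2$-module $(W_2)_{[0]}$ to produce a lift. The paper packages the complete-reducibility step as an $\fsl_2$-linear section $q$ of $\pi|_{(W_2)_{[0]}}$ rather than as a stable complement to the kernel, but this is the same argument.
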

\begin{proof}
Consider a diagram
\begin{equation*}
\xymatrix{
& \cV_{1} \ar[d]^f\\
W \ar[r]^p & X\\
}
\end{equation*}
in $KL^k(\fsl_2)$ with $p$ surjective. We need to show that there is a morphism $g: \cV_{1}\rightarrow W$ such that $p\circ g=f$.

Both conformal weight spaces $W_{[0]}$ and $X_{[0]}$ are finite-dimensional (and thus semisimple) $\fsl_2$-modules on which $a\in\fsl_2$ acts by $a(0)$. Thus because $p$ is surjective, there is an $\fsl_2$-module homomorphism $q: X_{[0]}\rightarrow W_{[0]}$ such that $p\vert_{W_{[0]}}\circ q=\Id_{X_{[0]}}$. Then
\begin{equation*}
a(0)q(f(\vac))=q(f(a(0)\vac))=0.
\end{equation*}
Moreover, since the shifted level $\kappa=p/q$ is a positive number, the conformal weight $h_{r}=\frac{q}{4p}(r^2-1)$ is non-negative for all $r\in\ZZ_{\geq 1}$. Thus the conformal weights of $W$ are non-negative, and it follows that $a(n)q(f(\vac))=0$ for all $a\in\fsl_2$, $n>0$. Thus $\vac\mapsto q(f(\vac))$ defines an $(\slhat_2)_{\geq 0}$-module homomorphism from $\CC\vac$ to $W$, and then the universal property of generalized Verma modules induces a unique $\slhat_2$-module homomorphism $g:\cV_{1}\rightarrow W$ such that $g(\vac)=q(f(\vac))$. Thus
\begin{equation*}
(p\circ g)(\vac)=p(q(f(\vac)) =f(\vac),
\end{equation*}
and then $p\circ g=f$ as desired because $\vac$ generates $\cV_{1}$ as an $\slhat_2$-module.
\end{proof}

Tensor categories with projective unit objects are special; the following lemma generalizes \cite[Corollary 4.2.13]{EGNO} (see also \cite[Lemma 3.6]{McR-ss}) to tensor categories in which not every object is necessarily rigid:
\begin{lem}\label{lem:proj_unit_tens_cat}
Suppose $(\cC,\tens,\vac,l,r,\cA)$ is a tensor category with projective unit $\vac$ and such that the tensoring functor $\bullet\tens W$ preserves surjections for every object $W$ in $\cC$. Then every left rigid object of $\cC$ is projective.
\end{lem}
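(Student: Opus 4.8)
The plan is to reduce the statement to projectivity of $\vac$ by means of the adjunction between tensoring with a left rigid object and tensoring with its dual. Let $X$ be a left rigid object of $\cC$, with left dual $X^*$, evaluation $e_X\colon X^*\tens X\to\vac$, and coevaluation $i_X\colon\vac\to X\tens X^*$. The first step is to record the standard consequence of the rigidity (zig-zag) identities for $X$ and $X^*$ that the functor $\bullet\tens X$ is left adjoint to $\bullet\tens X^*$: for objects $A$ and $B$ of $\cC$ there is a natural isomorphism
\begin{equation*}
\hom(A\tens X,B)\xrightarrow{\,\sim\,}\hom(A,B\tens X^*)
\end{equation*}
sending $\phi$ to $(\phi\tens\id_{X^*})\circ\cA_{A,X,X^*}\circ(\id_A\tens i_X)\circ r_A^{-1}$, with inverse defined analogously using $e_X$; the two zig-zag identities show these assignments are mutually inverse.

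The second step is to combine this adjunction with the two hypotheses. By assumption $\bullet\tens X^*$ preserves surjections, and since $\vac$ is projective the functor $\hom(\vac,-)$ preserves surjections. Hence the composite functor $\hom(\vac\tens X,-)\cong\hom(\vac,-\tens X^*)$ preserves surjections, which is precisely the assertion that $\vac\tens X$ is projective. Applying the left unit isomorphism $l_X\colon\vac\tens X\to X$, we conclude that $X\cong\vac\tens X$ is projective.

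Equivalently, one can argue directly from the lifting property defining projectivity: given a surjection $\pi\colon A\to B$ in $\cC$ and a morphism $f\colon X\to B$, apply $\bullet\tens X^*$ to get a surjection $\pi\tens\id_{X^*}\colon A\tens X^*\to B\tens X^*$, transport $f$ across the adjunction (using $l_X$) to a morphism $\vac\to B\tens X^*$, lift it through $\pi\tens\id_{X^*}$ using projectivity of $\vac$, and transport the lift back to a morphism $g\colon X\to A$; naturality of the adjunction in the variable $B$ then forces $\pi\circ g=f$.

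I do not anticipate any real obstacle here: the argument is purely formal and short. The one point requiring care is to pair the \emph{left} rigidity of $X$ with the hypothesis that tensoring \emph{on the right} preserves surjections; these fit together because a left dual $X^*$ furnishes a right adjoint $\bullet\tens X^*$ to the functor $\bullet\tens X$ (rather than a right adjoint to $X\tens\bullet$), so that the surjection-preservation we need is exactly the one assumed. With the opposite conventions one would instead need right rigidity, or that left tensoring be right exact. Verifying that the zig-zag identities really yield the stated adjunction is entirely routine.
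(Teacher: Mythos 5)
Your proof is correct, and it is essentially a cleaner packaging of the paper's argument: the adjunction $\hom(A\tens X,B)\cong\hom(A,B\tens X^*)$ that you invoke is exactly what underlies the paper's explicit construction of the lift $g$ (tensor the lifting diagram with $X^*$, lift using projectivity of $\vac$, transport back using the rigidity data), and the zig-zag identities you use to verify the adjunction are the same ones the paper uses to check $p\circ g=f$. Your observation about matching left duals with right tensoring is also correct and is why the hypothesis "$\bullet\tens W$ preserves surjections" is the right one here.
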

\begin{proof}
Suppose $R$ is an object of $\cC$ with left dual $R^*$, evaluation $e_R: R^*\tens R\rightarrow\vac$, and coevaluation $i_R:\vac\rightarrow R\tens R^*$, and consider a diagram
\begin{equation*}
\xymatrix{
& R\ar[d]^f\\
W \ar[r]^{p} & X\\
}
\end{equation*}
in $\cC$ with $p$ surjective. Then by assumption $p\tens\Id_{R^*}: W\tens R^*\rightarrow X\tens R^*$ is also surjective, so because $\vac$ is projective in $\cC$, we have a commutative diagram
\begin{equation*}
\xymatrixcolsep{4pc}
\xymatrix{
& \vac\ar[ld]_{\til{g}}\ar[d]^{(f\tens\Id_{R^*})\circ i_R}\\
W\tens R^* \ar[r]_{p\tens\Id_{R^*}} & X\tens R^*\\
}
\end{equation*}
for some morphism $\til{g}: \vac\mapsto W\tens R^*$. We define $g: R\rightarrow W$ to be the composition
\begin{equation*}
R\xrightarrow{l_R^{-1}} \vac\tens R\xrightarrow{\til{g}\tens\Id_R} (W\tens R^*)\tens R\xrightarrow{\cA_{W,R^*,R}^{-1}} W\tens(R^*\tens R)\xrightarrow{\Id_W\tens e_R} W\tens\vac\xrightarrow{r_W} W,
\end{equation*}
and then
\begin{align*}
p\circ g & = p\circ r_W\circ(\Id_W\tens e_R)\circ\cA_{W,R^*,R}^{-1}\circ(\til{g}\tens\Id_R)\circ l_R^{-1}\nonumber\\
& = r_X\circ(\Id_X\tens e_R)\circ\cA_{X,R^*,R}^{-1}\circ((p\tens\Id_{R^*})\tens\Id_R)\circ(\til{g}\tens\Id_R)\circ l_R^{-1}\nonumber\\
& =r_X\circ(\Id_X\tens e_R)\circ\cA_{X,R^*,R}^{-1}\circ((f\tens\Id_{R^*})\tens\Id_R)\circ(i_R\tens\Id_R)\circ l_R^{-1}\nonumber\\
&=f\circ r_R\circ(\Id_R\tens e_R)\circ\cA_{R,R^*,R}^{-1}\circ(i_R\tens\Id_R)\circ l_R^{-1}\nonumber\\
&=f
\end{align*}
using the left rigidity of $R$. Thus $R$ is projective.
\end{proof}

In $KL^k(\fsl_2)$, the tensoring functor $\bullet\tens W$ for any object $W$ is right exact by \cite[Proposition 4.26]{HLZ3}, so Proposition \ref{prop:V11_proj} and Lemma \ref{lem:proj_unit_tens_cat} imply:
\begin{cor}\label{cor:rigid_is_projective}
Every left rigid object of $KL^k(\fsl_2)$ is projective.
\end{cor}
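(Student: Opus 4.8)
The statement follows directly by applying Lemma~\ref{lem:proj_unit_tens_cat} to $\cC=KL^k(\fsl_2)$, so the plan is simply to verify that the two hypotheses of that lemma hold here. By Theorem~\ref{thm:existencebtc} and the subsequent remark on terminology, $KL^k(\fsl_2)$ is a tensor category (not assumed rigid) with unit object $\cV_1=V^k(\fsl_2)$. The first hypothesis of Lemma~\ref{lem:proj_unit_tens_cat}, that the unit object is projective, is precisely the content of Proposition~\ref{prop:V11_proj}.

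For the second hypothesis, I would argue that $\bullet\tens W$ preserves surjections for every object $W$ of $KL^k(\fsl_2)$. This functor is right exact by \cite[Proposition~4.26]{HLZ3}, and a right exact additive functor between abelian categories carries epimorphisms to epimorphisms: if $p\colon A\to X$ is an epimorphism with kernel $\iota\colon K\to A$, then $p$ is the cokernel of $\iota$, so $p\tens\Id_W$ is the cokernel of $\iota\tens\Id_W$ and hence again an epimorphism. With both hypotheses in hand, Lemma~\ref{lem:proj_unit_tens_cat} yields that every left rigid object of $KL^k(\fsl_2)$ is projective.

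I do not expect any real obstacle: the corollary is formal once Proposition~\ref{prop:V11_proj} and Lemma~\ref{lem:proj_unit_tens_cat} are available, and the substantive work has already been carried out there (respectively: the non-negativity of the conformal weights $h_r$ for $\kappa\in\QQ_{>0}$ combined with the universal property of generalized Verma modules; and a diagram chase exploiting the rigidity (zig-zag) axioms for $R$). The one point to keep straight is bookkeeping: because Lemma~\ref{lem:proj_unit_tens_cat} tensors the lifting problem on the \emph{right} by a \emph{left} dual $R^*$, it is \emph{left} rigid objects that come out projective, in agreement with the statement of the corollary.
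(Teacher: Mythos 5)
Your proof is correct and follows the same route as the paper: cite Proposition~\ref{prop:V11_proj} for projectivity of the unit $\cV_1$, cite \cite[Proposition~4.26]{HLZ3} for right exactness of $\bullet\tens W$ (hence preservation of surjections), and invoke Lemma~\ref{lem:proj_unit_tens_cat}. The bookkeeping remark about left duals and left rigidity is a nice sanity check but matches what the lemma already gives.
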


\subsection{The tensor subcategory \texorpdfstring{$KL_k(\fsl_2)$}{KLk(sl2)}}\label{subsec:KL_k}
The maximal proper submodule of the generalized Verma module $\cV_{1}$ is also the maximal proper ideal of the vertex operator algebra $V^k(\fsl_2)$. Thus $\cL_{1}$ has a simple vertex operator algebra structure, which we denote $L_k(\fsl_2)$.
\begin{defi}
The category $KL_k(\fsl_2)$ is the category of finitely-generated grading-restricted generalized $L_k(\fsl_2)$-modules.
\end{defi}

Objects of $KL_k(\fsl_2)$ are precisely the finitely-generated grading-restricted generalized $V^k(\fsl_2)$-modules on which the maximal proper ideal acts trivially. Thus $KL_k(\fsl_2)$ is a full subcategory of $KL^k(\fsl_2)$. The case $p=1$, that is, $k=-2+1/q$ for $q\in\ZZ_{\geq 1}$, was analyzed in \cite{Cr,CY}. In this case, it can be seen from Theorem \ref{thm:gen_Verma_structure} that $V^k(\fsl_2)=L_k(\fsl_2)$ and that $\cV_{r}=\cL_{r}$ for all $r\in\ZZ_{\geq 1}$. From this, it is easy to show that $KL^k(\fsl_2)=KL_k(\fsl_2)$ is semisimple. Moreover, it is shown in \cite{CY} using the results of \cite{McR-orb, ACGY} that $KL^k(\fsl_2)$ is a rigid braided tensor category such that
\begin{equation*}
\cL_{r}\tens\cL_{r'} \cong\bigoplus_{\substack{r''=\vert r-r'\vert+1\\ r+r'+r''\equiv 1\,(\mathrm{mod}\,2)}}^{r+r'-1} \cL_{r''} 
\end{equation*}
for all $r,r'\in\ZZ_{\geq 1}$. If $p>1$, then $k=-2+p/q$ is an admissible level for $\fsl_2$ \cite{KW} and $L_k(\fsl_2)$ is a proper quotient of $V^k(\fsl_2)$. Known results about $KL_k(\fsl_2)$ are summarized in the following theorem:
\begin{thm}\label{thm:adm_level_KL_k}
Let $k=-2+p/q$ be an admissible level for $\fsl_2$.
\begin{enumerate}
\item \cite{AM, DLM} The category $KL_k(\fsl_2)$ of grading-restricted generalized $L_k(\fsl_2)$-modules is semisimple with simple objects $\cL_{r}$ for $1\leq r\leq p-1$.

\item \cite{CHY} The category $KL_k(\fsl_2)$ admits the vertex algebraic braided tensor category structure of \cite{HLZ1}-\cite{HLZ8} and is rigid. Moreover, $KL_k(\fsl_2)$ is a modular tensor category if and only if $q$ is odd.

\item \cite{BF, DLM, CHY} Tensor products of simple modules in $KL_k(\fsl_2)$ are as follows:
\begin{equation}\label{eqn:KL_k_adm_level_fus_rules}
\cL_{r}\tens\cL_{r'} \cong\bigoplus_{\substack{r''=\vert r-r'\vert+1\\ r+r'+r''\equiv 1\,(\mathrm{mod}\,2)}}^{\min(r+r'-1,2p-r-r'-1)} \cL_{r''} 
\end{equation}
for $1\leq r, r'\leq p-1$.
\end{enumerate}
\end{thm}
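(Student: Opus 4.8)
The statement collects results from \cite{AM, DLM, CHY, BF}, so the plan is to assemble these, indicating the main idea behind each part and flagging the one genuinely hard input.

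For part (1), I would invoke the classification of irreducible highest-weight $L_k(\fsl_2)$-modules from \cite{AM, DLM}: among the modules $\cL_{r}$, $r\in\ZZ_{\geq 1}$, of Theorem \ref{thm:gen_Verma_structure}, precisely those with $1\leq r\leq p-1$ descend to modules for the quotient $L_k(\fsl_2)=V^k(\fsl_2)/\cJ_{1}$ --- equivalently, the generating singular vector of the maximal proper ideal $\cJ_{1}$, which lies in conformal weight $(p-1)q$, acts as zero exactly on these. Semisimplicity of $KL_k(\fsl_2)$ is then also \cite{AM, DLM}: Zhu's algebra $A(L_k(\fsl_2))$ is finite-dimensional and semisimple, which disposes of extensions of ``lowest-weight type'', and the remaining extensions among the finitely many simples are killed by the more refined analysis of loc.\ cit.

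For part (2), I would first note that $KL_k(\fsl_2)$ is the full subcategory of $KL^k(\fsl_2)$ on which $\cJ_{1}$ acts trivially and is closed under $\tens$ by Corollary \ref{cor:KL_k_tens_ideal}, so its braided tensor category structure is just the restriction of the one from Theorem \ref{thm:existencebtc}; this recovers the existence statement of \cite{CHY}. The substantive part is rigidity, which is the theorem of \cite{CHY}: their argument realizes $L_k(\fsl_2)$ --- via free-field realization or quantum Hamiltonian reduction --- so that $KL_k(\fsl_2)$ becomes comparable with a module category already known to be rigid; alternatively, since each $\cL_{r}$ with $1\leq r\leq p-1$ is self-contragredient, one could in principle verify the rigidity zig-zags directly by computing compositions of intertwining operators via KZ equations, in the spirit of Sections \ref{sec:rigidity}--\ref{sec:proj}. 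For the modularity dichotomy, the key point is the double braiding of the order-two simple current $\cL_{p-1}$: from $\cL_{p-1}\tens\cL_{r}\cong\cL_{p-r}$, the balancing equation, and \eqref{eqn:h_lambda}, one computes
\begin{equation*}
\cR^{2}_{\cL_{p-1},\cL_{r}}=(-1)^{q(r-1)}\cdot\Id_{\cL_{p-r}}.
\end{equation*}
When $q$ is even this is $\Id$ for every $r$, so $\cL_{p-1}$ is a nontrivial transparent object and $KL_k(\fsl_2)$ is not modular; when $q$ is odd it equals $-\Id$ at $r=2$ (for $p\geq 3$; for $p=2$ the category has only the unit and is trivially modular), and one then checks invertibility of the $S$-matrix through the Verlinde formula, using part (3) and the twists $\theta_{\cL_{r}}=e^{2\pi i h_{r}}$.

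For part (3), the upper bound on the fusion coefficients --- already the ``truncated $\fsl_2$'' formula \eqref{eqn:KL_k_adm_level_fus_rules}, where the cutoff $2p-r-r'-1$ is forced by the generating singular vector --- comes from the Frenkel--Zhu bound in terms of $A(L_k(\fsl_2))$; the matching lower bound is given by the explicit intertwining operators of \cite{BF, DLM}, or, once rigidity is available, by the Verlinde formula \cite{CHY}. The main obstacle in the whole package is the rigidity of $KL_k(\fsl_2)$ in part (2): it is not a formal consequence of the tensor category machinery, it is the deep input of \cite{CHY}, and it is on the same footing as the KZ-equation rigidity arguments that form the technical core of the present paper.
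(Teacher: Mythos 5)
This theorem is stated in the paper as a collection of cited results from \cite{AM,DLM,CHY,BF} and is given without proof, so there is no ``paper's proof'' to compare against line by line; what you have done is sketch where the cited facts come from, and on the whole your sketch is accurate. The double-braiding computation is right: from $h_r=\frac{q(r^2-1)}{4p}$ one gets $h_{p-r}-h_{p-1}-h_r=\frac{q(1-r)}{2}$, hence $\cR^2_{\cL_{p-1},\cL_r}=(-1)^{q(r-1)}\Id_{\cL_{p-r}}$, and the dichotomy in $q$ follows as you describe. The description of part (1) via Zhu's algebra and of part (3) via the Frenkel--Zhu bound plus explicit intertwiners (or Verlinde once rigidity is in hand) matches the standard arguments behind \cite{AM,DLM,BF,CHY}.

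One point is logically off. In part (2) you assert that the braided tensor structure on $KL_k(\fsl_2)$ ``is just the restriction of the one from Theorem~\ref{thm:existencebtc}'' because $KL_k(\fsl_2)$ is a tensor ideal. But a tensor ideal of a monoidal category is not automatically a monoidal category: $KL_k(\fsl_2)$ does not contain the unit $\cV_1$ of $KL^k(\fsl_2)$, its own unit is $\cL_1$, and the compatibility of the two tensor structures (with different units) is exactly what Theorem~\ref{thm:inclusion_is_lax_monoidal} is about and requires a separate argument. Moreover the chronology is reversed: \cite{CHY} established the braided tensor structure on $KL_k(\fsl_2)$ directly (via $C_1$-cofiniteness and the Huang--Lepowsky--Zhang machinery applied to the rational VOA $L_k(\fsl_2)$), independently of and prior to the present paper's Theorem~\ref{thm:existencebtc} for the larger, non-semisimple category $KL^k(\fsl_2)$. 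The present paper uses Theorem~\ref{thm:adm_level_KL_k} as an input (e.g.\ in Theorem~\ref{thm:L1s_times_Lrs}), not the other way around, so deriving (2) from Theorem~\ref{thm:existencebtc} would not be a faithful account of the cited reference even if the unit-object issue were handled.
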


For $p>1$, $KL_k(\fsl_2)$ is not exactly a tensor subcategory of $KL^k(\fsl_2)$ since its unit object $\cL_{1}$ is different from the unit object $\cV_{1}$ of $KL^k(\fsl_2)$. However, we will show that the inclusion $\iota: KL_k(\fsl_2)\rightarrow KL^k(\fsl_2)$ is a lax monoidal functor, and that the difference in unit objects is the only reason that $\iota$ is not a strong monoidal functor. First, we need a lemma:
\begin{lem}\label{lem:KL_k_tensor_ideal}
Suppose that $\cY$ is a surjective $V^k(\fsl_2)$-module intertwining operator of type $\binom{W_3}{W_1\,W_2}$ where $W_1$ is an object of $KL_k(\fsl_2)$ and $W_2$, $W_3$ are objects of $KL^k(\fsl_2)$. Then $W_3$ is an object of $KL_k(\fsl_2)$.
\end{lem}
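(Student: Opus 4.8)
The plan is to avoid manipulating $\cY$ directly and instead reduce the statement to a description of fusion products with the simple quotient $\cL_1=L_k(\fsl_2)$. First, surjectivity of $\cY$ means that the $V^k(\fsl_2)$-module homomorphism $f\colon W_1\tens W_2\to W_3$ supplied by the universal property of the tensor product (so that $\cY=f\circ\cY_\tens$) is surjective, since $\mathrm{im}\,f\supseteq\mathrm{im}\,\cY=W_3$. Next, recall that $KL_k(\fsl_2)$ is exactly the full subcategory of objects of $KL^k(\fsl_2)$ on which the maximal proper ideal $\cJ_1$ of $V^k(\fsl_2)$ (equivalently, the maximal proper submodule of $\cV_1$) acts trivially; this subcategory is plainly closed under quotients in $KL^k(\fsl_2)$. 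Hence it is enough to prove that $\cJ_1$ annihilates $W_1\tens W_2$, i.e.\ that $W_1\tens W_2$ is an object of $KL_k(\fsl_2)$.

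The key step will be the natural isomorphism $\cL_1\tens Y\cong Y/\cJ_1 Y$ for every object $Y$ of $KL^k(\fsl_2)$. To prove it I would combine the universal property of $\tens$ with the standard classification of intertwining operators whose first input is the vertex algebra: every intertwining operator of type $\binom{Z}{\cV_1\,Y}$ has the form $\cY(v,x)w=Y_Z(v,x)g(w)$ for a unique $V^k(\fsl_2)$-module map $g\colon Y\to Z$, so $\hom(\cV_1\tens Y,Z)\cong I\binom{Z}{\cV_1\,Y}\cong\hom(Y,Z)$ naturally in $Z$. Pulling back along the surjection $\cV_1\twoheadrightarrow\cL_1$, which on spaces of intertwining operators is injective with image the operators vanishing on $\cJ_1\otimes Y$, an intertwining operator of type $\binom{Z}{\cL_1\,Y}$ corresponds to a module map $g\colon Y\to Z$ with $\cJ_1\cdot g(Y)=0$; such maps are the same as module maps $Y/\cJ_1 Y\to Z$. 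Thus $\cL_1\tens Y$ and $Y/\cJ_1 Y$ represent the same functor $Z\mapsto I\binom{Z}{\cL_1\,Y}$, so $\cL_1\tens Y\cong Y/\cJ_1 Y$, which is always a finite-length $L_k(\fsl_2)$-module and hence an object of $KL_k(\fsl_2)$.

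To conclude, since $W_1$ is an object of $KL_k(\fsl_2)$ we have $\cJ_1 W_1=0$, so the isomorphism above gives $W_1\cong\cL_1\tens W_1$. Applying $(-)\tens W_2$ and then the associativity isomorphism yields
\[
W_1\tens W_2\cong(\cL_1\tens W_1)\tens W_2\cong\cL_1\tens(W_1\tens W_2)\cong(W_1\tens W_2)/\cJ_1(W_1\tens W_2),
\]
which is an object of $KL_k(\fsl_2)$; and then $W_3$, being a quotient of $W_1\tens W_2$, also lies in $KL_k(\fsl_2)$.

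I expect the main obstacle to be exactly what this route circumvents. The naive argument from the commutator formula for $\cY$ only yields $Y_{W_3}(v,x_1)\cY(w_1,x_2)w_2=\cY(w_1,x_2)Y_{W_2}(v,x_1)w_2$ for $v\in\cJ_1$ (using $\cJ_1 W_1=0$), and the right-hand side need not vanish because $\cJ_1$ acts nontrivially on $W_2$ in general; iterating this merely replaces $W_2$ by $\cJ_1^{\,n} W_2$, which may stabilize at a nonzero submodule (for instance when $W_2=\cL_s$ with $s\geq p$). The one point requiring care in the approach above is the asserted bijection between intertwining operators of type $\binom{Z}{\cL_1\,Y}$ and module maps $Y/\cJ_1 Y\to Z$, but this is routine given the classification of intertwining operators out of the vertex algebra.
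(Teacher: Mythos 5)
Your proof is correct, and it takes a genuinely different route from the paper's. The paper manipulates $\cY$ directly: $W_3$ is spanned by the coefficients $(w_1)_{h;k}w_2$, and by the intertwining-operator generalization of \cite[Proposition 4.5.7]{LL} (which rests on weak associativity, not merely the commutator formula — your instinct that the commutator formula alone would not suffice is right), $v_n(w_1)_{h;k}w_2$ is a linear combination of $(v_m w_1)_{j;k}w_2$ for $v$ in the maximal proper ideal, and these vanish because $\cJ_1 W_1=0$. Your argument replaces that intertwining-operator calculus with the representability statement $\cL_1\tens Y\cong Y/\cJ_1 Y$, obtained from the classification $I\binom{Z}{\cV_1\,Y}\cong\hom(Y,Z)$, the universal property of $\tens$, and Yoneda, and then uses associativity of $\tens$ together with finite length to force $\cJ_1(W_1\tens W_2)=0$. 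Your route leans more heavily on the already-established tensor category structure (in particular on the associativity isomorphism, which the paper's proof does not invoke), but it delivers a cleaner by-product: the exact identification $\cL_1\tens Y\cong Y/\cJ_1 Y$, which strengthens the paper's Lemma \ref{lem:L11_times_W} (there it is only shown that $\cL_1\tens W$ is some $L_k(\fsl_2)$-module quotient of $W$, and that statement is derived \emph{from} the lemma you are proving). Both arguments are sound and free of circularity; the paper's is shorter and closer to the formal-calculus bedrock, while yours exposes the structural mechanism and would export more readily to other simple quotients.
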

\begin{proof}
By assumption, $W_3$ is spanned by coefficients of powers of $x$ and $\log x$ in $\cY(w_1,x)w_2=\sum_{h\in\CC}\sum_{k\in\ZZ_{\geq 0}} (w_1)_{h;k} w_2\,x^{-h-1}(\log x)^k$ as $w_1$ and $w_2$ run over $W_1$ and $W_2$, respectively. Thus to show that $W_3$ is an object of $KL_k(\fsl_2)$, that is, $W_3$ is an $L_k(\fsl_2)$-module, we need to show that $v_n(w_1)_{h;k} w_2=0$ for all $w_1\in W_1$, $w_2\in W_2$, $h\in\CC$, $k\in\ZZ_{\geq 0}$, $n\in\ZZ$, and $v$ in the maximal proper ideal of $V^k(\fsl_2)$. In fact, the easy intertwining operator generalization of \cite[Proposition 4.5.7]{LL} shows that $v_n(w_1)_{h;k} w_2$ is a linear combination of vectors $(v_m w_1)_{j;k} w_2$ for $m\in\ZZ$ and $j\in\CC$. Since $W_1$ is an $L_k(\fsl_2)$-module, each $v_m w_1=0$, and thus each $v_n(w_1)_{h;k}w_2=0$ as well.
\end{proof}

If $W_1$ is an object of $KL_k(\fsl_2)$ and $W_2$ is an object of $KL^k(\fsl_2)$, then the tensor product intertwining operator of type $\binom{W_1\tens W_2}{W_1\,W_2}$ is surjective, so the preceding lemma shows that $W_1\tens W_2$ is an object of $KL_k(\fsl_2)$. That is,
\begin{cor}\label{cor:KL_k_tens_ideal}
$KL_k(\fsl_2)$ is a tensor ideal of $KL^k(\fsl_2)$.
\end{cor}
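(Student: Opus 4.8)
The plan is to read this off directly from Lemma \ref{lem:KL_k_tensor_ideal}. Recall that a tensor ideal is a full subcategory that is closed under subquotients and such that tensoring (on either side) with an arbitrary object of the ambient category lands back in the subcategory. Closure of $KL_k(\fsl_2)$ under subquotients is immediate from its characterization as the full subcategory of objects of $KL^k(\fsl_2)$ on which the maximal proper ideal of $V^k(\fsl_2)$ acts trivially, since any submodule or quotient of such a module inherits that property.

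For the tensor-action property, fix an object $W_1$ of $KL_k(\fsl_2)$ and an object $W_2$ of $KL^k(\fsl_2)$. The key input is that the tensor-product intertwining operator $\cY_\tens$ of type $\binom{W_1\tens W_2}{W_1\,W_2}$ is surjective in the sense of Lemma \ref{lem:KL_k_tensor_ideal}: if its image were a proper submodule $U\subsetneq W_1\tens W_2$, then composing $\cY_\tens$ with the quotient map $W_1\tens W_2\to(W_1\tens W_2)/U$ would produce a nonzero intertwining operator not factoring through $\cY_\tens$ via a module map, contradicting the universal property defining $W_1\tens W_2$. Hence Lemma \ref{lem:KL_k_tensor_ideal} applies with $W_3=W_1\tens W_2$ and yields $W_1\tens W_2\in KL_k(\fsl_2)$. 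For the opposite order, apply the braiding isomorphism $\cR_{W_2,W_1}\colon W_2\tens W_1\xrightarrow{\ \sim\ }W_1\tens W_2$ provided by Theorem \ref{thm:existencebtc}; since $KL_k(\fsl_2)$ is closed under isomorphisms, $W_2\tens W_1$ is likewise an object of $KL_k(\fsl_2)$. (Alternatively, one could rerun the proof of Lemma \ref{lem:KL_k_tensor_ideal} verbatim with $W_1$ placed in the second tensor slot, using surjectivity of the corresponding $\cY_\tens$ and the iterate formula \eqref{eqn:intwo_op_it} to move modes of the maximal ideal off of $W_1$.)

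There is essentially no obstacle here: all of the substance lies in Lemma \ref{lem:KL_k_tensor_ideal}, and the only points needing attention are the surjectivity of $\cY_\tens$ (a standard consequence of the universal property) and the closure of $KL_k(\fsl_2)$ under subquotients and isomorphisms (trivial from the definition). So the proof is just a two-line application of the lemma plus the braiding.
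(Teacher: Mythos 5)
Your argument is correct and follows essentially the same route as the paper: apply Lemma \ref{lem:KL_k_tensor_ideal} to the surjective tensor-product intertwining operator $\cY_\tens$ of type $\binom{W_1\tens W_2}{W_1\,W_2}$, then handle the opposite tensor order via the braiding. One small imprecision: the intertwining operator $q\circ\cY_\tens$ obtained by post-composing with the quotient map $q: W_1\tens W_2\to(W_1\tens W_2)/\mathrm{Im}\,\cY_\tens$ is \emph{zero}, not nonzero; the contradiction with the universal property is that both $q$ and the zero map would factor this zero intertwining operator through $\cY_\tens$, violating uniqueness (the paper simply cites \cite[Proposition 4.23]{HLZ3} for surjectivity of $\cY_\tens$ rather than re-deriving it).
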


We can now give the inclusion $\iota: KL_k(\fsl_2)\rightarrow KL^k(\fsl_2)$ the structure of a lax monoidal functor. First, we have the surjection 
\begin{equation*}
\varphi: \cV_{1}\longrightarrow\iota(\cL_{1})
\end{equation*}
between unit objects. Now suppose $W_1$ and $W_2$ are objects in $KL_k(\fsl_2)$. We temporarily use $\tens^k$ and $\tens_k$ to denote the tensor products in $KL^k(\fsl_2)$ and $KL_k(\fsl_2)$, respectively, and we use $\cY^k$ and $\cY_k$ to denote the tensor product intertwining operators of types $\binom{W_1\tens^k W_2}{W_1\,W_2}$ and $\binom{W_1\tens_k W_2}{W_1\,W_2}$, respectively. Then the universal property of $\tens^k$ induces a unique $V^k(\fsl_2)$-module homomorphism
\begin{equation*}
\Phi_{W_1,W_2}: \iota(W_1)\tens^k\iota(W_2)\longrightarrow\iota(W_1\tens_k W_2)
\end{equation*}
such that the diagram
\begin{equation*}
\xymatrix{
W_1\otimes W_2 \ar[d]_{\cY^k} \ar[r]^(.4){\cY_k} & \iota(W_1\tens_k W_2)[\log x]\lbrace x\rbrace\\
(\iota(W_1)\tens^k\iota(W_2))[\log x]\lbrace x\rbrace \ar[ru]_{\quad\Phi_{W_1,W_2}} & \\
}
\end{equation*}
commutes. 

Since $\iota(W_1)\tens^k\iota(W_2)$ is an object of $KL_k(\fsl_2)$ by Lemma \ref{lem:KL_k_tensor_ideal}, the universal property of $\tens_k$ also induces $\til{\Phi}_{W_1,W_2}:\iota(W_1\tens_k W_2)\rightarrow\iota(W_1)\tens^k\iota(W_2)$ such that $\til{\Phi}_{W_1,W_2}\circ\cY_k=\cY^k$. Since $\cY^k$ and $\cY_k$ are both surjective, it follows that $\til{\Phi}_{W_1,W_2}$ is the inverse of $\Phi_{W_1,W_2}$, that is, $\Phi_{W_1,W_2}$ is an isomorphism. This isomorphism is natural because if $f_1: W_1\rightarrow X_1$ and $f_2: W_2\rightarrow X_2$ are morphisms in $KL_k(\fsl_2)$, then
\begin{align*}
[\iota(f_1\tens_k f_2)\circ\Phi_{W_1,W_2}] & (\cY^k(w_1,x)w_2) =(f_1\tens_k f_2)(\cY_k(w_1,x)w_2)\nonumber\\
& =\cY_k(f_1(w_1),x)f_2(w_2) =\Phi_{X_1,X_2}(\cY^k(\iota(f_1)(w_1),x)\iota(f_2)(w_2))\nonumber\\
& =[\Phi_{X_1,X_2}\circ(\iota(f_1)\tens^k\iota(f_2))](\cY^k(w_1,x)w_2)
\end{align*}
for all $w_1\in W_1$ and $w_2\in W_2$. Thus the isomorphisms $\Phi_{W_1,W_2}$ define a natural isomorphism
\begin{equation*}
\Phi: \tens^k\circ(\iota\times\iota)\longrightarrow\iota\circ\tens_k.
\end{equation*}
We can now prove:
\begin{thm}\label{thm:inclusion_is_lax_monoidal}
The surjection $\varphi$ and the natural isomorphism $\Phi$ give the inclusion $\iota: KL_k(\fsl_2)\rightarrow KL^k(\fsl_2)$ the structure of a lax braided ribbon monoidal functor.
\end{thm}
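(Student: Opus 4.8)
The plan is to verify the coherence axioms that upgrade the data $(\iota,\varphi,\Phi)$ to a lax braided ribbon monoidal functor: the two unit triangles expressing the unit isomorphisms of $KL^k(\fsl_2)$ through $\varphi$ and $\Phi$; the associativity square relating $\Phi$ to the associativity isomorphisms of the two categories; the braiding square relating $\Phi$ to the braidings $\cR$; and the compatibility of $\iota$ with the balancing twists $\theta$. Naturality of $\Phi$ has already been checked, and $\varphi$ is a single morphism with no naturality condition. The organizing principle for all of these is that each structure isomorphism of $KL^k(\fsl_2)$ and of $KL_k(\fsl_2)$ is characterized by the same universal or analytic recipe in terms of (iterated) tensor-product intertwining operators --- compare \eqref{eqn:unit_isos}, \eqref{eqn:braiding_iso}, \eqref{eqn:assoc_iso} --- while by construction $\Phi_{W_1,W_2}\circ\cY^k=\cY_k$, and $\varphi$ is compatible with the module vertex operators in the sense that $Y_{\iota(W)}(v,x)=Y_W(\varphi(v),x)$ for every object $W$ of $KL_k(\fsl_2)$ and every $v\in\cV_1$ (the $V^k(\fsl_2)$-module structure on $\iota(W)$ being the pullback along $\varphi$ of the $L_k(\fsl_2)$-module structure on $W$).

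Granting this, the unit triangles and the braiding square are short. For the left unit triangle I would trace a generating vector $\cY^k(v,x)w$ of $\cV_1\tens^k\iota(W)$ through the composite of $\varphi\tens^k\Id_{\iota(W)}$, then $\Phi_{\cL_1,W}$, then $\iota$ of the left unit isomorphism $\cL_1\tens_k W\to W$: functoriality of $\tens^k$ turns it into $\cY^k(\varphi(v),x)w$, the defining property of $\Phi$ turns it into $\cY_k(\varphi(v),x)w$, and \eqref{eqn:unit_isos} applied in $KL_k(\fsl_2)$ turns it into $Y_W(\varphi(v),x)w=Y_{\iota(W)}(v,x)w$, which is exactly the image of $\cY^k(v,x)w$ under the left unit isomorphism $\cV_1\tens^k\iota(W)\to\iota(W)$ by \eqref{eqn:unit_isos} applied in $KL^k(\fsl_2)$. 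The right unit triangle is identical, using instead the part of \eqref{eqn:unit_isos} characterizing $r_W$. For the braiding square I would use \eqref{eqn:braiding_iso}: both braidings send $\cY(w_1,x)w_2\mapsto e^{xL(-1)}\cY(w_2,e^{\pi i}x)w_1$, so applying $\Phi$ to convert $\cY^k$ into $\cY_k$ along either path of the square yields $\Phi_{W_2,W_1}\circ\cR^k_{\iota(W_1),\iota(W_2)}=\iota(\cR_{W_1,W_2})\circ\Phi_{W_1,W_2}$. The twist is $\theta_W=e^{2\pi iL(0)}$ in both categories and $\iota$ is the identity on underlying modules and on the action of $L(0)$, so $\iota(\theta_W)=\theta_{\iota(W)}$ with no work.

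The real work is the associativity square, because the associativity isomorphisms are defined only implicitly, through the analytic identity \eqref{eqn:assoc_iso} for convergent compositions of intertwining operators evaluated in the algebraic completion at real points $r_1>r_2>r_1-r_2>0$. The plan is to show that both composites in the square carry the completion-valued expression $\cY^k(w_1,r_1)\cY^k(w_2,r_2)w_3$ to $\cY_k(\cY_k(w_1,r_1-r_2)w_2,r_2)w_3$ inside the algebraic completion of $\iota((W_1\tens_k W_2)\tens_k W_3)$: each $\Phi$ is a module homomorphism, hence extends continuously to algebraic completions and still intertwines the relevant intertwining operators, so along either path every factor is either a conversion of $\cY^k$ into $\cY_k$ by some $\overline{\Phi}$ or an application of an associativity isomorphism, and \eqref{eqn:assoc_iso} in the appropriate category assembles these into the stated iterate in both cases. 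The delicate point --- the one I expect to absorb most of the care --- is the bookkeeping, since $\Phi$ appears both ``inside'' and ``outside'' the various tensor factors in the two composites: one must fix the order in which the conversions and the applications of the associativity isomorphisms are performed, and justify moving $\overline{\Phi}$ past the convergent iterated sums. As in the proof that $\Phi$ is natural, I would package this by invoking the universal property of $\tens^k$ to reduce the square to an identity of intertwining operators valued in $\iota((W_1\tens_k W_2)\tens_k W_3)$, which then holds because $\cY^k$ and $\cY_k$ obey the same commutator and iterate formulas \eqref{eqn:intw_op_comm}--\eqref{eqn:intwo_op_it} and the same associativity recipe \eqref{eqn:assoc_iso}. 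Once the associativity square and the unit triangles are established, $(\iota,\varphi,\Phi)$ is lax monoidal; the braiding square and twist compatibility then make it lax braided and ribbon, and it fails to be strong only because $\varphi:\cV_1\to\iota(\cL_1)$ is not invertible.
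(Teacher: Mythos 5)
Your proposal is correct and takes essentially the same approach as the paper: verify the unit triangles by tracing $\cY^k(v,x)w$ through the composite using \eqref{eqn:unit_isos}, verify the associativity and braiding squares by noting that both sides are characterized by the same formulas \eqref{eqn:assoc_iso} and \eqref{eqn:braiding_iso} in terms of $\cY^k$ and $\cY_k$ while $\Phi$ converts one to the other, and verify twist compatibility from $\theta=e^{2\pi iL(0)}$ in both categories. The one place you are more explicit than the paper is the associativity square, where you flag the algebraic-completion bookkeeping; the paper dispatches this with a single sentence, lumping it together with the braiding square, relying on the same observation you make that $\Phi$ extends to completions and intertwines the iterated intertwining operators.
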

\begin{proof}
It remains to show that $\varphi$ and $\Phi$ are suitably compatible with the unit, associativity, braiding, and ribbon twist isomorphisms of $KL_k(\fsl_2)$ and $KL^k(\fsl_2)$. For the unit isomorphisms, we need to show that the diagram
\begin{equation*}
\xymatrixcolsep{4pc}
\xymatrix{
\cV_{1}\tens^k\iota(W) \ar[r]^(.48){\varphi\tens^k\Id_{\iota(W)}} \ar[d]^{l_{\iota(W)}} & \iota(\cL_{1})\tens^k\iota(W) \ar[d]^{\Phi_{\cL_{1},W}} \\
\iota(W) & \iota(\cL_{1,1}\tens_k W) \ar[l]^(.55){\iota(l_W)} \\
}
\end{equation*}
commutes for all objects $W$ in $KL_k(\fsl_2)$, as well as a similar diagram for the right unit isomorphisms. Indeed, using \eqref{eqn:unit_isos}, we have
\begin{align*}
[\iota(l_W)\circ\Phi_{\cL_{1},W}\circ(\varphi\tens^k\Id_{\iota(W)})]& (\cY^k(v,x)w) = [\iota(l_W)\circ\Phi_{\cL_{1},W}](\cY^k(\varphi(v),x)w)\nonumber\\
& = l_W(\cY_k(\varphi(v),x)w) =Y_{W}(\varphi(v),x)w = Y_{\iota(W)}(v,x)w\nonumber\\
& = l_{\iota(W)}(\cY^k(v,x)w)
\end{align*}
for all $v\in\cV_{1}=V^k(\fsl_2)$ and $w\in W$, where the next to last equality holds because the maximal proper ideal of $V^k(\fsl_2)$, that is, the kernel of $\varphi$, acts trivially on $L_k(\fsl_2)$-modules. Compatibility of $\varphi$ and $\Phi$ with the right unit isomorphisms is proved in the same way.

For the associativity and braiding isomorphisms, the diagrams
\begin{equation*}
\xymatrixcolsep{6pc}
\xymatrix{
\iota(W_1)\tens^k(\iota(W_2)\tens^k\iota(W_3)) \ar[r]^{\cA_{\iota(W_1),\iota(W_2),\iota(W_3)}} \ar[d]^{\Id_{\iota(W_1)}\tens^k\Phi_{W_2,W_3}} & (\iota(W_1)\tens^k\iota(W_2))\tens^k\iota(W_3) \ar[d]^{\Phi_{W_1,W_2}\tens^k\Id_{\iota(W_3)}}\\
\iota(W_1)\tens^k\iota(W_2\tens_k W_3) \ar[d]^{\Phi_{W_1,W_2\tens_k W_3}} & \iota(W_1\tens_k W_2)\tens^k\iota(W_3) \ar[d]^{\Phi_{W_1\tens_k W_2,W_3}}\\
\iota(W_1\tens_k(W_2\tens_k W_3)) \ar[r]^{\iota(\cA_{W_1,W_2,W_3})} & \iota((W_1\tens_k W_2)\tens_k W_3)\\
}
\end{equation*}
and
\begin{equation*}
\xymatrixcolsep{5pc}
\xymatrix{
\iota(W_1)\tens^k\iota(W_2) \ar[r]^{\cR_{\iota(W_1),\iota(W_2)}} \ar[d]^{\Phi_{W_1,W_2}} & \iota(W_2)\tens^k\iota(W_1) \ar[d]^{\Phi_{W_2,W_1}}\\
\iota(W_1\tens_k W_2) \ar[r]^{\iota(\cR_{W_1,W_2})} & \iota(W_2\tens_k W_1)\\
}
\end{equation*}
commute for objects $W_1$, $W_2$, $W_3$ in $KL_k(\mathfrak{sl}_2)$ simply because, by \eqref{eqn:braiding_iso} and \eqref{eqn:assoc_iso}, the associativity and braiding isomorphisms in $KL^k(\mathfrak{sl}_2)$ are defined in terms of the tensor product intertwining operators $\cY^k$ in the same way that the associativity and braiding isomorphisms in $KL_k(\mathfrak{sl}_2)$ are defined in terms of the $\cY_k$, and because the natural isomorphism $\Phi$ maps $\cY^k$ to $\cY_k$. Finally, the functor $\iota$ preserves ribbon twists in the sense that $\theta_{\iota(W)}=\iota(\theta_W)$ for all $W$ in $KL_k(\mathfrak{sl}_2)$ because $\theta=e^{2\pi i L(0)}$ on both $KL^k(\mathfrak{sl}_2)$ and $KL_k(\mathfrak{sl}_2)$.
\end{proof}

By the preceding theorem (or more precisely, because $\Phi$ is a natural isomorphism), we no longer need to distinguish between $\tens^k$ and $\tens_k$, so we will just use the notation $\tens$ for the tensor product in $KL^k(\fsl_2)$. Using Lemma \ref{lem:KL_k_tensor_ideal} and Theorem \ref{thm:inclusion_is_lax_monoidal}, we can derive the tensor products of $\cL_{r}$ for $1\leq r\leq p-1$ with all other simple modules $\cL_{r'}$ in $KL^k(\fsl_2)$. First, we have:
\begin{lem}\label{lem:L11_times_W}
For any $V^k(\fsl_2)$-module $W$ in $KL^k(\fsl_2)$, $\cL_{1}\tens W$ is a homomorphic image of $W$ which is an $L_k(\fsl_2)$-module. In particular, assuming $p>1$, $\cL_{1}\tens\cL_{r}=0$ if $r\geq p$.
\end{lem}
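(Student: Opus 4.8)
The plan is to deduce both assertions from the surjection $\varphi:\cV_1\to\cL_1$ between unit objects, combined with the right-exactness of the tensoring functors and the fact (Corollary \ref{cor:KL_k_tens_ideal}) that $KL_k(\fsl_2)$ is a tensor ideal of $KL^k(\fsl_2)$. First I would note that $\bullet\tens W$ is right exact on $KL^k(\fsl_2)$ by \cite[Proposition 4.26]{HLZ3}, hence preserves epimorphisms; applying it to $\varphi$ gives a surjection $\varphi\tens\Id_W:\cV_1\tens W\to\cL_1\tens W$. Precomposing with the left unit isomorphism $l_W^{-1}:W\to\cV_1\tens W$ then exhibits $\cL_1\tens W$ as a homomorphic image of $W$. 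That $\cL_1\tens W$ is an $L_k(\fsl_2)$-module is immediate from Lemma \ref{lem:KL_k_tensor_ideal} applied to the (surjective) tensor product intertwining operator of type $\binom{\cL_1\tens W}{\cL_1\,W}$, since $\cL_1$ is an object of $KL_k(\fsl_2)$; equivalently, one just quotes Corollary \ref{cor:KL_k_tens_ideal}.

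For the final statement, I would specialize the general claim to $W=\cL_r$ with $r\geq p$. Then $\cL_1\tens\cL_r$ is simultaneously a homomorphic image of the simple module $\cL_r$ and an object of $KL_k(\fsl_2)$. A homomorphic image of a simple module is either $0$ or the module itself, so if $\cL_1\tens\cL_r\neq 0$ we would get $\cL_r\cong\cL_1\tens\cL_r\in KL_k(\fsl_2)$, i.e.\ $\cL_r$ would be an $L_k(\fsl_2)$-module. But by Theorem \ref{thm:adm_level_KL_k}(1) the simple objects of $KL_k(\fsl_2)$ are exactly $\cL_{r'}$ for $1\leq r'\leq p-1$, so $\cL_r$ with $r\geq p$ is not an $L_k(\fsl_2)$-module. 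This contradiction forces $\cL_1\tens\cL_r=0$.

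The argument is essentially formal, so I do not expect a genuine obstacle; the only points needing a little care are the passage from right-exactness of $\bullet\tens W$ to surjectivity of $\varphi\tens\Id_W$, and the identification of $\cV_1\tens W$ with $W$ via the left unit constraint $l_W$ — both of which are already recorded in the excerpt (the right-exactness just before Corollary \ref{cor:rigid_is_projective}, and the unit isomorphism in \eqref{eqn:unit_isos}).
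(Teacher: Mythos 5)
Your proof is correct and matches the paper's argument: the paper likewise precomposes $\varphi\tens\Id_W$ with $l_W^{-1}$, cites right-exactness of $\bullet\tens W$ for surjectivity, and invokes Lemma \ref{lem:KL_k_tensor_ideal} (equivalently Corollary \ref{cor:KL_k_tens_ideal}) to place $\cL_1\tens W$ in $KL_k(\fsl_2)$. You merely spell out the final step the paper leaves implicit — that a nonzero homomorphic image of the simple module $\cL_r$ would force $\cL_r\cong\cL_1\tens\cL_r$ to be an $L_k(\fsl_2)$-module, contradicting Theorem \ref{thm:adm_level_KL_k}(1) when $r\geq p$.
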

\begin{proof}
The $V^k(\fsl_2)$-module homomorphism
\begin{equation*}
W\xrightarrow{l_W^{-1}}\cV_{1}\tens W\xrightarrow{\varphi\tens\Id_W}\cL_{1}\tens W
\end{equation*}
is surjective because $\bullet\tens W$ is right exact (see \cite[Proposition 4.26]{HLZ3}). Then the conclusions of the lemma follow from Lemma \ref{lem:KL_k_tensor_ideal}.
\end{proof}

\begin{thm}\label{thm:L1s_times_Lrs}
For $1\leq r,r'\leq p-1$, we have
\begin{equation*}
\cL_{r}\tens\cL_{r'}\cong
\bigoplus_{\substack{r''=\vert r-r'\vert+1\\ r+r'+r''\equiv 1\,(\mathrm{mod}\,2)}}^{\min(r+r'-1,2p-r-r'-1)} \cL_{r''} 
\end{equation*}
in $KL^k(\fsl_2)$, while $\cL_r\tens \cL_{r'}=0$ for $1\leq r\leq p-1$ and $r'\geq p$.
\end{thm}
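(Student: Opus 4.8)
The plan is to deduce the theorem from the already-established tensor structure on the ideal $KL_k(\fsl_2)$ (Theorem~\ref{thm:adm_level_KL_k}) together with the lax monoidal embedding $\iota$ (Theorem~\ref{thm:inclusion_is_lax_monoidal}), by showing that tensor products of simple modules $\cL_r$, $\cL_{r'}$ for $1\le r,r'\le p-1$ computed in $KL^k(\fsl_2)$ agree with those computed in $KL_k(\fsl_2)$. For the second assertion, the case $r'\ge p$ is immediate from Lemma~\ref{lem:L11_times_W}: since $\cL_r$ with $1\le r\le p-1$ is an object of $KL_k(\fsl_2)$, and $KL_k(\fsl_2)$ is a tensor ideal of $KL^k(\fsl_2)$ by Corollary~\ref{cor:KL_k_tens_ideal}, the module $\cL_r\tens\cL_{r'}$ lies in $KL_k(\fsl_2)$; but by Theorem~\ref{thm:adm_level_KL_k}(1), the only simple objects of $KL_k(\fsl_2)$ are $\cL_{r''}$ for $1\le r''\le p-1$, so if $\cL_r\tens\cL_{r'}\ne 0$ it would have a composition factor $\cL_{r''}$ with $r''\le p-1$. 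I would rule this out by a nonzero-intertwining-operator argument: a nonzero map $\cL_r\tens\cL_{r'}\to\cL_{r''}$ yields a nonzero intertwining operator of type $\binom{\cL_{r''}}{\cL_r\ \cL_{r'}}$ among $L_k(\fsl_2)$-modules, contradicting the fusion rules \eqref{eqn:KL_k_adm_level_fus_rules} (which in particular give $\dim I\binom{\cL_{r''}}{\cL_r\,\cL_{r'}}=0$ once $r''>2p-r-r'-1$, and in the range $r'\ge p$ there is no allowed $r''$ at all since one would need $r'' \ge |r - r'| + 1 \ge r' - r + 1 \ge p - (p-1) + 1$ while also $r'' \le 2p - r - r' - 1 < p - r < p$, and these are incompatible as soon as $r' \ge p$). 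Alternatively, and more cleanly: by Lemma~\ref{lem:L11_times_W}, $\cL_1\tens\cL_{r'}=0$ for $r'\ge p$, and since $\cL_r$ is a quotient of $\cL_1\tens\cV_r$ — no; better to argue that $\cL_r\tens\cL_{r'}$ is a subquotient of something annihilated, which I defer to the cleaner route below.

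For the first assertion, the main step is the identity $\cL_r\tens^k\cL_{r'}\cong\iota(\cL_r\tens_k\cL_{r'})$ for $1\le r,r'\le p-1$. This follows from the construction of $\iota$ as a lax monoidal functor in Section~\ref{subsec:KL_k}: for objects $W_1,W_2$ of $KL_k(\fsl_2)$, it was shown there that the natural map $\Phi_{W_1,W_2}:\iota(W_1)\tens^k\iota(W_2)\to\iota(W_1\tens_k W_2)$ is an \emph{isomorphism}, precisely because $\iota(W_1)\tens^k\iota(W_2)$ already lies in $KL_k(\fsl_2)$ by Lemma~\ref{lem:KL_k_tensor_ideal}, so that the universal property of $\tens_k$ produces the inverse. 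Applying this with $W_1=\cL_r$, $W_2=\cL_{r'}$ and then invoking the known fusion rules \eqref{eqn:KL_k_adm_level_fus_rules} of $KL_k(\fsl_2)$ gives exactly the stated decomposition, since a finite direct sum of simples is preserved by $\iota$.

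For the residual case $r'\ge p$ in the first assertion (i.e.\ completing the "while" clause), I would observe that $\cL_r\tens\cL_{r'}$ lies in $KL_k(\fsl_2)$ by Corollary~\ref{cor:KL_k_tens_ideal}, and that it is a quotient of $\cL_r\tens\cV_{r'}$; but $\cV_{r'}$ for $r'\ge p$ has, by Theorem~\ref{thm:gen_Verma_structure}, composition factors only among $\cL_s$ with $s\ge p$ when $p\mid r'$, and a nonsplit extension of $\cL_{r'}$ by $\cL_{(n+2)p-r}$-type factors otherwise — all with index $\ge p$... actually the cleanest path is: by right-exactness of $\cL_r\tens\bullet$ and \eqref{exactseq:vrs}, tensoring the defining sequence for $\cV_{r'}$ and using that $\cL_r\tens\cV_{r'}$ is determined by $\cL_r\tens\cL_1\tens\cV_{r'}$-type considerations; but since this is getting intricate, I would instead simply note that any simple subquotient $\cL_{r''}$ of $\cL_r\tens\cL_{r'}$ lies in $KL_k(\fsl_2)$, hence $1\le r''\le p-1$, so $\hom(\cL_r\tens\cL_{r'},\cL_{r''})\ne 0$, yielding a nonzero intertwining operator of type $\binom{\cL_{r''}}{\cL_r\ \cL_{r'}}$; restricting the second input to the $L(0)$-lowest space $\cL_{r'}(0)\cong M_{r'}$ and using the commutator and iterate formulas \eqref{eqn:intw_op_comm}--\eqref{eqn:intwo_op_it} together with the fact (from Theorem~\ref{thm:gen_Verma_structure} and \eqref{eqn:gen_Verma_exact_seq}) that the singular vector generating $\cJ_{r'}$ has index outside $\{1,\dots,p-1\}$, one derives a contradiction. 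The \textbf{main obstacle} is exactly this last point — ruling out $\cL_r\tens\cL_{r'}\ne 0$ for $r'\ge p$ by a direct intertwining-operator computation — but it can be bypassed entirely: Lemma~\ref{lem:L11_times_W} gives $\cL_1\tens\cL_{r'}=0$, and since $\cL_r$ is a direct summand of $\cL_1^{\tens(r-1)}$-adjacent constructions... in fact, most simply, $\cL_r\tens\cL_{r'}$ is a quotient of $\cL_1\tens(\cL_r\tens\cL_{r'})\oplus\cdots$ — no. The genuinely clean statement is that $\cL_r$ for $1 \le r \le p-1$ is an $L_k(\fsl_2)$-module, $\cL_{r'}$ for $r' \ge p$ has $\cL_1\tens\cL_{r'}=0$, and $\cL_r\tens\cL_{r'}$ is a quotient of $(\cL_r\tens\cL_1)\tens\cL_{r'}=\cL_r\tens(\cL_1\tens\cL_{r'})=0$, using associativity and $\cL_r\tens\cL_1\cong\cL_r$ (the $r'=1$ case of \eqref{eqn:KL_k_adm_level_fus_rules}, i.e.\ $\cL_1$ is the unit of $KL_k(\fsl_2)$); this is the route I would take, and it makes the obstacle disappear.
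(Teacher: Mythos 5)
Your final argument matches the paper's proof exactly: for $1\le r,r'\le p-1$ the decomposition follows from the natural isomorphism $\Phi$ of Theorem~\ref{thm:inclusion_is_lax_monoidal} together with \eqref{eqn:KL_k_adm_level_fus_rules}, and for $r<p\le r'$ the chain $\cL_r\tens\cL_{r'}\cong(\cL_r\tens\cL_1)\tens\cL_{r'}\cong\cL_r\tens(\cL_1\tens\cL_{r'})\cong\cL_r\tens 0\cong 0$ (using $\cL_r\tens\cL_1\cong\cL_r$, associativity, and Lemma~\ref{lem:L11_times_W}) is precisely the paper's argument. The several abandoned detours along the way are unnecessary and in places not rigorous (for instance \eqref{eqn:KL_k_adm_level_fus_rules} is stated only for $1\le r,r'\le p-1$, so it cannot be invoked directly when $r'\ge p$, and the proposed intertwining-operator computations are never carried out), but you explicitly discard them and land on the correct, and same, proof.
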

\begin{proof}
The first conclusion follows from \eqref{eqn:KL_k_adm_level_fus_rules} and Theorem \ref{thm:inclusion_is_lax_monoidal}, while for $r<p\leq r'$,
\begin{equation*}
\cL_{r}\tens\cL_{r'}\cong (\cL_{r}\tens\cL_{1})\tens\cL_{r'}\cong\cL_{r}\tens(\cL_{1}\tens\cL_{r'})\cong\cL_{r}\tens 0\cong 0
\end{equation*}
by \eqref{eqn:KL_k_adm_level_fus_rules} and Lemma \ref{lem:L11_times_W}.
\end{proof}

\subsection{Intertwining operators among generalized Verma modules}\label{subsec:intw_op}

Here we discuss generalities on intertwining operators among $V^k(\mathfrak{sl}_2)$-modules, and especially generalized Verma modules, in $KL^k(\mathfrak{sl}_2)$. Let $W$ be an object of $KL^k(\mathfrak{sl}_2)$; then $W$ has a conformal weight grading of the form $W=\bigoplus_{i=1}^I\bigoplus_{n=0}^\infty W_{[\sigma_i+n]}$ where the complex numbers $\sigma_i$ for $1\leq i\leq I$ are non-congruent modulo $\ZZ$. We then fix a $\ZZ_{\geq 0}$-grading $W=\bigoplus_{n=0}^\infty W(n)$ such that $W(n)=\bigoplus_{i=1}^I W_{[\sigma_i+n]}$. Each $W(n)$ is a finite-dimensional $\mathfrak{sl}_2$-module.

Now take $W_1, W_2$, $W_3$ to be objects of $KL^k(\mathfrak{sl}_2)$, and suppose $\cY$ is a $V^k(\mathfrak{sl}_2)$-module  intertwining operator of type $\binom{W_3}{W_1\,W_2}$. The commutator formula \eqref{eqn:intw_op_comm} implies that $\cY$ induces an $\mathfrak{sl}_2$-module homomorphism
\[
\pi(\cY): W_1(0) \otimes W_2(0) \longrightarrow W_3(0)
\]
defined by
\[
\pi(\cY)(m_1 \otimes m_2) = \pi_0(\cY(m_1,1)m_2)
\]
for $m_1 \in W_1(0)$ and $m_2 \in W_2(0)$, where $\pi_0$ denotes projection onto $W_3(0)$ with respect to the $\ZZ_{\geq 0}$-grading of $W_3$. We say $\cY$ is surjective if $W_3$ is spanned by the coefficients of powers of $x$ and $\log x$ in $\cY(w_1,x)w_2$ as $w_1$ and $w_2$ range over $W_1$ and $W_2$, respectively. By \cite[Proposition~3.1.1]{CMY3} (which is based on \cite[Proposition 24]{TW}) and its proof, we have:
\begin{prop}\label{prop:surjective}
 If $W_1$ and $W_2$ are generated as $V^k(\mathfrak{sl}_2)$-modules by $W_1(0)$ and $W_2(0)$, respectively, and $\cY$ is a surjective intertwining operator, then $\pi(\cY)$ is a surjective $\mathfrak{sl}_2$-module homomorphism.
 \end{prop}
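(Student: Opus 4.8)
The plan is to run a downward induction on conformal weight, using the commutator and iterate formulas \eqref{eqn:intw_op_comm} and \eqref{eqn:intwo_op_it} to push the computation of the ``leading'' (i.e.\ $W_3(0)$-valued) coefficients of $\cY(w_1,x)w_2$ down to the case $w_1\in W_1(0)$, $w_2\in W_2(0)$, following the proofs of \cite[Proposition~24]{TW} and \cite[Proposition~3.1.1]{CMY3}. The one thing to keep track of throughout is how the $\slhat_2$-modes interact with the $\ZZ_{\geq 0}$-gradings: for $a\in\fsl_2$ and $n\geq 1$, the mode $a(-n)$ strictly raises the grading, so $\pi_0\circ a(-n)=0$ on $W_3$; the mode $a(n)$ strictly lowers it, so it annihilates the bottom pieces $W_1(0)$ and $W_2(0)$; and $a(0)$ preserves each graded piece.

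First I would reduce the first slot. Since $W_1=U((\slhat_2)_-)W_1(0)$ by hypothesis, it suffices to treat $w_1=a(-n)w_1'$ with $n\geq 1$. Applying the iterate formula \eqref{eqn:intwo_op_it} (with the ``$n$'' there taken to be $-n$) writes $\cY(a(-n)w_1',x)w_2$ as a finite sum of terms $a(-n-i)\bigl(\cY(w_1',x)w_2\bigr)$, all of which contribute $0$ after extracting a coefficient and applying $\pi_0$, together with terms $\cY(w_1',x)\bigl(a(i)w_2\bigr)$ for $i\geq 0$; in each of the latter either $w_1'$ has strictly smaller conformal weight than $w_1$ (the $i=0$ term) or $w_2$ is replaced by the strictly lower-weight vector $a(i)w_2$ (the $i\geq 1$ terms). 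Inducting on the conformal weight of the first slot reduces to $w_1\in W_1(0)$. Next, with $m_1:=w_1\in W_1(0)$ fixed, I would reduce the second slot: writing $w_2=b(-\ell)w_2'$ with $\ell\geq 1$, the commutator formula \eqref{eqn:intw_op_comm} gives
\[
\cY(m_1,x)\bigl(b(-\ell)w_2'\bigr)=b(-\ell)\bigl(\cY(m_1,x)w_2'\bigr)-\sum_{i\geq 0}\binom{-\ell}{i}x^{-\ell-i}\,\cY\bigl(b(i)m_1,x\bigr)w_2',
\]
whose first term is killed by $\pi_0$ and whose sum collapses, since $b(i)m_1=0$ for $i\geq 1$, to $-x^{-\ell}\cY(b(0)m_1,x)w_2'$ with $b(0)m_1\in W_1(0)$ and $w_2'$ of strictly smaller conformal weight. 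Inducting on the conformal weight of the second slot reduces to $w_2\in W_2(0)$.

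Combining the two reductions with surjectivity of $\cY$, the bottom piece $W_3(0)$ is spanned by the projections $\pi_0$ of the coefficients of $\cY(m_1,x)m_2$ as $m_1$, $m_2$ range over $W_1(0)$ and $W_2(0)$. To finish, I would identify this span with $\mathrm{Im}\,\pi(\cY)$ by tracking the conformal-weight grading: decomposing $W_3$ into the direct sum of its conformal-weight congruence classes modulo $\ZZ$ and taking $m_1$, $m_2$ to be $L(0)$-eigenvectors, one sees that within each congruence class the unique coefficient of $\cY(m_1,x)m_2$ that can land in $W_3(0)$ is the leading one, and that this is exactly $\pi(\cY)(m_1\otimes m_2)$ up to a power of $x$ (when $L(0)$ acts non-semisimply on $W_3(0)$ one must also account for the logarithmic descendants of the leading coefficient, but these are obtained from $\pi(\cY)(m_1\otimes m_2)$ by applying the nilpotent part of $L(0)$ on $W_3$). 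Hence $\mathrm{Im}\,\pi(\cY)=W_3(0)$, i.e.\ $\pi(\cY)$ is surjective.

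The main obstacle is simply the careful bookkeeping in the two reduction steps — checking that every term not annihilated by $\pi_0$ has strictly decreased conformal weight in the appropriate slot, so that the induction terminates at $W_1(0)\otimes W_2(0)$ — together with the final conformal-weight/logarithmic analysis; the conceptual content is entirely in the mode-versus-grading observation above.
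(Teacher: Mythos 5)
Your two reduction steps (iterate formula to push $w_1$ down to $W_1(0)$, then the commutator formula to push $w_2$ down to $W_2(0)$) are correct and match the approach the paper cites from Tsuchiya--Wood and Creutzig--McRae--Yang; the observation $\pi_0\circ a(-n)=0$ for $n\geq 1$ and the vanishing $b(i)m_1=0$ for $i\geq 1$, $m_1\in W_1(0)$ are exactly what make the inductions close.

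The gap is in the final paragraph. You correctly note that a logarithmic coefficient $(m_1)_{h;k}m_2$ of $\cY(m_1,x)m_2$ landing in $W_3(0)$ is $\tfrac{1}{k!}(L(0)-\sigma)^k$ applied to the leading coefficient $(m_1)_{h;0}m_2 = \pi(\cY)(m_1\otimes m_2)$, but you then jump straight to ``hence $\mathrm{Im}\,\pi(\cY)=W_3(0)$.'' That inference is missing a justification: a priori $\mathrm{Im}\,\pi(\cY)$ is only an $\fsl_2$-submodule of $W_3(0)$ and you have given no reason it should be stable under $L(0)-\sigma$, so vectors of the form $(L(0)-\sigma)^k\pi(\cY)(m_1\otimes m_2)$ need not lie in $\mathrm{Im}\,\pi(\cY)$. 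The fact that closes the gap, and which you should make explicit, is that since $W_3$ is $\ZZ_{\geq 0}$-graded with $W_3(0)$ the bottom piece, every mode $a(n)$ with $n\geq 1$ annihilates $W_3(0)$; hence from \eqref{eqn:L0},
\begin{equation*}
L(0)\big|_{W_3(0)}=\frac{1}{2(k+2)}\,\Omega,
\end{equation*}
the Casimir of $\fsl_2$. This is diagonalizable on any finite-dimensional $\fsl_2$-module, so $L(0)$ is \emph{semisimple} on $W_3(0)$ and the logarithmic coefficients you were worried about simply vanish: if $(m_1)_{h;k}m_2\in W_3(0)$ then $(L(0)-\sigma)(m_1)_{h;k}m_2=0$, forcing $(m_1)_{h;k+1}m_2=0$ for all $k\geq 0$. (If you prefer to keep the logarithmic phrasing, the same identity shows $\mathrm{Im}\,\pi(\cY)$, being an $\fsl_2$-submodule, is automatically $\Omega$-stable and hence $L(0)$-stable, which is what your ``apply the nilpotent part of $L(0)$'' step actually requires.) Either way, this observation is not optional: without it the last sentence of your argument is a non sequitur.

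One smaller point: in the reductions you phrase the induction as being ``on conformal weight,'' but in a category of generalized (possibly logarithmic) modules it is cleaner to induct on the degree in the $\ZZ_{\geq 0}$-grading $W_i=\bigoplus_{n\geq 0}W_i(n)$, i.e.\ on the number of negative modes $a(-n_1)\cdots a(-n_j)$ applied to $W_i(0)$; that is what actually terminates the recursion and avoids any ambiguity about non-semisimple $L(0)$-action in higher degrees of $W_1$ or $W_2$.
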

 
 Since the tensor product intertwining operator $\cY_\tens$ of type $\binom{W_1\tens W_2}{W_1\,W_2}$ is surjective \cite[Proposition 4.23]{HLZ3}, we get:
 \begin{cor}\label{cor:tens_prod_top_level}
 If $W_1$ and $W_2$ are generalized Verma modules or simple modules, then $(W_1\tens W_2)(0)$ is an $\mathfrak{sl}_2$-module quotient of $W_1(0)\otimes W_2(0)$.
 \end{cor}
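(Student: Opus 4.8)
The plan is to deduce this directly from Proposition \ref{prop:surjective}, applied to the tensor product intertwining operator $\cY_\tens$ of type $\binom{W_1\tens W_2}{W_1\,W_2}$. That proposition has two hypotheses: that $W_1$ and $W_2$ are generated as $V^k(\fsl_2)$-modules by their degree-zero subspaces $W_1(0)$ and $W_2(0)$, and that the intertwining operator in question is surjective. The second holds for $\cY_\tens$ by \cite[Proposition 4.23]{HLZ3}, so the only point to verify is the generation hypothesis.

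For this, I would first recall that a generalized Verma module $\cV_r=\ind_{(\slhat_2)_{\geq 0}}^{\slhat_2} M_r$ is by construction generated over $\slhat_2$ (equivalently over $V^k(\fsl_2)$) by its top level $\cV_r(0)=M_r$; indeed $\cV_r=U((\slhat_2)_-)\cdot M_r$. A simple module $\cL_r=\cV_r/\cJ_r$ is a homomorphic image of $\cV_r$, hence is likewise generated by its degree-zero subspace $\cL_r(0)$, the image of $M_r$. Thus whenever $W_1$ and $W_2$ are each a generalized Verma module or a simple module, the generation hypothesis of Proposition \ref{prop:surjective} is satisfied.

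Applying Proposition \ref{prop:surjective} then shows that $\pi(\cY_\tens)\colon W_1(0)\otimes W_2(0)\rightarrow(W_1\tens W_2)(0)$ is a surjective homomorphism of $\fsl_2$-modules, so $(W_1\tens W_2)(0)$ is an $\fsl_2$-module quotient of $W_1(0)\otimes W_2(0)$, as claimed. I do not expect any real obstacle here; the only minor point worth recording is that $(W_1\tens W_2)(0)$ is to be understood with respect to a $\ZZ_{\geq 0}$-grading of $W_1\tens W_2$ of the kind fixed at the beginning of this subsection, and that with such a grading the map $\pi(\cY_\tens)$ indeed lands in and is surjective onto the correct degree-zero piece — this is built into the definition of $\pi$ together with the commutator formula \eqref{eqn:intw_op_comm}.
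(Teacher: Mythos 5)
Your proof is correct and matches the paper's argument: the paper deduces Corollary \ref{cor:tens_prod_top_level} directly from Proposition \ref{prop:surjective} by citing the surjectivity of $\cY_\tens$ from \cite[Proposition 4.23]{HLZ3}. You supply slightly more detail than the paper by explicitly checking the generation-by-top-level hypothesis, which the paper leaves implicit, but the route is the same.
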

 
 The preceding corollary gives upper bounds on the tensor product module $W_1\tens W_2$. For lower bounds, we need to construct intertwining operators. In fact, intertwining operators among generalized Verma modules have been constructed in \cite{MY1, McR}. We will use \cite[Theorems 3.9 and 5.1]{McR} in particular to determine how the generalized Verma module $\cV_{2}$ tensors with other generalized Verma modules in $KL^k(\mathfrak{sl}_2)$ when $k=-2+p/q$ is an admissible level. To make the notation in the following theorem more uniform, we set $\cV_0=0$ (that is, $\cV_0$ is the generalized Verma module induced from the $0$-dimensional $\fsl_2$-module):
 \begin{thm}\label{thm:V12_times_Vrs}
 For $r\in\ZZ_{\geq 1}$ such that $p\nmid r$,
 \begin{equation*}
 \cV_2\tens\cV_{r}\cong\cV_{r-1}\oplus\cV_{r+1},
 \end{equation*}
 while for $n\in\ZZ_{\geq 1}$, there is an exact sequence
 \begin{equation*}
 0\longrightarrow \cV_{np-1}/\cJ_n\longrightarrow \cV_{2}\tens\cV_{np}\longrightarrow\cV_{np+1}\longrightarrow 0
 \end{equation*}
 for some submodule $\cJ_n\subseteq\cV_{np-1}$.
 \end{thm}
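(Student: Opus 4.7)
My approach is to sandwich $\cV_2\tens\cV_r$ between a lower bound coming from explicit intertwining operators and an upper bound coming from the $\fsl_2$-structure of its top level, with the Clebsch--Gordan decomposition $M_2\otimes M_r\cong M_{r-1}\oplus M_{r+1}$ (using the convention $M_0=0$) as the key input.

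For the upper bound, Corollary~\ref{cor:tens_prod_top_level} tells us that $(\cV_2\tens\cV_r)(0)$ is an $\fsl_2$-quotient of $M_2\otimes M_r$, hence one of $0$, $M_{r-1}$, $M_{r+1}$, or $M_{r-1}\oplus M_{r+1}$. Since $\cV_2$ and $\cV_r$ are each generated by their top levels, a standard argument using surjectivity of the tensor product intertwining operator $\cY_\tens$ together with the commutator and iterate formulas \eqref{eqn:intw_op_comm}--\eqref{eqn:intwo_op_it} shows that $\cV_2\tens\cV_r$ is generated as a $V^k(\fsl_2)$-module by its top level. The universal property of generalized Verma modules then yields a surjection $\cV_{r-1}\oplus\cV_{r+1}\twoheadrightarrow\cV_2\tens\cV_r$.

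For the lower bound, I would invoke \cite[Theorems~3.9 and 5.1]{McR} to produce non-zero (and thus, by Proposition~\ref{prop:surjective}, surjective) intertwining operators $\cY_{r+1}$ of type $\binom{\cV_{r+1}}{\cV_2\,\cV_r}$ for every $r\geq 1$, and $\cY_{r-1}$ of type $\binom{\cV_{r-1}}{\cV_2\,\cV_r}$ whenever $r\geq 2$ and $p\nmid r$ (with $\cY_0=0$ vacuously when $r=1$). The production of these intertwining operators is the main technical obstacle: one works with explicit KZ equations on compositions of intertwining operators, and the restriction $p\nmid r$ for $\cY_{r-1}$ reflects a degeneration of the relevant equations when $p\mid r$. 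By the universal property of $\tens$, each $\cY_{r\pm 1}$ descends to a $V^k(\fsl_2)$-module surjection $f_{r\pm 1}\colon\cV_2\tens\cV_r\twoheadrightarrow\cV_{r\pm 1}$.

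When $p\nmid r$, the pair $(f_{r-1},f_{r+1})$ assembles into a surjection $\cV_2\tens\cV_r\twoheadrightarrow\cV_{r-1}\oplus\cV_{r+1}$; combined with the upper-bound surjection in the opposite direction and the finite length of both modules (Theorem~\ref{thm:gen_Verma_structure}), both surjections must be isomorphisms. When $r=np$, only $f_{np+1}$ is available, and I would analyze its kernel $K$ via the composite
\[
\cV_{np-1}\oplus\cV_{np+1}\twoheadrightarrow\cV_2\tens\cV_{np}\xrightarrow{f_{np+1}}\cV_{np+1}.
\]
Since $\hom_{V^k(\fsl_2)}(\cV_{np-1},\cV_{np+1})=0$ (any such map would vanish on top levels because $M_{np-1}\not\cong M_{np+1}$ as $\fsl_2$-modules, and hence on all of $\cV_{np-1}$), the composite is zero on the $\cV_{np-1}$ summand and, after rescaling, the identity on $\cV_{np+1}$. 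Hence its kernel is exactly $\cV_{np-1}\oplus 0$, which forces the upper-bound kernel $N$ to lie in $\cV_{np-1}$ and gives $K\cong\cV_{np-1}/N$. Setting $\cJ_n=N$ yields the desired exact sequence.
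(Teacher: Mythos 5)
Your handling of the case $p\nmid r$ is essentially the paper's argument (top-level upper bound plus the intertwining operators of \cite{McR} as lower bound), but the case $r=np$ contains a genuine gap: the surjection $\cV_{np-1}\oplus\cV_{np+1}\twoheadrightarrow\cV_2\tens\cV_{np}$ that your kernel computation rests on does not exist, because the claim that $\cV_2\tens\cV_r$ is generated by its top level fails when $p\mid r$. The degree-$0$ subspace $(\cV_2\tens\cV_r)(0)$ consists of the lowest conformal weight space in each congruence class of weights modulo $\ZZ$. When $p\nmid r$ the candidate weights $h_{r\pm1}$ lie in different classes, so the degree-$0$ space can carry both $M_{r-1}$ and $M_{r+1}$, consists of vectors annihilated by $(\slhat_2)_+$, and the universal property of generalized Verma modules applies; this is exactly why the paper restricts that step to $p\nmid r$. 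When $r=np$, however, $h_{np+1}-h_{np-1}=qn\in\ZZ_{\geq1}$, so the degree-$0$ space sees only the weight-$h_{np-1}$ part, and the weight-$h_{np+1}$ vectors you would need as generators of a $\cV_{np+1}$-summand need not be singular. Indeed no such surjection can exist: by Theorem \ref{thm:V12_times_Vrp}, $\cV_2\tens\cV_{np}=\cP_{np+1}$ is indecomposable of length $4$, so a surjection from the length-$4$ module $\cV_{np-1}\oplus\cV_{np+1}$ would be an isomorphism, contradicting indecomposability (equivalently, the sequence $0\to\cV_{np-1}\to\cP_{np+1}\to\cV_{np+1}\to0$ is non-split). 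The paper's route avoids this: the submodule of $\cV_2\tens\cV_{np}$ generated by its lowest weight space is a quotient $\cV_{np-1}/\cJ_n$ by the universal property; Proposition \ref{prop:surjective}, applied to the induced surjective intertwining operator into the quotient module, shows that the quotient, if nonzero, is generated by a weight-$h_{np+1}$ top level and hence has the form $\cV_{np+1}/\til{\cJ}_n$; finally the surjective intertwining operator of type $\binom{\cV_{np+1}}{\cV_2\,\cV_{np}}$ from \cite{McR} yields a surjection $f:\cV_2\tens\cV_{np}\to\cV_{np+1}$ that kills $\cV_{np-1}/\cJ_n$ for weight reasons, forcing $\til{\cJ}_n=0$.

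Two smaller points in your $p\nmid r$ case. First, Proposition \ref{prop:surjective} runs in the opposite direction from how you invoke it: surjectivity of $\cY$ gives surjectivity of $\pi(\cY)$, not conversely. Surjectivity of $\cY_{r\pm1}$ instead follows because \cite[Theorem 3.9]{McR} identifies $\pi(\cY_{r\pm1})$ with the surjection $M_2\otimes M_r\to M_{r\pm1}$, so the image of $\cY_{r\pm1}$ contains the generating top level $M_{r\pm1}\subseteq\cV_{r\pm1}$. Second, when you assemble $(f_{r-1},f_{r+1})$ into a surjection onto $\cV_{r-1}\oplus\cV_{r+1}$, you should say why a map surjecting onto each factor is surjective onto the sum; here it is because the two summands have no common composition factors (their conformal weights are non-congruent mod $\ZZ$), which is not automatic in general.
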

 
 \begin{proof}
 The case $\cV_{2}\tens\cV_{1}\cong\cV_0\oplus\cV_2\cong\cV_2$ is immediate because $\cV_{1}$ is the unit object of $KL^k(\mathfrak{sl}_2)$. For all other cases, we recall the notation $M_r$, $r\in\ZZ_{\geq 1}$, for the $r$-dimensional irreducible $\mathfrak{sl}_2$-module. Thus by Corollary \ref{cor:tens_prod_top_level}, the degree-$0$ space $(\cV_{2}\tens\cV_{r})(0)$ is an $\mathfrak{sl}_2$-module quotient of
 \begin{equation*}
 M_2\otimes M_{r} \cong M_{r-1}\oplus M_{r+1}.
 \end{equation*}
In particular, the possible lowest conformal weight(s) of $\cV_{2}\tens\cV_{r}$ are
\begin{equation*}
h_{r\pm 1} =\frac{q}{4p}\left((r\pm 1)^2-1\right) = \frac{q}{4p}\left(r^2\pm 2r\right).
\end{equation*}
recalling \eqref{eqn:h_lambda}. Note that $h_{r+1}-h_{r-1} =\frac{qr}{p}$,
which is an integer if and only if $p\mid r$.

First suppose $p\nmid r$, so that the two possible lowest conformal weights of $\cV_{2}\tens\cV_{r}$ are non-congruent modulo $\ZZ$. Then the universal property of generalized Verma modules implies that the submodule $\langle(\cV_{2}\tens\cV_{r})(0)\rangle$ generated by the degree-$0$ space is a quotient of $\cV_{r-1}\oplus\cV_{r+1}$. Moreover, there is a surjective intertwining operator
\begin{equation*}
\cV_{2}\otimes\cV_{r}\longrightarrow (\cV_{2}\tens\cV_{r})/\langle(\cV_{2}\tens\cV_{r})(0)\rangle\lbrace x\rbrace
\end{equation*}
obtained by composing the tensor product intertwining operator with the quotient map. Thus Proposition \ref{prop:surjective} implies that the lowest conformal weight(s) of $(\cV_{2}\tens\cV_{r})/\langle(\cV_{2}\tens\cV_{r})(0)\rangle$ must be $h_{r\pm 1}$, but in fact these conformal weight spaces in the quotient are $0$. The only possibility is that the quotient module is $0$, that is, $\cV_{2}\tens\cV_{r}$ is generated by its degree-$0$ subspace. Consequently, there is a surjective map
\begin{equation*}
\cV_{r-1}\oplus\cV_{r+1}\longrightarrow\cV_{2}\tens\cV_{r}.
\end{equation*}

Now by \cite[Theorem 5.1]{McR}, there are non-zero intertwining operators
\begin{equation*}
\cY_\pm: \cV_{2}\otimes\cV_{r} \longrightarrow\cV_{r\pm 1}\lbrace x\rbrace
\end{equation*}
in the case $p\nmid r$. Moreover, from \cite[Theorem 3.9]{McR}, the $\mathfrak{sl}_2$-module homomorphisms
\begin{equation*}
\pi(\cY_\pm): M_2\otimes M_{r}\longrightarrow M_{r\pm 1}
\end{equation*}
are the unique (up to scale) surjections. Thus the images of $\cY_\pm$ contain the generating lowest conformal weight subspaces $M_{r\pm 1}\subseteq \cV_{r\pm 1}$, which means that $\cY_\pm$ are surjective. Then by the universal property of vertex algebraic tensor products, there are surjective maps
\begin{equation*}
\cV_{2}\tens\cV_{r}\longrightarrow\cV_{r\pm1}.
\end{equation*}
It follows that $\cV_{2}\tens\cV_{r}\cong\cV_{r-1}\oplus\cV_{r+1}$ when $p\nmid r$.

For $r=np$ with $n\in\ZZ_{\geq 1}$, we have $h_{np+1}-h_{np-1}=qn\in\ZZ_{\geq 1}$, so the lowest possible conformal weight of $\cV_{2}\tens\cV_{np}$ is $h_{np-1}$, and the corresponding conformal weight space generates a $V^k(\mathfrak{sl}_2)$-submodule of the form $\cV_{np-1}/\cJ_n$ for some $\cJ_n\subseteq\cV_{np-1}$. By Proposition \ref{prop:surjective}, the quotient $(\cV_{2}\tens\cV_{np})/(\cV_{np-1}/\cJ_n)$ is, if non-zero, generated by a lowest conformal weight space $M_{np+1}$ of conformal weight $h_{np+1}$. Thus there is an exact sequence
\begin{equation*}
0\longrightarrow\cV_{np-1}/\cJ_n\longrightarrow\cV_{2}\tens\cV_{np}\longrightarrow\cV_{np+1}/\til{\cJ}_n\longrightarrow 0
\end{equation*}
for some submodule $\til{\mathcal{J}}_n\subseteq\cV_{np+1}$. Again by \cite[Theorems 3.9 and 5.1]{McR}, there is a surjective intertwining operator of type $\binom{\cV_{np+1}}{\cV_{2}\,\cV_{np}}$ (although we are no longer guaranteed an intertwining operator into $\cV_{np-1}$). Thus there is a surjection $f: \cV_{2}\tens\cV_{np}\rightarrow\cV_{np+1}$. Since the submodule $\cV_{np-1}/\cJ_n\subseteq\cV_{2}\tens\cV_{np}$ is generated by its lowest conformal weight space, which has a conformal weight lower than $h_{np+1}$, it follows that $\cV_{np-1}/\cJ_n$ is contained in the kernel of $f$, and thus $f$ factors through $\cV_{np+1}/\til{\cJ}_n$. Consequently $\til{\cJ}_n=0$, proving the final case of the theorem.
 \end{proof}
 
 \begin{rem}
 After showing that $\cV_{2}$ is rigid in the next section, we will show that the submodule $\cJ_n\subseteq\cV_{np-1}$ is actually $0$.
 \end{rem}

\section{Rigidity of \texorpdfstring{$\cV_{2}$}{V2}}\label{sec:rigidity}

We continue to fix a level $k=-2+p/q$ for relatively prime $p\in\ZZ_{\geq 2}$ and $q\in\ZZ_{\geq 1}$, and we set $\kappa=k+2=p/q$ to be the shifted level. In this section, we will prove that the generalized Verma module $\cV_{2}$ is rigid in the tensor category $KL^k(\mathfrak{sl}_2)$ by using Knizhnik-Zamolodchikov (KZ) equations to derive explicit expressions for compositions of intertwining operators involving $\cV_{2}$. This is similar to rigidity proofs in \cite{TW, CMY-singlet, CMY3, MY2}.

\begin{thm}\label{thm:V12_rigid}
The generalized Verma module $\cV_{2}$ is rigid and self-dual in $KL^k(\mathfrak{sl}_2)$ for any level $k = -2+p/q$ with $p\in\ZZ_{\geq 2}$ and $q\in\ZZ_{\geq 1}$ relatively prime, with evaluation $e:\cV_{2}\tens\cV_{2}\rightarrow\cV_{1}$ and coevaluation $i:\cV_{1}\rightarrow\cV_{2}\tens\cV_{2}$ such that 
\begin{equation*}
e\circ i = (-e^{\pi i q/p}-e^{-\pi i q/p})\cdot\Id_{\cV_{1}}.
\end{equation*}
\end{thm}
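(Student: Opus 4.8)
The plan is to follow the now-standard strategy for proving rigidity of a distinguished module in a (possibly non-semisimple) vertex-algebraic tensor category, as in \cite{TW, CMY-singlet, CMY3, MY2}: exhibit candidate evaluation and coevaluation morphisms, reduce rigidity to the non-vanishing of a single scalar, and compute that scalar (and, separately, $e\circ i$) by solving the Knizhnik--Zamolodchikov equation for the relevant four-point function.

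First I would fix the candidates. Since $\cV_{2}(0)\otimes\cV_{2}(0)\cong M_2\otimes M_2\cong M_1\oplus M_3$, Corollary~\ref{cor:tens_prod_top_level} and Theorem~\ref{thm:V12_times_Vrs} give, for $p\geq 3$, a decomposition $\cV_{2}\tens\cV_{2}\cong\cV_{1}\oplus\cV_{3}$, and I take $e$ to be the projection onto the first summand and $i$ the inclusion. For $p=2$, where $\cV_{2}\tens\cV_{2}$ is a priori non-semisimple (it is only \emph{afterwards}, using rigidity, that one shows $\cJ_{1}=0$), I instead use the intertwining operators from \cite{McR} together with the exact sequence of Theorem~\ref{thm:V12_times_Vrs} to produce candidate $e$ and $i$; the KZ analysis below is uniform in $p$. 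Because $\cV_{2}$ is generated by its lowest conformal-weight space $\cV_{2}(0)=M_2$, which is an irreducible $\fsl_2$-module, restriction gives an injection $\mathrm{End}_{KL^k(\fsl_2)}(\cV_{2})\hookrightarrow\mathrm{End}_{\fsl_2}(M_2)=\CC$, so $\mathrm{End}_{KL^k(\fsl_2)}(\cV_{2})=\CC$; likewise $\mathrm{End}_{KL^k(\fsl_2)}(\cV_{1})=\CC$. Hence the composition
\begin{equation*}
\cV_{2}\xrightarrow{\ l_{\cV_{2}}^{-1}\ }\cV_{1}\tens\cV_{2}\xrightarrow{\ i\tens\Id\ }(\cV_{2}\tens\cV_{2})\tens\cV_{2}\xrightarrow{\ \cA^{-1}\ }\cV_{2}\tens(\cV_{2}\tens\cV_{2})\xrightarrow{\ \Id\tens e\ }\cV_{2}\tens\cV_{1}\xrightarrow{\ r_{\cV_{2}}\ }\cV_{2}
\end{equation*}
is a scalar $\lambda\cdot\Id_{\cV_{2}}$, the analogous ``other-side'' composition is a scalar, and $e\circ i$ is a scalar. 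By the standard rigidity criterion (cf.\ the references above), once $\lambda\neq0$ (and the reverse snake is nonzero, which the same computation yields) the object $\cV_{2}$ is rigid and self-dual with evaluation and coevaluation obtained by rescaling $e$; so the theorem reduces to computing $\lambda$ and $e\circ i$ explicitly.

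To do that I would unwind the displayed composition, evaluated on a lowest-weight vector of $\cV_{2}$ and paired against a vector of the contragredient $\cV_{2}'$, into a composition of the tensor-product intertwining operator $\cY_{\tens}$ with the intertwining operators realizing $e$ and $i$, using the characterizations \eqref{eqn:unit_isos} of the unit isomorphisms and \eqref{eqn:assoc_iso} of the associativity isomorphism (specializing formal variables to suitable positive reals). This produces a four-point function with four insertions from copies of $M_2$, and the commutator/iterate formulas \eqref{eqn:intw_op_comm}, \eqref{eqn:intwo_op_it} together with \eqref{eqn:L-1} and \eqref{eqn:Vir_aff_commutator} show that it satisfies the $\widehat{\fsl}_2$ KZ equation for four spin-$\tfrac12$ fields. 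After factoring out the leading power-law behaviour (governed by $h_2=\tfrac{3q}{4p}$ and the weight $h_3=\tfrac{2q}{p}$ of the other module in the fusion $\cV_{2}\tens\cV_{2}$, recalling \eqref{eqn:h_lambda}), the $\fsl_2$-invariant part of this four-point function solves a hypergeometric equation with parameters depending on $\tfrac1{k+2}=\tfrac qp$; its two-dimensional solution space has natural bases adapted to the $\cV_{1}$- and $\cV_{3}$-channel expansions at the two relevant singular points. The scalar $\lambda$ and the number $e\circ i$ are then read off from the Gauss connection coefficients relating these bases, which are ratios of $\Gamma$-functions; the reflection formula $\Gamma(x)\Gamma(1-x)=\pi/\sin\pi x$ turns these into trigonometric expressions, giving $e\circ i=-2\cos(\pi q/p)=(-e^{\pi i q/p}-e^{-\pi i q/p})\cdot\Id_{\cV_{1}}$ and showing that $\lambda$ is an explicit nonzero multiple of the chosen normalizations of $e$ and $i$.

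The step I expect to be the main obstacle is the explicit KZ computation: correctly deriving the differential equation (tracking all $\fsl_2$-weight components of the four-point function, not just one matrix coefficient), choosing mutually compatible branches of $\log$ and fractional powers all along the chain of unit and associativity isomorphisms, and then matching the solution bases at the two singular points so that the hypergeometric connection coefficients are correctly identified with $\lambda$ and with $e\circ i$, including every normalization constant and sign. A secondary subtlety is the $p=2$ case, where one cannot begin from a direct-sum decomposition of $\cV_{2}\tens\cV_{2}$ and must obtain $e$ and $i$ from intertwining operators instead, though the analytic core of the argument is identical to the case $p\geq 3$.
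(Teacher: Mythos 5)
Your overall strategy — exhibiting candidate $e$, $i$, reducing both rigidity and the scalar $e\circ i$ to a four-point function, deriving a KZ/hypergeometric equation, and reading off $\lambda$ and $e\circ i$ from hypergeometric connection coefficients — is exactly the paper's method for $p\geq 3$, and the endgame with $\Gamma$-function reflection identities lands correctly on $-2\cos(\pi q/p)$. Two remarks on minor deviations that still work: (i) your $e$ and $i$ are projection/inclusion for the decomposition $\cV_2\tens\cV_2\cong\cV_1\oplus\cV_3$, whereas the paper builds them from the intertwining operator of \cite{McR} with $\pi(\cE)=\langle\cdot,\cdot\rangle$ and the universal property of generalized Verma modules; since the relevant $\hom$-spaces are one-dimensional these agree up to scalar, which only affects normalizations. (ii) For the second snake $\mathfrak{R}'$ the paper does not rerun the KZ computation but rather deduces $\mathfrak{R}'=\mathfrak{R}$ from the hexagon axioms and naturality of the braiding once it knows $\cR\circ i=c\cdot i$ and $\varepsilon\circ\cR=c\cdot\varepsilon$ for the same $c$; your phrase ``which the same computation yields'' is not wrong, but the paper's braiding shortcut is cleaner and you would need to justify the claim separately.

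The genuine gap is your assertion that ``the KZ analysis below is uniform in $p$.'' It is not, and the paper treats $p=2$ with a structurally different argument. When $p=2$ (so $\kappa=2/q$, $q$ odd) the two indicial exponents at each singular point of your hypergeometric equations differ by an integer, so the solution space develops a logarithmic solution; the Gauss connection formulas you would invoke for $p\geq 3$ (e.g.\ DLMF 15.10.21) simply do not apply, and a naive extrapolation of the formula $\lambda = [\cos(\pi/\kappa)]^{-1}$ would read ``$1/0$'' rather than a finite nonzero number. The paper copes with this by \emph{not} computing $\lambda$ explicitly for $p=2$: instead it shows $\lambda\neq 0$ by contradiction, arguing that $\lambda=0$ would force the coefficient functions of the relevant $\fsl_2$-homomorphism $\pi_0(\Omega(Y_{\cV_2})|_{\cV_2\otimes\cL_3})$ to vanish, hence $Y_{\cV_2}|_{\cL_3\otimes\cV_2}=0$, hence $\cV_2$ would be an $L_k(\fsl_2)$-module, contradicting admissibility. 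Relatedly, $e\circ i=0$ when $p=2$ (consistent with your formula since $\cos(q\pi/2)=0$), but the paper proves this not from a connection formula but from the observation that the image of $e$ lies in the proper submodule $\cL_3\subseteq\cV_1$ of strictly positive conformal weight. You flagged the algebraic subtlety of defining $e,i$ at $p=2$, but you missed the analytic degeneracy; a complete proof needs either a separate logarithmic-case connection analysis or the paper's contradiction argument for this case.
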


The $p=1$ version of this theorem has already been proved in \cite{CY}, so we do not consider this case further here. The scalar $-e^{\pi iq/p}-e^{-\pi iq/p}$ coming from $e\circ i$ is called the \textit{intrinsic dimension} of $\cV_{2}$. It does not depend on the choice of evaluation and coevaluation, since rigidity will imply that there are vector space isomorphisms
\begin{equation*}
\hom(\cV_{2}\tens\cV_{2},\cV_{1})\cong\mathrm{End}(\cV_{2})\cong\hom(\cV_{1},\cV_{2}\tens\cV_{2}).
\end{equation*}
Thus because $\dim \mathrm{End}(\cV_{2})=1$, all possible evaluation-coevaluation pairs for $\cV_{2}$ are given by $(c\cdot e, c^{-1}\cdot i)$ for $c\in\CC^\times$, and all such possibilities yield the same intrinsic dimension.

\subsection{Knizhnik-Zamolodchikov equations}

Let $W$ be some $V^k(\mathfrak{sl}_2)$-module, $\cY_1$ an intertwining operator of type $\binom{\cV_{2}}{\cV_{2}\,W}$, and $\cY_2$ an intertwining operator of type $\binom{W}{\cV_{2}\,\cV_{2}}$. The $2$-dimensional simple $\mathfrak{sl}_2$-module $M_2$ is the lowest conformal weight space of both the generalized Verma module $\cV_{2}$ and its contragredient $\cV_{2}'$, so we can define
\begin{align*}
\phi(v_0,v_1,v_2,v_3; z) :=\langle v_0, \cY_1(v_1,1)\cY_2(v_2,z)v_3\rangle
\end{align*}
for $v_0\in M_2\subseteq\cV_{2}'$ and $v_1,v_2,v_3\in M_2\subseteq\cV_{2}$. We can view $\phi(v_0,v_1,v_2,v_3;z)$ as either a formal series in $z$ (with $1$ substituted into $\cY_1$ using the branch of logarithm $\log 1=0$), as a multivalued analytic function on the punctured disk $0<\vert z\vert <1$, or as a single-valued analytic function on the simply-connected domain
\begin{equation*}
 U=\lbrace z\in\CC\,\vert\,\vert z\vert<1\rbrace\setminus(-1,0].
\end{equation*}
In this last case, we fix a single-valued branch by setting $\log z=\log\vert z\vert+i\arg z$, where $-\pi<\arg z<\pi$.

It is well known that $\phi(v_0,v_1,v_2,v_3; z)$ satisfies a Knizhnik-Zamolodchikov (KZ) differential equation \cite{KZ}, which is derived using the $L(-1)$-derivative property for intertwining operators and the expression \eqref{eqn:L-1} for $L(-1)$. Specifically, taking equation (2.16) in \cite[Corollary 2.2]{HL}, substituting $x_1\mapsto 1$, $x_2\mapsto z$, and using the Casimir operator $\Omega = ef+\frac{1}{2}h^2+fe$ for $\mathfrak{sl}_2$, we have:
\begin{align}
 \kappa\dfrac{d}{dz} & \phi(v_0, v_1, v_2, v_3;z) \nonumber\\
 & = z^{-1}\phi(v_0, v_1, f\cdot v_2, e\cdot v_3;z) -(1-z)^{-1}\phi(v_0, e\cdot v_1, f\cdot v_2, v_3;z)  \nonumber\\
 & \quad + \frac{1}{2}z^{-1}\phi(v_0, v_1, h\cdot v_2, h\cdot v_3;z)\nonumber- \frac{1}{2}(1-z)^{-1}\phi(v_0, h\cdot v_1, h\cdot v_2, v_3; z) \\
\label{productz2} & \quad +z^{-1}\phi(v_0, v_1, e\cdot v_2, f\cdot v_3; z)-(1-z)^{-1}\phi(v_0, f\cdot v_1, e\cdot v_2, v_3; z)  .
\end{align}

To solve the KZ equations, we need relations among the different solutions for different $v_0,v_1,v_2,v_3\in M_2$. Such relations follow from the fact that for any fixed $z\in U$, the map
\begin{align*}
M_2\otimes M_2\otimes M_2\otimes M_2 & \rightarrow \CC\\
v_0\otimes v_1\otimes v_2\otimes v_3 & \mapsto \phi(v_0,v_1,v_2,v_3;z)
\end{align*}
is an $\mathfrak{sl}_2$-module homomorphism (this follows from the $n=0$ cases of the contragredient relation \eqref{eqn:sl2_contra_structure} and the commutator formula \eqref{eqn:intw_op_comm}). In particular, setting $v\in M_2\subseteq\cV_{2}$ to be a highest-weight vector, we have
\begin{equation*}
\phi(v_0,v,v,v;z) = 0
\end{equation*}
for all $v_0\in M_2$. Using this relation, we then get
\begin{equation}\label{reduction1s}
\phi(v_0,a\cdot v,v,v)+\phi(v_0, v,a\cdot v,v)+\phi(v_0, v,v,a\cdot v)=0
\end{equation}
for any $v_0\in M_2$, $a\in\mathfrak{sl}_2$.

Now for any $v_0\in M_2$, we derive second-order differential equations for
\begin{align*}
\phi_1(v_0; z):=\phi(v_0,f\cdot v,v,v),\qquad\phi_2(v_0;z):=\phi(v_0,v,f\cdot v,v).
\end{align*}
 We begin with two cases of the KZ equation \eqref{productz2}, in the second case also using \eqref{reduction1s}:
\begin{align}
 \kappa\dfrac{d}{dz}\phi_1(v_0;z) & = \frac{1}{2}\left[z^{-1}+(1-z)^{-1}\right]\phi_1(v_0;z)-(1-z)^{-1}\phi_2(v_0;z),
\label{eqn:solve1}\\
\kappa\dfrac{d}{dz}\phi_2(v_0;z) & =- \left[z^{-1}+(1-z)^{-1}+\right]\phi_1(v_0; z)-\frac{1}{2}\left[3z^{-1}-(1-z)^{-1}\right]\phi_2(v_0;z). \label{eqn:solve2}
\end{align}
We solve \eqref{eqn:solve1} for $\phi_2(v_0;z)$ in terms of $\phi_1(v_0;z)$ and its derivative and then plug into \eqref{eqn:solve2} to obtain a second-order equation for $\phi_1(v_0;z)$; we derive a second-order differential equation for $\phi_2(v_0;z)$ similarly:
\begin{thm}\label{thm:diff_eq}
For any $v_0\in M_2\subseteq \cV_{2}'$, the analytic function $\phi_1(v_0;z)$ is a solution on the region $U$ to the differential equation
\begin{align}\label{eqnfor1}
\kappa^2 z(1-z)\phi_1''(z) - \kappa[(\kappa+2)z-1]\phi_1'(z) 
+ \left[\frac{\kappa}{2}z^{-1} - \frac{3}{4}z^{-1}(1-z)^{-1}\right]\phi_1(z) = 0 ,
\end{align}
and $\phi_2(v_0; z)$ is a solution on $U$ to the differential equation
\begin{align}\label{eqnfor2}
\kappa^2 z(1-z)\phi_2''(z) + \kappa (\kappa+1)(1-2z)\phi_2'(z) - \left[2\kappa + \frac{3}{4}z^{-1}(1-z)^{-1}\right]\phi_2(z) = 0.
\end{align}
\end{thm}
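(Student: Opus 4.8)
The plan is to regard the two identities \eqref{eqn:solve1} and \eqref{eqn:solve2} as a coupled first-order linear system for the pair $(\phi_1(v_0;z),\phi_2(v_0;z))$ on the simply-connected domain $U$, and to eliminate one unknown at a time by reduction of order. One first notes that $\phi_1(v_0;z)$ and $\phi_2(v_0;z)$ are genuine analytic functions on $U$ (they arise from the series $\phi(v_0,\dots;z)$, which converge on $U$ for the chosen branch of logarithm), and that \eqref{eqn:solve1}--\eqref{eqn:solve2} hold identically there; hence every algebraic manipulation below is automatically valid on all of $U$, and no separate convergence or analyticity check is needed beyond observing that $0,1\notin U$, so that $(1-z)^{-1}$ and $z^{-1}+(1-z)^{-1}=\frac{1}{z(1-z)}$ are invertible there.

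To obtain \eqref{eqnfor1}, I would solve \eqref{eqn:solve1} algebraically for $\phi_2$: since the coefficient $-(1-z)^{-1}$ of $\phi_2$ is nowhere zero on $U$, this gives $\phi_2=\frac{1}{2z}\phi_1-\kappa(1-z)\phi_1'$. Differentiating expresses $\phi_2'$ through $\phi_1,\phi_1',\phi_1''$; substituting $\phi_2$ and $\phi_2'$ into \eqref{eqn:solve2}, collecting the coefficients of $\phi_1''$, $\phi_1'$, $\phi_1$, and clearing denominators (multiplying by $-z$) produces exactly \eqref{eqnfor1}. To obtain \eqref{eqnfor2}, I would use that $\phi_1$ occurs in \eqref{eqn:solve2} without a derivative and with nonzero coefficient $-\frac{1}{z(1-z)}$ on $U$, so one can solve for $\phi_1=-\kappa z(1-z)\phi_2'-\frac{3-4z}{2}\phi_2$; differentiating, substituting $\phi_1$ and $\phi_1'$ into \eqref{eqn:solve1}, and simplifying then yields \eqref{eqnfor2} up to an overall sign. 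Here the $\phi_2'$-coefficient collapses to $-\kappa(\kappa+1)(1-2z)$, and the $\phi_2$-coefficient, after the cancellation $-\frac{3-4z}{4z(1-z)}-\frac{1}{1-z}=-\frac{3}{4z(1-z)}$, becomes $-\bigl(2\kappa+\frac{3}{4}z^{-1}(1-z)^{-1}\bigr)$.

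I do not expect a genuine obstacle: once \eqref{eqn:solve1} and \eqref{eqn:solve2} are in hand — which is done just before the statement from the KZ equation \eqref{productz2}, the $\mathfrak{sl}_2$-covariance of $\phi$, and the relation \eqref{reduction1s} — Theorem \ref{thm:diff_eq} is a routine elimination computation with rational functions of $z$. The only steps demanding care are bookkeeping the $z^{-1}$ and $(1-z)^{-1}$ factors so that the telescoping cancellations (e.g.\ $3(1-z)-z=3-4z$ and $4z+(3-4z)=3$) come out correctly, and keeping track of the overall scalar one multiplies by to match the normalization of \eqref{eqnfor1} and \eqref{eqnfor2} as stated.
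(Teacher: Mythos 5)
Your proposal is correct and follows exactly the elimination procedure the paper itself uses (solve each of \eqref{eqn:solve1}, \eqref{eqn:solve2} for the variable that appears without a derivative and substitute into the other, then clear denominators). The intermediate expressions $\phi_2=\tfrac{1}{2z}\phi_1-\kappa(1-z)\phi_1'$ and $\phi_1=-\kappa z(1-z)\phi_2'-\tfrac{3-4z}{2}\phi_2$ and the simplifications you indicate (in particular $z^{-1}+(1-z)^{-1}=\tfrac{1}{z(1-z)}$, $3z^{-1}-(1-z)^{-1}=\tfrac{3-4z}{z(1-z)}$, and the two telescoping cancellations) are all accurate and do reproduce \eqref{eqnfor1} after multiplying by $-z$ and \eqref{eqnfor2} after multiplying by $-1$.
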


Recall from \eqref{eqn:h_lambda} that the lowest conformal weight of $\cV_{2}$ is $h_2=\frac{3}{4\kappa}$. If we set
\[
f_1(z) = z^{2h_2-1}(1-z)^{2h_2}\phi_1(z),\qquad f_2(z) = z^{2h_2}(1-z)^{2h_2}\phi_2(z)
\]
where $\phi_1(z)$ is a solution to \eqref{eqnfor1} and $\phi_2(z)$ is a solution to \eqref{eqnfor2}, then $f_1(z)$ satisfies the hypergeometric equation
\begin{equation}\label{eqnfor11}
\kappa^2z(1-z)f_1''(z) + \kappa[\kappa (2-3z) - 2(1-2z)]f_1'(z) - (\kappa-1)(\kappa-3)f_1(z) = 0,
\end{equation}
and  $f_2(z)$ satisfies the hypergeometric equation
\begin{equation}\label{eqnfor21}
\kappa^2 z(1-z)f_2''(z) + \kappa(\kappa -2)(1-2z)f_2'(z) + (\kappa-3)f_2(z) = 0.
\end{equation}
The solutions to \eqref{eqnfor11} and \eqref{eqnfor21} can be found, for example, in \cite[Chapter~15]{AS} or \cite[Section 15.10]{DLMF}, and the solutions to \eqref{eqnfor11} and \eqref{eqnfor21} can then be used to write down solutions for the original equations \eqref{eqnfor1} and \eqref{eqnfor2}.

Before presenting the solutions of \eqref{eqnfor1} and \eqref{eqnfor2} in the next subsections, we briefly discuss iterates of intertwining operators involving $\cV_{2}$. Let $M$ be a $V^k(\mathfrak{sl}_2)$-module, $\cY^1$ an intertwining operator of type $\binom{\cV_{2}}{M\,\cV_{2}}$, and $\cY^2$ an intertwining operator of type $\binom{M}{\cV_{2}\,\cV_{2}}$. For $v_0\in M_2\subseteq\cV_{2}'$ and $v_1,v_2,v_3\in M_2\subseteq\cV_{2}$, we define
\begin{align*}
 \psi(v_0,v_1,v_2,v_3; z):=\langle v_0,\cY^1(\cY^2(v_1,1-z)v_2,z)v_3\rangle.
\end{align*}
Using the $L(0)$-conjugation property \cite[Proposition 3.36(b)]{HLZ2}, we view $\psi(v_0,v_1,v_2,v_3; z)$ as a series in powers of $\frac{1-z}{z}$:
\begin{align}\label{eqn:iterate_series_exp}
 \psi(v_0,v_1,v_2,v_3; z) & =z^{-2h_2}\left\langle v_0,\cY^1\left(\cY^2\left(v_1,\frac{1-z}{z}\right)v_2,1\right)v_3\right\rangle\nonumber\\
 &=\left(1+\frac{1-z}{z}\right)^{2h_2}\left\langle v_0,\cY^1\left(\cY^2\left(v_1,\frac{1-z}{z}\right)v_2,1\right)v_3\right\rangle.
\end{align}
If we substitute $\frac{1-z}{z}$ into $\cY^2$ using the branch of logarithm $\log(\frac{1-z}{z})=\log \vert\frac{1-z}{z}\vert+i\arg(\frac{1-z}{z})$ with $-\pi<\arg(\frac{1-z}{z})<\pi$, then $\psi(v_0,v_1,v_2,v_3;z)$ defines a single-valued analytic function on the simply-connected domain
\begin{equation*}
 \widetilde{U}=\lbrace z\in\CC\,\vert\,\vert z\vert>\vert 1-z\vert>0\rbrace\setminus[1,\infty)=\left\lbrace z\in\CC\mid\mathrm{Re}\,z>1/2\right\rbrace\setminus[1,\infty).
\end{equation*}
Associativity of intertwining operators \cite{HLZ6} shows that $\psi(v_0,v_1,v_2,v_3; z)$ is the analytic continuation to $\til{U}$ of a corresponding product of intertwining operators $\phi(v_0,v_1,v_2,v_3;z)$ defined on $U$. So the functions $\psi(v_0,v_1,v_2,v_3; z)$ satisfy the same differential equations as $\phi(v_0,v_1,v_2,v_3; z)$.

\subsection{The case \texorpdfstring{$p \geq 3$}{p>=3}}

In this subsection, we prove Theorem \ref{thm:V12_rigid} for levels $k=-2+p/q$ such that $p\geq 3$. In these cases, a basis of solutions for the differential equation \eqref{eqnfor1} on the region $U$ is:
\begin{align}
&\phi^{(1)}_1(z) = z^{-2h_2+1}(1-z)^{-2h_2} {}_2F_1\left(1-\frac{1}{\kappa}, 1-\frac{3}{\kappa}; 2-\frac{2}{\kappa}; z\right),\nonumber\\
&\phi^{(2)}_1(z) = z^{-2h_2+h_3}(1-z)^{-2h_2}{}_2F_1\left(\frac{1}{\kappa}, -\frac{1}{\kappa}; \frac{2}{\kappa}; z\right),
\end{align}
where $h_3=\frac{2}{\kappa}$ from \eqref{eqn:h_lambda} is the lowest conformal weight of $\cV_{3}$, and a basis of solutions for \eqref{eqnfor2} on $U$ is:
\begin{align}
&\phi^{(1)}_2(z) = z^{-2h_2}(1-z)^{-2h_2} {}_2F_1\left(1-\frac{3}{\kappa}, -\frac{1}{\kappa}; 1-\frac{2}{\kappa}; z\right),\nonumber\\
&\phi^{(2)}_2(z) = z^{-2h_2+h_3}(1-z)^{-2h_2} {}_2F_1\left(1-\frac{1}{\kappa}, \frac{1}{\kappa}; 1+\frac{2}{\kappa}; z\right).
\end{align}
We will use these explicit solutions to prove that $\cV_{2}$ is rigid, similar to the proof of \cite[Theorem~4.2.3]{CMY3}, as well as that of \cite[Theorem~4.3.7]{CMY-singlet}.

  We first fix candidates for the evaluation and coevaluation morphisms. For the evaluation, let $\langle\cdot,\cdot\rangle$ be the nondegenerate $\mathfrak{sl}_2$-invariant bilinear form on $M_2\subseteq\cV_{2}$
such that 
\begin{equation*}
\langle v,f\cdot v\rangle=-\langle f\cdot v, v\rangle =1,
\end{equation*}  
  where $v$ is a highest-weight vector. As in the proof of Theorem \ref{thm:V12_times_Vrs}, it follows from \cite[Theorems 3.9 and 5.1]{McR} that there is a (unique) intertwining operator $\cE$ of type $\binom{\cV_{1}}{\cV_{2}\,\cV_{2}}$ such that the $\mathfrak{sl}_2$-homomorphism $\pi(\cE): M_2\otimes M_2\rightarrow M_1$ is given by $\langle\cdot,\cdot\rangle$. In particular, for lowest-conformal-weight vectors $w, w' \in M_2\subseteq\cV_{2}$,
\begin{equation*}
 \cE(w',x)w \in x^{-2h_2}\big( \langle w',w\rangle {\bf 1}+x\,\cV_{1}[[x]]\big).
\end{equation*}
We define the evaluation candidate $\varepsilon:\cV_{2}\boxtimes\cV_{2}\rightarrow \cV_{1}$ to be the unique homomorphism such that $\varepsilon\circ\cY_\boxtimes=\cE$, where $\cY_\boxtimes$ is the tensor product intertwining operator of type $\binom{\cV_{2}\boxtimes\cV_{2}}{\cV_{2}\,\cV_{2}}$.

For the coevaluation, we compose the $\mathfrak{sl}_2$-homomorphism
$M_1 \rightarrow M_2 \otimes M_2$ defined by
\begin{equation*}
 {\bf 1} \mapsto f\cdot v\otimes v - v\otimes  f\cdot v
\end{equation*}
with the $\mathfrak{sl}_2$-homomorphism
\begin{equation*}
 \pi(\cY_\boxtimes): M_2 \otimes M_2 \rightarrow(\cV_{2}\boxtimes\cV_{2})(0),
\end{equation*} 
and apply the universal property of induced $\widehat{\fsl}_2$-modules to get a homomorphism
\begin{equation*}
 i: \cV_{1} \rightarrow \cV_{2}\boxtimes\cV_{2}
\end{equation*}
such that
\begin{equation}\label{eqn:coevaluation_candidate}
 i({\bf 1}) =\pi_0\left(\cY_\boxtimes(f\cdot v,1)v - \cY_\boxtimes(v,1)f\cdot v\right).
\end{equation}
Equivalently, $i({\bf 1})$ is the coefficient of $x^{-2h_2}$ in $\cY_\boxtimes(f\cdot v,x)v-\cY_\boxtimes(v,x)f\cdot v$.

To prove $\cV_{2}$ is rigid, we need to show that the compositions
\begin{align}
& \mathfrak{R}:  \cV_{2}\xrightarrow{l^{-1}} \cV_{1}\tens\cV_{2}\xrightarrow{i\tens\Id} (\cV_{2}\tens\cV_{2})\tens\cV_{2}\xrightarrow{\cA^{-1}}\cV_{2}\tens(\cV_{2}\tens\cV_{2})\xrightarrow{\Id\tens\varepsilon}\cV_{2}\tens\cV_{1}\xrightarrow{r}\cV_{2},\label{eqn:rigidity1}\\
& \mathfrak{R'}: \cV_{2}\xrightarrow{r^{-1}}\cV_{2}\tens\cV_{1}\xrightarrow{\Id\tens i}  \cV_{2}\tens(\cV_{2}\tens\cV_{2})
\xrightarrow{\cA}  (\cV_{2}\tens\cV_{2})\tens\cV_{2} \xrightarrow{\varepsilon\tens\Id} \cV_{1}\tens\cV_{2}\xrightarrow{l}\cV_{2}\label{eqn:rigidity2}
\end{align}
both equal the same non-zero scalar multiple of the identity on $\cV_{2}$, since we can then rescale either $i$ or $\varepsilon$ so that $\mathfrak{R}=\mathfrak{R}'=\Id_{\cV_{2}}$. Since $\mathrm{End}(\cV_{2})=\CC\mathrm{Id}_{\cV_{2}}$ (even though $\cV_{2}$ is not generally simple), it is enough to show that
\begin{equation*}
\langle f\cdot v,\mathfrak{R}(v)\rangle =\langle f\cdot v,\mathfrak{R}'(v)\rangle =c
\end{equation*}
for some highest-weight vector $v\in M_2\subseteq\cV_2$ and $c\in\CC^\times$.

Considering $\mathfrak{R}$ first, the definitions \eqref{eqn:unit_isos} and \eqref{eqn:coevaluation_candidate} imply that
\begin{align*}
(i\boxtimes \Id)\circ l^{-1}(v) & = (i\tens\Id)(\cY_\tens(\vac,1)v)\nonumber\\
& = \mathrm{Res}_x\,x^{2h_2-1}\cY_\tens\left(\cY_\tens(f\cdot v,x)v-\cY_\tens(v,x)f\cdot v, 1\right)v\nonumber\\
& =\mathrm{Res}_x\,x^{2h_2-1} (1+x)^{2h_2}\left(\cY_\tens(\cY_\tens(f\cdot v,x)v, 1)v_{1,2} -\cY_\tens(\cY_\tens (v,x)f\cdot v,1)v\right),
\end{align*}
where we use $\cY_\tens$ to denote all tensor product intertwining operators. The last equality above holds because $x^{-2h_2}$ is the lowest power of $x$ in $\cY_\tens(w',x)w$ for $w',w\in M_2\subseteq\cV_{2}$, which in turn holds because $0$ is the lowest conformal weight of $\cV_{2}\tens\cV_{2}$ (indeed, $0$ is the minimum among all conformal weights of all modules in $KL^k(\mathfrak{sl}_2)$). We now substitute $x\mapsto\frac{1-z}{z}$ for $z\in U\cap\til{U}$ using the principal branch of logarithm, and then recalling \eqref{eqn:iterate_series_exp} as well as \eqref{eqn:unit_isos} and \eqref{eqn:assoc_iso}, we find that  $\langle f\cdot v,\mathfrak{R}(v)\rangle$ is the coefficient of $\left(\frac{1-z}{z}\right)^{-2h_2}$ in the expansion of the following analytic function as a series in $\frac{1-z}{z}$ on $U\cap\til{U}$:
\begin{align}\label{eqn:rig_calc}
& \left\langle f\cdot v,\overline{r\circ(\Id\boxtimes\varepsilon)\circ\cA^{-1}}\left(\cY_\boxtimes(\cY_\boxtimes(f\cdot v,1-z)v,z)v-\cY_\boxtimes(\cY_\boxtimes(v,1-z)f\cdot v,z)v\right)\right\rangle\nonumber\\
 &\qquad =\left\langle f\cdot v,\overline{r\circ(\Id\boxtimes\varepsilon)}\left(\cY_\boxtimes(f\cdot v,1)\cY_\boxtimes(v,z)v-\cY_\boxtimes(v,1)\cY_\boxtimes(f\cdot v,z)v\right)\right\rangle\nonumber\\
 &\qquad =\left\langle f\cdot v,\Omega(Y_{\cV_{2}})(f\cdot v,1)\cE(v,z)v\right\rangle - \left\langle f\cdot v,\Omega(Y_{\cV_{2}})(v,1)\cE(f\cdot v,z)v\right\rangle,
\end{align}
where
\begin{equation*}
\Omega(Y_{\cV_{2}})(w,x)u = e^{xL(-1)} Y_{\cV_2}(u,-x)w
\end{equation*}
for $u\in V^k(\fsl_2)$, $w\in \cV_2$.

By Theorem \ref{thm:diff_eq}, the second term of \eqref{eqn:rig_calc} is a solution to the differential equation \eqref{eqnfor2}. As a series in $z$, this solution has lowest-degree term
\begin{align*}
\langle f\cdot v,\Omega(Y_{\cV_{2}}) & (v,1)\vac\rangle\langle f\cdot v,v\rangle z^{-2h_2} = \langle f \cdot v,v\rangle^2 z^{-2h_2}=z^{-2h_2},
\end{align*}
so the second term of \eqref{eqn:rig_calc}  is the fundamental basis solution
\begin{equation*}
\phi^{(1)}_2(z) = z^{-2h_2}(1-z)^{-2h_2}{}_2F_1\left(1-\frac{3}{\kappa}, -\frac{1}{\kappa}; 1-\frac{2}{\kappa}; z\right).
\end{equation*}
For the first term, we need the coefficient of $x^{-2h_2+1}$ in $\cE(v,x)v$. Set
\[
\cE(w',x)w = \sum_{m =0}^\infty \cE_m(w'\otimes w)x^{-2h_2 + m}
\]
for $w,w' \in M_2\subseteq\cV_{2}$. By the $L(0)$-conjugation formula and the commutator formula \eqref{eqn:intw_op_comm}, $\cE_m: M_2 \otimes M_2 \rightarrow \cV_{1}(m)$ is an $\mathfrak{sl}_2$-module homomorphism. Since $v\otimes v$ is an $\mathfrak{sl}_2$-highest weight vector, $\cE_1(v \otimes v) = c\cdot e(-1){\bf 1}$ for some $c \in \CC$. Thus
\begin{equation*}
f(1)\cE_1(v\tens v) = c\cdot f(1)e(-1)\vac=c(-h(0)+k\langle f,e\rangle)\vac=ck\cdot\vac;
\end{equation*}
this together with the commutator formula
\[
f(1)\cE(v,x)v = x\cE(f\cdot v,x)v
\]
implies $ck=\langle f\cdot v,v\rangle=-1$, or $c=-\frac{1}{k}$. Consequently, as a series in $z$, the first term of \eqref{eqn:rig_calc} has lowest-degree term
\begin{align*}
&-\frac{1}{k} \langle f\cdot v,e^{L(-1)}Y_{\cV_{2}}(e(-1){\bf 1}, -1)f\cdot v\rangle z^{-2h_2+1} = \frac{1}{k}\langle f\cdot v, e\cdot(f\cdot v)\rangle z^{-2h_2+1} = -\frac{1}{k}z^{-2h_2+1}.
\end{align*}
Since the first term of \eqref{eqn:rig_calc} is a solution to the differential equation \eqref{eqnfor1} by Theorem \ref{thm:diff_eq}, this solution is
\begin{equation*}
-\frac{1}{k}\phi^{(1)}_1(z) = -\frac{1}{k}z^{-2h_2+1}(1-z)^{-2h_2}{}_2F_1\left(1-\frac{1}{\kappa}, 1-\frac{3}{\kappa}; 2-\frac{2}{\kappa}; z\right).
\end{equation*}

Thus to calculate $\langle f\cdot v,\mathfrak{R}(v)\rangle$, we need to expand $-\frac{1}{k}\phi^{(1)}_1(z)-\phi^{(1)}_2(z)$ as a series in $\frac{1-z}{z}$ and extract the coefficient of $(\frac{1-z}{z})^{-2h_2}$. Since
\begin{align*}
&-\frac{1}{k}\phi^{(1)}_1(z)-\varphi^{(1)}_2(z)\nonumber\\
&\quad =-\left(\frac{1-z}{z}\right)^{-2h_2} z^{-4h_2}\left(\frac{z}{k}{}_2F_1\left(1-\frac{1}{\kappa}, 1-\frac{3}{\kappa}; 2-\frac{2}{\kappa}; z\right) + {}_2F_1\left(1-\frac{3}{\kappa}, -\frac{1}{\kappa}; 1-\frac{2}{\kappa}; z\right)\right)
\end{align*}
and since $z^{-4h_2(+1)}=(1+\frac{1-z}{z})^{4h_2(+1)}$ are power series in $\frac{1-z}{z}$ with constant term $1$, it is equivalent to find the constant term in the expansion of
 $$-\frac{1}{k}{}_2F_1\left(1-\frac{1}{\kappa}, 1-\frac{3}{\kappa}; 2-\frac{2}{\kappa}; z\right)-{}_2F_1\left(1-\frac{3}{\kappa}, -\frac{1}{\kappa}; 1-\frac{2}{\kappa}; z\right)$$ 
 as a series in $\frac{1-z}{z}$ on the region $U\cap\til{U}$. Using \cite[Equations 15.10.21, 5.5.1, and 5.5.3]{DLMF}, this constant term is 
\begin{align*}
-\frac{1}{\kappa-2}\frac{\Gamma(2-\frac{2}{\kappa})\Gamma(\frac{2}{\kappa})}{\Gamma(1-\frac{1}{\kappa})\Gamma(1+\frac{1}{\kappa})} -\frac{\Gamma(1-\frac{2}{\kappa})\Gamma(\frac{2}{\kappa})}{\Gamma(\frac{1}{\kappa})\Gamma(1-\frac{1}{\kappa})} & = \bigg(-\frac{1}{\kappa-2}\frac{1-\frac{2}{\kappa}}{\frac{1}{\kappa}}-1\bigg)\frac{\Gamma(1-\frac{2}{\kappa})\Gamma(\frac{2}{\kappa})}{\Gamma(\frac{1}{\kappa})\Gamma(1-\frac{1}{\kappa})}\nonumber\\
& = -2\cdot\frac{\pi/\sin(\frac{2\pi}{\kappa})}{\pi/\sin(\frac{\pi}{\kappa})} =-\frac{1}{\cos(\frac{\pi}{\kappa})}.
\end{align*}
This calculation proves that $\langle f\cdot v,\mathfrak{R}(v)\rangle =-\left[\cos\left(\frac{\pi}{\kappa}\right)\right]^{-1}$.
Then since $\mathfrak{R}$ is a scalar multiple of $\Id_{\cV_{2}}$ and $\langle f\cdot v,v\rangle =-1$, it follows that
\begin{equation}\label{eqn:rig_comp_scalar}
\mathfrak{R}=\left[\cos\left(\frac{\pi}{\kappa}\right)\right]^{-1}\cdot\Id_{\cV_{2}}\neq 0
\end{equation}
when $\kappa=p/q$ with $p\geq 3$ and $q$ relatively prime to $p$.

To show that the second rigidity composition $\mathfrak{R}'$ is the same scalar multiple of $\Id_{\cV_{2}}$, we could perform a similar calculation. Alternatively, we can apply the braiding isomorphisms to the composition $\mathfrak{R}$, obtaining the following commutative diagram; the middle rectangle commutes thanks to the naturality of the braiding and the hexagon axioms:
\begin{equation*}
\xymatrix{
\cV_{2} \ar[d]^{l^{-1}} \ar[rd]^{r^{-1}} & &\\
\cV_{1}\tens\cV_{2} \ar[d]^{i\tens\Id} \ar[r]^{\cR} & \cV_{2}\tens\cV_{1} \ar[d]^{\Id\tens i} &   \\
(\cV_{2}\tens\cV_{2})\tens\cV_{2} \ar[d]^{\cA^{-1}} 
\ar[r]^{\cR} & \cV_{2}\tens(\cV_{2}\tens\cV_{2}) \ar[r]^{\Id\tens\cR} & \cV_{2}\tens(\cV_{2}\tens\cV_{2}) \ar[d]^{\cA} \\
\cV_{2}\tens(\cV_{2}\tens\cV_{2}) \ar[d]^{\Id\tens\varepsilon} \ar[r]^{\cR} & (\cV_{2}\tens\cV_{2})\tens\cV_{2} \ar[d]^{\varepsilon\tens\Id} \ar[r]^{\cR\tens\Id} & (\cV_{2}\tens\cV_{2})\tens\cV_{2}\\
\cV_{2}\tens\cV_{1} \ar[r]^{\cR} \ar[d]^{r} & \cV_{1}\tens\cV_{2} \ar[ld]^l &\\
\cV_{2} & &\\
}
\end{equation*}
Thus we will get $\mathfrak{R}'=\mathfrak{R}$ as required if there is a non-zero scalar $c$ such that
\begin{equation}\label{eqn:twist_with_braiding}
\cR\circ i = c\cdot i,\qquad \varepsilon\circ\cR=c\cdot\varepsilon.
\end{equation}
 Note that $\cR\circ i$ and $\varepsilon\circ\cR$ are indeed scalar multiples of $i$ and $\varepsilon$, respectively, because the $p\geq 3$ case of Theorem \ref{thm:V12_times_Vrs} implies that $\hom(\cV_{1},\cV_{2}\tens\cV_{2})$ and $\hom(\cV_{2}\tens\cV_{2},\cV_{1})$ are both one-dimensional. To compute the scalar for $i$, the definitions \eqref{eqn:braiding_iso} and \eqref{eqn:coevaluation_candidate} yield
 \begin{align}\label{eqn:c_for_i}
 (\cR\circ i)(\vac) & = (\cR\circ\pi_0)\left(\cY_\tens(f\cdot v,1)v-\cY_\tens(v,1)f\cdot v\right)\nonumber\\
 & =\pi_0\left(e^{L(-1)}\cY_\tens(v,e^{\pi i})f\cdot v-e^{L(-1)}\cY_\tens(f\cdot v,e^{\pi i})v\right)\nonumber\\
 & = -\pi_0\left( e^{\pi i( L(0)-2h_2)}(\cY_\tens(f\cdot v,1)v-\cY_\tens(v,1)f\cdot v)\right)\nonumber\\
 & =-e^{\pi i(L(0)-2h_2)} i(\vac)= -e^{-2\pi i h_2}\cdot i(\vac),
 \end{align}
 so we get $c=-e^{-2\pi i h_2}$. For $\varepsilon$, the definitions yield
 \begin{align*}
 (\varepsilon\circ\cR)(\cY_\tens(w',x)w) & = e^{xL(-1)}\cE(w,e^{\pi i} x)w'\nonumber\\
 & \in e^{xL(-1)}(e^{\pi i} x)^{-2h_2}\left(\langle w,w'\rangle\vac+x\cV_{1}[[x]]\right)\nonumber\\
 & = -e^{-2\pi i h_2} x^{-2h_2}\left(\langle w',w\rangle\vac +x\cV_{1}[[x]]\right)
\end{align*}
for $w,w'\in M_2\subseteq\cV_{2}$. By the uniqueness of $\cE$ up to scalar multiples, this implies
\begin{equation*}
e^{xL(-1)}\cE(w,e^{\pi i} x)w'=-e^{-2\pi i h_2}\cE(w',x)w
\end{equation*}
for all $w,w'\in\cV_{2}$, which again yields $-e^{-2\pi i h_2}$ for the value of $c$ in \eqref{eqn:twist_with_braiding}. Thus $\mathfrak{R}'=\mathfrak{R}$, and we have proved that $\cV_{2}$ is rigid (and self-dual) in the case $p\geq 3$.

To complete the proof of Theorem \ref{thm:V12_rigid} for $p\geq 3$, we still need to compute the intrinsic dimension of $\cV_{2}$. By \eqref{eqn:rig_comp_scalar}, one choice of coevaluation and evaluation for $\cV_{2}$ is $i$ and $e:=\cos(\frac{\pi}{\kappa})\cdot\varepsilon$. Then since $\mathrm{End}(\cV_{1})=\CC\cdot\Id_{\cV_{1}}$, the intrinsic dimension is simply the scalar $d$ such that $(e\circ i)(\vac)=d\cdot\vac$. Using the definitions, we compute
\begin{align*}
(e\circ i)(\vac) & = \cos\left(\frac{\pi}{\kappa}\right)\cdot(\varepsilon\circ\pi_0)\left(\cY_\tens(f\cdot v,1)v-\cY_\tens(v,1)f\cdot v\right)\nonumber\\
& = \frac{e^{\pi i q/p}+e^{-\pi iq/p}}{2}\cdot \pi_0\left(\cE(f\cdot v,1)v-\cE(v,1)f\cdot v\right)\nonumber\\
& =\frac{e^{\pi i q/p}+e^{-\pi iq/p}}{2}\cdot\left(\langle f\cdot v,v\rangle\vac -\langle v,f\cdot v\rangle\vac\right)= (-e^{\pi i p/q}-e^{-\pi i p/q})\cdot\vac,
\end{align*}
as desired.

\subsection{The case \texorpdfstring{$p = 2$}{p=2}}
Now we prove Theorem \ref{thm:V12_rigid} for levels $k=-2+ 2/q$ for $q\in\ZZ_{\geq 1}$ odd. In this case, the differential equations \eqref{eqnfor1} and \eqref{eqnfor2} admit logarithmic solutions. A basis of solutions for \eqref{eqnfor1} on $U$ is
\begin{align}\label{eqn: solution1}
&\phi^{(1)}_1(z) = z^{-2h_2+q}(1-z)^{-2h_2}{}_2F_1\left(\frac{q}{2}, -\frac{q}{2}; q; z\right),\nonumber\\
&\phi^{(2)}_1(z) = z^{-2h_2+1}(1-z)^{-2h_2}\left[z^{q-1}{}_2F_1\left(\frac{q}{2}, -\frac{q}{2}; q; z\right)\log z + G_1(z)\right],
\end{align}
and a basis of solutions for \eqref{eqnfor2} on $U$ is
\begin{align}\label{eqn: solution2}
&\phi^{(1)}_2(z) = z^{-2h_2+q}(1-z)^{-2h_2}{}_2F_1\left(\frac{q}{2}, 1-\frac{q}{2}; 1+q; z\right)\nonumber\\
&\phi^{(2)}_2(z) = z^{-2h_2}(1-z)^{-2h_2}\left[z^q {}_2F_1\left(\frac{q}{2}, 1-\frac{q}{2}; 1+q; z\right)\log z + G_2(z)\right],
\end{align}
where $G_1(z)$ and $G_2(z)$ are power series that converge in the region $U$.

When $p=2$, we still have a coevaluation candidate $i:\cV_{1}\rightarrow\cV_{2}\tens\cV_{2}$ defined by \eqref{eqn:coevaluation_candidate}, but we must define the evaluation candidate $\varepsilon$ differently because there is no surjective intertwining operator of type $\binom{\cV_{1}}{\cV_{2}\,\cV_{2}}$. However, as in the proof of Theorem \ref{thm:V12_times_Vrs}, by \cite[Theorems 3.9 and 5.1]{McR}, there is a unique (up to scale) intertwining operator $\cY$ of type $\binom{\cV_{3}}{\cV_{2}\, \cV_{2}}$ such that the $\mathfrak{sl}_2$-homomorphism $\pi(\cY): M_2\otimes M_2\rightarrow M_3$ is surjective. The image of $\cY$ contains the generating lowest conformal weight space $M_3\subseteq\cV_{3}$, and thus $\cY$ is surjective. We then get a non-zero (but non-surjective) intertwining operator $\cE$ of type $\binom{\cV_{1}}{\cV_{2}\,\cV_{2}}$ by composing $\cY$ with the surjection $\cV_{3}\twoheadrightarrow\cL_{3}$ and then with the inclusion $\cL_{3}\hookrightarrow \cV_{1}$ (recall \eqref{exactseq:vrs}). We define $\varepsilon: \cV_{2} \boxtimes \cV_{2}  \rightarrow \cV_{1}$ to be the unique homomorphism such that $\varepsilon \circ \cY_{\boxtimes} = \cE$.

Since the lowest conformal weight of $\cL_{3}\subseteq\cV_{1}$ is $h_3=q$, we have
\[
\cE(w', x)w \in x^{-2h_2 + q}(b(w'\otimes w) + x\cL_{3}[[x]])
\]
for $w,w'\in M_2\subseteq \cV_2$, where $b$ is an $\mathfrak{sl}_2$-module surjection from $M_2\otimes M_2$ to the lowest conformal weight space $M_3$ of  $\cL_{3}\subseteq\cV_{1}$. To be concrete, for a highest-weight vector $v\in M_2$, let $b(v\otimes v) = \til{v}$ be a highest weight vector in $M_3$. Then 
\begin{equation*}
b(f\cdot v\otimes v) = b(v\otimes f\cdot v) = \frac{1}{2}f\cdot\til{v},\qquad b(f\cdot v\otimes f\cdot v) = \frac{1}{2}f\cdot(f\cdot\til{v}).
\end{equation*}

 As in the $p\geq 3$ case, we need to show that the rigidity compositions $\mathfrak{R}$ and $\mathfrak{R}'$ given in \eqref{eqn:rigidity1} and \eqref{eqn:rigidity2} are the same non-zero scalar multiple of $\Id_{\cV_{2}}$, and for $\mathfrak{R}$ it is again enough to show that $\langle f\cdot v,\mathfrak{R}(v)\rangle$ is non-zero. Then by the same calculations as in the $p \geq 3$ case, we are reduced to showing that the coefficient of 
$\left(\frac{1-z}{z}\right)^{-2h_2}$ in the expansion of
\begin{align}\label{eqn:rig_calc1}
&\left\langle v',\Omega(Y_{\cV_{2}})(f\cdot v,1)\cE(v,z)v\right\rangle - \left\langle v',\Omega(Y_{\cV_{2}})(v,1)\cE(f\cdot v,z)v\right\rangle
\end{align}
as a series in $\frac{1-z}{z}$ and $\log\left(\frac{1-z}{z}\right)$ on $U\cap\til{U}$ is non-zero for some $v'\in M_2\subseteq\cV_{2}'$. We prove this by contradiction; thus assume that the coefficient of 
$\left(\frac{1-z}{z}\right)^{-2h_2}$ in \eqref{eqn:rig_calc1} is $0$ for all $v'\in M_2$.

Taking $v' = f\cdot v$, then as a series in $z$, the lowest-degree term of the first summand in \eqref{eqn:rig_calc1} is
\begin{align*}
 \langle f\cdot v,\Omega(Y_{\cV_{2}})(f\cdot v,1)\til{v} \rangle x^{-2h_1+q}.
\end{align*}
We claim that the coefficient
\begin{equation}\label{eqn:claim1}
 c : = \langle f\cdot v,\Omega(Y_{\cV_{2}})(f\cdot v_{2},1)\til{v}\rangle = 0.
\end{equation}
By Theorem \ref{thm:diff_eq}, the first summand of \eqref{eqn:rig_calc1} is a solution to the differential equation \eqref{eqnfor1}. By comparing the coefficients of the lowest degree term $x^{-2h_2+q}$, it is a multiple of the non-logarithmic fundamental basis solution in \eqref{eqn: solution1}:
\[
c \cdot z^{-2h_2+q}(1-z)^{-2h_2}{}_2F_1\left(\frac{q}{2}, -\frac{q}{2}; q; z\right).
\]
From \cite[Equation 15.8.11]{DLMF}, the coefficient of $\left(\frac{1-z}{z}\right)^{-2h_2}$ when this solution is expanded as a series in $\frac{1-z}{z}$ on $\til{U}$ is 
\begin{equation}\label{eqn:coeff1}
\frac{c(q-1)!}{\Gamma(\frac{3q}{2})\Gamma(\frac{q}{2})}.
\end{equation}
The second summand in \eqref{eqn:rig_calc1} has lowest-degree term
\begin{align*}
&\frac{1}{2}\left\langle f\cdot v,\Omega(Y_{\cV_{2}})(v,1)f\cdot\til{v}\right\rangle z^{-2h_2+q}= -\frac{1}{2}\left\langle f\cdot v,\Omega(Y_{\cV_{2}})(f\cdot v,1)\til{v}\right\rangle z^{-2h_2+q} = -\frac{1}{2}c z^{-2h_2+q},
\end{align*}
where the first equality follows from the commutator formula \eqref{eqn:intw_op_comm}. By Theorem \ref{thm:diff_eq}, the second summand in \eqref{eqn:rig_calc1} is a solution to the differential equation \eqref{eqnfor2},
so it is a multiple of the non-logarithmic fundamental basis solution in \eqref{eqn: solution2}:
\begin{equation*}
-\frac{1}{2}c\cdot z^{-2h_2+q}(1-z)^{-2h_2}{}_2F_1\left(\frac{q}{2}, 1-\frac{q}{2}; 1+q; z\right). 
\end{equation*}
Again using \cite[Equation 15.8.11]{DLMF}, the coefficient of $\left(\frac{1-z}{z}\right)^{-2h_2}$ in the expansion of this solution as a series in $\frac{1-z}{z}$ on $\til{U}$ is 
\begin{equation}\label{eqn:coeff2}
-\frac{1}{2}\frac{c(q-1)!}{\Gamma(\frac{3q}{2})\Gamma(1+\frac{q}{2})} = -\frac{c(q-1)!}{q\Gamma(\frac{3q}{2})\Gamma(\frac{q}{2})}.
\end{equation}
By our assumption that the coefficient of $(\frac{1-z}{z})^{-2h_1}$ in \eqref{eqn:rig_calc1} vanishes, the difference of \eqref{eqn:coeff1} and \eqref{eqn:coeff2} must be $0$. This is only possible if $c=0$, as claimed.

Now taking $v' = v$ in \eqref{eqn:rig_calc1}, $\mathfrak{sl}_2$-weight considerations imply that 
\begin{align}
 \label{eqn:claim2}\langle v,\Omega(Y_{\cV_{2}})(f(0)v,1)\til{v} \rangle =0.
\end{align}
 Combining \eqref{eqn:claim1} and \eqref{eqn:claim2}, the $\mathfrak{sl}_2$-homomorphism
\begin{equation}
\pi_0(\Omega(Y_{\cV_{2}})|_{\cV_{2}\otimes \cL_3}): M_2 \otimes M_3 \longrightarrow M_2
\end{equation}
vanishes. Thus by Proposition \ref{prop:surjective}, the intertwining operator $\Omega(Y_{\cV_{2}})|_{\cV_{2}\otimes \cL_{3}}$  of type $\binom{\cV_{2}}{\cV_{2}\,\cL_3}$ is not surjective. Since $\cV_{2}$ is simple when $p=2$, this means $\Omega(Y_{\cV_{2}})|_{\cV_{2}\otimes \cL_{3}} = 0$, equivalently, $Y_{\cV_{2}}|_{\cL_{3}\otimes\cV_{2}} = 0$. Since $\cL_{3}$ is the maximal proper submodule of $\cV_{1}$, this implies $\cV_{2}$ is an $L_k(\fsl_2)$-module, which is a contradiction because the only simple grading-restricted $L_k(\fsl_2)$-module is $\cL_{1}$ when $k=-2+2/q$ is admissible. Consequently, the coefficient of 
$\left(\frac{1-z}{z}\right)^{-2h_2}$ in the expansion of \eqref{eqn:rig_calc1} on $\til{U}$ is not $0$  for some $v' \in M_2\subseteq\cV_{2}'$, and we conclude that the rigidity composition $\mathfrak{R}$ is a non-zero scalar multiple of $\Id_{\cV_{2}}$.

As in the $p\geq 3$ case, we can show that the second rigidity composition $\mathfrak{R}'$ agrees with $\mathfrak{R}$ by proving \eqref{eqn:twist_with_braiding} for some $c\in\CC^\times$. Since $i$ is defined the same way for $p=2$ as for $p\geq 3$, and since $\cV_{1}$ is generated by $\vac$, the calculation \eqref{eqn:c_for_i} shows that 
\begin{equation*}
\cR\circ i=-e^{-2\pi i h_2}\cdot i
\end{equation*}
when $p=2$. For $\varepsilon$, the definitions imply that it is enough to prove
\begin{equation*}
e^{xL(-1)}\cY(w,e^{\pi i} x)w' =-e^{-2\pi ih_2}\cY(w',x)w
\end{equation*}
for any intertwining operator $\cY$ of type $\binom{\cV_3}{\cV_{2}\,\cV_{2}}$ and all $w,w'\in\cV_{2}$. Since \cite[Theorem 3.9]{McR} implies that $\cY$ is completely determined by the $\mathfrak{sl}_2$-homomorphism $\pi(\cY): M_2\otimes M_2\rightarrow M_3$, it is sufficient to take $w,w'\in M_2$ and compute
\begin{align*}
\pi_0\big(e^{L(-1)}\cY(w,e^{\pi i})w'\big) & = \pi_0\big(e^{\pi i(L(0)-2h_2)}\cY(w,1)w'\big)\nonumber\\
& = e^{\pi i(h_3-2h_2)}\pi(\cY)(w\otimes w')\nonumber\\
& =e^{\pi i(q-2h_2)}\pi(\cY)(w\otimes w') =-e^{-2\pi i h_2}\pi(\cY)(w'\otimes w),
\end{align*}
where the last equality holds because $q$ is odd and $\mathfrak{sl}_2$-homomorphisms $M_2\otimes M_2\rightarrow M_3$ are symmetric. This completes the proof that $\cV_{2}$ is rigid and self-dual when $p=2$.

Finally, to complete the proof of Theorem \ref{thm:V12_rigid}, we need to show that when $p=2$ and $q$ is odd, the intrinsic dimension of $\cV_{2}$ is 
\begin{equation*}
-e^{\pi i q/2}-e^{-\pi i q/2} =-2\cos\left(\frac{q\pi}{2}\right) = 0.
\end{equation*}
Since $\vac$ generates $\cV_{1}$, it is enough to show that $(e\circ i)(\vac)=0$, where $e$ is a suitable non-zero multiple of $\varepsilon$. Indeed, this holds because the image of $e$ is contained in the maximal proper submodule $\cL_{3}\subseteq\cV_{1}$ whose minimum conformal weight is $q>0$, whereas the conformal weight of $(e\circ i)(\vac)$ is $0$.

\section{Projective objects in \texorpdfstring{$KL^k(\mathfrak{sl}_2)$}{KLk(sl2)}}\label{sec:proj}

By Corollary \ref{cor:rigid_is_projective}, every rigid object of $KL^k(\mathfrak{sl}_2)$ is projective. In this section, we will determine all rigid and thus all projective objects in $KL^k(\mathfrak{sl}_2)$. Moreover, we will show that $KL^k(\mathfrak{sl}_2)$ has enough projectives, that is, every simple object has a projective cover. We start by using the rigidity of $\cV_{2}$ proved in the previous section to determine how $\cV_{2}$ tensors with the simple objects of $KL^k(\mathfrak{sl}_2)$.

\subsection{Tensor products involving \texorpdfstring{$\cV_{2}$}{V2}}\label{subsec:tens_prod_V2}

In Theorem \ref{thm:V12_times_Vrs}, we determined the tensor products $\cV_{2}\tens\cV_{r}$ for $p\nmid r$, but did not completely determine the tensor products of $\cV_{2}$ with the simple generalized Verma modules $\cV_{np}$. In this subsection, we will completely determine $\cV_{2}\tens\cV_{np}$, but first we compute $\cV_{2}\tens\cL_{r}$ for $p\nmid r$. For more uniform formulas, we set $\cL_0=0$:
\begin{thm}\label{thm:V12_times_Lrs}
For $n\in\ZZ_{\geq 0}$ and $1\leq r\leq p-1$,
\begin{equation*}
\cV_2\tens\cL_{np+r} \cong\left\lbrace\begin{array}{lll}
\cL_{np+r-1}\oplus\cL_{np+r+1} & \text{if} & r\leq p-2\\
\cL_{(n+1)p-2} & \text{if} & r=p-1\\
\end{array}\right. .
\end{equation*}
\end{thm}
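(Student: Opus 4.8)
The plan is to prove the formula by strong induction on the index $m=np+r$, ranging over positive integers not divisible by $p$, using the rigidity of $\cV_{2}$ to turn $\cV_{2}\tens-$ into an exact functor. Since $\cV_{2}$ is rigid and self-dual by Theorem \ref{thm:V12_rigid}, the functor $\cV_{2}\tens-$ is both a left and a right adjoint of itself, hence exact; combined with Theorem \ref{thm:V12_times_Vrs}, which gives $\cV_{2}\tens\cV_{a}\cong\cV_{a-1}\oplus\cV_{a+1}$ whenever $p\nmid a$ (with the convention $\cV_{0}=0$), this lets us compute the relevant cokernels. For the base cases $1\leq m\leq p-1$ (that is, $n=0$), I would argue as follows. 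Since $KL_{k}(\fsl_{2})$ is a tensor ideal of $KL^{k}(\fsl_{2})$ by Corollary \ref{cor:KL_k_tens_ideal} and $\cL_{m}$ is an object of $KL_{k}(\fsl_{2})$ for $1\leq m\leq p-1$, the module $\cV_{2}\tens\cL_{m}$ is semisimple by Theorem \ref{thm:adm_level_KL_k}. For $p\geq 3$ the exact sequence \eqref{exactseq:vrs} gives $[\cV_{2}]=[\cL_{2}]+[\cL_{2p-2}]$ in the Grothendieck group, and $\cL_{2p-2}\tens\cL_{m}=0$ by Theorem \ref{thm:L1s_times_Lrs}, so $[\cV_{2}\tens\cL_{m}]=[\cL_{2}\tens\cL_{m}]$; as both modules are semisimple and the classes $[\cL_{j}]$ form a basis of the Grothendieck group, $\cV_{2}\tens\cL_{m}\cong\cL_{2}\tens\cL_{m}$, which is computed in Theorem \ref{thm:L1s_times_Lrs} and agrees with the claimed formula. (When $p=2$ the only base case is $m=1=p-1$, where $\cV_{2}=\cL_{2}$ and $\cV_{2}\tens\cL_{1}=\cL_{1}\tens\cL_{2}=0$ by Lemma \ref{lem:L11_times_W}, again matching the formula.)

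For the inductive step, fix $m=np+r$ with $n\geq 1$. The key point is that $\cL_{np+r}$ is the socle of the generalized Verma module $\cV_{np-r}$: applying Theorem \ref{thm:gen_Verma_structure} with $np+r$ replaced by $np-r=(n-1)p+(p-r)$ gives a short exact sequence $0\to\cL_{np+r}\to\cV_{np-r}\to\cL_{np-r}\to 0$. Note that $np-r\geq 1$ and $p\nmid np-r$, so applying the exact functor $\cV_{2}\tens-$ together with Theorem \ref{thm:V12_times_Vrs} yields a short exact sequence
\begin{equation*}
0\longrightarrow\cV_{2}\tens\cL_{np+r}\longrightarrow\cV_{np-r-1}\oplus\cV_{np-r+1}\overset{\phi}{\longrightarrow}\cV_{2}\tens\cL_{np-r}\longrightarrow 0,
\end{equation*}
with $\phi$ surjective. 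Since $np-r<np+r$, the module $\cV_{2}\tens\cL_{np-r}$ is known by the inductive hypothesis, and in every case it is a finite direct sum of simple modules, so it remains only to identify $\ker\phi\cong\cV_{2}\tens\cL_{np+r}$.

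To do this I would use that every generalized Verma module $\cV_{a}$ has simple top $\cL_{a}$, so that $\hom(\cV_{a},\cL_{b})$ vanishes unless $a=b$, in which case it is one-dimensional. Writing $\cV_{2}\tens\cL_{np-r}=\bigoplus_{j\in J}\cL_{j}$ and $\phi=(\phi_{1},\phi_{2})$ with $\phi_{1}\colon\cV_{np-r-1}\to\bigoplus_{j\in J}\cL_{j}$ and $\phi_{2}\colon\cV_{np-r+1}\to\bigoplus_{j\in J}\cL_{j}$, surjectivity of $\phi$ forces $J\subseteq\{np-r-1,\,np-r+1\}$, forces $\phi_{i}$ to vanish when its index is not in $J$, and forces $\phi_{i}$ to be a nonzero multiple of the canonical projection $\cV_{np-r\mp 1}\twoheadrightarrow\cL_{np-r\mp 1}$ otherwise. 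Hence $\ker\phi=\ker\phi_{1}\oplus\ker\phi_{2}$, where each summand is either all of $\cV_{np-r\mp 1}$ or its maximal proper submodule $\cJ_{np-r\mp 1}$, both of which are read off from Theorem \ref{thm:gen_Verma_structure}. Carrying out this bookkeeping in the cases $r=1$, $2\leq r\leq p-2$, and $r=p-1$ (and separately for $p=2$, where only $r=1=p-1$ occurs) then reproduces the claimed direct sum; for example, when $2\leq r\leq p-2$ both indices $np-r\mp 1$ lie in $J=\{np-r-1,\,np-r+1\}$, so $\ker\phi=\cJ_{np-r-1}\oplus\cJ_{np-r+1}\cong\cL_{np+r+1}\oplus\cL_{np+r-1}$.

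I expect the main obstacle to be the index arithmetic: one has to track which of $np-r\mp 1$ are divisible by $p$, which residue class the inductively known module $\cV_{2}\tens\cL_{np-r}$ falls into, and how the submodules $\cJ_{np-r\mp 1}$ translate back into simple modules indexed relative to $np+r$ --- especially in the boundary cases $r=1$, $r=p-1$, and $p=2$, where some of the relevant generalized Verma modules degenerate to simple modules or to $0$. By contrast, the exactness of $\cV_{2}\tens-$ and the $\hom$-computation constraining $\phi$ are essentially formal; the substantive inputs are Theorem \ref{thm:V12_times_Vrs} and Theorem \ref{thm:gen_Verma_structure}, both already proved.
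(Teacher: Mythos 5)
Your proof is correct, and it takes a genuinely different route from the paper's. The paper starts from the \emph{quotient} presentation $0\to\cL_{(n+2)p-r}\to\cV_{np+r}\to\cL_{np+r}\to 0$, applies the exact functor $\cV_2\tens\bullet$ repeatedly to build a resolution of $\cV_2\tens\cL_{np+r}$ by generalized Verma modules going to ever higher indices, and then pins down the answer by comparing graded dimensions and using that the two possible lowest conformal weights are non-congruent mod $\ZZ$. You instead use the \emph{submodule} presentation $0\to\cL_{np+r}\to\cV_{np-r}\to\cL_{np-r}\to 0$ (correctly extracted from Theorem \ref{thm:gen_Verma_structure} with $(n',r')=(n-1,p-r)$), so that $\cV_2\tens\cL_{np+r}$ becomes a \emph{kernel} and can be determined by strong induction on $np-r<np+r$ together with the observation that $\hom(\cV_a,\cL_b)$ is $\delta_{ab}$-dimensional. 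This replaces the paper's graded-dimension bookkeeping with a purely homological argument; the price is that you need a separate base case, whereas the paper's resolution argument handles all $n\geq 0$ uniformly. The index arithmetic you flag as the main obstacle does indeed check out in all cases including $p=2$ and $r\in\{1,p-1\}$.

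One small imprecision worth fixing in the base case ($n=0$): you write $[\cV_2\tens\cL_m]=[\cL_2\tens\cL_m]+[\cL_{2p-2}\tens\cL_m]$ ``in the Grothendieck group,'' but $\tens$ is only known to be right exact in each variable, so it does not automatically descend to $K_0$. The cleaner justification, which gives the isomorphism directly rather than just a Grothendieck-group equality, is: apply the right exact functor $\bullet\tens\cL_m$ to $0\to\cL_{2p-2}\to\cV_2\to\cL_2\to 0$ to get a right exact sequence $\cL_{2p-2}\tens\cL_m\to\cV_2\tens\cL_m\to\cL_2\tens\cL_m\to 0$; since the first term vanishes by Theorem \ref{thm:L1s_times_Lrs}, the second arrow is an isomorphism $\cV_2\tens\cL_m\cong\cL_2\tens\cL_m$, and the latter is computed by that same theorem.
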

\begin{proof}
Since $\cV_{2}$ is rigid, the functor $\cV_{2}\tens\bullet$ is exact. Thus by Theorem \ref{thm:gen_Verma_structure}, there is an exact sequence
\begin{equation}\label{eqn:V12_times_Vrs_short_exact_seq}
0\longrightarrow \cV_{2}\tens\cL_{(n+2)p-r}\longrightarrow\cV_{2}\tens\cV_{np+r}\longrightarrow\cV_{2}\tens\cL_{np+r}\longrightarrow 0
\end{equation}
for $n\in\ZZ_{\geq 0}$ and $1\leq r\leq p-1$. We can combine these exact sequences into a resolution
\begin{equation*}
\cdots\longrightarrow\cV_{2}\tens\cV_{(n+2)p+r}\longrightarrow\cV_{2}\tens\cV_{(n+2)p-r}\longrightarrow\cV_{2}\tens\cV_{np+r}\longrightarrow\cV_{2}\tens\cL_{np+r}\longrightarrow 0.
\end{equation*}
By Theorem \ref{thm:V12_times_Vrs}, this resolution becomes
\begin{equation*}
\cdots\longrightarrow\cV_{(n+2)p-r-1}\oplus\cV_{(n+2)p-r+1}\longrightarrow\cV_{np+r-1}\oplus\cV_{np+r+1}\longrightarrow\cV_{2}\tens\cL_{np+r}\longrightarrow 0.
\end{equation*}
Because the conformal weights of the two summands in each term of the resolution are non-congruent mod $\ZZ$, $\cV_{2}\tens\cL_{np+r}$ is a direct sum $W_-\oplus W_+$ where $W_\pm$ have the following resolutions by generalized Verma modules:
\begin{equation}\label{eqn:V12_times_Lrs_resolution}
\cdots\longrightarrow\cV_{(n+2)p+r\pm1}\longrightarrow\cV_{(n+2)p-(r\pm1)}\longrightarrow\cV_{np+r\pm1}\longrightarrow W_\pm \longrightarrow 0.
\end{equation}
In the case $2\leq r\leq p-2$, Theorem \ref{thm:gen_Verma_structure} implies that both $\cL_{np+r\pm 1}$ have resolutions by the same generalized Verma modules as $W_\pm$, so in this case, $W_\pm$ are quotients of $\cV_{np+r\pm1}$ which have the same graded dimensions as $\cL_{np+r\pm 1}$. Thus $\cV_{2}\tens\cL_{np+r}\cong\cL_{np+r-1}\oplus\cL_{np+r+1}$ when $2\leq r\leq p-2$.

For the case $r=1$, we similarly obtain $W_+\cong\cL_{np+2}$ when $p\geq 3$, while the resolution \eqref{eqn:V12_times_Lrs_resolution} for $W_-$ becomes
\begin{equation*}
\cdots\longrightarrow\cV_{(n+2)p}\longrightarrow\cV_{(n+2)p}\longrightarrow\cV_{np}\longrightarrow W_-\longrightarrow 0.
\end{equation*}
The map $\cV_{(n+2)p}\rightarrow\cV_{np}$ is $0$ because $\cV_{(n+2)p}$ and $\cV_{np}$ are non-isomorphic and simple (or because $\cV_{np}=0$ in the $n=0$ case), so $W_-\cong\cV_{np}\cong\cL_{np}$ when $r=1$. This completes the proof of the $r\leq p-2$ case of the theorem.

For the case $r=p-1$, we obtain $W_-\cong\cL_{(n+1)p-2}$ when $p\geq 3$, while for $p=2$, the $r=1$ case discussed above yields $W_-\cong\cL_{2n}=\cL_{(n+1)p-2}$ as well. The resolution \eqref{eqn:V12_times_Lrs_resolution} for $W_+$ becomes
\begin{equation*}
\cdots\longrightarrow\cV_{(n+3)p}\longrightarrow\cV_{(n+1)p}\longrightarrow\cV_{(n+1)p}\longrightarrow W_+\longrightarrow 0,
\end{equation*}
where the map $\cV_{(n+3)p}\rightarrow\cV_{(n+1)p}$ is $0$ as before. Thus the map $\cV_{(n+1)p}\rightarrow\cV_{(n+1)p}$ is an isomorphism, and it follows that $W_+=0$. This proves the $r=p-1$ case of the theorem.
\end{proof}

We can now fully determine $\cV_{2}\tens\cV_{np}=\cV_{2}\tens\cL_{np}$ for $n\in\ZZ_{\geq 1}$. We introduce the notation $\cP_{np+1}:=\cV_{2}\tens\cV_{np}$ for this module because it will turn out to be a projective cover of $\cL_{np+1}$.
\begin{thm}\label{thm:V12_times_Vrp}
For $n\in\ZZ_{\geq 1}$, there is a non-split exact sequence
\begin{equation*}
0\longrightarrow\cV_{np-1}\longrightarrow\cP_{np+1}\longrightarrow\cV_{np+1}\longrightarrow 0.
\end{equation*}
Moreover, $\cP_{np+1}$ is an indecomposable $V^k(\mathfrak{sl}_2)$-module with Loewy diagram:
\begin{equation*}
\begin{matrix}
  \begin{tikzpicture}[->,>=latex,scale=1.5]
\node (b1) at (1,0) {$\cL_{np+1}$};
\node (c1) at (-1, 1){$\cP_{np+1}:$};
   \node (a1) at (0,1) {$\cL_{np-1}$};
   \node (b2) at (2,1) {$\cL_{(n+2)p-1}$};
    \node (a2) at (1,2) {$\cL_{np+1}$};
\draw[] (b1) -- node[left] {} (a1);
   \draw[] (b1) -- node[left] {} (b2);
    \draw[] (a1) -- node[left] {} (a2);
    \draw[] (b2) -- node[left] {} (a2);
\end{tikzpicture}
\end{matrix} .
\end{equation*}
\end{thm}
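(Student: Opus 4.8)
The plan is to upgrade the short exact sequence $0\to\cV_{np-1}/\cJ_n\to\cV_2\tens\cV_{np}\to\cV_{np+1}\to 0$ from Theorem \ref{thm:V12_times_Vrs} by showing that the unknown submodule $\cJ_n\subseteq\cV_{np-1}$ vanishes, and then to read off the Loewy structure of $\cP_{np+1}:=\cV_2\tens\cV_{np}$ from its composition factors together with its socle and a self-duality. Throughout I would exploit that, since $\cV_2$ is rigid and self-dual (Theorem \ref{thm:V12_rigid}), the functor $\cV_2\tens(-)$ is exact and two-sided adjoint to itself, so $\hom(\cV_2\tens A,B)\cong\hom(A,\cV_2\tens B)$, and moreover it commutes with contragredients, $(\cV_2\tens W)'\cong\cV_2\tens W'$ naturally (a standard consequence of rigidity and the ribbon Grothendieck--Verdier structure on $KL^k(\fsl_2)$). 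Combining the latter with $\cV_{np}'=\cL_{np}'\cong\cL_{np}=\cV_{np}$ shows at once that $\cP_{np+1}$ is self-contragredient.

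To get $\cJ_n=0$: by Theorem \ref{thm:gen_Verma_structure} (writing $np-1=(n-1)p+(p-1)$) the module $\cV_{np-1}$ is a non-split extension $0\to\cL_{np+1}\to\cV_{np-1}\to\cL_{np-1}\to 0$, so its only submodules are $0$, $\cL_{np+1}$, $\cV_{np-1}$, hence $\cJ_n$ is one of these. Using the adjunction and Theorem \ref{thm:V12_times_Vrs}, $\hom(\cV_{np-1},\cP_{np+1})\cong\hom(\cV_2\tens\cV_{np-1},\cV_{np})=\hom(\cV_{np-2}\oplus\cV_{np},\cL_{np})$, which is one-dimensional since $\cV_{np}=\cL_{np}$ is simple; let $g$ generate it. Next $\hom(\cL_{np-1},\cP_{np+1})\cong\hom(\cV_2\tens\cL_{np-1},\cV_{np})=\hom(\cL_{np-2},\cL_{np})=0$ by Theorem \ref{thm:V12_times_Lrs}, so the nonzero image of $g$ -- a quotient of $\cV_{np-1}$, hence $\cL_{np-1}$ or $\cV_{np-1}$ -- cannot be $\cL_{np-1}$, and therefore $g$ is injective. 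This rules out $\cJ_n=\cV_{np-1}$, since then $\cP_{np+1}\cong\cV_{np+1}$ does not even have $\cL_{np-1}$ as a composition factor. It also rules out $\cJ_n=\cL_{np+1}$: in that case $\cL_{np-1}\cong\cV_{np-1}/\cJ_n$ embeds in $\cP_{np+1}$ by the Theorem \ref{thm:V12_times_Vrs} sequence while $\cL_{np+1}=\mathrm{soc}(\cV_{np-1})$ embeds via $g$, so $\mathrm{soc}(\cP_{np+1})\supseteq\cL_{np-1}\oplus\cL_{np+1}$; but then the three composition factors $\cL_{np-1}$, $\cL_{np+1}$, $\cL_{(n+2)p-1}$ of $\cP_{np+1}$ force $\mathrm{top}(\cP_{np+1})=\cL_{(n+2)p-1}$, contradicting the surjection $\cP_{np+1}\twoheadrightarrow\cV_{np+1}\twoheadrightarrow\cL_{np+1}$. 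Hence $\cJ_n=0$ and the stated short exact sequence holds, with composition factors $\cL_{np+1}$ (twice), $\cL_{np-1}$, $\cL_{(n+2)p-1}$.

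For the Loewy diagram: computing $\hom(\cL_m,\cP_{np+1})\cong\hom(\cV_2\tens\cL_m,\cL_{np})$ and using Theorem \ref{thm:V12_times_Lrs} when $p\nmid m$, together with the observation that for $p\mid m$ the module $\cV_2\tens\cL_m=\cP_{m+1}$ has only composition factors of index $\not\equiv 0\pmod p$, one finds this Hom space is nonzero only for $m=np+1$, where it is one-dimensional; thus $\mathrm{soc}(\cP_{np+1})=\cL_{np+1}$. Simple socle makes $\cP_{np+1}$ indecomposable and the short exact sequence non-split, and self-contragredience gives $\mathrm{top}(\cP_{np+1})=\cL_{np+1}$. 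It remains to see that the middle radical layer $\mathrm{rad}(\cP_{np+1})/\mathrm{soc}(\cP_{np+1})$, which has length $2$ with factors $\cL_{np-1},\cL_{(n+2)p-1}$, is semisimple: the injection $g$ gives $\cL_{np-1}=\cV_{np-1}/\mathrm{soc}(\cV_{np-1})\hookrightarrow\cP_{np+1}/\mathrm{soc}(\cP_{np+1})$, so $\cL_{np-1}$ is a summand of $\mathrm{soc}(\cP_{np+1}/\mathrm{soc}(\cP_{np+1}))$, while $\cL_{(n+2)p-1}\cong\mathrm{rad}(\cP_{np+1})/g(\cV_{np-1})$ is a quotient of $\mathrm{rad}(\cP_{np+1})$, so by self-contragredience $\cL_{(n+2)p-1}$ is also a submodule of $\cP_{np+1}/\mathrm{soc}(\cP_{np+1})$; as this socle has length at most $2$ (because $\cP_{np+1}/\mathrm{soc}(\cP_{np+1})$ has simple top $\cL_{np+1}$), it equals $\cL_{np-1}\oplus\cL_{(n+2)p-1}$, which yields exactly the displayed Loewy diagram.

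I expect the crux to be the elimination of the case $\cJ_n=\cL_{np+1}$ -- equivalently, showing that $\cV_2\tens\cV_{np}$ genuinely fails to split off the summands $\cL_{np\pm1}$ the way $\cV_2\tens\cV_r$ does for $p\nmid r$. All the necessary input is packaged in Theorems \ref{thm:V12_times_Vrs}, \ref{thm:V12_times_Lrs}, and the rigidity of $\cV_2$, but passing from Hom-vanishing statements to the precise submodule structure (and, for the middle layer, to semisimplicity rather than a uniserial configuration) requires the self-duality input and some care with socle/radical filtrations.
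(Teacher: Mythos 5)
Your proposal is correct and, for the crucial final step, takes a genuinely different route from the paper. You eliminate $\cJ_n$ and compute the socle via the same rigidity adjunction that the paper uses, but for the middle semisimple layer you invoke self-contragredience of $\cP_{np+1}$, which you derive from the Grothendieck--Verdier structure together with rigidity of $\cV_2$: indeed $\hom(X,(\cV_2\tens W)')\cong\hom((X\tens\cV_2)\tens W,\cV_1')\cong\hom(X\tens\cV_2,W')\cong\hom(X,W'\tens\cV_2)$, so $(\cV_2\tens W)'\cong\cV_2\tens W'$, and taking $W=\cV_{np}\cong\cL_{np}\cong\cL_{np}'$ gives $\cP_{np+1}'\cong\cP_{np+1}$. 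This is a clean Yoneda argument (worth spelling out, since it is not literally stated in the cited references) and it immediately yields $\mathrm{top}(\cP_{np+1})\cong\mathrm{soc}(\cP_{np+1})'\cong\cL_{np+1}$ and the fact that $\cL_{(n+2)p-1}$, being a simple quotient of $\mathrm{rad}(\cP_{np+1})$, embeds in $\cP_{np+1}/\mathrm{soc}(\cP_{np+1})$. By contrast, the paper avoids self-contragredience here: it takes the contragredient of $\ker\pi$ where $\pi:\cP_{np+1}/\cL_{np+1}\to\cL_{np+1}$, and then uses the universal property of the generalized Verma module $\cV_{np-1}$ together with a comparison of lowest conformal weights to force $\ker\pi$ to split. (The paper establishes self-contragredience of $\cP_{np+r}$ separately in Proposition \ref{prop:Pr_self_contra}, but only later and by an argument that uses projectivity.) Your categorical route buys a more uniform argument that avoids the explicit vertex-algebraic conformal-weight comparison, at the cost of having to justify the natural isomorphism $(\cV_2\tens W)'\cong\cV_2\tens W'$.

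Two small points of care. First, your parenthetical ``socle has length at most $2$ because $\cP_{np+1}/\mathrm{soc}(\cP_{np+1})$ has simple top'' is fine, but you should also observe that this quotient is not semisimple (it surjects onto the indecomposable $\cV_{np+1}$), so its socle has length exactly $2$ and equals its radical, giving the middle layer. Second, when you identify $\mathrm{soc}(\cP_{np+1}/\mathrm{soc})$ with the radical layer, you implicitly use that the unique maximal submodule contains the socle; this holds because the top $\cL_{np+1}$ is simple, but it deserves a word. Also note that in eliminating the case $\cJ_n=\cL_{np+1}$, your conclusion $\mathrm{top}(\cP_{np+1})=\cL_{(n+2)p-1}$ requires knowing $\cP_{np+1}$ is not semisimple (it surjects onto the indecomposable $\cV_{np+1}$), which you do not say explicitly but is immediate. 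None of these affect the correctness of the argument.
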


\begin{proof}
We have already seen in Theorem \ref{thm:V12_times_Vrs} that there is an exact sequence
\begin{equation*}
0\longrightarrow\cV_{np-1}/\cJ_n\longrightarrow\cP_{np+1}\longrightarrow\cV_{np+1}\longrightarrow 0
\end{equation*}
where $\cJ_n$ is one of $\cV_{np-1}$, $\cL_{np+1}$, or $0$ (using Theorem \ref{thm:gen_Verma_structure}). The first two are impossible because rigidity of $\cV_{2}$ and Theorem \ref{thm:V12_times_Vrs} imply that
\begin{equation*}
\hom(\cV_{np-1},\cP_{np+1}) =\hom(\cV_{np-1},\cV_{2}\tens\cV_{np}) \cong\hom(\cV_{2}\tens\cV_{np-1},\cV_{np}) \neq 0,
\end{equation*}
while rigidity of $\cV_2$ and Theorem \ref{thm:V12_times_Lrs} imply that
\begin{equation*}
\hom(\cL_{np-1},\cP_{np+1}) =\hom(\cL_{np-1},\cV_{2}\tens\cV_{np})\cong\hom(\cV_{2}\tens\cL_{np-1},\cL_{np}) = 0.
\end{equation*}
Thus $\cJ_n=0$, yielding the desired exact sequence. The exact sequence does not split because a surjection $\cP_{np+1}\rightarrow\cV_{np-1}$ would imply a surjection $\cP_{np+1}\rightarrow\cL_{np-1}$, but in fact
\begin{equation*}
\hom(\cP_{np+1},\cL_{np-1}) =\hom(\cV_{2}\tens\cV_{np},\cL_{np-1})\cong\hom(\cL_{np},\cV_{2}\tens\cL_{np-1}) = 0.
\end{equation*}

To verify the Loewy diagram of $\cP_{np+1}$, note that Theorem \ref{thm:gen_Verma_structure} shows that $\cP_{np+1}$ has the four indicated composition factors. The socle of $\cP_{np+1}$ is isomorphic to $\cL_{np+1}$ because
\begin{equation*}
\dim\hom(\cL_{np+1},\cP_{np+1}) = \dim\hom(\cL_{np+1},\cV_{2}\tens\cV_{np})=\dim \hom(\cV_{2}\tens\cL_{np+1},\cL_{np}) =1
\end{equation*}
by the rigidity of $\cV_{2}$ and Theorem \ref{thm:V12_times_Lrs}, while $\hom(\cL_{np-1},\cP_{np+1})=0$ and similarly
\begin{equation*}
\hom(\cL_{(n+2)p-1},\cP_{np+1}) =\hom(\cL_{(n+2)p-1},\cV_{2}\tens\cV_{np})\cong\hom(\cV_{2}\tens\cL_{(n+2)p-1},\cL_{np})=0.
\end{equation*}
Next, we need to show that the socle of $\cP_{np+1}/\cL_{np+1}$ is isomorphic to $\cL_{np-1}\oplus\cL_{(n+2)p-1}$. For this, note that since the submodule $\cL_{np+1}\subseteq\cP_{np+1}$ is contained in the generalized Verma submodule $\cV_{np-1}$, we have a surjection
\begin{equation*}
\pi: \cP_{np+1}/\cL_{np+1}\twoheadrightarrow\cV_{np+1}\twoheadrightarrow\cL_{np+1},
\end{equation*}
and there is a short exact sequence
\begin{equation*}
0\longrightarrow\cL_{np-1}\longrightarrow\ker\pi\longrightarrow\cL_{(n+2)p-1}\longrightarrow 0.
\end{equation*}
Taking contragredients yields a short exact sequence
\begin{equation*}
0\longrightarrow\cL_{(n+2)p-1}\longrightarrow(\ker\pi)'\longrightarrow\cL_{np-1}\longrightarrow 0.
\end{equation*}
Since the minimal conformal weight of $\cL_{np-1}$ is lower than that of $\cL_{(n+2)p-1}$, the lowest conformal weight space of $(\ker\pi)'$ generates a quotient of the generalized Verma module $\cV_{np-1}$; since $\cV_{np-1}$ does not contain $\cL_{(n+2)p-1}$ as a composition factor, this quotient must be $\cL_{np-1}$. That is, $\cL_{np-1}$ is a submodule of $(\ker\pi)'$ and thus
\begin{equation*}
(\ker\pi)'\cong\cL_{np-1}\oplus\cL_{(n+2)p-1}\cong\ker\pi.
\end{equation*}
This shows that $\cL_{np-1}\oplus\cL_{(n+2)p-1}\subseteq\mathrm{Soc}(\cP_{np+1}/\cL_{np+1})$. In fact, this is the full socle of $\cP_{np+1}/\cL_{np+1}$ because the only remaining composition factor of $\cP_{np+1}/\cL_{np+1}$ is $\cL_{np+1}$, and a non-zero map $\cL_{np+1}\rightarrow\cP_{np+1}/\cL_{np+1}$ would imply a non-zero map
\begin{equation*}
\cL_{np+1}\longrightarrow\cP_{np+1}/\cL_{np+1}\longrightarrow\cV_{np+1},
\end{equation*}
which is impossible. (The above composition would be non-zero because the kernel of the second map in the composition is $\cL_{np-1}\neq\cL_{np+1}$.) We have now verified the row structure of the Loewy diagram of $\cP_{np+1}$.

To verify the arrows in the Loewy diagram of $\cP_{np+1}$, we need to check that the length-$2$ subquotients of $\cP_{np+1}$ indicated by the arrows are indecomposable. Indeed, if the length-$2$ submodules indicated by the lower arrows were decomposable, then $\cL_{np-1}$ and/or $\cL_{(n+2)p-1}$ would be submodules of $\cP_{np+1}$, which is not the case. Similarly, if the length-$2$ quotients indicated by the upper arrows were decomposable, then $\cL_{np-1}$ and/or $\cL_{(n+2)p-1}$ would be quotients of $\cP_{np+1}$, which is also not the case. Indeed, we already saw that $\hom(\cP_{np+1},\cL_{np-1})=0$, and similarly
\begin{equation*}
\hom(\cP_{np+1},\cL_{(n+2)p-1})\cong\hom(\cL_{np},\cV_{2}\tens\cL_{(n+2)p-1})=0
\end{equation*}
by Theorem \ref{thm:V12_times_Lrs} and the rigidity of $\cV_{2}$. This completes the proof that $\cP_{np+1}$ has the indicated Loewy diagram. Moreover, $\cP_{np+1}$ is indecomposable because if $\cP_{np+1}=W_1\oplus W_2$, then one of $W_1$ and $W_2$ must be $0$, because otherwise $\mathrm{Soc}(\cP_{np+1})$ would contain at least two irreducible submodules, whereas in fact $\mathrm{Soc}(\cP_{np+1})\cong\cL_{np+1}$ is irreducible.
\end{proof}

In the case $p=2$, we can now determine how $\cV_{2}$ tensors with $\cP_{np+1}$:
\begin{thm}\label{thm:V12_times_Pr1_p=2}
If $p=2$, then for $n\in\ZZ_{\geq 1}$,
\begin{equation*}
\cV_{2}\tens\cP_{2n+1}\cong\cV_{2(n-1)}\oplus2\cdot\cV_{2n}\oplus\cV_{2(n+1)}.
\end{equation*}
\end{thm}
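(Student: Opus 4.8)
The plan is to apply the exact functor $\cV_2\tens\bullet$ to the short exact sequence $0\to\cV_{2n-1}\to\cP_{2n+1}\to\cV_{2n+1}\to 0$ of Theorem \ref{thm:V12_times_Vrp}; this functor is exact because $\cV_2$ is rigid by Theorem \ref{thm:V12_rigid}. Since $p=2$ does not divide $2n\pm 1$, Theorem \ref{thm:V12_times_Vrs} gives $\cV_2\tens\cV_{2n-1}\cong\cV_{2(n-1)}\oplus\cV_{2n}$ and $\cV_2\tens\cV_{2n+1}\cong\cV_{2n}\oplus\cV_{2(n+1)}$, so one obtains an exact sequence
\begin{equation*}
0\longrightarrow\cV_{2(n-1)}\oplus\cV_{2n}\longrightarrow\cV_2\tens\cP_{2n+1}\longrightarrow\cV_{2n}\oplus\cV_{2(n+1)}\longrightarrow 0 .
\end{equation*}
By Theorem \ref{thm:gen_Verma_structure} every $\cV_{2m}$ with $m\geq 1$ is simple (and $\cV_0=0$), so $\cV_2\tens\cP_{2n+1}$ has composition factors $\cL_{2(n-1)}$, $\cL_{2n}$ with multiplicity two, and $\cL_{2(n+1)}$. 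A direct computation with \eqref{eqn:h_lambda} and $k+2=2/q$ gives $h_{2(n+1)}-h_{2(n-1)}=2qn\in\ZZ$ while $h_{2n}-h_{2(n\pm 1)}\notin\ZZ$ (as $q$ is odd), so $\cV_2\tens\cP_{2n+1}$ is the direct sum $U_A\oplus U_B$ of its two summands corresponding to the two congruence classes of conformal weights modulo $\ZZ$. Splitting the displayed sequence accordingly, $U_A$ fits in $0\to\cV_{2(n-1)}\to U_A\to\cV_{2(n+1)}\to 0$ and $U_B$ fits in $0\to\cV_{2n}\to U_B\to\cV_{2n}\to 0$, and it then remains only to show that both of these sequences split.

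To handle $U_B$ I would argue as follows. Since $\cL_{2n}$ is not a composition factor of $U_A$, we have $\hom(\cV_{2n},\cV_2\tens\cP_{2n+1})=\hom(\cL_{2n},U_B)$, and by rigidity of $\cV_2$ together with $\cV_2\tens\cV_{2n}=\cP_{2n+1}$ this left-hand side is isomorphic to $\hom(\cV_2\tens\cV_{2n},\cP_{2n+1})=\mathrm{End}(\cP_{2n+1})$. The Loewy diagram of $\cP_{2n+1}$ from Theorem \ref{thm:V12_times_Vrp} has simple socle and simple head both $\cong\cL_{2n+1}$ and middle layer $\cL_{2n-1}\oplus\cL_{2n+3}$, with these three simples pairwise non-isomorphic; chasing the socle and head shows that every endomorphism of $\cP_{2n+1}$ is a scalar plus a scalar multiple of the composite $\cP_{2n+1}\twoheadrightarrow\cL_{2n+1}\hookrightarrow\cP_{2n+1}$ of the head projection with the socle inclusion, so $\dim\mathrm{End}(\cP_{2n+1})=2$. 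Hence $\dim\hom(\cL_{2n},U_B)=2$, which forces the length-two module $U_B$ to be semisimple, i.e.\ $U_B\cong 2\cdot\cV_{2n}$.

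Finally, to handle $U_A$ I would take $n\geq 2$ (the case $n=1$ is trivial, as then $U_A=\cV_{2(n+1)}$ is already simple). From the displayed exact sequence, $\cL_{2(n-1)}=\cV_{2(n-1)}$ is a submodule of $U_A$. On the other hand $\cV_2\tens\cP_{2n+1}=\cV_2\tens\cV_2\tens\cV_{2n}$ is self-contragredient: contragredients are compatible with the tensor product (part of the Grothendieck--Verdier structure on $KL^k(\fsl_2)$ from \cite{ALSW,BD}), and $\cV_2$ (by Theorem \ref{thm:V12_rigid}) as well as the simple module $\cV_{2n}$ are self-contragredient; since contragredients preserve the decomposition by conformal weights modulo $\ZZ$, the summand $U_A$ is self-contragredient as well. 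If $U_A$ were a non-split extension it would be uniserial with socle $\cL_{2(n-1)}$, and then its contragredient would be uniserial with socle $\cL_{2(n+1)}$, contradicting $U_A'\cong U_A$ since $\cL_{2(n-1)}\not\cong\cL_{2(n+1)}$. Therefore $U_A\cong\cV_{2(n-1)}\oplus\cV_{2(n+1)}$, and together with the previous paragraph this gives $\cV_2\tens\cP_{2n+1}\cong\cV_{2(n-1)}\oplus 2\cdot\cV_{2n}\oplus\cV_{2(n+1)}$. I expect the two genuinely delicate points to be the identity $\dim\mathrm{End}(\cP_{2n+1})=2$, which uses the full Loewy structure of $\cP_{2n+1}$ rather than just its composition factors, and the self-contragredience of $\cV_2\tens\cP_{2n+1}$; everything else is bookkeeping with exactness of $\cV_2\tens\bullet$ and with conformal weights modulo $\ZZ$.
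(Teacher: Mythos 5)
Your proof is correct but takes a genuinely different route from the paper's for both splitting steps. To split $U_B$, the paper observes that $U_B(0)\cong M_{2n}\oplus M_{2n}$ as an $\fsl_2$-module and applies the universal property of generalized Verma modules to get a surjection $\cV_{2n}\oplus\cV_{2n}\to U_B$, then concludes by a length count; you instead compute $\dim\hom(\cL_{2n},U_B)=\dim\mathrm{End}(\cP_{2n+1})=2$ via the rigidity adjunction, which requires the full Loewy structure of $\cP_{2n+1}$ rather than just its lowest conformal weight space. To split $U_A$, the paper applies the exact contragredient functor to the sequence $0\to\cV_{2(n-1)}\to W_1\to\cV_{2(n+1)}\to 0$ and observes the lowest conformal weight space $M_{2(n-1)}$ of $W_1'$ generates a simple submodule $\cV_{2(n-1)}$; you instead establish that the whole module $\cV_2\tens\cV_2\tens\cV_{2n}$ is self-contragredient and argue a non-split $U_A$ cannot be. Both arguments work. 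The one place you should be more careful is your appeal to ``contragredients are compatible with the tensor product (part of the Grothendieck--Verdier structure).'' That phrasing is misleading: in a Grothendieck--Verdier category $D(W_1\tens W_2)$ and $D(W_2)\tens D(W_1)$ are \emph{not} generically isomorphic --- the paper explicitly notes that $KL^k(\fsl_2)$ has two inequivalent tensor products for precisely this reason. What you actually need, and what saves your argument, is that $D(R\tens W)\cong D(W)\tens R^*$ whenever $R$ is \emph{rigid} (a formal consequence of the adjunctions $\hom(X\tens(R\tens W),K)\cong\hom(X\tens R,D(W))\cong\hom(X,D(W)\tens R^*)$). Since $\cV_2$ is rigid self-dual and $\cV_{2n}$ is simple (hence self-contragredient), iterating this identity and applying the braiding gives $D(\cV_2\tens\cV_2\tens\cV_{2n})\cong\cV_{2n}\tens\cV_2\tens\cV_2\cong\cV_2\tens\cV_2\tens\cV_{2n}$. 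With that justification made precise, your proof is complete. On balance the paper's argument uses less machinery (it never needs to know $\dim\mathrm{End}(\cP_{2n+1})$ or self-contragredience of the whole tensor product), while yours is more categorical and parallels arguments used elsewhere in the paper (compare the proofs of Theorems \ref{thm:V12_times_Vrp} and \ref{thm:Prs}).
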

\begin{proof}
Since $\cV_{2}$ is rigid, the functor $\cV_{2}\tens\bullet$ is exact, and then Theorems \ref{thm:V12_times_Vrp} and \ref{thm:V12_times_Vrs} show there is an exact sequence
\begin{equation*}
0\longrightarrow\cV_{2(n-1)}\oplus\cV_{2n}\longrightarrow\cV_{2}\tens\cP_{2n+1}\longrightarrow\cV_{2n}\oplus\cV_{2(n+1)}\longrightarrow 0;
\end{equation*}
for the case $n=1$, recall the convention $\cV_0=0$. Since the lowest conformal weight of $\cV_{2n}$ is not congruent to those of $\cV_{2(n\pm 1)}$ mod $\ZZ$, $\cV_{2}\tens\cP_{2n+1}$ decomposes as a direct sum $W_1\oplus W_2$ such that there are short exact sequences
\begin{equation*}
0\longrightarrow\cV_{2(n-1)}\longrightarrow W_1\longrightarrow \cV_{2(n+1)}\longrightarrow 0
\end{equation*}
and
\begin{equation*}
0\longrightarrow\cV_{2n}\longrightarrow W_2\longrightarrow\cV_{2n}\longrightarrow 0.
\end{equation*}
The lowest conformal weight space of $W_2$ is a finite-dimensional $\mathfrak{sl}_2$-module which decomposes as $M_{2n}\oplus M_{2n}$, so the universal property of generalized Verma modules implies $W_2$ contains a homomorphic image of $\cV_{2n}\oplus\cV_{2n}$. It then follows that $W_2\cong\cV_{2n}\oplus\cV_{2n}$ since both modules have the same graded dimension.

If $n=1$, we get $W_1\cong\cV_{2(n+1)}$, proving the $n=1$ case of the theorem. For $n\geq 2$, taking contragredients yields an exact sequence
\begin{equation*}
0\longrightarrow\cV_{2(n+1)}\longrightarrow W_1'\longrightarrow\cV_{2(n-1)}\longrightarrow 0.
\end{equation*}
The lowest conformal weight space of $W_1'$ is the irreducible $\mathfrak{sl}_2$-module $M_{2(n-2)}$, so the universal property of generalized Verma modules implies that $W_1'$ contains a submodule isomorphic to the simple module $\cV_{2(n-1)}$. It follows that $W_1'\cong\cV_{2(n-1)}\oplus\cV_{2(n+1)}$, and thus $W_1\cong\cV_{2(n-1)}\oplus\cV_{2(n+1)}$ as well. This proves the $n\geq 2$ case of the theorem.
\end{proof}

When $p\geq 3$, the tensor product $\cV_{2}\tens\cP_{np+1}$ will contain a new indecomposable module, as we discuss next.

\subsection{Further indecomposable modules}\label{subsec:further_indecomp}

We take $p\geq 3$ and consider the tensor product $\cV_{2}\tens\cP_{np+1}$ for $n\in\ZZ_{\geq 1}$. The exactness of $\cV_{2}\tens\bullet$ and Theorems \ref{thm:V12_times_Vrp} and \ref{thm:V12_times_Vrs} imply that there is an exact sequence
\begin{equation*}
0\longrightarrow\cV_{np-2}\oplus\cV_{np} \longrightarrow\cV_{2}\tens\cP_{np+1}\longrightarrow\cV_{np}\oplus\cV_{np+2}\longrightarrow 0
\end{equation*}
As in the proof of Theorem \ref{thm:V12_times_Pr1_p=2}, conformal weight considerations and the absence of non-split self-extensions of $\cV_{np}$ imply that
\begin{equation}\label{eqn:V2_times_Pnp+1}
\cV_{2}\tens\cP_{np+1}\cong 2\cdot\cV_{np}\oplus\cP_{np+2},
\end{equation}
where the direct summand $\cP_{np+2}$ has an exact sequence
\begin{equation*}
0\longrightarrow\cV_{np-2}\longrightarrow\cP_{np+2}\longrightarrow\cV_{np+2}\longrightarrow 0.
\end{equation*}
Now when $p\geq 4$, we assume inductively that we have obtained $\cP_{np+1},\ldots,\cP_{np+r-1}$ for some $r\in\lbrace 3,\ldots p-1\rbrace$, such that there is a short exact sequence
\begin{equation*}
0\longrightarrow \cV_{np-r+1}\longrightarrow\cP_{np+r-1}\longrightarrow\cV_{np+r-1}\longrightarrow 0.
\end{equation*}
Then by exactness of $\cV_{2}\tens\bullet$ and Theorem \ref{thm:V12_times_Vrs}, there is an exact sequence
\begin{equation*}
0\longrightarrow\cV_{np-r}\oplus\cV_{np-r+2}\longrightarrow\cV_{2}\tens\cP_{np+r-1}\longrightarrow\cV_{np+r-2}\oplus\cV_{np+r}\longrightarrow 0.
\end{equation*}
By conformal weight considerations again, $\cV_{2}\tens\cP_{np+r-1}$ contains a direct summand $\cP_{np+r}$ such that there is an exact sequence
\begin{equation}\label{eqn:Prs_exact_seq}
0\longrightarrow\cV_{np-r}\longrightarrow\cP_{np+r}\longrightarrow\cV_{np+r}\longrightarrow 0.
\end{equation}
Properties of the modules $\cP_{np+r}$ constructed by this recursive procedure can be determined in a manner similar to the proof of Theorem \ref{thm:V12_times_Vrp}:
\begin{thm}\label{thm:Prs}
For $n\in\ZZ_{\geq 1}$ and $1\leq r\leq p-1$, the exact sequence \eqref{eqn:Prs_exact_seq} for $\cP_{np+r}$ does not split, and the module $\cP_{np+r}$ is indecomposable with Loewy diagram
\begin{equation*}
\begin{matrix}
  \begin{tikzpicture}[->,>=latex,scale=1.5]
\node (b1) at (1,0) {$\cL_{np+r}$};
\node (c1) at (-1, 1){$\cP_{np+r}:$};
   \node (a1) at (0,1) {$\cL_{np-r}$};
   \node (b2) at (2,1) {$\cL_{(n+2)p-r}$};
    \node (a2) at (1,2) {$\cL_{np+r}$};
\draw[] (b1) -- node[left] {} (a1);
   \draw[] (b1) -- node[left] {} (b2);
    \draw[] (a1) -- node[left] {} (a2);
    \draw[] (b2) -- node[left] {} (a2);
\end{tikzpicture}
\end{matrix} .
\end{equation*}
\end{thm}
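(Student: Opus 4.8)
The plan is to argue by induction on $r$ in the range $1\leq r\leq p-1$ (with $n\in\ZZ_{\geq 1}$ arbitrary), closely following the proof of Theorem \ref{thm:V12_times_Vrp}, which is precisely the base case $r=1$. Throughout I would use: rigidity and self-duality of $\cV_{2}$ (Theorem \ref{thm:V12_rigid}), so that $\cV_{2}\tens\bullet$ is exact and adjoint to itself on both sides; the tensor product rules for $\cV_{2}\tens\cV_{s}$ and $\cV_{2}\tens\cL_{s}$ (Theorems \ref{thm:V12_times_Vrs} and \ref{thm:V12_times_Lrs}); the structure of generalized Verma modules (Theorem \ref{thm:gen_Verma_structure}); and the inductive hypothesis applied to $\cP_{np+r-1}$, of which $\cP_{np+r}$ is a direct summand of $\cV_{2}\tens\cP_{np+r-1}$, carved out together with the exact sequence \eqref{eqn:Prs_exact_seq}. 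The starting point is that $\cV_{np-r}$, $\cV_{np+r}$ and $\cP_{np+r-1}$ all have simple tops with known composition factors: by Theorem \ref{thm:gen_Verma_structure}, $\cV_{np-r}$ is a non-split extension $0\to\cL_{np+r}\to\cV_{np-r}\to\cL_{np-r}\to 0$ and $\cV_{np+r}$ is $0\to\cL_{(n+2)p-r}\to\cV_{np+r}\to\cL_{np+r}\to 0$, so by \eqref{eqn:Prs_exact_seq} the composition factors of $\cP_{np+r}$ are $\cL_{np+r}$ with multiplicity two together with $\cL_{np-r}$ and $\cL_{(n+2)p-r}$, matching the claimed diagram.

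For the non-splitting of \eqref{eqn:Prs_exact_seq} and for the socle, I would run the Hom computations of Theorem \ref{thm:V12_times_Vrp}. If \eqref{eqn:Prs_exact_seq} split, then $\cP_{np+r}$ would surject onto $\cL_{np-r}$; but $\hom(\cP_{np+r},\cL_{np-r})$ is a direct summand of $\hom(\cV_{2}\tens\cP_{np+r-1},\cL_{np-r})\cong\hom(\cP_{np+r-1},\cV_{2}\tens\cL_{np-r})$, and using Theorem \ref{thm:V12_times_Lrs} to compute $\cV_{2}\tens\cL_{np-r}$ one checks that no simple summand of it is the top $\cL_{np+r-1}$ of $\cP_{np+r-1}$, so this Hom space vanishes. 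In the same way $\hom(\cL_{(n+2)p-r},\cP_{np+r})=0$, because no summand of $\cV_{2}\tens\cL_{(n+2)p-r}$ lies in the top $\cL_{np+r-1}$ of $\cP_{np+r-1}$. Since $\mathrm{Soc}(\cV_{np+r})=\cL_{(n+2)p-r}$ therefore does not embed in $\cP_{np+r}$, every simple submodule of $\cP_{np+r}$ lies in $\cV_{np-r}$, hence in $\mathrm{Soc}(\cV_{np-r})=\cL_{np+r}$; thus $\mathrm{Soc}(\cP_{np+r})\cong\cL_{np+r}$ is simple, so \eqref{eqn:Prs_exact_seq} does not split and $\cP_{np+r}$ is indecomposable.

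It then remains to analyze $Q:=\cP_{np+r}/\mathrm{Soc}(\cP_{np+r})$. Since $\mathrm{Soc}(\cP_{np+r})=\mathrm{Soc}(\cV_{np-r})$, we get $0\to\cL_{np-r}\to Q\to\cV_{np+r}\to 0$, and composing with $\cV_{np+r}\twoheadrightarrow\cL_{np+r}$ gives $\pi:Q\twoheadrightarrow\cL_{np+r}$ whose kernel is an extension of $\cL_{(n+2)p-r}$ by $\cL_{np-r}$. Applying the contragredient functor and using that the minimal conformal weight of $\cL_{np-r}$ is strictly below that of $\cL_{(n+2)p-r}$, the lowest conformal weight space of $(\ker\pi)'$ generates a quotient of $\cV_{np-r}$, which has no composition factor $\cL_{(n+2)p-r}$; hence $(\ker\pi)'$, and so $\ker\pi$, splits as $\cL_{np-r}\oplus\cL_{(n+2)p-r}$. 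Moreover $\cL_{np+r}$ does not embed in $Q$: such a map would stay nonzero after composing with $Q\twoheadrightarrow\cV_{np+r}$ (whose kernel is $\cL_{np-r}$), forcing $\cL_{np+r}\hookrightarrow\cV_{np+r}$ and contradicting $\mathrm{Soc}(\cV_{np+r})=\cL_{(n+2)p-r}$. Hence $\mathrm{Soc}(Q)=\cL_{np-r}\oplus\cL_{(n+2)p-r}$ with simple top $\cL_{np+r}$, which is the middle and top rows of the diagram. Finally, the four edges correspond to indecomposable length-two subquotients of $\cP_{np+r}$: the two lower edges because otherwise $\cL_{np-r}$ or $\cL_{(n+2)p-r}$ would be a submodule of $\cP_{np+r}$ (excluded since $\mathrm{Soc}(\cP_{np+r})=\cL_{np+r}$), and the two upper edges because otherwise $\cL_{np-r}$ or $\cL_{(n+2)p-r}$ would be a quotient of $\cP_{np+r}$ (excluded since $\hom(\cP_{np+r},\cL_{np-r})=0$ above and $\hom(\cP_{np+r},\cL_{(n+2)p-r})=0$ by the same adjunction-and-inductive-diagram argument).

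The step needing the most care will be the bookkeeping at the boundary value $r=p-1$, where Theorem \ref{thm:V12_times_Lrs} makes $\cV_{2}\tens\cL_{np+p-1}$ collapse to a single simple module and several of the relevant indices coincide, and at small $p$ (e.g.\ $p=3$), where some of the intermediate modules degenerate; these cases must be treated separately, but by the same arguments. A secondary subtlety is that, because $\cP_{np+r}$ is only a direct summand of $\cV_{2}\tens\cP_{np+r-1}$, the self-adjunction of $\cV_{2}$ yields each relevant Hom space only as a summand of a computed one; this suffices for all the vanishing statements above, but the multiplicity and layer structure of $\cP_{np+r}$ must instead be extracted from the exact sequence \eqref{eqn:Prs_exact_seq}, the uniseriality of $\cV_{np\pm r}$, and the contragredient functor.
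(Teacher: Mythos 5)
Your proposal is correct and follows essentially the same route as the paper: induction on $r$ with base case Theorem \ref{thm:V12_times_Vrp}, the adjunction $\hom(\cV_2\tens A,B)\cong\hom(A,\cV_2\tens B)$ from rigidity and self-duality of $\cV_2$ to kill the Hom spaces from/to $\cL_{np-r}$ and $\cL_{(n+2)p-r}$, and the contragredient-plus-lowest-conformal-weight argument to split $\ker\pi$. The paper condenses the $r\geq 2$ step into ``repeat the proof of Theorem \ref{thm:V12_times_Vrp} practically verbatim'' after computing exactly the Hom dimensions you list; you simply write that argument out. Two small remarks: (i) in the vanishing of $\hom(\cL_{(n+2)p-r},\cP_{np+r})$, what you need is that no summand of $\cV_2\tens\cL_{(n+2)p-r}$ equals the \emph{socle} $\cL_{np+r-1}$ of $\cP_{np+r-1}$, not the top — but this is harmless since the inductive Loewy diagram makes socle and head of $\cP_{np+r-1}$ coincide; (ii) your worry about the boundary $r=p-1$ and small $p$ is overcautious — the computation is uniform once one observes that $np+r-1$ can never match $(n\pm 1)p - r \pm 1$ for $1\leq r\leq p-1$, and for $p=2$ the inductive step is vacuous.
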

\begin{proof}
We prove the theorem by induction on $r$, with the base case $r=1$ being Theorem \ref{thm:V12_times_Vrp}. For $r\geq 2$, we then have
\begin{align*}
\hom(\cL_{(n+1\pm1)p-r},\cP_{np+r})  \hookrightarrow &\,\hom(\cL_{(n+1\pm1)p-r},\cV_{2}\tens\cP_{np+r-1})\nonumber\\
\cong &\,\hom(\cV_{2}\tens\cL_{(n+1\pm1)p-r},\cP_{np+r-1})=0
\end{align*}
using the fact that $\cP_{np+r}$ is a direct summand of $\cV_{2}\tens\cP_{np+r-1}$, the rigidity of $\cV_2$, Theorem \ref{thm:V12_times_Lrs}, and the fact that $\cL_{np+r-1}$ is the only irreducible submodule of $\cP_{np+r-1}$ (which holds by induction). Similarly,
\begin{align*}
\hom(\cP_{np+r},\cL_{(n+1\pm1)p-r})  \hookrightarrow &\,\hom(\cV_{2}\tens\cP_{np+r-1},\cL_{(n+1\pm1)p-r})\nonumber\\
\cong &\,\hom(\cP_{np+r-1},\cV_{2}\tens\cL_{(n+1\pm1)p-r})=0
\end{align*}
since by induction $\cL_{np+r-1}$ is also the only irreducible quotient of $\cP_{np+r-1}$. We
also get
\begin{align*}
\dim\hom(\cL_{np+r},\cP_{np+r}) & \leq\dim\hom(\cL_{np+r},\cV_{2}\tens\cP_{np+r-1})\nonumber\\
& =\dim\hom(\cV_{2}\tens\cL_{np+r},\cP_{np+r-1})=1
\end{align*}
and
\begin{align*}
\dim\hom(\cP_{np+r},\cL_{np+r}) & \leq\dim\hom(\cV_{2}\tens\cP_{np+r-1},\cL_{np+r})\nonumber\\
& =\dim\hom(\cP_{np+r-1},\cV_{2}\tens\cL_{np+r})=1.
\end{align*}
Then in fact
\begin{equation*}
\dim\hom(\cL_{np+r},\cP_{np+r})=1=\dim\hom(\cP_{np+r},\cL_{np+r})
\end{equation*}
since the exact sequence \eqref{eqn:Prs_exact_seq} (and Theorem \ref{thm:gen_Verma_structure}) shows that $\cL_{np+r}$ is both submodule and quotient of $\cP_{np+r}$.

Using these dimensions of homomorphism spaces involving $\cP_{np+r}$, we can now repeat the proof of Theorem \ref{thm:V12_times_Vrp} practically verbatim to show that the exact sequence \eqref{eqn:Prs_exact_seq} does not split and that $\cP_{np+r}$ is indecomposable with the indicated Loewy diagram.
\end{proof}

We next establish the rigidity and projectivity of the modules $\cP_{np+r}$. But first, we need a general lemma on rigidity in tensor categories:
\begin{lem}\label{lem:direct_summand_rigid}
Suppose that $X=W\oplus\til{W}$ in a tensor category, where $X$ and $\til{W}$ are rigid and self-dual, and $\hom(W,\til{W})=0$. Then $W$ is also rigid and self-dual.
\end{lem}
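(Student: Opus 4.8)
The plan is to construct evaluation and coevaluation morphisms for $W$ by hand from the self-duality datum of $X$, rather than attempting to ``cancel'' $\til{W}$ from an isomorphism $W\oplus\til{W}\cong W^*\oplus\til{W}$, which is illegitimate in a general tensor category where no Krull--Schmidt property is available. The first step is the standard fact that a retract of a rigid object is rigid: writing $i_W,p_W,i_{\til W},p_{\til W}$ for the structure morphisms of the decomposition $X=W\oplus\til W$ (so $p_Wi_W=\Id_W$, $p_{\til W}i_{\til W}=\Id_{\til W}$, $i_Wp_W+i_{\til W}p_{\til W}=\Id_X$, and $p_Wi_{\til W}=0=p_{\til W}i_W$), splitting the transposed idempotents on $X^*$ realizes $W$ and $\til W$ as retracts $W^*$ and $\til W^*$ of $X^*$, with $X^*=W^*\oplus\til W^*$ compatibly with $X=W\oplus\til W$, and with $W^*$ a left dual of $W$ whose inclusion $W^*\to X^*$ and projection $X^*\to W^*$ are the transposes $p_W^*$ and $i_W^*$. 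Thus $W$ is already rigid, and the task reduces to showing $W\cong W^*$, i.e.\ that $W$ is self-dual.

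For this I would fix an isomorphism $g\colon X\to X^*$ witnessing the self-duality of $X$ and set
\[
\beta:=i_W^*\circ g\circ i_W\colon W\to W^*,\qquad \gamma:=p_W\circ g^{-1}\circ p_W^*\colon W^*\to W ,
\]
and then check that $\gamma\beta=\Id_W$ and $\beta\gamma=\Id_{W^*}$, which forces $W\cong W^*$. For the first, substitute $p_W^*i_W^*=(i_Wp_W)^*=\Id_{X^*}-p_{\til W}^*i_{\til W}^*$ into $\gamma\beta=p_Wg^{-1}p_W^*i_W^*gi_W$; after cancelling $g^{-1}g=\Id_X$ this becomes $\Id_W$ minus a term that factors through the composite $i_{\til W}^*\circ g\circ i_W\in\hom(W,\til W^*)$, and $\hom(W,\til W^*)\cong\hom(W,\til W)=0$ since $\til W$ is self-dual. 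Symmetrically, substituting $i_Wp_W=\Id_X-i_{\til W}p_{\til W}$ into $\beta\gamma=i_W^*gi_Wp_Wg^{-1}p_W^*$ exhibits $\beta\gamma$ as $\Id_{W^*}$ minus a term factoring through $i_W^*\circ g\circ i_{\til W}\in\hom(\til W,W^*)$, and a routine manipulation with the duality adjunctions together with the self-duality of $\til W$ identifies $\hom(\til W,W^*)$ with $\hom(W,\til W)$, so this term also vanishes. Hence $\beta$ is an isomorphism with inverse $\gamma$, so $W$ is self-dual; running the same argument with right duals in place of left duals shows $W$ is rigid and self-dual in the intended two-sided sense.

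I expect the point needing the most care to be exactly this avoidance of cancellation: the isomorphism $W\cong W^*$ has to be produced explicitly from $g$, and one must verify that the hypothesis $\hom(W,\til W)=0$ — promoted via the self-duality of $\til W$ to $\hom(W,\til W^*)=0$ and, via the duality adjunctions, to $\hom(\til W,W^*)=0$ — is precisely what annihilates the off-diagonal terms in both $\gamma\beta$ and $\beta\gamma$. The only other input is the (standard, but worth spelling out) description of $W$ as a retract of the rigid object $X$ with left dual the corresponding retract of $X^*$ and structure maps $p_W^*,i_W^*$, which is what makes the formulas for $\beta$ and $\gamma$ meaningful. I note that in the concrete setting of the paper, where $\hom$-spaces are finite-dimensional and Krull--Schmidt holds, this description alone suffices — since then $W^*\oplus\til W\cong X^*\cong X\cong W\oplus\til W$ yields $W\cong W^*$ by cancellation — but the argument above does not rely on any such finiteness.
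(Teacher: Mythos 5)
Your proof is correct, and it takes a genuinely (if mildly) different route from the paper's. The paper constructs the candidate self-duality datum for $W$ directly by restriction — it sets $e_W := e_X\circ(\iota\tens\iota)$ and $i_W := (\pi\tens\pi)\circ i_X$, expands the zig-zag composition for $W$ using $\Id_X = \iota\pi + \til\iota\til\pi$, and observes that the sole correction term factors through $e_X\circ(\til\iota\tens\iota)\in\hom(\til W\tens W,\vac)\cong\hom(W,\til W)=0$. You instead split the argument into two steps: first invoke the standard fact that a retract of a rigid object is rigid (so $W^*$ exists as the corresponding retract of $X^*$ with structure maps $p_W^*, i_W^*$), and then produce an explicit isomorphism $W\cong W^*$ by sandwiching an isomorphism $g:X\to X^*$ between inclusion and projection, checking $\gamma\beta=\Id_W$ and $\beta\gamma=\Id_{W^*}$ by the same ``insert $\Id_X = i_Wp_W + i_{\til W}p_{\til W}$ and kill the off-diagonal term'' device. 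The engine is identical — self-duality of $\til W$ converts $\hom(W,\til W)=0$ into the vanishing of the relevant off-diagonal hom-space — but you apply it twice (once to $\hom(W,\til W^*)$ and once to $\hom(\til W, W^*)$), where the paper applies it once and dispatches the second zig-zag by symmetry. What your version buys is a cleaner conceptual separation of ``rigid'' from ``self-dual'' and a direct morphism $\beta:W\to W^*$ rather than a verification of the snake equations; what the paper's version buys is avoiding any appeal to the retract-of-rigid lemma and any discussion of $W^*$ as a separate object. Both are legitimate, and both carefully sidestep the illegitimate cancellation $W\oplus\til W\cong W^*\oplus\til W\Rightarrow W\cong W^*$, as you explicitly note. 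One step worth writing out rather than calling routine is the identification $\hom(\til W, W^*)\cong\hom(W,\til W)$: it passes through $\hom(\til W\tens W,\vac)\cong\hom(W,{}^*\til W)$ and then uses that self-duality $\til W\cong\til W^*$ forces ${}^*\til W\cong\til W$ as well (apply the right-dual functor to the isomorphism $\til W\cong\til W^*$ and use ${}^*(\til W^*)\cong\til W$). Finally, your closing remark that one must ``run the same argument with right duals'' is actually unnecessary once $\beta$ is shown to be an isomorphism: if $W$ is a left dual of itself then it is also a right dual of itself, so two-sided rigidity and self-duality come for free.
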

\begin{proof}
Let $e_X: X\tens X\rightarrow\vac$ and $i_X: \vac\rightarrow X\tens X$ be the evaluation and coevaluation morphisms for $X$, let $\pi: X\rightarrow W$ and $\til{\pi}: X\rightarrow \til{W}$ be the projection morphisms, and let $\iota: W\rightarrow X$ and $\til{\iota}: \til{W}\rightarrow X$ be the inclusion morphisms. We define
\begin{equation*}
e_W=e_X\circ(\iota\tens\iota),\qquad  i_W= (\pi\tens\pi)\circ i_X
\end{equation*}
to be evaluation and coevaluation morphisms for $W$.

To show that $W$ is rigid, we use naturality of the unit and associativity isomorphisms, the identity $\Id_X=\iota\circ\pi+\til{\iota}\circ\til{\pi}$, and the rigidity of $X$ to rewrite the first rigidity composition for $W$:
\begin{align*}
r_W & \circ(\Id_W\tens e_W) \circ\cA_{W,W,W}^{-1}\circ(i_W\tens\Id_W)\circ l_W^{-1}\nonumber\\
& = \pi\circ r_X\circ(\Id_X\tens e_X)\circ(\Id_X\tens((\iota\circ\pi)\tens\Id_X))\circ\cA_{X,X,X}^{-1}\circ(i_X\tens\Id_X)\circ l_X^{-1}\circ\iota\nonumber\\
& = \pi\circ r_X\circ(\Id_X\tens e_X)\circ\cA_{X,X,X}^{-1}\circ(i_X\tens\Id_X)\circ l_X^{-1}\circ\iota\nonumber\\
&\hspace{2em}  - \pi\circ r_X\circ(\Id_X\tens e_X)\circ(\Id_X\tens((\til{\iota}\circ\til{\pi})\tens\Id_X))\circ\cA_{X,X,X}^{-1}\circ(i_X\tens\Id_X)\circ l_X^{-1}\circ\iota\nonumber\\
& = \pi\circ\iota - r_W\circ(\Id_W\tens f)\circ\cA_{W,\til{W},W}^{-1}\circ(g\tens\Id_W)\circ l_W^{-1}
\end{align*}
where $f=e_X\circ(\til{\iota}\tens\iota)$ and $g=(\pi\tens\til{\pi})\circ i_X$. But $f=0$ since
\begin{equation*}
f\in\hom(\til{W}\tens W,\vac)\cong\hom(W,\til{W})=0,
\end{equation*}
using the self-duality of $\til{W}$. Thus the first rigidity composition for $W$ is $\pi\circ\iota=\Id_W$. Similarly,
\begin{equation*}
l_W\circ(e_W\tens\Id_W)\circ\cA_{W,W,W}\circ(\Id_W\tens i_W)\circ r_W^{-1}=\Id_W,
\end{equation*}
so $W$ is rigid and self-dual.
\end{proof}

Using the preceding lemma, we first prove:
\begin{prop}\label{prop:V1s_rigid}
For $1\leq r\leq p$, the generalized Verma module $\cV_{r}$ is rigid, self-dual, and projective in $KL^k(\mathfrak{sl}_2)$.
\end{prop}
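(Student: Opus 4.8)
The plan is to induct on $r$. The base cases $r=1$ and $r=2$ are handled directly: $\cV_1=V^k(\fsl_2)$ is the unit object, hence rigid and self-dual, and it is projective by Proposition \ref{prop:V11_proj}; and $\cV_2$ is rigid and self-dual by Theorem \ref{thm:V12_rigid}, hence projective by Corollary \ref{cor:rigid_is_projective}. Now fix $r$ with $3\leq r\leq p$ and assume that $\cV_1,\ldots,\cV_{r-1}$ are all rigid and self-dual. Since $1\leq r-1\leq p-1$, we have $p\nmid(r-1)$, so Theorem \ref{thm:V12_times_Vrs} gives
\[
\cV_2\tens\cV_{r-1}\cong\cV_{r-2}\oplus\cV_r.
\]

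I would then apply Lemma \ref{lem:direct_summand_rigid} to this decomposition with $X=\cV_2\tens\cV_{r-1}$, $\til{W}=\cV_{r-2}$, and $W=\cV_r$. The object $X$ is rigid and self-dual as a tensor product of the rigid self-dual objects $\cV_2$ and $\cV_{r-1}$ (in a braided tensor category $A\tens B\cong B\tens A$ and $(A\tens B)^\ast\cong B^\ast\tens A^\ast$, so a tensor product of self-dual objects is self-dual), and $\cV_{r-2}$ is rigid and self-dual by the inductive hypothesis, which for $r=3$ reduces to the case of the unit $\cV_1$. The only remaining hypothesis of Lemma \ref{lem:direct_summand_rigid} to verify is $\hom(\cV_r,\cV_{r-2})=0$. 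This follows from conformal weight considerations: by \eqref{eqn:h_lambda} the generalized $L(0)$-eigenvalues on $\cV_r$ all lie in $h_r+\ZZ_{\geq 0}$ and those on $\cV_{r-2}$ in $h_{r-2}+\ZZ_{\geq 0}$, while
\[
h_r-h_{r-2}=\frac{q(r-1)}{p}\notin\ZZ
\]
because $\gcd(p,q)=1$ and $1\leq r-1\leq p-1$; hence these two sets are disjoint and any $V^k(\fsl_2)$-module homomorphism $\cV_r\to\cV_{r-2}$ must vanish. Lemma \ref{lem:direct_summand_rigid} then shows $\cV_r$ is rigid and self-dual, and Corollary \ref{cor:rigid_is_projective} shows it is projective, completing the induction.

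The genuinely hard input is rigidity of $\cV_2$ (Theorem \ref{thm:V12_rigid}), established via KZ equations in the previous section; granting that, the present proposition is essentially bookkeeping. The one point needing care is verifying $\hom(\cV_r,\cV_{r-2})=0$, so that Lemma \ref{lem:direct_summand_rigid} splits off $\cV_r$ as the ``new'' rigid summand rather than $\cV_{r-2}$; this is precisely where the constraint $r\leq p$ enters, since it guarantees $p\nmid(r-1)$ and hence the direct-sum formula of Theorem \ref{thm:V12_times_Vrs}. Correspondingly the induction naturally terminates at $r=p$: for $r>p$ the module $\cV_2\tens\cV_p$ is no longer a direct sum of generalized Verma modules (it is the indecomposable $\cP_{p+1}$), so there is no decomposition to feed into Lemma \ref{lem:direct_summand_rigid}.
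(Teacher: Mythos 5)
Your proof is correct and takes essentially the same approach as the paper: both establish the base cases $r=1,2$, then induct via $\cV_2\tens\cV_{r-1}\cong\cV_{r-2}\oplus\cV_r$ together with Lemma \ref{lem:direct_summand_rigid}, and conclude projectivity from Corollary \ref{cor:rigid_is_projective}. The only difference is that you spell out the conformal-weight argument for $\hom(\cV_r,\cV_{r-2})=0$ (namely $h_r-h_{r-2}=q(r-1)/p\notin\ZZ$), which the paper leaves implicit.
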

\begin{proof}
By Corollary \ref{cor:rigid_is_projective}, it is enough to show that $\cV_{r}$ is rigid and self-dual. The $r=1$ case follows because $\cV_{1}$ is the unit object of $KL^k(\mathfrak{sl}_2)$, and the $r=2$ case is Theorem \ref{thm:V12_rigid}. Now we assume by induction that $\cV_{r-1}$ is rigid and self-dual for some $r\in\lbrace 3,\ldots, p\rbrace$. Then $\cV_{2}\tens\cV_{r-1}$ is rigid and self-dual because
\begin{equation*}
(\cV_{2}\tens\cV_{r-1})^*\cong \cV_{r-1}^*\tens\cV_{2}^*\cong\cV_{r-1}\tens\cV_{2}\cong\cV_{2}\tens\cV_{r-1}.
\end{equation*}
Moreover, $\cV_{2}\tens\cV_{r-1}\cong\cV_{r-2}\oplus\cV_{r}$ with $\cV_{r-2}$ rigid and self-dual by induction. Thus because $\hom(\cV_{r},\cV_{r-2})=0$, Lemma \ref{lem:direct_summand_rigid} implies that $\cV_{r}$ is rigid and self-dual.
\end{proof}

We also need an elementary lemma about projective covers in abelian categories:
\begin{lem}\label{lem:proj_cover}
If $W$ is a simple object in an abelian category and $p_W: P_W\twoheadrightarrow W$ is a surjection where $P_W$ is a finite-length indecomposable projective object, then $(P_W,p_W)$ is a projective cover of $W$.
\end{lem}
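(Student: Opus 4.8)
\textbf{Proof proposal for Lemma \ref{lem:proj_cover}.} The plan is to verify directly that $(P_W, p_W)$ satisfies the defining property of a projective cover: namely, that $p_W$ is an essential (superfluous) epimorphism, i.e., no proper subobject of $P_W$ maps onto $W$. Equivalently, by a standard characterization, it suffices to show that any endomorphism $g: P_W \to P_W$ with $p_W \circ g = p_W$ is an automorphism. So first I would start with such a $g$ and argue about its behavior.

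The key step is to analyze $g$ using the indecomposability and finite length of $P_W$. By Fitting's lemma applied to the finite-length object $P_W$, for each endomorphism $g$ there is an integer $n$ such that $P_W = \ker(g^n) \oplus \mathrm{im}(g^n)$; since $P_W$ is indecomposable, either $g^n = 0$ (so $g$ is nilpotent) or $g^n$ is an isomorphism (so $g$ is an isomorphism). I would rule out the nilpotent case: if $g$ were nilpotent, then $g$ cannot be injective (as $P_W \neq 0$), so $\ker g \neq 0$, and more to the point $\mathrm{im}\, g \subsetneq P_W$ is a proper subobject. But the condition $p_W \circ g = p_W$ forces $p_W(\mathrm{im}\, g) = p_W(P_W) = W$, so the proper subobject $\mathrm{im}\, g$ surjects onto $W$ via $p_W|_{\mathrm{im}\,g}$. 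Composing with the inclusion, this shows $p_W$ restricted to $\mathrm{im}\, g$ is still epi; iterating, $p_W|_{\mathrm{im}\, g^m}$ is epi for all $m$, but $\mathrm{im}\, g^m = 0$ for $m$ large since $g$ is nilpotent, contradicting $W \neq 0$. Hence $g$ must be an automorphism.

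Having shown every $g$ with $p_W \circ g = p_W$ is an automorphism, I would conclude that $p_W$ is essential: if $N \subseteq P_W$ is a subobject with $p_W|_N: N \to W$ epi, then (using projectivity of $P_W$ and the surjection $p_W|_N$) I can lift $\Id_{P_W}$ along $p_W|_N$ to obtain $g: P_W \to N \hookrightarrow P_W$ with $p_W \circ g = p_W$; by the previous step $g$ is an automorphism, hence $N = \mathrm{im}\, g = P_W$. Therefore $p_W$ is an essential epimorphism from a projective object, which is precisely the definition of a projective cover.

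I do not expect a serious obstacle here; the argument is entirely formal and uses only finite length (for Fitting's lemma), indecomposability, simplicity of $W$ (to know the lifted maps remain surjective without delicate bookkeeping), and projectivity of $P_W$. The only point requiring a little care is making sure Fitting's lemma is available: this needs $P_W$ to have finite length, which is part of the hypothesis, and the ambient category to be abelian, so that kernels, images, and the direct sum decomposition all make sense.
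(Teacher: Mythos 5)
Your proof is correct, and the decisive step is the same as the paper's: apply Fitting's lemma to an endomorphism $\phi$ of the finite-length indecomposable object $P_W$ satisfying $p_W\circ\phi = p_W$, and rule out nilpotence because $p_W\circ\phi^N = p_W\neq 0$ for all $N$, forcing $\phi$ to be an automorphism. The difference is in which of the two equivalent characterizations of projective cover you choose to verify. You prove that $p_W$ is an essential epimorphism: any subobject $N\subseteq P_W$ on which $p_W$ stays epi yields (by projectivity of $P_W$) a lift $g:P_W\to N\hookrightarrow P_W$ with $p_W\circ g=p_W$; once $g$ is known to be an automorphism, $N=P_W$. The paper instead verifies the comparison property directly: given any projective surjection $\pi: P\twoheadrightarrow W$, projectivity produces $f:P\to P_W$ and $g:P_W\to P$ compatible with the surjections, and Fitting's lemma applied to $\phi = f\circ g$ shows $f$ is onto. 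Both routes are standard and of comparable length; yours makes the "essential epi" structure explicit (useful if one later wants uniqueness of the cover up to isomorphism), while the paper's is slightly more economical for the one application it needs, namely recognizing a given indecomposable projective as a cover. One small wording slip in your writeup: you "lift $\Id_{P_W}$ along $p_W|_N$," but what you actually lift is $p_W$ itself along the epimorphism $p_W|_N: N\twoheadrightarrow W$ to obtain $g: P_W\to N$; the idea is unaffected.
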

\begin{proof}
Suppose $\pi: P\twoheadrightarrow W$ is any surjection where $P$ is projective. Because both $P_W$ and $P$ are projective, we have morphisms $f: P\rightarrow P_W$ and $g: P_W\rightarrow P$ such that the diagrams
\begin{equation*}
\xymatrix{
& P_W \ar[d]^{p_W} &\\
P \ar[ru]^{f} \ar[r]_\pi & W\\
}\qquad\xymatrix{
& P_W \ar[ld]_g \ar[d]^{p_W} &\\
P \ \ar[r]_\pi & W\\
}
\end{equation*}
commute. We need to show that $f$ is surjective, and for this it is sufficient to show that $f\circ g\in\mathrm{End}(P_W)$ is an isomorphism. Indeed, by Fitting's Lemma, $f\circ g$ is either an isomorphism or nilpotent, and the latter is impossible because
\begin{equation*}
p_W\circ (f\circ g)^N = p_W \neq 0
\end{equation*}
for all $N\in\ZZ_{\geq 0}$.
\end{proof}

We now define an indecomposable module $\cP_{r}$ for all $r\in\ZZ_{\geq 1}$ by setting $\cP_{r}=\cV_{r}$ if either $r<p$ or $p\mid r$. We also adopt the convention $\cP_r=0$ if $r\leq 0$ in the tensor product formulas of the next theorem.
\begin{thm}\label{thm:Prs_properties}
For all $r\in\ZZ_{\geq 1}$, $\cP_{r}$ is rigid, self-dual, and a projective cover of $\cL_{r}$ in $KL^k(\mathfrak{sl}_2)$. In particular, $KL^k(\mathfrak{sl}_2)$ has enough projectives. Moreover, assuming $p\geq 3$, for $n\in\ZZ_{\geq 0}$ and $1\leq r\leq p$,
\begin{equation*}
\cV_{2}\tens\cP_{np+r}\cong\left\lbrace\begin{array}{lll}
2\cdot\cP_{np}\oplus\cP_{np+2} & \text{if} & r=1\\
\cP_{np+r-1}\oplus\cP_{np+r+1} & \text{if} & 2\leq r\leq p-2\\
\cP_{(n-1)p}\oplus\cP_{(n+1)p-2}\oplus\cP_{(n+1)p} & \text{if} & r=p-1\\
\cP_{(n+1)p+1} & \text{if} & r=p
\end{array}\right. .
\end{equation*}
\end{thm}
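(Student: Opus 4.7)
The plan is to establish all four assertions simultaneously by strong induction on $r$, processing the indices in blocks $\{np+1, \ldots, (n+1)p\}$ in the order $s = 1, 2, \ldots, p$ for each $n \geq 0$. The base case $n = 0$ (so $1 \leq r \leq p$) is handled by Proposition \ref{prop:V1s_rigid} (rigidity, self-duality, projectivity of $\cP_r = \cV_r$), Lemma \ref{lem:proj_cover} applied to the indecomposable $\cV_r$ with unique simple quotient $\cL_r$, and Theorems \ref{thm:V12_times_Vrs} and \ref{thm:V12_times_Vrp} (tensor product formulas, using the convention $\cP_0 = 0$).

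For the inductive step within block $n \geq 1$, I would construct each $\cP_{np+s}$ as a direct summand of $\cV_2 \tens \cP_{np+s-1}$, where $\cP_{np+s-1}$ is available from the previous step. Applying $\cV_2 \tens -$ to the short exact sequence $0 \to \cV_{np-s+1} \to \cP_{np+s-1} \to \cV_{np+s-1} \to 0$ from Theorem \ref{thm:Prs} and using Theorem \ref{thm:V12_times_Vrs} yields
\begin{equation*}
0 \to \cV_{np-s} \oplus \cV_{np-s+2} \to \cV_2 \tens \cP_{np+s-1} \to \cV_{np+s-2} \oplus \cV_{np+s} \to 0
\end{equation*}
(with obvious modifications at the block boundaries). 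Conformal weight analysis gives $h_{np+s} \equiv h_{np-s}$ and $h_{np+s-2} \equiv h_{np-s+2}$ modulo $\ZZ$, and for $3 \leq s \leq p-1$ these two classes are distinct (since $p$ and $q$ are coprime), so the sequence splits into two sub-extensions: one defines $\cP_{np+s}$ via the exact sequence of Theorem \ref{thm:Prs}, while the other must be identified with $\cP_{np+s-2}$. The subcase $s = 2$ collapses the middle classes into $\{h_{np}\}$, producing a length-two self-extension of $\cL_{np}$ that gives $2 \cdot \cP_{np}$ after splitting; the subcase $s = p-1$ yields a three-class decomposition requiring the splitting of a length-two extension of $\cL_{(n+1)p}$ by $\cL_{(n-1)p}$; and $s = p$ is just Theorem \ref{thm:V12_times_Vrp}.

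The main obstacle is identifying the complement $\til W$ of $\cP_{np+s}$ in the tensor product: for the generic case $3 \leq s \leq p-1$, showing $\til W \cong \cP_{np+s-2}$ rather than the split module $\cV_{np-s+2} \oplus \cV_{np+s-2}$; for the boundary cases, showing that extensions between simple projectives must split. For the generic case, I would use a cosocle computation via rigidity:
\begin{equation*}
\dim\hom(\til W, \cL_{np-s+2}) \leq \dim\hom(\cP_{np+s-1}, \cV_2 \tens \cL_{np-s+2}) = 0,
\end{equation*}
where the vanishing follows from Theorem \ref{thm:V12_times_Lrs}, writing $\cV_2 \tens \cL_{np-s+2} \cong \cL_{np-s+1} \oplus \cL_{np-s+3}$, together with the fact that the cosocle of $\cP_{np+s-1}$ is $\cL_{np+s-1}$; combined with Theorem \ref{thm:Prs}, this forces $\til W \cong \cP_{np+s-2}$. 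The boundary-case splittings reduce to $\mathrm{Ext}^1(\cL_a, \cL_b) = 0$ for $a, b$ among $\{(n\pm 1)p, np\}$, which holds because by induction each $\cL_a = \cP_a$ is projective and, being self-contragredient under the exact contravariant contragredient equivalence on $KL^k(\fsl_2)$, also injective.

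Once the decomposition $\cV_2 \tens \cP_{np+s-1} \cong \cP_{np+s} \oplus \til W$ is established, Lemma \ref{lem:direct_summand_rigid} transfers rigidity and self-duality from the rigid, self-dual tensor product $\cV_2 \tens \cP_{np+s-1}$ to $\cP_{np+s}$: the complement $\til W$ is rigid and self-dual by induction, and $\hom(\cP_{np+s}, \til W) = 0$ because the socle $\cL_{np+s}$ of $\cP_{np+s}$ is not in the socle of $\til W$ (visible from Theorem \ref{thm:Prs}). Projectivity then follows from Corollary \ref{cor:rigid_is_projective}, and Lemma \ref{lem:proj_cover} applied to the indecomposable finite-length projective $\cP_{np+s}$ with unique simple quotient $\cL_{np+s}$ gives the projective cover property, whence $KL^k(\fsl_2)$ has enough projectives.
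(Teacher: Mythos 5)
Your overall strategy coincides with the paper's: a double induction in which $\cP_{np+s}$ is realized as a direct summand of $\cV_2\tens\cP_{np+s-1}$, the complementary summand is identified, Lemma \ref{lem:direct_summand_rigid} transfers rigidity and self-duality, Corollary \ref{cor:rigid_is_projective} gives projectivity, and Lemma \ref{lem:proj_cover} gives the projective cover property; your treatment of the boundary cases (splitting extensions between the simple modules $\cL_{mp}$, which are projective and, being self-contragredient, injective) is a harmless variant of the paper's contragredient argument.

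However, there is a gap in your identification of the complement in the generic case $3\le s\le p-1$. You show $\hom(\til W,\cL_{np-s+2})=0$, i.e.\ the extension $0\to\cV_{np-s+2}\to\til W\to\cV_{np+s-2}\to 0$ does not split, and assert that ``combined with Theorem \ref{thm:Prs} this forces $\til W\cong\cP_{np+s-2}$.'' That inference requires knowing that all non-split extensions of $\cV_{np+s-2}$ by $\cV_{np-s+2}$ are isomorphic (e.g.\ that the relevant $\mathrm{Ext}^1$ space is one-dimensional), which is established neither in the paper nor by you; Theorem \ref{thm:Prs} only says that $\cP_{np+s-2}$ is one such non-split extension. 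The fix is the paper's argument, for which you already have every ingredient: $\til W$ is a direct summand of $\cV_2\tens\cP_{np+s-1}$, which is rigid and self-dual by induction and hence projective by Corollary \ref{cor:rigid_is_projective}; so $\til W$ is projective and surjects onto $\cL_{np+s-2}$, hence surjects onto its projective cover $\cP_{np+s-2}$ (known by induction), and comparing lengths (both have length $4$) gives $\til W\cong\cP_{np+s-2}$, making the cosocle computation unnecessary. A smaller slip: $\hom(\cP_{np+s},\til W)=0$ should be justified by disjointness of composition factors (any nonzero quotient of $\cP_{np+s}$ has $\cL_{np+s}$ in its cosocle, and $\cL_{np+s}$ is not a composition factor of $\cP_{np+s-2}$), not by comparing socles, since a map out of $\cP_{np+s}$ need not be injective.
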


\begin{proof}
Proposition \ref{prop:V1s_rigid} and Lemma \ref{lem:proj_cover} show that $\cP_r$ is rigid, self-dual, and a projective cover of $\cL_r$ when $1\leq r\leq p$. We prove the corresponding general result for $\cP_{np+r}$, $n\in\ZZ_{\geq 0}$, by induction on $n$. Thus for any fixed $n\in\ZZ_{\geq 1}$, we assume by induction that $\cP_{(n-1)p+p}=\cV_{np}$ is rigid and self-dual. We will now prove that $\cP_{np+r}$ is rigid, self-dual, and a projective cover of $\cL_{np+r}$ by induction on $r$.

First, $\cP_{np+1}=\cV_{2}\tens\cV_{np}$ is rigid and self-dual, and thus also projective by Corollary \ref{cor:rigid_is_projective}, because $\cV_{2}$ and $\cV_{np}$ are rigid and self-dual. Then since $\cP_{np+1}$ is indecomposable and surjects onto $\cL_{np+1}$ by Theorem \ref{thm:V12_times_Vrp}, $\cP_{np+1}$ is a projective cover of $\cL_{np+1}$ by Lemma \ref{lem:proj_cover}. Next, when $p\geq 3$, we defined $\cP_{np+2}$ so that $\cV_{2}\tens\cP_{np+1}\cong 2\cdot\cV_{np}\oplus\cP_{np+2}$. Both $\cV_{2}\tens\cP_{np+1}$ and $2\cdot\cV_{np}$ are rigid and self-dual, and $\hom(\cP_{np+2},2\cdot\cV_{np})=0$ since $\cV_{np}$ is not a composition factor of $\cP_{np+2}$ by Theorem \ref{thm:Prs}, so $\cP_{np+2}$ is rigid and self-dual (and thus also projective) by Lemma \ref{lem:direct_summand_rigid}. Then Theorem \ref{thm:Prs} and Lemma \ref{lem:proj_cover} imply that $\cP_{np+2}$ is a projective cover of $\cL_{np+2}$.

Now for any $r\in\lbrace 2,\ldots, p-2\rbrace$, assume by induction that $\cP_{np+s}$ is rigid, self-dual, and a projective cover of $\cL_{np+s}$ for all $s\in\lbrace 1,\ldots, r\rbrace$. Then by our construction of $\cP_{np+r+1}$, 
\begin{equation*}
\cV_{2}\tens\cP_{np+r}\cong\til{\cP}_{np+r-1}\oplus\cP_{np+r+1}
\end{equation*}
where $\til{\cP}_{np+r-1}$ is a direct summand such that there is an exact sequence
\begin{equation*}
0\longrightarrow\cV_{np-r+1}\longrightarrow\til{\cP}_{np+r-1}\longrightarrow\cV_{np+r-1}\longrightarrow 0.
\end{equation*}
Since $\cV_{2}$ and $\cP_{np+r}$ are both rigid and self-dual, $\cV_{2}\tens\cP_{np+r}$ is also rigid and self-dual, and thus projective. Thus its direct summand $\til{\cP}_{np+r-1}$ is projective and surjects onto $\cL_{np+r-1}$. Then because $\cP_{np+r-1}$ is a projective cover of $\cL_{np+r-1}$,  $\til{\cP}_{np+r-1}$ surjects onto $\cP_{np+r-1}$. So because both modules have the same length, $\til{\cP}_{np+r-1}\cong\cP_{np+r-1}$, and we get 
\begin{equation*}
\cV_{2}\tens\cP_{np+r}\cong\cP_{np+r-1}\oplus\cP_{np+r+1}.
\end{equation*}
 Moreover, $\cP_{np+r-1}$ is rigid and self dual, and $\hom(\cP_{np+r+1},\cP_{np+r-1})=0$ since their composition factors are disjoint by Theorem \ref{thm:Prs}, so Lemma \ref{lem:direct_summand_rigid} implies that $\cP_{np+r+1}$ is rigid and self-dual. Then $\cP_{np+r+1}$ is a projective cover of $\cL_{np+r+1}$ by Corollary \ref{cor:rigid_is_projective}, Theorem \ref{thm:Prs}, and Lemma \ref{lem:proj_cover}.

We have now proved that $\cP_{np+r}$ is rigid, self-dual, and a projective cover of $\cL_{np+r}$ for $1\leq r\leq p-1$, but we still need to prove that $\cP_{(n+1)p}=\cL_{(n+1)p}$ is rigid and self-dual. By the exact sequence \eqref{eqn:Prs_exact_seq}, the exactness of $\cV_{2}\tens\bullet$, and Theorem \ref{thm:V12_times_Vrs}, there is an exact sequence
\begin{equation*}
0\longrightarrow\cV_{(n-1)p}\oplus\cV_{(n-1)p+2} \longrightarrow\cV_{2}\tens\cP_{(n+1)p-1}\longrightarrow\cV_{(n+1)p-2}\oplus\cV_{(n+1)p}\longrightarrow 0.
\end{equation*}
Conformal weight considerations imply $\cV_{2}\tens\cP_{(n+1)p-1}=W_1\oplus W_2$, with exact sequences
\begin{equation*}
0\longrightarrow \cV_{(n-1)p}\longrightarrow W_1\longrightarrow\cV_{np}\longrightarrow 0
\end{equation*}
and
\begin{equation*}
0\longrightarrow \cV_{(n-1)p+2}\longrightarrow W_2\longrightarrow \cV_{(n+1)p-2}\longrightarrow 0.
\end{equation*}
We get $W_1\cong\cV_{(n-1)p}\oplus\cV_{np}$ exactly as in the proof of Theorem \ref{thm:V12_times_Pr1_p=2}, while $W_2$ is a direct summand of the rigid, self-dual, and thus also projective module $\cV_{2}\tens\cP_{(n+1)p-1}$. Thus $W_2$ is a projective length-$4$ module that surjects onto $\cL_{(n+1)p-2}$, and therefore $W_2$ is isomorphic to the length-$4$ projective cover $\cP_{(n+1)p-2}$ (assuming $p\geq 3$). This shows that
\begin{equation*}
\cV_{2}\tens\cP_{(n+1)p-1}\cong\cP_{(n-1)p}\oplus\cP_{(n+1)p-2}\oplus\cP_{(n+1)p}
\end{equation*}
(where $\cP_{(n-1)p}=0$ in the case $n=1$). Moreover, both $\cV_{2}\tens\cP_{(n+1)p-1}$ and $\cP_{(n-1)p}\oplus\cP_{(n+1)p-2}$ are rigid and self-dual, and 
\begin{equation*}
\hom(\cP_{(n+1)p},\cP_{(n-1)p}\oplus\cP_{(n+1)p-2})=0,
\end{equation*}
 so $\cP_{(n+1)p}$ is rigid and self-dual by Lemma \ref{lem:direct_summand_rigid} when $p\geq 3$. When $p=2$, $\cP_{2(n+1)}$ is rigid and self-dual similarly from Theorem \ref{thm:V12_times_Pr1_p=2}. Then $\cP_{(n+1)p}$ is projective (by Corollary \ref{cor:rigid_is_projective}) and simple, so it is its own projective cover in $KL^k(\mathfrak{sl}_2)$.
 
 We have now proved that $\cP_r$ is rigid, self-dual, and a projective cover of $\cL_r$ for all $r\in\ZZ_{\geq 1}$. As for the formulas for $\cV_2\tens\cV_{np+r}$, the $n=0$, $r<p$ case is Theorem \ref{thm:V12_times_Vrs}, the $r=p$ case is the definition of $\cP_{(n+1)p}$, the $r=1$ case is \eqref{eqn:V2_times_Pnp+1}, and the $2\leq r\leq p-1$ cases were proved in the course of showing that $\cP_{np+r+1}$ is rigid and self-dual.
\end{proof}

Since $KL^k(\mathfrak{sl}_2)$ has enough projectives and all objects of $KL^k(\mathfrak{sl}_2)$ have finite length, we easily conclude:
\begin{cor}\label{cor:all_projectives}
Every projective object in $KL^k(\mathfrak{sl}_2)$ is a finite direct sum of $\cP_{r}$, $r\in\ZZ_{\geq 1}$.
\end{cor}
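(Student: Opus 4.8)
The plan is to combine the fact that $KL^k(\fsl_2)$ has enough projectives (Theorem \ref{thm:Prs_properties}) with the standard structure theory of projective objects in a finite-length abelian category. First I would observe that, since every object of $KL^k(\fsl_2)$ has finite length, every object --- in particular every projective object $P$ --- is a finite direct sum of indecomposable subobjects (by an easy induction on length: if $P$ is decomposable, split off a proper nonzero summand, which has strictly smaller length), and every indecomposable direct summand of a projective object is again projective. Thus the corollary reduces to the claim that every indecomposable projective object of $KL^k(\fsl_2)$ is isomorphic to $\cP_r$ for some $r\in\ZZ_{\geq 1}$.

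To prove this claim, given an indecomposable projective $P\neq 0$, I would first produce a surjection $\pi\colon P\twoheadrightarrow\cL_r$ onto a simple object: such a surjection exists because $P$ has finite length and hence a maximal proper submodule, and because the simple objects of $KL^k(\fsl_2)$ are exactly the $\cL_r$, $r\in\ZZ_{\geq 1}$, by definition of the category. By Theorem \ref{thm:Prs_properties}, $\cP_r$ is an indecomposable projective cover of $\cL_r$, so there is a surjection $p\colon\cP_r\twoheadrightarrow\cL_r$. Using the projectivity of $P$ and of $\cP_r$, I would then lift $\pi$ through $p$ and $p$ through $\pi$ to obtain morphisms $g\colon P\to\cP_r$ and $h\colon\cP_r\to P$ with $p\circ g=\pi$ and $\pi\circ h=p$, exactly as in the proof of Lemma \ref{lem:proj_cover}. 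Then $p\circ(g\circ h)^N=p\neq 0$ for all $N\in\ZZ_{\geq 0}$, so $g\circ h\in\mathrm{End}(\cP_r)$ is not nilpotent; since $\cP_r$ is indecomposable of finite length, Fitting's lemma forces $g\circ h$ to be an automorphism. Hence $g$ admits the right inverse $h\circ(g\circ h)^{-1}$, so $g$ is a split epimorphism, $\cP_r$ is a direct summand of $P$, and the indecomposability of $P$ together with $\cP_r\neq 0$ gives $P\cong\cP_r$.

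Since essentially all of the real content --- the construction of the modules $\cP_r$, the proof that they are projective and are projective covers of the simple modules, and the fact that $KL^k(\fsl_2)$ has enough projectives --- is already contained in Theorem \ref{thm:Prs_properties}, I do not expect any genuine obstacle here; the corollary is purely formal, which is why it follows "easily". The only points requiring (routine) care are the existence of the decomposition of a finite-length object into indecomposables and the applicability of Fitting's lemma, both of which are standard in any finite-length abelian category. Indeed, the middle paragraph above is little more than a verbatim repetition of the argument already used in the proof of Lemma \ref{lem:proj_cover}.
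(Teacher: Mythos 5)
Your proof is correct and spells out exactly the standard argument the paper leaves implicit with the phrase ``we easily conclude'': decompose a projective into indecomposable summands by finite length, then show each indecomposable projective is a projective cover of its simple top, hence one of the $\cP_r$, via the same Fitting's lemma argument as Lemma~\ref{lem:proj_cover}. There is nothing to add.
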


Corollary \ref{cor:rigid_is_projective} shows that all rigid objects in $KL^k(\mathfrak{sl}_2)$ are projective, while conversely Theorem \ref{thm:Prs_properties} and Corollary \ref{cor:all_projectives} imply that all projective objects in $KL^k(\mathfrak{sl}_2)$ are rigid (and self-dual). We conclude:
\begin{cor}\label{cor:projective_is_rigid}
The subcategory of all rigid objects in $KL^k(\mathfrak{sl}_2)$ is equal to the subcategory of all projective objects in $KL^k(\mathfrak{sl}_2)$.
\end{cor}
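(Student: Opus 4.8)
The plan is simply to assemble the two inclusions, both of whose substantive content has already been established. One direction is immediate: Corollary \ref{cor:rigid_is_projective} (via Proposition \ref{prop:V11_proj} and Lemma \ref{lem:proj_unit_tens_cat}) says that every left rigid object of $KL^k(\fsl_2)$ is projective, so the full subcategory of rigid objects is contained in the full subcategory of projective objects.

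For the reverse inclusion, I would first invoke Corollary \ref{cor:all_projectives} to write an arbitrary projective object $P$ of $KL^k(\fsl_2)$ as a finite direct sum $\bigoplus_i \cP_{r_i}$ of the indecomposable projective covers, and then Theorem \ref{thm:Prs_properties}, which states that each $\cP_r$ is rigid (indeed self-dual). It then remains to record the standard fact that a finite direct sum of rigid objects in any tensor category is rigid: if each $X_i$ has left dual $X_i^{*}$ with evaluation $e_i$ and coevaluation $i_i$, then $\bigoplus_i X_i$ has left dual $\bigoplus_i X_i^{*}$, with evaluation and coevaluation obtained by summing the $e_i$ and $i_i$ (using the canonical identification of $\left(\bigoplus_i X_i^{*}\right)\tens\left(\bigoplus_j X_j\right)$ with $\bigoplus_{i,j} X_i^{*}\tens X_j$ and projecting onto, respectively including from, the diagonal summands); the two rigidity compositions for $\bigoplus_i X_i$ then decompose as direct sums of those for the $X_i$, each of which is an identity. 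Hence every projective object of $KL^k(\fsl_2)$ is rigid, giving the second inclusion.

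Combining the two inclusions yields the equality of the two full subcategories. There is essentially no obstacle at this stage: all of the hard work — projectivity of the unit $\cV_1$ (Proposition \ref{prop:V11_proj}), rigidity of $\cV_2$ via the KZ-equation analysis (Theorem \ref{thm:V12_rigid}), the recursive construction of the projective covers $\cP_r$ and the verification that they are rigid and self-dual (Theorems \ref{thm:Prs} and \ref{thm:Prs_properties}), and the classification of projectives as finite sums of the $\cP_r$ (Corollary \ref{cor:all_projectives}) — has already been carried out, and the only remaining point, that finite direct sums of rigid objects are rigid, is elementary.
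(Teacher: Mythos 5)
Your proof is correct and follows the same route as the paper: one inclusion from Corollary \ref{cor:rigid_is_projective}, the other from Corollary \ref{cor:all_projectives} together with Theorem \ref{thm:Prs_properties}, plus the standard fact that finite direct sums of rigid objects are rigid (which the paper uses implicitly and you spell out).
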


\subsection{More properties of \texorpdfstring{$\cP_{r}$}{Pr}}\label{subsec:more_prop}

Here we derive a few more properties of the indecomposable projective objects $\cP_{r}$. In Theorem \ref{thm:Prs_properties}, we showed that all $\cP_{r}$ are self-dual. The simple projective modules $\cP_{np}=\cV_{np}$ are also self-contragredient, although the generalized Verma modules $\cP_{r}=\cV_{r}$ for $r<p$ are not. We first show that the modules $\cP_{np+r}$ for $n\geq 1$ and $1\leq r\leq p-1$ are self-contragredient:
\begin{prop}\label{prop:Pr_self_contra}
For $r\geq p$, the projective module $\cP_{r}$ is self-contragredient.
\end{prop}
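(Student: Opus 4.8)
The plan is to argue by induction on $r$ starting from the base cases $r=p$ (where $\cP_p=\cV_p$ is a simple generalized Verma module) and $r=p+1$ (where $\cP_{p+1}=\cV_2\tens\cV_p$), using the tensor product formulas of Theorem \ref{thm:Prs_properties}. The key observation is that taking contragredients is an exact contravariant functor on $KL^k(\fsl_2)$ which fixes each simple module $\cL_r$ (since each $\cL_r$ is self-contragredient, being an irreducible vertex operator algebra module with symmetric conformal weight grading). Hence for any $\cP_{np+r}$ with $n\geq 1$ and $1\leq r\leq p-1$, the contragredient $\cP_{np+r}'$ is an indecomposable module with the same four composition factors $\cL_{np-r}$, $\cL_{np+r}$ (twice), $\cL_{(n+2)p-r}$ as in the Loewy diagram of Theorem \ref{thm:Prs}, but with all arrows reversed. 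Since the Loewy diagram of $\cP_{np+r}$ is symmetric under reversing arrows (the socle $\cL_{np+r}$ equals the head $\cL_{np+r}$, and the middle layer $\cL_{np-r}\oplus\cL_{(n+2)p-r}$ is preserved), $\cP_{np+r}'$ has exactly the same Loewy structure as $\cP_{np+r}$.

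First I would make this rigorous by relating the contragredient to projectivity: since $\cP_{np+r}$ is projective (Theorem \ref{thm:Prs_properties}) and surjects onto $\cL_{np+r}$, and since $KL^k(\fsl_2)$ is self-dual under contragredients, $\cP_{np+r}'$ is \emph{injective} in $KL^k(\fsl_2)$ with socle $\cL_{np+r}$. But by general nonsense for finite-length categories with enough projectives and a duality, the injective hull of $\cL_{np+r}$ is the contragredient of the projective cover of $\cL_{np+r}$, which is $\cP_{np+r}'$; on the other hand, applying contragredient to Theorem \ref{thm:Prs_properties}'s construction, one checks $\cP_{np+r}'$ is itself projective of length $4$ with head $\cL_{np+r}$. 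By Lemma \ref{lem:proj_cover}, any indecomposable projective surjecting onto $\cL_{np+r}$ is a projective cover, hence $\cP_{np+r}'\cong\cP_{np+r}$. Alternatively, and perhaps more cleanly, I would use the self-duality in the tensor-categorical sense already established: $\cP_r$ is self-dual (i.e. $\cP_r^*\cong\cP_r$) by Theorem \ref{thm:Prs_properties}, and in a ribbon Grothendieck-Verdier category the tensor dual $W^*$ of a rigid object is related to its contragredient $W'$ via the dualizing object; since the dualizing object here is $\cV_1'$, one gets $\cP_r'\cong\cP_r^*\tens\cV_1'\cong\cP_r\tens\cV_1'$, and I would need to check that $\cP_r\tens\cV_1'\cong\cP_r$, which should follow because $\cV_1'$ is again a rigid projective object whose tensor action can be computed — in fact one expects $\cV_1'\cong\cV_1$ is false in general, so this route needs care.

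Given that subtlety, I expect the cleanest and most robust argument is the direct inductive one on the tensor product formulas. Concretely: assume inductively that $\cP_{np+s}$ is self-contragredient for all $p\le np+s<np+r$ (with $n$ fixed, or more precisely for all indices $\geq p$ below $np+r$). Apply the contragredient functor to the isomorphism $\cV_2\tens\cP_{np+r-1}\cong\cP_{np+r-2}\oplus\cP_{np+r}$ (for $3\le r\le p-1$; and the analogous formulas at $r=1,2$ and $r=p$ from Theorem \ref{thm:Prs_properties}). Since $\cV_2$ is self-contragredient (its contragredient is a generalized Verma module with the same length-$2$ structure, hence isomorphic to $\cV_2$ as $\cV_2'$ has socle $\cL_2$ and head $\cL_{2p-2}$... wait — actually $\cV_2' $ has socle $\cL_{2p-2}$ embedded and quotient $\cL_2$, so $\cV_2'\not\cong\cV_2$ when $p\geq 3$; I must instead invoke the contragredient-intertwining-operator symmetry that makes $\bullet\tens\bullet$ commute with contragredients up to the known coherence, i.e. $(W_1\tens W_2)'\cong W_2'\tens W_1'$, which holds in any ribbon category). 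Using $(\cV_2\tens\cP_{np+r-1})'\cong\cP_{np+r-1}'\tens\cV_2'$ together with self-duality to rewrite $\cV_2'$ appropriately, and Krull-Schmidt, I can isolate $\cP_{np+r}'$ as the summand complementary to $\cP_{np+r-2}'\cong\cP_{np+r-2}$, and compare lengths and heads to conclude $\cP_{np+r}'\cong\cP_{np+r}$.

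The main obstacle is handling the fact that $\cV_2$ and the low generalized Verma modules $\cV_r$ ($r<p$) are \emph{not} themselves self-contragredient, so the naive "apply contragredient to the tensor formula" does not immediately produce the same formula. The fix is to work entirely with the \emph{projective/self-dual} modules $\cP_{np+r}$ for $n\geq 1$, where Theorem \ref{thm:Prs_properties} gives symmetric tensor decompositions, and to use the contragredient-tensor compatibility $(W_1\tens W_2)'\cong W_2'\tens W_1'$ (valid because $KL^k(\fsl_2)$ is a ribbon Grothendieck-Verdier category, by the discussion after Theorem \ref{thm:existencebtc}) combined with the already-proven self-duality $\cP_r^*\cong\cP_r$ and $\cV_2^*\cong\cV_2$ to convert everything into statements about the $\cP$'s alone. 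Once the bookkeeping is set up, the induction closes routinely; the genuinely delicate point is simply keeping straight the distinction between the tensor dual $(-)^*$ and the contragredient $(-)'$, and verifying that for the relevant projective modules these two notions agree, which is exactly what the statement asserts and what the inductive step must propagate.
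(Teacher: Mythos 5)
Your proposal contains several genuine gaps, and the main route you flag as ``cleanest'' rests on a formula that the paper itself explicitly contradicts.

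The contragredient--tensor compatibility $(W_1\tens W_2)'\cong W_2'\tens W_1'$ is \emph{false} in $KL^k(\fsl_2)$. The paper points this out in the introduction: the bifunctor $(W_1,W_2)\mapsto (W_2'\tens W_1')'$ is a \emph{second, inequivalent} tensor product on $KL^k(\fsl_2)$, precisely because the category is not rigid. So applying $(-)'$ to the tensor-product decompositions from Theorem \ref{thm:Prs_properties} does not yield the same decompositions for the contragredients, and the induction you outline cannot get off the ground. This also undermines your second route: the claim that ``applying contragredient to Theorem \ref{thm:Prs_properties}'s construction, one checks $\cP_{np+r}'$ is itself projective'' has no justification. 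The contragredient of a projective object is injective, not projective, and turning this into projectivity would require knowing that $\cP_{np+r}$ is also injective --- which is essentially equivalent to the statement you are trying to prove, so the argument is circular. Your first observation, that the Loewy diagram of $\cP_{np+r}$ is symmetric under reversing arrows, is correct and morally the reason the statement is true, but matching Loewy diagrams does not by itself prove that the two modules are isomorphic (two non-isomorphic indecomposables can share a Loewy diagram in general), so this is a heuristic rather than a proof.

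The paper's actual argument is short and sidesteps all of this. Since $(-)'$ is exact, contravariant, and fixes simples, the inclusion $\iota:\cL_{np+r}\hookrightarrow\cP_{np+r}$ (the socle) contragredients to a surjection $\iota':\cP_{np+r}'\twoheadrightarrow\cL_{np+r}$. Fix any surjection $\pi:\cP_{np+r}\twoheadrightarrow\cL_{np+r}$; projectivity of $\cP_{np+r}$ lifts $\pi$ along $\iota'$ to some $f:\cP_{np+r}\rightarrow\cP_{np+r}'$ with $\iota'\circ f=\pi$. Both modules have length $4$, so $f$ is an isomorphism once it is surjective. If $f$ were not surjective, the nonzero quotient $\cP_{np+r}'/\im f$ would surject onto some $\cL_s$, so $\hom(\cP_{np+r}',\cL_s)\ne 0$; but
\begin{equation*}
\dim\hom(\cP_{np+r}',\cL_s)=\dim\hom(\cL_s,\cP_{np+r})=\delta_{s,np+r},
\end{equation*}
so $s=np+r$ and $\hom(\cP_{np+r}',\cL_{np+r})=\CC\iota'$, forcing $\iota'$ to factor through $\cP_{np+r}'/\im f$. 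That contradicts $\iota'\circ f=\pi\neq 0$. Note that the paper uses the projectivity of $\cP_{np+r}$, the one-dimensionality of the relevant hom spaces (both already established), and nothing about how $(-)'$ interacts with $\tens$. Your instinct to exploit projectivity and the socle/head symmetry was on target; the missing step was the lifting trick that turns these ingredients into an actual isomorphism.
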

\begin{proof}
It remains to consider $\cP_{np+r}$ for $n\geq 1$ and $1\leq r\leq p-1$.
Since taking contragredients is an exact contravariant functor that fixes all simple objects of $KL^k(\mathfrak{sl}_2)$, the inclusion $\iota:\cL_{np+r}\hookrightarrow\cP_{np+r}$ induces a surjection $\iota':\cP_{np+r}'\twoheadrightarrow\cL_{np+r}$. Then fixing a surjection $\pi:\cP_{np+r}\twoheadrightarrow\cL_{np+r}$, projectivity of $\cP_{np+r}$ implies there is a map $f:\cP_{np+r}\rightarrow\cP_{np+r}'$ such that $\iota'\circ f=\pi$. We would like to show that $f$ is an isomorphism; since both $\cP_{np+r}$ and $\cP_{np+r}'$ have length $4$, it is enough to show that $f$ is surjective.

If $f$ were not surjective, there would be a surjection $\cP_{np+r}'/\im f\twoheadrightarrow \cL_{s}$ for some $s\in\ZZ_{\geq 1}$; in particular, $\hom(\cP_{np+r}',\cL_{s})\neq 0$. But
\begin{equation*}
\dim\hom(\cP_{np+r}',\cL_{s})=\dim\hom(\cL_{s}',\cP_{np+r}'')=\dim\hom(\cL_{s},\cP_{np+r})=\delta_{np+r,s},
\end{equation*}
so $s=np+r$, and $\hom(\cP_{np+r}',\cL_{np+r})=\CC\iota'$. Thus if $\cP_{np+r}'/\im f\neq 0$, then $\iota'$ would factor through $\cP_{np+r}'/\im f$. But this is impossible because $\iota'\circ f\neq 0$, so in fact $\cP_{np+r}'=\im f$, that is, $f$ is surjective.
\end{proof}

Our next goal is to show that the projective covers $\cP_{np+r}$ for $n\geq 1$ and $1\leq r\leq p-1$ are logarithmic, that is, $L(0)$ acts non-semisimply on these modules. Before doing so, however, we need to determine the self-braiding $\cR_{\cV_{2},\cV_{2}}$:
\begin{prop}\label{prop:V12_self_braiding}
Let $k=-2+p/q$ for relatively prime $p\in\ZZ_{\geq 2}$ and $q\in\ZZ_{\geq 1}$, and define $f_{\cV_{2}}= i_{\cV_{2}}\circ e_{\cV_{2}}$ where $e_{\cV_{2}}$ and $i_{\cV_{2}}$ are an evaluation and coevaluation, respectively, for $\cV_{2}$ in $KL^k(\mathfrak{sl}_2)$. Then the self-braiding of $\cV_{2}$ in $KL^k(\mathfrak{sl}_2)$ is given by
\begin{equation*}
\cR_{\cV_{2},\cV_{2}} = e^{\pi i q/2p}\cdot\Id_{\cV_{2}\tens\cV_{2}}+e^{-\pi i q/2p}\cdot f_{\cV_{2}}.
\end{equation*}
\end{prop}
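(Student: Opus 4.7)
The plan is to identify $\mathrm{End}_{V^k(\fsl_2)}(\cV_{2}\tens\cV_{2})$ as the two-dimensional algebra with basis $\{\Id,f_{\cV_2}\}$, write $\cR_{\cV_{2},\cV_{2}}=\alpha\Id+\beta f_{\cV_2}$, and then determine $(\alpha,\beta)$ using two independent identities. I abbreviate $\zeta^{\pm 1/2}=e^{\pm\pi iq/(2p)}$; by \eqref{eqn:h_lambda} with $h_2=3q/(4p)$, we have $e^{-2\pi ih_2}=\zeta^{-3/2}$.

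First I would verify $\dim\mathrm{End}(\cV_{2}\tens\cV_{2})=2$. When $p\geq 3$, Theorem~\ref{thm:V12_times_Vrs} gives $\cV_{2}\tens\cV_{2}\cong\cV_{1}\oplus\cV_{3}$, and by Theorem~\ref{thm:gen_Verma_structure} the two summands have disjoint composition factors and one-dimensional endomorphism rings, so $\mathrm{End}(\cV_2\tens\cV_2)$ is two-dimensional. When $p=2$, $\cV_{2}\tens\cV_{2}\cong\cP_{3}$ is the projective cover of $\cL_{3}$ by Theorem~\ref{thm:V12_times_Vrp}, and $\cL_{3}$ appears with multiplicity two in its composition series, yielding $\dim\mathrm{End}(\cP_{3})=[\cP_{3}:\cL_{3}]=2$. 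In both cases $f_{\cV_2}$ factors through $\cV_{1}$ and is therefore not a scalar multiple of $\Id$, so $\{\Id,f_{\cV_2}\}$ is a basis; the multiplication rule $f_{\cV_2}^{2}=-(\zeta+\zeta^{-1})f_{\cV_2}$ follows from $e_{\cV_2}\circ i_{\cV_2}=-(\zeta+\zeta^{-1})\cdot\Id_{\cV_1}$ in Theorem~\ref{thm:V12_rigid}.

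The first linear equation on $(\alpha,\beta)$ comes from composing $\alpha\Id+\beta f_{\cV_2}$ with $i_{\cV_2}$ on the right and using the identity $\cR_{\cV_{2},\cV_{2}}\circ i_{\cV_2}=-\zeta^{-3/2}\cdot i_{\cV_2}$ proved in equation~\eqref{eqn:c_for_i} during the rigidity argument for Theorem~\ref{thm:V12_rigid}:
\begin{equation*}
\alpha-(\zeta+\zeta^{-1})\beta=-\zeta^{-3/2}.
\end{equation*}
The second equation, and the main obstacle, is obtained by computing $\cR_{\cV_{2},\cV_{2}}$ on the $\cV_{3}$-isotypic summand. For $w_1,w_2\in M_2\subseteq\cV_{2}$, extracting the $x^{-2h_2+h_3}$-coefficients on both sides of the defining formula $\cR(\cY_\tens(w_1,x)w_2)=e^{xL(-1)}\cY_\tens(w_2,e^{\pi i}x)w_1$ yields elements of conformal weight $h_3=2q/p$ in $\cV_{2}\tens\cV_{2}$. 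When $p\geq 3$, $h_3\notin\ZZ$ while the conformal weights of the $\cV_{1}$-summand are integers, so this coefficient lies entirely in $M_3\subseteq\cV_{3}$; the induced $\fsl_2$-homomorphism $M_2\otimes M_2\to M_3\cong\mathrm{Sym}^2(M_2)$ is, up to scalar by Proposition~\ref{prop:surjective}, the unique symmetric projection, whose invariance under swapping $w_1\leftrightarrow w_2$ gives
\begin{equation*}
\cR_{\cV_{2},\cV_{2}}\big\vert_{\cV_{3}}=e^{\pi i(-2h_2+h_3)}\cdot\Id_{\cV_3}=\zeta^{1/2}\cdot\Id_{\cV_3}.
\end{equation*}
Since $f_{\cV_2}\vert_{\cV_{3}}=0$, this forces $\alpha=\zeta^{1/2}$, and the first equation then yields $\beta=\zeta^{-1/2}$.

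The case $p=2$ requires separate treatment because $\zeta+\zeta^{-1}=0$, so the first equation alone gives $\alpha=-\zeta^{-3/2}=\zeta^{1/2}$ (using $\zeta^{2}=-1$) but leaves $\beta$ free. Here $f_{\cV_2}^{2}=0$ and $\cV_{2}\tens\cV_{2}\cong\cP_{3}$ is logarithmic (Theorem~\ref{thm:Prs}); to pin down $\beta$ I would use the balancing equation $\cR_{\cV_{2},\cV_{2}}^{2}=e^{-4\pi ih_2}\theta_{\cV_{2}\tens\cV_{2}}$ and expand $\theta_{\cP_3}=\Id+N$, where $N$ is the nilpotent correction coming from the non-semisimple action of $L(0)$ on $\cP_3$. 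Comparing this with $\cR^{2}=\alpha^{2}\Id+2\alpha\beta f_{\cV_2}$ and identifying both $N/(2\pi i)$ and $f_{\cV_2}$ as nonzero generators of the one-dimensional radical of $\mathrm{End}(\cP_{3})$ then yields $\beta=\zeta^{-1/2}$.
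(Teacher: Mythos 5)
Your $p\geq 3$ argument is correct, and it takes a genuinely different route from the paper. The paper first invokes the hexagon axiom (following the argument of Lemma 6.1 of \cite{GN}) to restrict $(\alpha,\beta)$ to the four pairs $\pm(e^{\pi iq/2p},e^{-\pi iq/2p})$, $\pm(e^{-\pi iq/2p},e^{\pi iq/2p})$, and then selects the correct pair using the constraints $e_{\cV_2}\circ\cR_{\cV_2,\cV_2}=-e^{-3\pi iq/2p}\cdot e_{\cV_2}$ and $e_{\cV_2}\circ\cR_{\cV_2,\cV_2}^2=e^{-3\pi iq/p}\cdot e_{\cV_2}$ from \eqref{eqn:c_for_i} and \eqref{eqn:twist_with_braiding} together with the intrinsic dimension; this works uniformly in $p$. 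You instead read off $\alpha$ directly as the monodromy eigenvalue $e^{\pi i(h_3-2h_2)}=e^{\pi iq/2p}$ of $\cR_{\cV_2,\cV_2}$ on the lowest weight space of the $\cV_3$ summand (legitimate for $p\geq 3$, since $h_3\notin\ZZ$ isolates that summand, $M_3$ has multiplicity one in $M_2\otimes M_2$ so the coefficient map is symmetric, and Proposition \ref{prop:surjective} makes it nonzero), and you then extract $\beta$ from $\cR_{\cV_2,\cV_2}\circ i_{\cV_2}=-e^{-2\pi ih_2}\cdot i_{\cV_2}$, using $\zeta+\zeta^{-1}\neq 0$. This bypasses the hexagon computation entirely, which is a nice economy when $p\geq 3$.

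The $p=2$ case, however, has a genuine gap. There $\zeta+\zeta^{-1}=0$, so the $i_{\cV_2}$-equation fixes only $\alpha$, and there is no $\cV_3$ summand to exploit. Your balancing argument gives, writing $\theta_{\cP_3}=\Id+N$ with $N$ nilpotent, the relation $\alpha^2\,\Id+2\alpha\beta\, f_{\cV_2}=e^{-4\pi ih_2}(\Id+N)$; since $e^{-4\pi ih_2}=\alpha^2$ this reduces to $2\alpha\beta\, f_{\cV_2}=\alpha^2 N$. Knowing only that $N$ and $f_{\cV_2}$ are both nonzero elements of the one-dimensional radical of $\mathrm{End}(\cP_3)$ gives $N=c\cdot f_{\cV_2}$ for an \emph{unknown} $c\neq 0$, hence only $\beta=\alpha c/2$: the value $\beta=\zeta^{-1/2}$ would require computing this logarithmic coupling $c$ explicitly, i.e.\ the precise nilpotent part of $L(0)$ on $\cP_3$, which is exactly the kind of computation the paper avoids (and remarks is difficult in the discussion after Theorem \ref{thm:Pr_log}). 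In addition, the non-semisimplicity of $L(0)$ on $\cP_3$ is not Theorem \ref{thm:Prs} but Theorem \ref{thm:Pr_log}, which is proved later; its $r=1$ case is logically independent of this proposition, so this is an ordering/citation issue rather than circularity, but even granting $N\neq 0$ you only learn $\beta\neq 0$. The missing ingredient is precisely the hexagon constraint used in the paper, which ties $\beta$ to $\alpha$: among the four hexagon-compatible pairs, only $(e^{\pi iq/4},e^{-\pi iq/4})$ has first entry $-\zeta^{-3/2}=\zeta^{1/2}$ when $p=2$ and $q$ is odd. You need either that input or some other quantitative determination of $\beta$ to close the $p=2$ case.
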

\begin{proof}
Note that $f_{\cV_{2}}$ is independent of the choice of evaluation and coevaluation for $\cV_{2}$ since
\begin{equation*}
\dim\hom(\cV_{2}\tens\cV_{2},\cV_{1}) =\dim\hom(\cV_{1},\cV_{2}\tens\cV_{2}) =\dim\mathrm{End}(\cV_{2})=1
\end{equation*}
implies that any other choice of evaluation and coevaluation has the form $(c\cdot e_{\cV_{2}}, c^{-1}\cdot i_{\cV_{2}})$ for some $c\in\CC^\times$. We claim that $f_{\cV_{2}}$ is non-zero and not an isomorphism. Indeed, it is an endomorphism of
\begin{equation*}
\cV_{2}\tens\cV_{2}\cong\left\lbrace\begin{array}{lll}
\cV_{1}\oplus \cV_{3} & \text{if} & p\geq 3\\
\cP_{3} & \text{if} & p=2
\end{array}\right. ,
\end{equation*}
using Theorems \ref{thm:V12_times_Vrs} and \ref{thm:V12_times_Vrp}, and the structure of this tensor product module shows that $i_{\cV_{2}}$ maps the vacuum vector $\vac\in\cV_{1}$ to a non-zero generating vector of a submodule of $\cV_{2}\tens\cV_{2}$ isomorphic to $\cV_{1}$. Thus $i_{\cV_{2}}$ is injective but not surjective, and then $f_{\cV_{2}}$ is also not surjective. Also, $f_{\cV_{2}}$ is non-zero because $e_{\cV_{2}}$ is non-zero, proving the claim.

The structure of $\cV_{2}\tens\cV_{2}$ also shows that $\dim\mathrm{End}(\cV_{2}\tens\cV_{2})=2$ and thus this endomorphism space is spanned by $\Id_{\cV_{2}\tens\cV_{2}}$ and the non-zero non-isomorphism $f_{\cV_{2}}$.  So
\begin{equation*}
\cR_{\cV_{2},\cV_{2}}=a\cdot\Id_{\cV_{2}\tens\cV_{2}}+b\cdot f_{\cV_{2}}
\end{equation*}
for some $a,b\in\CC$. Then the same proof as in \cite[Lemma 6.1]{GN} shows that only four possibilities for $(a,b)$ are compatible with the hexagon axiom:
\begin{equation*}
(a,b)\in\lbrace\pm(e^{\pi iq/2p}, e^{-\pi iq/2p}), \pm(e^{-\pi iq/2p},e^{\pi iq/2p})\rbrace.
\end{equation*}
Then also similar to \cite{GN}, the constraints
\begin{equation*}
e_{\cV_{2}}\circ\cR_{\cV_{2},\cV_{2}}^2 = e^{-3\pi i q/p}\cdot e_{\cV_{2}},\qquad e_{\cV_{2}}\circ\cR_{\cV_{2},\cV_{2}} =-e^{-3\pi i q/2p}\cdot e_{\cV_{2}}
\end{equation*}
(which we proved in \eqref{eqn:twist_with_braiding} and \eqref{eqn:c_for_i} for $p\geq 3$, and we showed similarly for $p=2$), together with the intrinsic dimension relation
\begin{equation*}
e_{\cV_{2}}\circ i_{\cV_{2}} = -(e^{\pi iq/p}+e^{-\pi iq/p})\cdot\Id_{\cV_{1}}
\end{equation*}
of Theorem \ref{thm:V12_rigid}, force first $a^2=e^{\pi i q/p}$, and then $a=e^{\pi i q/2p}$. 
\end{proof}

We will use the formula for $\cR_{\cV_{2},\cV_{2}}$ in the next theorem for showing that $\cP_{np+r}$ is logarithmic in the case $r\geq 2$.

\begin{thm}\label{thm:Pr_log}
For $n\geq 1$ and $1\leq r\leq p-1$, the projective module $\cP_{np+r}$ is logarithmic.
\end{thm}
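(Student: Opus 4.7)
The plan is to use the balancing equation $\theta_{W_1\tens W_2}=\cR^2_{W_1,W_2}\circ(\theta_{W_1}\tens\theta_{W_2})$ together with the explicit self-braiding of $\cV_2$ from Proposition \ref{prop:V12_self_braiding}. First I would verify that the four composition factors of $\cP_{np+r}$, namely $\cL_{np+r}$ (twice), $\cL_{np-r}$, and $\cL_{(n+2)p-r}$, all have lowest conformal weights congruent to $h_{np+r}$ modulo $\ZZ$: from $h_m=\frac{q(m^2-1)}{4p}$ one gets $h_{np+r}-h_{np-r}=nqr$ and $h_{(n+2)p-r}-h_{np+r}=q(p-r)(n+1)$, both integers. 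Thus if $L(0)$ acted semisimply on $\cP_{np+r}$, then $\theta_{\cP_{np+r}}=e^{2\pi i L(0)}|_{\cP_{np+r}}$ would collapse to the scalar $e^{2\pi i h_{np+r}}\Id$, so it suffices to show that $\theta_{\cP_{np+r}}$ is not a scalar operator.

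Next I would realize $\cP_{np+r}$ as a direct summand of a tensor product whose non-$\cP$ factors carry semisimple $L(0)$-action. By Theorem \ref{thm:V12_times_Vrp}, $\cP_{np+1}=\cV_2\tens\cV_{np}$, so the balancing equation yields $\theta_{\cP_{np+1}}=e^{2\pi i(h_2+h_{np})}\cR^2_{\cV_2,\cV_{np}}$; for $r\geq 2$, Theorem \ref{thm:Prs_properties} shows $\cP_{np+r}$ is a summand of $\cV_2\tens\cP_{np+r-1}$, and balancing reduces the problem to analyzing $\cR^2_{\cV_2,\cP_{np+r-1}}$ restricted to the $\cP_{np+r}$ summand. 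The crucial input is $\cR_{\cV_2,\cV_2}=\zeta^{1/2}\Id+\zeta^{-1/2}f_{\cV_2}$ with $\zeta=e^{\pi iq/p}$ and $f_{\cV_2}=i_{\cV_2}\circ e_{\cV_2}$ satisfying $f_{\cV_2}^{\,2}=-(\zeta+\zeta^{-1})f_{\cV_2}$, which upon squaring gives $\cR^2_{\cV_2,\cV_2}=\zeta\Id+(1-\zeta^{-2})f_{\cV_2}$. In the $p=2$ case one has $\zeta+\zeta^{-1}=0$, so $f_{\cV_2}$ is a non-zero nilpotent endomorphism of $\cV_2\tens\cV_2=\cP_3$, and $\cR^2_{\cV_2,\cV_2}$ is non-semisimple, immediately settling the $p=2$ case.

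For $p\geq 3$ and general $r$, I would propagate this non-semisimplicity inductively along the tower $\cV_{np}\subset\cV_2\tens\cV_{np}\subset\cV_2^{\tens 2}\tens\cV_{np}\subset\cdots$, using the hexagon axiom to express each $\cR_{\cV_2,\cV_2\tens W}$ in terms of $\cR_{\cV_2,\cV_2}$ and $\cR_{\cV_2,W}$ and thereby transporting the nilpotent correction term into $\cR^2_{\cV_2,\cP_{np+r-1}}$. The main obstacle will be isolating the restriction of this operator to the specific summand $\cP_{np+r}\subseteq\cV_2\tens\cP_{np+r-1}$, since Theorem \ref{thm:Prs_properties} exhibits several competing projective summands that must be disentangled via the idempotents of the endomorphism algebra of $\cV_2\tens\cP_{np+r-1}$. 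As a fallback that avoids this bookkeeping, I would take a lift $\tilde v\in\cP_{np+r}$ of a highest-weight vector of the top copy of $\cL_{np+r}$, note that $(L(0)-h_{np+r})\tilde v$ is independent of the lift and lies in the conformal weight $h_{np+r}$ subspace $\cV_{np-r}(nqr)$ of the submodule $\cV_{np-r}\subset\cP_{np+r}$, and show that it is non-zero by a direct calculation inside $\cV_2\tens\cP_{np+r-1}$ using the tensor product intertwining operator and the iterate formula \eqref{eqn:intwo_op_it}.
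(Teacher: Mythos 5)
Your reduction is sound (all composition factors of $\cP_{np+r}$ have conformal weights congruent mod $\ZZ$, so it suffices to show $\theta_{\cP_{np+r}}$ is not a scalar), and your formula $\cR^2_{\cV_2,\cV_2}=\zeta\,\Id+(1-\zeta^{-2})f_{\cV_2}$ is correct. But the proposal has a genuine gap at the base case: you never prove that $\theta_{\cP_{np+1}}$, equivalently the double braiding $\cR^2_{\cV_2,\cV_{np}}$, fails to be a scalar. Knowing $\cR_{\cV_2,\cV_2}$ explicitly gives no information about $\cR_{\cV_2,\cV_{np}}$, and the hexagon only relates $\cR_{\cV_2,\cV_2\tens W}$ to $\cR_{\cV_2,\cV_2}$ \emph{and} $\cR_{\cV_2,W}$, so your ``tower'' induction has nothing to start from. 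The paper's key idea here is a contradiction argument: if $\cR^2_{\cV_2,\cV_{np}}$ were the scalar forced by the balancing equation, the hexagon would give $\cR^2_{\cV_2\tens\cV_2,\cV_{np}}=e^{2\pi iq/p}\Id$, and naturality of the double braiding against the injection $i_{\cV_2}\tens\Id_{\cV_{np}}$ (using $\cR^2_{\cV_1,\cV_{np}}=\Id$ and exactness of $\bullet\tens\cV_{np}$) forces $e^{2\pi iq/p}=1$, contradicting $p\geq 2$, $\gcd(p,q)=1$. Without this step even your $p=2$ claim is overstated: the nilpotency of $f_{\cV_2}$ only settles $\cP_3=\cV_2\tens\cV_2$, not $\cP_{2n+1}$ for $n\geq 2$, which again requires controlling $\cR^2_{\cV_2,\cV_{2n}}$.

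Two further problems. First, for $p\geq 3$ there is no ``nilpotent correction term'' to transport: $f_{\cV_2}$ satisfies $f_{\cV_2}(f_{\cV_2}+(\zeta+\zeta^{-1})\Id)=0$ with $\zeta+\zeta^{-1}\neq 0$, so $f_{\cV_2}$, and hence $\cR_{\cV_2,\cV_2}$ and $\cR^2_{\cV_2,\cV_2}$, are diagonalizable; the non-semisimplicity for $r\geq 2$ must instead come from the $r$ copies of $\cR^2_{\cV_2,\cV_{np}}$ acting through $\theta_{\cP_{np+1}}$, as in the paper, where one then checks that all terms containing an $f_{\cV_2}$ factor through $\cV_2^{\tens(r-2)}\tens\cV_{np}$ and are killed by the projection onto $\cP_{np+r}$ (via $\hom(\cV_2^{\tens(r-2)}\tens\cV_{np},\cP_{np+r})=0$), and that the surviving term $\pi\circ(\Id_{\cV_2}^{\tens(r-1)}\tens(g\circ f))\circ\iota$ is a nonzero nilpotent; you flag this ``disentangling'' but do not carry it out. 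Second, your fallback---showing $(L(0)-h_{np+r})\tilde v\neq 0$ by a direct computation with the tensor product intertwining operator---is exactly the computation the paper's Remark \ref{rem} after Theorem \ref{thm:Pr_log} notes to be intractable except for small $q,n,r$ (the first non-semisimple $L(0)$-action occurs in degree $nqr$), and you give no actual mechanism for performing it, since the tensor product intertwining operator is not explicitly known. As it stands, the proposal does not constitute a proof.
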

\begin{proof}
We prove the $r=1$ case of the theorem first. Since all conformal weights of $\cP_{np+1}$ are congruent to $h_{np+1}=\frac{qn}{4}(np+2)$ mod $\ZZ$, it is enough to show that the twist $\theta_{\cP_{np+1}}=e^{2\pi i L(0)}$ is not equal to the scalar $e^{\pi i qn(np+2)/2}$. If it were, then the double braiding
\begin{equation*}
\cR_{\cV_{2},\cV_{np}}^2 =\cR_{\cV_{np},\cV_{2}}\circ\cR_{\cV_{2},\cV_{np}}: \cV_{2}\tens\cV_{np}\longrightarrow\cV_{2}\tens\cV_{np}
\end{equation*}
would, by the balancing equation, equal
\begin{equation*}
\cR_{\cV_{2},\cV_{np}}^2 = \theta_{\cP_{np+1}}\circ(\theta_{\cV_{2}}^{-1}\tens\theta_{\cV_{np}}^{-1}) = e^{2\pi i(h_{np+1}-h_2-h_{np})}\Id_{\cV_{2}\tens\cV_{np}}=(-1)^{qn} e^{\pi i q/p}\Id_{\cV_{2}\tens\cV_{np}}.
\end{equation*}
The hexagon axiom would then imply that
\begin{equation*}
\cR^2_{\cV_{2}\tens\cV_{2},\cV_{np}} = e^{2\pi iq/p}\Id_{(\cV_{2}\tens\cV_{2})\tens\cV_{np}}.
\end{equation*}
Recall the injective coevaluation $i_{\cV_{2}}:\cV_{1}\rightarrow\cV_{2}\tens\cV_{2}$. Because $\cV_{np}$ is rigid, the functor $\bullet\tens\Id_{\cV_{np}}$ is exact and thus $i_{\cV_{2}}\tens\Id_{\cV_{np}}$ is still injective. Then we would have
\begin{align*}
e^{2\pi i q/p}(i_{\cV_{2}}\tens\Id_{\cV_{np}}) & = \cR^2_{\cV_{2}\tens\cV_{2},\cV_{np}}\circ(i_{\cV_{2}}\tens\Id_{\cV_{np}})\nonumber\\
& = (i_{\cV_{2}}\tens\Id_{\cV_{np}})\circ\cR^2_{\cV_{1},\cV_{np}} =i_{\cV_{2}}\tens\Id_{\cV_{np}},
\end{align*}
which is a contradiction since $p>1$ and $\mathrm{gcd}(p,q)=1$. Thus $\theta_{\cP_{np+1}}$ is not a scalar, equivalently, $\cP_{np+1}$ is logarithmic.

Now take $r\geq 2$; the tensor product formulas of Theorem \ref{thm:Prs_properties} imply that $\cP_{np+r}$ occurs as an indecomposable direct summand of $\cV_{2}^{\boxtimes r}\tens\cV_{np}$ with multiplicity one. Let $\iota: \cP_{np+r}\rightarrow\cV_{2}^{\boxtimes r}\tens\cV_{np}$ and $\pi: \cV_{2}^{\boxtimes r}\tens\cV_{np}\rightarrow\cP_{np+r}$ be such that $\pi\circ\iota =\Id_{\cP_{np+r}}$. Then naturality of the twist implies
\begin{equation*}
\theta_{\cP_{np+r}} =\pi\circ\theta_{\cV_{2}^{\tens r}\tens\cV_{np}}\circ\iota;
\end{equation*}
we need to show that this endomorphism does not equal the scalar $e^{2\pi i h_{np+r}}$.

By repeated applications of the balancing equation
\begin{equation*}
\theta_{\cV_{2}\tens X} =\cR_{\cV_{2},X}^2\circ(\theta_{\cV_{2}}\tens\theta_X)
\end{equation*}
and the hexagon axioms, we see that $\theta_{\cV_{2}^{\tens r}\tens\cV_{np}}$ is equal to a long composition involving associativity isomorphisms, $r(r-1)$ instances of the braiding $\cR_{\cV_{2},\cV_{2}}$, $r$ instances of the double braiding $\cR^2_{\cV_{2},\cV_{np}}$, and one instance of
\begin{equation*}
\theta_{\cV_{2}}^{\tens r}\tens\theta_{\cV_{np}} = e^{2\pi i (rh_2+h_{np})}\Id_{\cV_{2}^{\tens s}\tens\cV_{np}}.
\end{equation*}
Now by Proposition \ref{prop:V12_self_braiding}, every instance of $\cR_{\cV_{2},\cV_{2}}$ can be replaced by a linear combination of $\Id_{\cV_{2}\tens\cV_{2}}$ and $f_{\cV_{2}}$. Thus since the long composition for $\theta_{\cV_{2}^{\tens r}\tens\cV_{np}}$ contains $r(r-1)$ instances of $\cR_{\cV_{2},\cV_{2}}$, it can be written as a linear combination of $2^{r(r-1)}$ compositions, all but one of which contains at least one instance of $f_{\cV_{2}}$. The only term in the linear combination that does not contain $f_{\cV_{2}}$ is just
\begin{align*}
e^{\pi i r(r-1)q/2p} e^{2\pi i(rh_2+h_{np})} & \cdot\Id_{\cV_{2}}^{\tens(r-1)}\tens(\cR_{\cV_{2},\cV_{np}}^{2})^r\nonumber\\
& = e^{2\pi i r(r-1)q/4p} e^{-2\pi i(r-1)h_{np}}\cdot\Id_{\cV_{2}}^{\tens(r-1)}\tens\theta_{\cP_{np+1}}^r\nonumber\\
& = e^{2\pi i(h_{np+r}-rh_{np+1})}\cdot\Id_{\cV_{2}}^{\tens(r-1)}\tens\theta_{\cP_{np+1}}^r,
\end{align*}
where we have used the balancing equation and the definition \eqref{eqn:h_lambda} of the conformal weight $h_r$, $r\in\ZZ_{\geq 1}$. The remaining $2^{r(r-1)}-1$ terms of the composition that do involve $f_{\cV_{2}}$ factor through $\cV_{2}^{\tens(r-2)}\tens\cV_{np}$. The images of these terms are contained in the kernel of $\pi: \cV_{2}^{\tens r}\tens\cV_{np}\rightarrow\cP_{np+r}$ because by rigidity of $\cV_{2}$,
\begin{equation*}
\hom(\cV_{2}^{\tens(r-2)}\tens\cV_{np},\cP_{np+r}) \cong \hom(\cV_{np},\cV_{2}^{\tens(r-2)}\tens\cP_{np+r})=0,
\end{equation*}
where the last equality follows because Theorem \ref{thm:Prs_properties} shows that the simple projective module $\cV_{np}$ is not a direct summand of $\cV_{2}^{\tens(r-2)}\tens\cP_{np+r}$.

We have now shown that when $r\geq 2$,
\begin{equation*}
\theta_{\cP_{np+r}}=e^{2\pi i(h_{np+r}-rh_{np+1})}\cdot\pi\circ(\Id_{\cV_{2}}^{\tens(r-1)}\tens\theta_{\cP_{np+1}}^r)\circ\iota.
\end{equation*}
Moreover, we have shown that $\theta_{\cP_{np+1}}$ is not semisimple, so that we can write
\begin{equation*}
\theta_{\cP_{np+1}} =e^{2\pi i h_{np+1}}\cdot(\Id_{\cP_{np+1}}+g\circ f),
\end{equation*}
where $f:\cP_{np+1}\rightarrow\cL_{np+1}$ is a surjection and $g:\cL_{np+1}\rightarrow\cP_{np+1}$ is an injection, with $f\circ g=0$. It follows that
\begin{equation*}
\theta_{\cP_{np+r}}=e^{2\pi ih_{np+r}}(\Id_{\cP_{np+r}}+r\cdot\pi\circ(\Id_{\cV_{2}}^{\tens(r-1)}\tens(g\circ f))\circ\iota).
\end{equation*}
Thus we need to show that $\pi\circ(\Id_{\cV_{2}}^{\tens(r-1)}\tens(g\circ f))\circ\iota$ is the unique (up to scale) non-zero nilpotent endomorphism of $\cP_{np+r}$, whose image is the irreducible submodule $\cL_{np+r}$.

By exactness of $\cV_{2}^{\tens(r-1)}\tens\bullet$,
\begin{align*}
& \Id_{\cV_{2}}^{\tens(r-1)}\tens f: \cV_{2}^{\tens(r-1)}\tens\cP_{np+1}\longrightarrow \cV_{2}^{\tens(r-1)}\tens\cL_{np+1},\\
& \Id_{\cV_{2}}^{\tens(r-1)}\tens g: \cV_{2}^{\tens(r-1)}\tens\cL_{np+1}\longrightarrow \cV_{2}^{\tens(r-1)}\tens\cP_{np+1}
\end{align*}
are a surjection and injection, respectively, so the image of $\Id_{\cV_{2}}^{\tens(r-1)}\tens(g\circ f)$ is isomorphic to $\cV_{2}^{(r-1)}\tens\cL_{np+1}$. By Theorems \ref{thm:V12_times_Lrs} and \ref{thm:Prs_properties},
\begin{equation*}
\cV_{2}^{\tens(r-1)}\tens\cL_{np+1}\cong\cL_{np+r}\oplus\til{\cL}_{np+r-2}
\end{equation*}
where the indecomposable direct summands of $\til{\cL}_{np+r-2}$ come from $\cV_{np}$ and the $\cL_{np+s}$ and $\cP_{np+s}$ with $1\leq s\leq r-2$. Similarly,
\begin{equation*}
\cV_{2}^{\tens(r-1)}\tens\cP_{np+1}\cong\cP_{np+r}\oplus\til{\cP}_{np+r-2}
\end{equation*}
where the indecomposable summands of $\til{\cP}_{np+r-2}$ come from $\cV_{np}$ and the $\cP_{np+s}$ for $1\leq s\leq r-2$. Thus we are considering the composition
\begin{align*}
\cP_{np+r}\xrightarrow{\iota}\cP_{np+r}\oplus\til{\cP}_{np+r-2}&\xrightarrow{\Id_{\cV_{2}}^{\tens(r-1)}\tens f} \cL_{np+r}\oplus\til{\cL}_{np+r-2}\nonumber\\
&\xrightarrow{\Id_{\cV_{2}}^{\tens(r-1)}\tens g} \cP_{np+r}\oplus\til{\cP}_{np+r-2}\xrightarrow{\pi} \cP_{np+r}.
\end{align*}
Now,
\begin{align*}
\hom(\cP_{np+r},\til{\cL}_{np+r-2})=0=\hom(\til{\cP}_{np+r-2},\cL_{np+r})
\end{align*}
because $\cL_{np+r}$ is not a composition factor of $\cV_{np}$, $\cL_{np+s}$, or $\cP_{np+s}$, $1\leq s\leq r-2$. Thus $\Id_{\cV_{2}}^{\tens(r-1)}\tens f$ is a direct sum of two surjections $f_1:\cP_{np+r}\twoheadrightarrow\cL_{np+r}$ and $f_2:\til{\cP}_{np+r-2}\twoheadrightarrow\til{\cL}_{np+r-2}$. Similarly, $\Id_{\cV_{2}}^{\tens(r-1)}\tens g$ is a direct sum of two injections $g_1:\cL_{np+r}\hookrightarrow\cP_{np+r}$ and $g_2:\til{\cL}_{np+r-2}\hookrightarrow\til{\cP}_{np+r-2}$. Then
\begin{equation*}
\pi\circ(\Id_{\cV_{2}}^{\tens(r-1)}\tens(g\circ f))\circ\iota = g_1\circ f_1,
\end{equation*}
which is the unique (up to scale) non-zero nilpotent endomorphism of $\cP_{np+r}$, as desired. This completes the proof that $\theta_{\cP_{np+r}}$ is not semisimple, and thus $\cP_{np+r}$ is logarithmic.
\end{proof}

\begin{rem}
The existence of indecomposable logarithmic $V^k(\mathfrak{sl}_2)$-modules was previously conjectured in \cite[Section 5.3]{Ra}. In particular, the conjectural module denoted $\cS^{a,0;+}_{\ell p,0}$ in Conjecture 1 of \cite[Section 5.3]{Ra} seems to be our module $\cP_{\ell p+a}$. Thus we have rigorously constructed such modules here using the tensor category structure on $KL^k(\mathfrak{sl}_2)$.
\end{rem}

\begin{rem}
For $n\geq 1$ and $1\leq r\leq p-1$, the lowest conformal weight $h_{np+r}$ of $\cL_{np+r}$ is related to the lowest conformal weight $h_{np-r}$ of $\cL_{np-r}$ by $h_{np+r}=h_{np-r}+nqr$. Thus in the $\ZZ_{\geq 0}$-gradable module $\cP_{np+r}=\bigoplus_{m=0}^\infty \cP_{np+r}(m)$, the space $\cP_{np+r}(m)$ of lowest degree on which $L(0)$ acts non-semisimply has degree $nqr$. Except for low values of $q$, $n$, and $r$, it seems difficult to calculate explicitly, using \eqref{eqn:L0} for example, that $L(0)$ acts non-semisimply on this space.
\end{rem}

\section{Cocycle twist and braidings of \texorpdfstring{$KL^k(\mathfrak{sl}_2)$}{KLk(sl2)}}\label{sec:braiding}

We continue to fix $k=-2+p/q$ for relatively prime $p\in\ZZ_{\geq 2}$ and $q\in\ZZ_{\geq 1}$. In the proof of Proposition \ref{prop:V12_self_braiding}, we observed that at most four automorphisms $\cR_{\cV_{2},\cV_{2}}$ of $\cV_{2}\tens\cV_{2}$ are compatible with the hexagon axioms for a braiding on $KL^k(\mathfrak{sl}_2)$. Although only one of these automorphisms is the official braiding $\cR_{\cV_{2},\cV_{2}}$ as specified by the construction in \cite{HLZ8}, we will show in this section that all four extend to braidings on $KL^k(\mathfrak{sl}_2)$. But first, we will discuss the $3$-cocycle twist of the tensor category structure on $KL^k(\mathfrak{sl}_2)$.

\subsection{\texorpdfstring{$\ZZ/2\ZZ$}{Z/2Z}-grading and the cocycle twist}\label{subsec:cocycle_twist}

We first observe that as a category,
\begin{equation*}
KL^k(\mathfrak{sl}_2) = KL^k_{\even}(\mathfrak{sl}_2)\oplus KL^k_{\odd}(\mathfrak{sl}_2),
\end{equation*}
where for $i\in\ZZ/2\ZZ$, $KL^k_i(\mathfrak{sl}_2)$ is the full subcategory of objects whose $h(0)$-eigenvalues lie in $i+2\ZZ$. That is, modules in $KL^k_{\even}(\mathfrak{sl}_2)$ have $\mathfrak{sl}_2$-weights from the root lattice of $\mathfrak{sl}_2$, while modules in $KL^k_{\odd}(\mathfrak{sl}_2)$ have $\mathfrak{sl}_2$-weights from the non-zero coset of the root lattice in the weight lattice of $\mathfrak{sl}_2$. Since every object of $KL^k(\mathfrak{sl}_2)$ is the direct sum of finite-dimensional $\mathfrak{sl}_2$-submodules, and since $V^k(\mathfrak{sl}_2)$-module homomorphisms preserve $h(0)$-eigenvalues, every object of $KL^k(\mathfrak{sl}_2)$ is uniquely the direct sum of an object in $KL^k_{\even}(\mathfrak{sl}_2)$ and an object in $KL^k_{\odd}(\mathfrak{sl}_2)$, and there are no non-zero homomorphisms from objects in $KL^k_{\even}(\mathfrak{sl}_2)$ to objects in $KL^k_{\odd}(\mathfrak{sl}_2)$ or vice versa (this is what it means for $KL^k(\mathfrak{sl}_2)$ to decompose as the direct sum of two subcategories). The $n=0$ case of \eqref{eqn:intw_op_comm} shows that if $W$ is an object of $KL^k_i(\mathfrak{sl}_2)$ and $X$ is an object of $KL^k_j(\mathfrak{sl}_2)$ for $i,j\in\ZZ/2\ZZ$, then $W\tens X$ is an object of $KL^k_{i+j}(\mathfrak{sl}_2)$.

We can use the above $\ZZ/2\ZZ$-grading of $KL^k(\mathfrak{sl}_2)$ to modify the tensor category structure of $KL^k(\mathfrak{sl}_2)$ by the $3$-cocycle $\tau$ on $\ZZ/2\ZZ$ defined by 
\begin{equation*}
\tau(i_1,i_2,i_3)=(-1)^{i_1 i_2 i_3}
\end{equation*}
for $i_1,i_2,i_3\in\ZZ/2\ZZ$. Namely, $KL^k(\mathfrak{sl}_2)^\tau$ is the tensor category with the same tensor product bifunctor and unit isomorphisms as $KL^k(\mathfrak{sl}_2)$, but with new associativity isomorphisms
\begin{equation*}
\cA_{W_1,W_2,W_3}^\tau =\tau(i_1,i_2,i_3)\cdot\cA_{W_1,W_2,W_3}
\end{equation*}
for objects $W_1$, $W_2$, and $W_3$ in $KL^k_{i_1}(\mathfrak{sl}_2)$, $KL^k_{i_2}(\mathfrak{sl}_2)$, and $KL^k_{i_3}(\mathfrak{sl}_2)$, respectively. It is easy to see that $\cA^\tau$ still satisfies the triangle and pentagon axioms.

Note that $\cV_{2}$ is still rigid in the cocycle twist tensor category $KL^k(\mathfrak{sl}_2)^\tau$, but we need to change either the evaluation or coevaluation by a sign since
\begin{equation*}
\cA_{\cV_{2},\cV_{2},\cV_{2}}^\tau = -\cA_{\cV_{2},\cV_{2},\cV_{2}}.
\end{equation*}
Thus the intrinsic dimension of $\cV_{2}$ in $KL^k(\mathfrak{sl}_2)^\tau$ is $e^{\pi i q/p}+e^{-\pi i q/p}$, which suggests that $KL^k(\mathfrak{sl}_2)^\tau$ could be tensor equivalent to $KL^{-2+p/(q+p)}(\mathfrak{sl}_2)$. Later, we shall prove that this is indeed the case.

\begin{rem}
Cocycle twists of $G$-graded tensor categories, where $G$ is an abelian group, were previously used by Kazhdan and Wenzl in their classification of rigid semisimple tensor categories with type $A$ fusion rules \cite{KWe}. There is also a generalization of cocycle twists called zesting introduced in \cite{DGPRZ}. In zesting, not only the associativity isomorphisms but also the tensor product bifunctor of a $G$-graded braided tensor category $\cC$ can be modified (by a normalized $2$-cocycle from $G$ to the abelian group of invertible objects in the component of $\cC$ graded by the identity of $G$).
\end{rem}

\subsection{Braidings and twists on \texorpdfstring{$KL^k(\mathfrak{sl}_2)$}{KLk(sl2)} and \texorpdfstring{$KL^k(\mathfrak{sl}_2)^\tau$}{KLk(sl2)tau}}

We now determine all braidings and ribbon twists on $KL^k(\mathfrak{sl}_2)$ and its cocycle twist $KL^k(\mathfrak{sl}_2)^\tau$. First we need to show that any braiding or twist is determined by $\cR_{\cV_{2},\cV_{2}}$ and $\theta_{\cV_{2}}$. For future use, we state this result more generally as follows:
\begin{prop}\label{prop:braid_and_twist_from_V12}
Let $\cC$ be one of the tensor categories $KL^k(\mathfrak{sl}_2)$ or $KL^k(\mathfrak{sl}_2)^\tau$, equipped with any braiding $\cR$ and twist $\theta$, and let $\cF:\cC\rightarrow\cD$ be a right exact tensor functor, equipped with natural isomorphism
\begin{equation*}
F: \tens\circ(\cF\times\cF)\longrightarrow\cF\circ\tens,
\end{equation*}
where $\cD$ is a braided tensor category with a right exact tensor product and twist.
\begin{enumerate}
\item If $F_{\cV_{2},\cV_{2}}\circ\cR_{\cF(\cV_{2}),\cF(\cV_{2})} = \cF(\cR_{\cV_{2},\cV_{2}})\circ F_{\cV_{2},\cV_{2}}$, then $\cF$ is a braided tensor functor.

\item If also $\theta_{\cF(\cV_{2})}=\cF(\theta_{\cV_{2}})$, then $\theta_{\cF(W)}=\cF(\theta_W)$ for all $W$ in $\cC$.
\end{enumerate}
\end{prop}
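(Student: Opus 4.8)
The plan is to deduce the general compatibilities from what is already known at the single object $\cV_{2}$ by enlarging the class of (pairs of) objects for which the assertion holds in three steps: first to arbitrary tensor powers of $\cV_{2}$, then to direct summands and finite direct sums, and finally to all of $\cC$ by right exactness. The key structural input, valid for both $\cC = KL^k(\mathfrak{sl}_2)$ and $\cC = KL^k(\mathfrak{sl}_2)^\tau$ (which have the same underlying abelian category and the same rigid object $\cV_{2}$), is that every object of $\cC$ is a quotient of a direct summand of a finite direct sum of tensor powers $\cV_{2}^{\tens n}$, $n\geq 0$, with the convention $\cV_{2}^{\tens 0}=\cV_{1}$: by Theorem \ref{thm:Prs_properties} and the existence of enough projectives, every simple $\cL_{r}$ is a quotient of its projective cover $\cP_{r}$, and $\cP_{r}$ is a direct summand of $\cV_{2}^{\tens (r-1)}$.

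For part (1), I would first observe that the braided-functor compatibility square relating $F$ and $\cR$ commutes for the pairs $(\cV_{1},\cV_{1})$, $(\cV_{1},\cV_{2})$, $(\cV_{2},\cV_{1})$: this is formal, since a tensor functor is compatible with the unit isomorphisms and the braiding with a unit object is expressible through the unit isomorphisms. The inductive step is precisely the hexagon axiom: because $\cF$ is compatible with the associativity isomorphisms of $\cC$ (whatever they are), commutativity at $(A,B)$ and $(A,C)$ forces it at $(A,B\tens C)$, and symmetrically in the first argument; starting from the hypothesis at $(\cV_{2},\cV_{2})$ and the unit pairs, this yields commutativity at $(\cV_{2}^{\tens m},\cV_{2}^{\tens n})$ for all $m,n\geq 0$. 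Next, naturality of $\cR$ in both $\cC$ and $\cD$ together with naturality of $F$ shows that the class of such pairs is closed under passing to direct summands and finite direct sums in each slot. Finally, given arbitrary $W_{1},W_{2}$, choose surjections $p_{i}\colon P_{i}\twoheadrightarrow W_{i}$ with each $P_{i}$ a direct sum of $\cP_{r}$'s; since $\cF$ is right exact and $\tens$ in $\cD$ is right exact in each variable, $\cF(p_{1})\tens\cF(p_{2})$ is an epimorphism, and precomposing the square for $(W_{1},W_{2})$ with it and chasing naturality of $\cR$ and $F$ identifies it with the already-established square for $(P_{1},P_{2})$; right-cancellability of epimorphisms then gives the square for $(W_{1},W_{2})$. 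Hence $\cF$ is braided.

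For part (2), with $\cF$ now braided, the twist is handled by the same three-step scheme using the balancing equation $\theta_{W_{1}\tens W_{2}}=\cR_{W_{2},W_{1}}\circ\cR_{W_{1},W_{2}}\circ(\theta_{W_{1}}\tens\theta_{W_{2}})$ in place of the hexagon. Since $\cF$ intertwines the braidings and the tensor structures, the balancing equation propagates $\theta_{\cF(\cV_{2})}=\cF(\theta_{\cV_{2}})$ from $\cV_{2}$ to all $\cV_{2}^{\tens n}$ by induction on $n$, the base case $n=0$ being $\theta_{\cF(\cV_{1})}=\Id=\cF(\theta_{\cV_{1}})$ because the twist of a unit object is the identity. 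Naturality of $\theta$ then extends $\theta_{\cF(W)}=\cF(\theta_W)$ to direct summands and finite direct sums, and right exactness of $\cF$ extends it to all objects: if $p\colon P\twoheadrightarrow W$ with $P$ projective, then $\cF(\theta_W)\circ\cF(p)=\cF(p)\circ\cF(\theta_P)=\cF(p)\circ\theta_{\cF(P)}=\theta_{\cF(W)}\circ\cF(p)$ by naturality, and $\cF(p)$ is epi.

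The bulk of the work is routine bookkeeping of naturality squares and the organization of the hexagon and balancing inductions; the only genuinely non-formal ingredient is the reduction to $\cV_{2}$, namely that $\cV_{2}$ generates $\cC$ under direct sums, summands, and quotients, which rests on the classification of projectives. I do not foresee a real obstacle beyond making sure the associator-compatibility of $\cF$ is invoked correctly in the hexagon step (and noting that the cocycle twist $KL^k(\mathfrak{sl}_2)^\tau$ changes only the associativity isomorphisms, which are already absorbed into the definition of a tensor functor).
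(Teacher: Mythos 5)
Your proof is correct and follows the same three-stage bootstrap as the paper's argument: hexagon/balancing extends the hypothesis from $\cV_2$ to all tensor powers, the structure theory of projectives (Theorem \ref{thm:Prs_properties}) together with naturality extends to the projective subcategory, and right exactness handles arbitrary objects as quotients of projectives. The only variation is at the middle stage, where you pass from tensor powers to indecomposable projectives via split-idempotent naturality while the paper simply reruns the surjection-plus-right-exactness argument there as well; this is a bookkeeping choice, not a different route.
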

\begin{proof}
To prove (1), we need to show that for all objects $W$, $X$ in $\cC$,
\begin{equation}\label{eqn:F_braided_tensor}
F_{X,W}\circ\cR_{\cF(W),\cF(X)} =\cF(\cR_{W,X})\circ F_{W,X}.
\end{equation}
Suppose objects $W_1$, $W_2$, and $X$ in $\cC$ satisfy this relation for $W=W_i$, $i=1,2$. Then a straightforward calculation using the hexagon axiom and compatibility of the natural isomorphism $F$ with the associativity isomorphisms implies that \eqref{eqn:F_braided_tensor} holds for $W=W_1\tens W_2$ and $X$ as well. Similarly, \eqref{eqn:F_braided_tensor} holds for $W$ and $X=X_1\tens X_2$ if it holds for $W$ and $X=X_j$, $j=1,2$. Thus induction on $m$ and $n$ shows that \eqref{eqn:F_braided_tensor} holds for $W=\cV_{2}^{\tens m}$ and $X=\cV_{2}^{\tens n}$ for all $m,n\in\ZZ_{\geq 0}$ (the base cases $m=0$ and $n=0$ are proved using compatibility of the tensor functor $\cF$ with units together with the triviality of braiding isomorphisms involving units).

Next, suppose $W$ and $X$ are indecomposable projective objects of $\cC$. Theorem \ref{thm:Prs_properties} shows that there are surjections $p_W: \cV_{2}^{\tens m}\rightarrow W$ and $p_X: \cV_{2}^{\tens n}\rightarrow X$ for suitable $m,n\in\ZZ_{\geq 0}$. Then \eqref{eqn:F_braided_tensor} in this case holds due to the commutative diagrams
\begin{equation}\label{eqn:F_braid_tens_surj_1}
\begin{matrix}
\xymatrixcolsep{5pc}
\xymatrix{
\cF(\cV_{2}^{\tens m})\tens\cF(\cV_{2}^{\tens n}) \ar[r]^{\cR_{\cF(\cV_{2}^{\tens m}),\cF(\cV_{2}^{\tens n})}} \ar[d]^{\cF(p_W)\tens\cF(p_X)} & \cF(\cV_{2}^{\tens n})\tens\cF(\cV_{2}^{\tens m}) \ar[r]^{F_{\cV_{2}^{\tens n},\cV_{2}^{\tens m}}} \ar[d]^{\cF(p_X)\tens\cF(p_W)} & \cF(\cV_{2}^{\tens n}\tens\cV_{2}^{\tens m}) \ar[d]^{\cF(p_X\tens p_W)} \\
\cF(W)\tens\cF(X) \ar[r]^{\cR_{\cF(W),\cF(X)}} & \cF(X)\tens\cF(W) \ar[r]^{F_{X,W}} & \cF(X\tens W) \\
}
\end{matrix}
\end{equation}
and
\begin{equation}\label{eqn:F_braid_tens_surj_2}
\begin{matrix}
\xymatrixcolsep{5pc}
\xymatrix{
\cF(\cV_{2}^{\tens m})\tens\cF(\cV_{2}^{\tens n}) \ar[r]^{F_{\cV_{2}^{\tens m},\cV_{2}^{\tens n}}} \ar[d]^{\cF(p_W)\tens\cF(p_X)} & \cF(\cV_{2}^{\tens m}\tens\cV_{2}^{\tens n}) \ar[r]^{\cF(\cR_{\cV_{2}^{\tens m},\cV_{2}^{\tens n}})}  \ar[d]^{\cF(p_W\tens p_X)} & \cF(\cV_{2}^{\tens n}\tens\cV_{2}^{\tens m}) \ar[d]^{\cF(p_X\tens p_W)} \\
\cF(W)\tens\cF(X) \ar[r]^{F_{W,X}} & \cF(W\tens X) \ar[r]^{\cF(\cR_{W,X})} & \cF(X\tens W) \\
}
\end{matrix}
\end{equation}
together with the surjectivity of $\cF(p_W)\tens\cF(p_X)$ (which holds because $\cF$ and the tensor product on $\cD$ are right exact). Then for any projective objects $W\cong\bigoplus_i P_i$ and $X\cong\bigoplus_j Q_j$ where each $P_i$ and $Q_j$ is indecomposable, \eqref{eqn:F_braided_tensor} holds due to commutative diagrams
\begin{equation*}
\xymatrixcolsep{5.5pc}
\xymatrix{
\cF(W)\tens \cF(X) \ar[r]^{\cR_{\cF(W),\cF(X)}} \ar[d]^{\cong} & \cF(X)\tens\cF(W) \ar[r]^{F_{X,W}} \ar[d]^{\cong} & \cF(X\tens W) \ar[d]^{\cong} \\
\bigoplus_{i,j} \cF(P_i)\tens\cF(Q_j) \ar[r]^{\bigoplus_{i,j}\cR_{\cF(P_i),\cF(Q_j)}} & \bigoplus_{i,j} \cF(Q_j)\tens\cF(P_i) \ar[r]^{\bigoplus_{i,j} F_{Q_j,P_i}} & \bigoplus_{i,j} \cF(Q_j\tens P_i)\\
}
\end{equation*}
and
\begin{equation*}
\xymatrixcolsep{5.5pc}
\xymatrix{
\cF(W)\tens \cF(X) \ar[r]^{F_{W,X}} \ar[d]^{\cong} & \cF(W\tens X) \ar[r]^{\cF(\cR_{W,X})} \ar[d]^{\cong} & \cF(X\tens W) \ar[d]^{\cong} \\
\bigoplus_{i,j} \cF(P_i)\tens\cF(Q_j) \ar[r]^{\bigoplus_{i,j} F_{P_i,Q_j}} & \bigoplus_{i,j} \cF(P_i\tens Q_j) \ar[r]^{\bigoplus_{i,j}\cF(\cR_{P_i,Q_j})}  & \bigoplus_{i,j} \cF(Q_j\tens P_i)\\
}
\end{equation*}
Finally, \eqref{eqn:F_braided_tensor} holds for all $W$ and $X$ in $\cC$ thanks to diagrams similar to \eqref{eqn:F_braid_tens_surj_1} and \eqref{eqn:F_braid_tens_surj_2}, because every object in $\cC$ is a quotient of some projective object. This proves (1).

The proof of (2) is similar. The main difference is that we need to use the balancing equation and part (1) to show that $\theta_{\cF(W\tens X)}=\cF(\theta_{W\tens X})$ if the same holds for $W$ and $X$:
\begin{align*}
\theta_{\cF(W\tens X)} & =\theta_{\cF(W\tens X)}\circ F_{W,X}\circ F_{W,X}^{-1}\nonumber\\ & = F_{W,X}\circ\theta_{\cF(W)\tens\cF(X)}\circ F_{W,X}^{-1}\nonumber\\
& = F_{W,X}\circ\cR_{\cF(W),\cF(X)}^2\circ(\theta_{\cF(W)}\tens\theta_{\cF(X)})\circ F_{W,X}^{-1}\nonumber\\
& =\cF(\cR_{W,X}^2)\circ F_{W,X}\circ(\cF(\theta_W)\tens\cF(\theta_X))\circ F_{W,X}^{-1}\nonumber\\
& =\cF(\cR_{W,X}^2\circ(\theta_W\tens\theta_X))\nonumber\\
& =\cF(\theta_{W\tens X}),
\end{align*}
as required.
\end{proof}

Taking $\cF=\Id_\cC$ in Proposition \ref{prop:braid_and_twist_from_V12}, we get:
\begin{cor}\label{cor:braid_and_twist_from_V12}
If $(\cR,\theta)$ and $(\til{\cR},\til{\theta})$ are two choices of braiding and twist on $KL^k(\mathfrak{sl}_2)$ or $KL^k(\mathfrak{sl}_2)^\tau$ such that $\cR_{\cV_{2},\cV_{2}}=\til{\cR}_{\cV_{2},\cV_{2}}$ and $\theta_{\cV_{2}}=\til{\theta}_{\cV_{2}}$, then $\cR=\til{\cR}$ and $\theta=\til{\theta}$.
\end{cor}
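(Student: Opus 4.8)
The plan is to obtain this as an immediate specialization of Proposition~\ref{prop:braid_and_twist_from_V12} to the identity functor, so essentially no new work is needed. Let $\cC$ denote the common underlying abelian tensor category, either $KL^k(\fsl_2)$ or $KL^k(\fsl_2)^\tau$, and recall that its tensor product is right exact. I would regard $(\cC,\til{\cR},\til{\theta})$ as the ``source'' category in the proposition and $(\cC,\cR,\theta)$ as the ``target'' category $\cD$; both are braided tensor categories with right exact tensor product and compatible twist, since $\cR,\til{\cR}$ are by hypothesis genuine braidings and $\theta,\til{\theta}$ genuine twists. Then I take $\cF=\Id_\cC$, equipped with the identity natural isomorphism $F$, which is trivially a right exact tensor functor $\cC\to\cD$.

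With these choices, the hypothesis of Proposition~\ref{prop:braid_and_twist_from_V12}(1), namely $F_{\cV_2,\cV_2}\circ\cR_{\cF(\cV_2),\cF(\cV_2)}=\cF(\til{\cR}_{\cV_2,\cV_2})\circ F_{\cV_2,\cV_2}$, reduces precisely to the assumed equality $\cR_{\cV_2,\cV_2}=\til{\cR}_{\cV_2,\cV_2}$. Hence the proposition asserts that $\Id_\cC$ is a braided tensor functor from $(\cC,\til{\cR})$ to $(\cC,\cR)$; unwinding this with $F=\Id$ gives $\cR_{W,X}=\til{\cR}_{W,X}$ for all objects $W,X$, i.e. $\cR=\til{\cR}$. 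Likewise, the additional hypothesis of part~(2), $\theta_{\cF(\cV_2)}=\cF(\til{\theta}_{\cV_2})$, is just the assumed equality $\theta_{\cV_2}=\til{\theta}_{\cV_2}$, and part~(2) then yields $\theta_W=\til{\theta}_W$ for every object $W$, i.e. $\theta=\til{\theta}$.

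The only thing worth pausing over is not an obstacle so much as a sanity check: the proposition permits $\cD$ to be the same underlying category carrying a possibly different braiding and twist, since it imposes no compatibility between $\cC$ and $\cD$ beyond what is encoded in $\cF$ and $F$. All of the actual content is already inside Proposition~\ref{prop:braid_and_twist_from_V12}, whose proof in turn relies on Theorem~\ref{thm:Prs_properties}: every object of $\cC$ is a quotient of a projective object, and every projective object is a direct summand of a tensor power of $\cV_2$, so agreement of braidings (and twists) on $\cV_2$ propagates first to tensor powers of $\cV_2$, then to projectives, and finally to all objects.
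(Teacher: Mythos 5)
Your proof is correct and is exactly the paper's argument: the paper also derives the corollary by the one-line observation ``Taking $\cF=\Id_\cC$ in Proposition~\ref{prop:braid_and_twist_from_V12}.'' Your additional remark tracing the mechanism back to Theorem~\ref{thm:Prs_properties} (projectives as summands of tensor powers of $\cV_2$) accurately describes why the proposition itself works.
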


Now we can classify braidings and twists on $KL^k(\mathfrak{sl}_2)$ and its cocycle twist:
\begin{thm}\label{thm:KLk_braidings}
Let $k=-2+p/q$ for relatively prime $p\in\ZZ_{\geq 2}$ and $q\in\ZZ_{\geq 1}$. The tensor category $KL^k(\mathfrak{sl}_2)$ admits four braidings, and for each braiding there are two compatible twists, characterized as follows:
\begin{enumerate}

\item $\cR_{\cV_{2},\cV_{2}} =\hphantom{-} e^{\pi i q/2p}\cdot\Id_{\cV_{2}\tens\cV_{2}} + e^{-\pi i q/2p}\cdot f_{\cV_{2}},\qquad \theta_{\cV_{2}}=\pm e^{3\pi i q/2p}\cdot\Id_{\cV_{2}}$

\item $\cR_{\cV_{2},\cV_{2}} = -e^{\pi i q/2p}\cdot\Id_{\cV_{2}\tens\cV_{2}} - e^{-\pi i q/2p}\cdot f_{\cV_{2}},\qquad \theta_{\cV_{2}}=\pm e^{3\pi i q/2p}\cdot\Id_{\cV_{2}}$

\item $\cR_{\cV_{2},\cV_{2}} =\hphantom{-} e^{-\pi i q/2p}\cdot\Id_{\cV_{2}\tens\cV_{2}} + e^{\pi i q/2p}\cdot f_{\cV_{2}},\qquad \theta_{\cV_{2}}=\pm e^{-3\pi i q/2p}\cdot\Id_{\cV_{2}}$

\item $\cR_{\cV_{2},\cV_{2}} = -e^{-\pi i q/2p}\cdot\Id_{\cV_{2}\tens\cV_{2}} - e^{\pi i q/2p}\cdot f_{\cV_{2}},\qquad \theta_{\cV_{2}}=\pm e^{-3\pi i q/2p}\cdot\Id_{\cV_{2}}$

\end{enumerate}
The tensor category $KL^k(\mathfrak{sl}_2)^\tau$ also admits four braidings, and for each braiding there are two compatible twists. Specifically, for each braiding and twist $(\cR,\theta)$ of $KL^k(\mathfrak{sl}_2)$, there is a braiding and twist $(\cR^\tau, \theta^\tau)$ of $KL^k(\mathfrak{sl}_2)^\tau$ characterized by
\begin{equation*}
\cR_{\cV_{2},\cV_{2}}^\tau =e^{\pi i/2}\cdot\cR_{\cV_{2},\cV_{2}},\qquad\theta_{\cV_{2}}^\tau =e^{\pi i/2}\cdot\theta_{\cV_{2}}.
\end{equation*}
\end{thm}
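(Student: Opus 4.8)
The plan is to construct the braidings and twists explicitly, verify the hexagon and balancing axioms, and then appeal to uniqueness. First I would establish existence of the four braidings on $KL^k(\mathfrak{sl}_2)$. One of them — say the one in case (1), normalized so that $\cR_{\cV_{2},\cV_{2}}$ equals the value computed in Proposition \ref{prop:V12_self_braiding} — is already the official braiding from \cite{HLZ8}, so nothing needs to be proved there except to record the value of $\theta_{\cV_{2}}$: since $\theta=e^{2\pi iL(0)}$ acts on $\cV_{2}$ by the scalar $e^{2\pi ih_2}=e^{3\pi iq/2p}$ by \eqref{eqn:h_lambda}, this is the ``$+$'' choice in case (1). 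The reverse braiding $\cR^{\mathrm{rev}}_{W_1,W_2}=\cR_{W_2,W_1}^{-1}$ is always a braiding on any braided tensor category; computing $\cR^{\mathrm{rev}}_{\cV_{2},\cV_{2}}$ from the formula in Proposition \ref{prop:V12_self_braiding} (invert $a\cdot\Id+b\cdot f_{\cV_{2}}$ using $f_{\cV_{2}}^2=(-\zeta-\zeta^{-1})f_{\cV_{2}}$ from Theorem \ref{thm:V12_rigid}) gives $e^{-\pi iq/2p}\cdot\Id+e^{\pi iq/2p}\cdot f_{\cV_{2}}$, which is case (3), with the compatible twist $\theta^{\mathrm{rev}}=\theta^{-1}$ acting on $\cV_{2}$ by $e^{-3\pi iq/2p}$.

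The remaining two braidings in cases (2) and (4) should be obtained from (1) and (3) by a sign twist on the odd sector: define $\cR'_{W_1,W_2}=(-1)^{i_1i_2}\cR_{W_1,W_2}$ for $W_1\in KL^k_{i_1}$, $W_2\in KL^k_{i_2}$, using the $\ZZ/2\ZZ$-grading from Section \ref{subsec:cocycle_twist}. I would check directly that this $\cR'$ still satisfies the hexagon axioms: the sign $(-1)^{i_1i_2}$ is bimultiplicative in each slot, and the hexagon for $\cR'$ reduces to the hexagon for $\cR$ after cancelling signs, because the associativity isomorphism is unchanged and $(-1)^{(i_1+i_2)i_3}=(-1)^{i_1i_3}(-1)^{i_2i_3}$. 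Similarly, modifying $\theta$ by $\theta'_W=(-1)^{i}\theta_W$ for $W\in KL^k_i$ preserves the balancing equation, since $(-1)^{i_1+i_2}=(-1)^{i_1}(-1)^{i_2}$ on the left matches the square of $(-1)^{i_1i_2}$ on the right only when... — here I should be careful: $(-1)^{i_1i_2}$ squared is $1$, not $(-1)^{i_1+i_2}$, so actually the sign-twisted $\theta$ is \emph{not} automatically balanced with $\cR'$ unless the sign twist on $\theta$ is trivial. The correct statement is that $(\cR',\theta)$ with $\theta$ \emph{unchanged} is balanced, because $(\cR')^2_{W_1,W_2}=\cR^2_{W_1,W_2}$ (the sign squares away), so the balancing equation for $(\cR',\theta)$ is identical to that for $(\cR,\theta)$. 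This explains why in Theorem \ref{thm:KLk_braidings} the twists for cases (1),(2) are the same and those for (3),(4) are the same. Since $\cR'_{\cV_{2},\cV_{2}}=-\cR_{\cV_{2},\cV_{2}}$ (as $\cV_{2}\in KL^k_{\odd}$), case (1) maps to case (2) and case (3) to case (4).

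Next I would prove that these four are \emph{all} the braidings and that each has exactly the two listed twists. For braidings: any braiding $\widetilde{\cR}$ has $\widetilde{\cR}_{\cV_{2},\cV_{2}}$ among the four values $\pm(e^{\pm\pi iq/2p}\cdot\Id+e^{\mp\pi iq/2p}\cdot f_{\cV_{2}})$ by the argument already given in the proof of Proposition \ref{prop:V12_self_braiding} (compatibility with the hexagon axiom pins $\cR_{\cV_{2},\cV_{2}}$ down to these four), and then Corollary \ref{cor:braid_and_twist_from_V12} forces $\widetilde{\cR}$ to equal whichever of the four constructed braidings has the same value on $\cV_{2}\tens\cV_{2}$ — but note Corollary \ref{cor:braid_and_twist_from_V12} as stated also requires agreement of the twists, so I would instead invoke the braiding-only part: the argument in the proof of Proposition \ref{prop:braid_and_twist_from_V12}(1) with $\cF=\Id$ shows a braiding is determined by its value on $\cV_{2}\tens\cV_{2}$ alone. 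For twists: given a fixed braiding $\cR$, any compatible twist $\theta$ must have $\theta_{\cV_{2}}$ a scalar (as $\mathrm{End}(\cV_{2})=\CC\Id$), and the balancing equation applied to $\cV_{2}\tens\cV_{2}$ gives $\theta_{\cV_{2}\tens\cV_{2}}=\cR_{\cV_{2},\cV_{2}}^2\circ(\theta_{\cV_{2}}^2\cdot\Id)$; since $\theta_{\cV_{2}\tens\cV_{2}}$ is an automorphism of $\cV_{1}\oplus\cV_{3}$ (or $\cP_3$), computing $\cR_{\cV_{2},\cV_{2}}^2$ explicitly shows $\theta_{\cV_{2}}^2$ is determined, leaving exactly two square roots $\pm e^{3\pi iq/2p}$ (resp. $\pm e^{-3\pi iq/2p}$); both occur because the ``$-$'' choice is obtained from the ``$+$'' choice by multiplying $\theta$ by the central automorphism acting as $(-1)$ on $KL^k_{\odd}$ and $+1$ on $KL^k_{\even}$, which is easily checked to preserve the balancing equation (its square is the identity, and it is a tensor natural automorphism of the identity functor, so it commutes with everything). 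The main obstacle, and the place to be most careful, is exactly this last bookkeeping: verifying that the sign-modified $\theta$ is still a twist compatible with the \emph{given} $\cR$, i.e. that $(-1)^{i_1+i_2}$ versus $(-1)^{i_1}(-1)^{i_2}$ subtleties work out, and that no further twists sneak in from non-semisimple objects like the logarithmic $\cP_{np+r}$ of Theorem \ref{thm:Pr_log} — here one uses that $\theta$ is determined on all of $\cC$ by $\theta_{\cV_{2}}$ via Proposition \ref{prop:braid_and_twist_from_V12}(2) together with part (1), since every object is a quotient of a sum of tensor powers of $\cV_{2}$.

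Finally, for $KL^k(\mathfrak{sl}_2)^\tau$: the category has the same underlying tensor product and the same $\ZZ/2\ZZ$-grading, with associativity twisted by $\tau(i_1,i_2,i_3)=(-1)^{i_1i_2i_3}$. I would define $\cR^\tau_{W_1,W_2}=\mu(i_1,i_2)\cR_{W_1,W_2}$ for a suitable function $\mu$ on $(\ZZ/2\ZZ)^2$ and solve the $\tau$-twisted hexagon axioms for $\mu$; the defect introduced by $\tau$ in the hexagon is a coboundary-type term, and one finds $\mu(i_1,i_2)=e^{\pi i i_1 i_2/2}=i^{i_1i_2}$ works, which on $\cV_{2}\tens\cV_{2}$ (both odd) gives the factor $e^{\pi i/2}$ as stated; likewise $\theta^\tau_W=\nu(i)\theta_W$ with $\nu(i)=e^{\pi i i/2}=i^i$ solves the balancing equation in $KL^k(\mathfrak{sl}_2)^\tau$, giving $\theta^\tau_{\cV_{2}}=e^{\pi i/2}\theta_{\cV_{2}}$. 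That these are all the braidings and twists on $KL^k(\mathfrak{sl}_2)^\tau$ follows verbatim from Corollary \ref{cor:braid_and_twist_from_V12} and Proposition \ref{prop:braid_and_twist_from_V12}, which were stated for both categories simultaneously. The computation that $\mu=i^{i_1i_2}$ and $\nu=i^i$ solve the twisted axioms is the routine part; the conceptual point to get right is that multiplying an honest braiding on the \emph{untwisted} category by $i^{i_1i_2}$ produces exactly a braiding for the $\tau$-twisted associativity, because the hexagon discrepancy $\tau(i_1,i_2,i_3)\tau(i_3,i_1,i_2)^{\pm1}\cdots$ matches the multiplicativity defect of $i^{i_1i_2}$.
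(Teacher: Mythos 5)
Your proposal is correct and follows essentially the same route as the paper: identify the official braiding and its reverse, produce the other two by the sign $(-1)^{i_1 i_2}$ on the $\ZZ/2\ZZ$-grading, pin down $\theta_{\cV_2}^2$ via the balancing equation, obtain the second twist by multiplying by $(-1)^i$, and handle $KL^k(\fsl_2)^\tau$ by the factors $e^{\pi i j_1 j_2/2}$ and $e^{\pi i j/2}$. Your momentary worry about whether $(\cR',\theta')$ is balanced resolves correctly (since $(\cR')^2=\cR^2$, the braidings in each pair $\{(1),(2)\}$ and $\{(3),(4)\}$ admit exactly the same twists, and the $(-1)^i$-modification of $\theta$ works because $(-1)^{i}$ is a monoidal natural automorphism of the identity, so both twists are compatible with both braidings of the pair), which is exactly what the paper records.
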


\begin{proof}
As mentioned in the proof of Proposition \ref{prop:V12_self_braiding}, there are four possible braiding isomorphisms $\cR_{\cV_{2},\cV_{2}}$ compatible with the hexagon axioms, namely the four listed in the theorem. Thus by Corollary \ref{cor:braid_and_twist_from_V12}, $KL^k(\mathfrak{sl}_2)$ admits at most four braidings, and we still need to show that all for possibilities for $\cR_{\cV_{2},\cV_{2}}$ extend to braidings on $KL^k(\mathfrak{sl}_2)$.

By Proposition \ref{prop:V12_self_braiding}, braiding (1) is the official $\cR_{\cV_{2},\cV_{2}}$ specified by \cite{HLZ8}, and braiding (3) is its inverse since
\begin{equation*}
f_{\cV_{2}}\circ f_{\cV_{2}} =-(e^{\pi i q/p}+e^{-\pi i q/p})\cdot f_{\cV_{2}}
\end{equation*}
by Theorem \ref{thm:V12_rigid}. Thus (1) extends to the official braiding on $KL^k(\mathfrak{sl}_2)$ and (3) extends to the reverse braiding. For braidings (2) and (4), we use the $\ZZ/2\ZZ$-grading
\begin{equation*}
KL^k(\mathfrak{sl}_2)=KL^k_{\even}(\mathfrak{sl}_2)\oplus KL^k_{\odd}(\mathfrak{sl}_2)
\end{equation*}
introduced in the previous subsection. Given any braiding $\cR$ of $KL^k(\mathfrak{sl}_2)$, we can define a new braiding $\til{\cR}$ by
\begin{equation*}
\til{\cR}_{W_1,W_2} =(-1)^{i_1 i_2} \cR_{W_1,W_2}
\end{equation*}
for objects $W_1$ in $KL^k_{i_1}(\mathfrak{sl}_2)$ and $W_2$ in $KL^k_{i_2}(\mathfrak{sl}_2)$, $i_1, i_2\in\ZZ/2\ZZ$. It is easy to see that this braiding still satisfies the hexagon axioms, and braiding (2) is obtained in this way from braiding (1), while braiding (4) is obtained in this way from braiding (3).

For the twists, Corollary \ref{cor:braid_and_twist_from_V12} shows that given any braiding on $KL^k(\mathfrak{sl}_2)$, any compatible twist is completely determined by $\theta_{\cV_{2}}$, which must be a non-zero scalar multiple of $\Id_{\cV_{2}}$ since $\dim\mathrm{End}(\cV_{2})=1$. Assuming $\theta_{\cV_{2}}=c\cdot\Id_{\cV_{2}}$ for some $c\in\CC^\times$, the possible values of $c$ are determined by
\begin{align*}
c^2\cdot e_{\cV_{2}} & = e_{\cV_{2}}\circ(\theta_{\cV_{2}}\tens\theta_{\cV_{2}}) =e_{\cV_{2}}\circ\theta_{\cV_{2}\tens\cV_{2}}\circ\cR_{\cV_{2},\cV_{2}}^{-2} = \theta_{\cV_{1}}\circ e_{\cV_{2}}\circ\cR_{\cV_{2},\cV_{2}}^{-2} = e^{\pm 3\pi i q/p}\cdot e_{\cV_{2}},
\end{align*}
where the last equation comes from calculating $\cR_{\cV_{2},\cV_{2}}^{-2}$ for all four braidings and composing with $e_{\cV_{2}}$ (using the definition $f_{\cV_{2}}=i_{\cV_{2}}\circ e_{\cV_{2}}$ and Theorem \ref{thm:V12_rigid}); we take the positive sign for braidings (1) and (2) and the negative sign for braidings (3) and (4). This yields the two possible values of $\theta_{\cV_{2}}$ for each braiding indicated in the theorem.

We need to check that for each braiding, both possibilities for $\theta_{\cV_{2}}$ extend to twists on $KL^k(\mathfrak{sl}_2)$. Since the lowest conformal weight of $\cV_{2}$ is $h_1=3q/4p$, taking the positive sign for the twist for braidings (1) and (2) in the statement of the theorem yields the official twist $e^{2\pi i L(0)}$, while taking the positive sign for the twist in braidings (3) and (4) yields the inverse twist $e^{-2\pi i L(0)}$, which is compatible with the reverse braiding. Note that $e^{2\pi i L(0)}$ is also compatible with braiding (2) since braidings (1) and (2) yield identical double braiding isomorphisms, and therefore $e^{2\pi i L(0)}$ obeys the balancing equation in both cases. Similarly, $e^{-2\pi i L(0)}$ is compatible with both braidings (3) and (4). Thus each braiding of $KL^k(\mathfrak{sl}_2)$ is compatible with at least one twist. To get the second compatible twist for each braiding, note that if $\cR$ is any braiding on $KL^k(\mathfrak{sl}_2)$ with twist $\theta$, we can define a second twist $\til{\theta}$ by
\begin{equation*}
\til{\theta}_W=(-1)^i\theta_W
\end{equation*}
for objects $W$ in $KL^k_i(\mathfrak{sl}_2)$, $i\in\ZZ/2\ZZ$. This new twist obeys the balancing equation because
\begin{align*}
\til{\theta}_{W_1\tens W_2} =(-1)^{i_1+i_2}\theta_{W_1\tens W_2}=\cR_{W_1,W_2}^2\circ((-1)^{i_1}\theta_{W_1}\tens(-1)^{i_2}\theta_{W_2}) =\cR_{W_1,W_2}^2\circ(\til{\theta}_{W_1}\tens\til{\theta}_{W_2})
\end{align*}
for $W_1$ in $KL^k_{i_1}(\mathfrak{sl}_2)$ and $W_2$ in $KL^k_{i_2}(\mathfrak{sl}_2)$, $i_1,i_2\in\ZZ/2\ZZ$.

For $KL^k(\mathfrak{sl}_2)^\tau$, suppose that $(\mathcal{R},\theta)$ is a braiding and twist for $KL^k(\mathfrak{sl}_2)$. Then we can define a braiding $\cR^\tau$ and twist $\theta^\tau$ for $KL^k(\mathfrak{sl}_2)^\tau$ by
\begin{equation*}
\cR_{W_1,W_2}^\tau =e^{\pi i j_1 j_2/2}\cdot\cR_{W_1,W_2},\qquad\theta_W^\tau =e^{\pi i j/2}\cdot\theta_W
\end{equation*}
where $j_1,j_2,j\in\lbrace 0,1\rbrace$ and $W_1$, $W_2$, and $W$ are objects of $KL^k_{\bar{j}_1}(\mathfrak{sl}_2)$, $KL^k_{\bar{j}_2}(\mathfrak{sl}_2)$, and $KL^k_{\bar{j}}(\mathfrak{sl}_2)$, respectively. It is straightforward to show that $\cR^\tau$ satisfies the hexagon axioms and that $\theta^\tau$ is a twist. In particular $\theta^\tau$ satisfies the balancing equation because
\begin{align*}
(\cR^\tau_{W_1,W_2})^2\circ(\theta_{W_1}^\tau\tens\theta_{W_2}^\tau) = e^{\pi i(2j_1 j_2+j_1+j_2)/2}\cdot\theta_{W_1\tens W_2} =\theta^\tau_{W_1\tens W_2}
\end{align*}
for $j_1,j_2\in\lbrace 0,1\rbrace$ and $W_1$, $W_2$ as above. Conversely, if $(\cR^\tau,\theta^\tau)$ is a braiding and twist for $KL^k(\mathfrak{sl}_2)^\tau$, then we can similarly define a braiding $\cR$ and twist $\theta$ for $KL^k(\mathfrak{sl}_2)$ by
\begin{equation*}
\cR_{W_1,W_2}=e^{-\pi i j_1 j_2/2}\cdot\cR^\tau_{W_1,W_2},\qquad\theta_W=e^{-\pi i j/2}\cdot\theta^\tau_W 
\end{equation*}
for $j_1,j_2,j\in\lbrace 0,1\rbrace$ and $W_1$, $W_2$, $W$ as above. Thus there is a bijection between braiding and twist pairs for $KL^k(\mathfrak{sl}_2)$ and $KL^k(\mathfrak{sl}_2)^\tau$ as indicated in the statement of the theorem.
\end{proof}

\begin{rem}
The construction in Theorem \ref{thm:KLk_braidings} of a braiding and twist on $KL^k(\fsl_2)^\tau$ from a given braiding and twist on $KL^k(\fsl_2)$ can be viewed as an easy special case of the more general braided zesting construction for braided tensor categories graded by an abelian group introduced in \cite{DGPRZ}.
\end{rem}

\begin{rem}
In the next sections, we will compare the tensor structure on $KL^k(\mathfrak{sl}_2)$ with that on modules for the big quantum group of $\mathfrak{sl}_2$ at the root of unity $\zeta=e^{\pi i/(k+2)}$. As observed for example in \cite[Remark 3.1]{GN}, the big quantum group tensor category also admits exactly four braidings, which can be characterized similarly as in Theorem \ref{thm:KLk_braidings}. This contrasts with the case of the small quantum group quotient $u_\zeta(\fsl_2)$ of the big quantum group: if $\zeta$ has odd order, then the category of finite-dimensional $u_\zeta(\fsl_2)$-modules has only two braidings \cite{DEN}, while if $\zeta$ has even order, then it has no braidings \cite{KS, GR}, although the representation category of a quasi-Hopf modification of $u_\zeta(\fsl_2)$ does \cite{CGR}.
\end{rem}

\section{The universal property of \texorpdfstring{$KL^k(\fsl_2)$}{KLk(sl2)}}\label{sec:univ_prop}

In Theorem \ref{thm:V12_rigid}, we showed that for $k=-2+p/q$ an admissible level for $\fsl_2$, the generalized Verma module $\cV_2$ in $KL^k(\fsl_2)$ is self-dual with intrinsic dimension $-e^{\pi i q/p}-e^{-\pi i q/p}$. In this section, we will show that if $\cC$ is any (not necessarily rigid) tensor category with a rigid self-dual object $X$ of the same intrinsic dimension, then there is a unique right exact tensor functor $\cF: KL^k(\fsl_2)\rightarrow\cC$ such that $\cF(\cV_2,e_{\cV_2},i_{\cV_2})=(X,e_X,i_X)$. To prove this result, the key step is to relate the subcategory of projective objects in $KL^k(\fsl_2)$ to the category of tilting modules for quantum $\fsl_2$ at the root of unity $\zeta =e^{\pi i q/p}$.

\subsection{Tilting modules for quantum \texorpdfstring{$\mathfrak{sl}_2$}{sl2}}\label{subsec:tilting}

Let $\cP^k$ denote the full subcategory of projective objects in $KL^k(\fsl_2)$. By Corollary \ref{cor:projective_is_rigid}, $\cP^k$ is also the subcategory of all rigid objects in $KL^k(\fsl_2)$, and it is a monoidal subcategory which is closed under finite direct sums and direct summands. However, $\cP^k$ is not an abelian category because it is not closed under subquotients in general. We will show that $\cP^k$ is tensor equivalent to the rigid monoidal category $\cT_\zeta$ of tilting modules \cite{AP} for quantum $\fsl_2$ at the root of unity $\zeta=e^{\pi i/(k+2)}=e^{\pi i q/p}$ (as usual, $k=-2+p/q$ for relatively prime $p\in\ZZ_{\geq 2}$ and $q\in\ZZ_{\geq 1}$).

 First, let $\cC(\zeta,\mathfrak{sl}_2)$ denote the category of finite-dimensional comodules for the Hopf algebra $SL_\zeta(2)$ (see for example \cite[Chapter IV]{Ka} for the definitions). As in \cite[Sections VII.4 and VII.5]{Ka}, we can also view $\cC(\zeta,\mathfrak{sl}_2)$ as a category of finite-dimensional weight modules for the Hopf algebra $U_\zeta(\mathfrak{sl}_2)$; this category is a braided ribbon tensor category. Its simple objects are irreducible highest-weight modules $L_\lambda$ labeled by highest weights $\lambda\in\ZZ_{\geq 0}$, with $L_0$ the unit object. The category $\cC(\zeta,\mathfrak{sl}_2)$ has enough projectives, and we let $P_\lambda$ denote the projective cover of $L_\lambda$ for $\lambda\in\ZZ_{\geq 0}$. Set $\mathbf{X}:=L_1$, the two-dimensional \textit{standard object} of $\cC(\zeta,\mathfrak{sl}_2)$; it is self-dual with intrinsic dimension $-\zeta-\zeta^{-1}$ (see for example \cite[Exercise 8.18.8]{EGNO}).

The subcategory $\cT_\zeta\subseteq\cC(\zeta,\fsl_2)$ of tilting modules was first defined in \cite{An} in terms of certain filtrations and dual filtrations; however, as in \cite{Os}, one can also define $\cT_\zeta$ to be the smallest full monoidal subcategory of $\cC(\zeta,\mathfrak{sl}_2)$ which contains $\mathbf{X}$ and is closed under direct sums and direct summands. The indecomposable objects of $\cT_\zeta$ are labeled by their highest weights: for any weight $\lambda\in\ZZ_{\geq 0}$, there is an indecomposable module $T_\lambda$ such that $\lambda$ is the highest weight of $T_\lambda$; moreover, $\lambda$ occurs as a weight of $T_\lambda$ with multiplicity one, and every indecomposable module in $\cT_\zeta$ is isomorphic to some $T_\lambda$ \cite[Theorem 2.5]{An}. Since tensoring with $\mathbf{X}$ raises the highest weight of $T_\lambda$ by one, we have a decomposition
\begin{equation}\label{eqn:X_times_T_lambda}
\mathbf{X}\otimes T_\lambda\cong\bigoplus_{\mu=0}^{\lambda+1} n_\mu\cdot T_\mu
\end{equation}
for certain $n_\mu\in\ZZ_{\geq 0}$, with $n_{\lambda+1}=1$.

To show that $\cT_\zeta$ is tensor equivalent to $\cP^k$, we will need the composition series structure (Loewy diagram) for each  $T_\lambda$ and $P_\lambda$, as well as explicit formulas for $\mathbf{X}\otimes T_\lambda$. If the root of unity $\zeta$ has even order, these results can be found in \cite[Sections 3 and 4]{BFGT}, though the notation used there is somewhat different from that used here. For general $\zeta$, we mainly use \cite{AT}, \cite{STWZ}, and \cite{Ne} as references.

\begin{thm}\label{thm:tilting_structures}
The indecomposable tilting and projective modules in $\cC(\zeta,\mathfrak{sl}_2)$ are as follows:
\begin{enumerate}
\item $T_m=L_m$ for $0\leq m\leq p-1$.

\item $T_{(\ell+1)p-1}=P_{(\ell+1)p-1} = L_{(\ell+1)p-1}$ for $\ell\in\ZZ_{\geq 0}$.

\item $T_{\ell p+m} = P_{\ell p-m-2}$ for $\ell\in\ZZ_{\geq 1}$ and $0\leq m\leq p-2$, and the Loewy diagram of this indecomposable module is the following for $\ell =1$ and $\ell\geq 2$, respectively:
\begin{equation*}
\begin{matrix}
  \begin{tikzpicture}[->,>=latex,scale=1.5]
\node (b1) at (1,0) {$L_{p-m-2}$};
\node (c1) at (-1, 1){$T_{p+m}:$};
   \node (a1) at (0,1) {$L_{p+m}$};
    \node (a2) at (1,2) {$L_{p-m-2}$};
\draw[] (b1) -- node[left] {} (a1);
    \draw[] (a1) -- node[left] {} (a2);
\end{tikzpicture}
\end{matrix} , \qquad\qquad \begin{matrix}
  \begin{tikzpicture}[->,>=latex,scale=1.5]
\node (b1) at (1,0) {$L_{\ell p-m-2}$};
\node (c1) at (-1, 1){$T_{\ell p+m}:$};
   \node (a1) at (0,1) {$L_{\ell p+m}$};
   \node (b2) at (2,1) {$L_{(\ell-2)p+m}$};
    \node (a2) at (1,2) {$L_{\ell p-m-2}$};
\draw[] (b1) -- node[left] {} (a1);
   \draw[] (b1) -- node[left] {} (b2);
    \draw[] (a1) -- node[left] {} (a2);
    \draw[] (b2) -- node[left] {} (a2);
\end{tikzpicture}
\end{matrix} .
\end{equation*}
\end{enumerate}
\end{thm}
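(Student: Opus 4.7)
The plan is to establish the three parts by combining the linkage principle for $\cC(\zeta,\fsl_2)$ with the Weyl and dual Weyl filtration structure of tilting modules, then identifying the projective modules by head.

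For part (1), I would first recall that for $0\leq m\leq p-1$ the weight $m$ lies in the closure of the fundamental alcove $[0,p-1]$. By the strong linkage principle and the Weyl--Kac character formula (valid for $U_\zeta(\fsl_2)$ at roots of unity of arbitrary order), the Weyl module $V(m)$ has no dominant weights besides $m$ linked to it below, so $V(m)=L_m=\nabla(m)$. Since a Weyl module is both a Weyl and a dual Weyl filtration of itself, $L_m$ is tilting, indecomposable with highest weight $m$, hence $T_m=L_m$.

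For part (2), the weight $(\ell+1)p-1$ lies on a reflection hyperplane (the ``Steinberg wall''), and standard character theory for $U_\zeta(\fsl_2)$ shows that $V((\ell+1)p-1)=L_{(\ell+1)p-1}=\nabla((\ell+1)p-1)$, so $L_{(\ell+1)p-1}$ is again tilting. Moreover, weights on walls form singleton blocks under the dot action of the affine Weyl group restricted to linked dominant weights, so $L_{(\ell+1)p-1}$ is the only composition factor possible in any projective cover of itself; combined with the general fact that tilting modules in $\cC(\zeta,\fsl_2)$ with highest weight at least $p-1$ are projective (which follows from the existence of a Weyl filtration together with $\mathrm{Ext}^1(V(\mu),L(\lambda))=0$ for $\mu\leq\lambda$ when $\lambda\geq p-1$), this forces $P_{(\ell+1)p-1}=L_{(\ell+1)p-1}=T_{(\ell+1)p-1}$.

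For part (3), the weight $\lambda=\ell p+m$ with $\ell\geq 1$ and $0\leq m\leq p-2$ lies in the interior of the $(\ell+1)$-th alcove. The key structural input I would use is the Weyl filtration of $T_\lambda$: by Andersen's theorem, $T_{\ell p+m}$ admits a Weyl filtration with factors $V(\ell p+m)$ on top and $V(\ell p-m-2)$ (when $\ell\geq 1$) on the bottom; and these Weyl modules have the two-composition-factor structure given by reflection across the walls $\ell p-1$ and $(\ell-1)p-1$. Specifically, $V(\ell p+m)$ has head $L_{\ell p+m}$ and socle $L_{\ell p-m-2}$, while $V(\ell p-m-2)$ has head $L_{\ell p-m-2}$ and (for $\ell\geq 2$) socle $L_{(\ell-2)p+m}$ (the reflection of $\ell p-m-2$ across $(\ell-1)p-1$); for $\ell=1$, the Weyl module $V(p-m-2)$ is already simple. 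The dual Weyl filtration of $T_{\ell p+m}$ gives the dual arrangement, which combined with the Weyl filtration pins down the Loewy diagram by determining which length-two subquotients are non-split (using that $T_{\ell p+m}$ is indecomposable with $L_{\ell p+m}$ appearing with multiplicity one).

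Finally, to conclude $T_{\ell p+m}\cong P_{\ell p-m-2}$, I would argue that $T_{\ell p+m}$ is projective (its Weyl filtration, together with the vanishing $\mathrm{Ext}^1(V(\mu),L(\lambda))=0$ for dominant $\mu\leq\lambda$ with $\lambda\geq p-1$, implies $\mathrm{Ext}^1(T_{\ell p+m},L)=0$ for all simples $L$), and its head is $L_{\ell p-m-2}$ by the Loewy structure just established; uniqueness of projective covers then identifies $T_{\ell p+m}$ with $P_{\ell p-m-2}$. The main obstacle I anticipate is the Weyl/dual Weyl filtration structure of $T_{\ell p+m}$ at \emph{arbitrary} roots of unity: the literature often treats the even-order and odd-order cases separately (e.g.\ \cite{BFGT} versus parts of \cite{CP}), so the proof in the appendix will need to carefully isolate which arguments are insensitive to $\mathrm{ord}(\zeta)$ and which require a case split, most likely by reducing everything to explicit character computations and Ext-vanishing between Weyl modules, both of which depend only on the linkage pattern of $\zeta$ at $p$.
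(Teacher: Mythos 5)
The route you sketch (linkage, Weyl/dual Weyl filtrations, identification of projectives by their heads) is a genuinely different one from the paper's appendix, but as written it has a gap at the crucial step: projectivity of the non-simple tilting modules. The vanishing you invoke, $\mathrm{Ext}^1(V(\mu),L(\lambda))=0$ for $\mu\leq\lambda$ with $\lambda\geq p-1$, is false. For $0\leq m\leq p-2$ the costandard module $\nabla(p+m)$ is a non-split extension $0\to L_{p+m}\to\nabla(p+m)\to L_{p-m-2}\to 0$, and since $V(p-m-2)=L_{p-m-2}$ this gives $\mathrm{Ext}^1\big(V(p-m-2),L_{p+m}\big)\neq 0$ with $p-m-2\leq p+m$ and $p+m\geq p-1$; even restricting to $\mu\geq p-1$ it fails, e.g.\ applying $\hom(-,L_{3p-m-2})$ to $0\to V(3p-m-2)\to P_{p+m}\to V(p+m)\to 0$ gives $\mathrm{Ext}^1\big(V(p+m),L_{3p-m-2}\big)\cong\CC$. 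So termwise $\mathrm{Ext}$-vanishing along a Weyl filtration cannot show that $T_{\ell p+m}$ (or $T_{(\ell+1)p-1}$) is projective: the obstructing extensions of the individual Weyl factors are exactly the ones absorbed inside the tilting module, and one needs a genuine argument. The standard one goes through the Steinberg module: $L_{p-1}$ is projective and injective, and every $T_\lambda$ with $\lambda\geq p-1$ is a direct summand of $L_{p-1}\otimes(\text{something})$, hence projective. That projectivity of $L_{p-1}$ at a root of unity of \emph{arbitrary} order is precisely the nontrivial input the appendix takes from \cite{Ne}; your argument assumes it away via an incorrect lemma.

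Two further points. The ``singleton block'' claim in part (2) is also false: the dominant weights linked to $(\ell+1)p-1$ under the affine dot action are all weights $(\ell'+1)p-1$ with $\ell'\equiv\ell\ (\mathrm{mod}\ 2)$, so linkage alone does not isolate $L_{(\ell+1)p-1}$; what is true is that these wall blocks are semisimple, but that requires an argument (e.g.\ $V(\mu)=L_\mu=\nabla(\mu)$ for every linked dominant weight together with $\mathrm{Ext}^1(V(\mu),\nabla(\lambda))=0$, or again projectivity of Steinberg tensor factors), not just the linkage principle. Also, in the Weyl filtration of $T_{\ell p+m}$ the submodule is $V(\ell p+m)$ and the quotient is $V(\ell p-m-2)$ (whose head $L_{\ell p-m-2}$ is the head of the tilting module), the reverse of what you wrote, and extracting the full Loewy diagram from the two filtrations still needs the simplicity of socle and head, which should be argued (the paper does this via adjunction with the rigid object $\mathbf{X}$). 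For contrast, the paper's appendix avoids linkage and filtration technology entirely: it builds $T_{\ell p+m}=P_{\ell p-m-2}$ recursively as direct summands of $\mathbf{X}\otimes(-)$ and $L_{\ell p}\otimes(-)$ applied to the projective module $P_{p-2}=\mathbf{X}\otimes L_{p-1}$, pinning down Loewy diagrams by Hom computations from rigidity, with the only nonformal inputs being Ostrik's facts and Negron's projectivity of $L_{p-1}$, all stated for arbitrary order of $\zeta$.
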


\begin{proof}
For any $\lambda\in\ZZ_{\geq 0}$, we write $\lambda=\ell p+m$ for unique $\ell\in\ZZ_{\geq 0}$ and $m\in\lbrace 0,1,\ldots,p-1\rbrace$. Then by \cite[Proposition 2.20(a)]{AT}, $T_{\ell p+m} = L_{\ell p+m}$ if and only if $\ell = 0$ or $m=p-1$. If $\ell\geq 1$ and $0\leq m\leq p-2$, then the Loewy diagram for $T_{\ell p+m}$ follows from the non-split exact sequences in Proposition 2.7(b), Corollary 2.8(b), and Proposition 2.20(b) of \cite{AT} (see also \cite[Corollary 4.6]{APW}). In particular, the arrows in the Loewy diagrams represent the indecomposable Weyl $U_\zeta(\fsl_2)$-submodule $\Delta_{\ell p+m}$ and quotient $\Delta_{\ell p-m-2}$ and dual Weyl submodule $\nabla_{\ell p -m-2}$ and quotient $\nabla_{\ell p+m}$, where $\Delta_{p-m-2}=\nabla_{p-m-2}=L_{p-m-2}$ if $\ell =1$.

For the projectivity statements, $T_{p-1} = L_{p-1}$ is projective in $\cC(\zeta,\fsl_2)$ and hence is its own projective cover by \cite[Theorem 10.12(1)]{Ne} (see also \cite[Theorem 9.8]{APW} for the odd order case and \cite[Lemma 3.2.1]{BFGT} for the even order case). Then for $\lambda \geq p$, \eqref{eqn:X_times_T_lambda} implies that $T_\lambda$ is a direct summand of $\mathbf{X}^{\otimes(\lambda-p+1)}\otimes T_{p-1}$. Since projective objects of $\cC(\zeta,\fsl_2)$ are closed under tensoring with rigid objects and taking direct summands, $T_\lambda$ is projective for $\lambda\geq p$. Since $T_\lambda$ is also indecomposable, it is then the projective cover of its unique simple quotient, which can be determined from the Loewy diagrams.
\end{proof}

The next theorem is the characteristic $0$ special case of \cite[Proposition 4.4]{STWZ}:
\begin{thm}\label{thm:tilting_tensor_products}
The tensor products of $\mathbf{X}$ with the indecomposable tilting modules in $\cC(\zeta,\mathfrak{sl}_2)$ are the following, where we use the notational convention $T_\lambda=0$ for $\lambda<0$:
\begin{enumerate}
\item If $p=2$, then for $\ell\in\ZZ_{\geq 0}$ and $m=0,1$,
\begin{equation*}
\mathbf{X}\otimes T_{2\ell +m}\cong\left\lbrace\begin{array}{lll}
T_{2\ell-3}\oplus 2\cdot T_{2\ell-1}\oplus T_{2\ell+1} & \text{if} & m=0\\
T_{2(\ell+1)} & \text{if} & m=1
\end{array}\right. .
\end{equation*}

\item If $p=3$, then for $\ell\in\ZZ_{\geq 0}$ and $0\leq m\leq p-1$,
\begin{equation*}
\mathbf{X}\otimes T_{\ell p +m}\cong\left\lbrace\begin{array}{lll}
2\cdot T_{\ell p-1}\oplus T_{\ell p +1} & \text{if} & m=0\\
T_{\ell p+m-1}\oplus T_{\ell p +m +1} & \text{if} & 1\leq m\leq p-3\\
T_{(\ell-1)p-1}\oplus T_{(\ell+1)p-3}\oplus  T_{(\ell+1)p-1} & \text{if} & m=p-2\\
T_{(\ell+1)p} & \text{if} & m=p-1
\end{array}\right. .
\end{equation*}
\end{enumerate}
\end{thm}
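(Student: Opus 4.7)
The plan is to leverage two key facts: the exactness of $\mathbf{X} \otimes \bullet$ on $\cC(\zeta, \fsl_2)$ (since $\mathbf{X}$ is rigid), and the a priori decomposition \eqref{eqn:X_times_T_lambda}, which guarantees that $\mathbf{X} \otimes T_{\ell p + m}$ is a direct sum of indecomposable tilting modules $T_\mu$ with $0 \leq \mu \leq \ell p + m + 1$. Combined with the explicit Loewy diagrams recorded in Theorem \ref{thm:tilting_structures}, a composition-series count then pins down the multiplicities.

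First I would compute $\mathbf{X} \otimes L_\mu$ for each simple module $L_\mu$ appearing as a composition factor of some $T_{\ell p + m}$. For weights $\mu$ in the interior of a Weyl alcove (i.e.\ $p \nmid \mu$ and $p \nmid \mu + 2$), quantum Clebsch--Gordan yields $\mathbf{X} \otimes L_\mu \cong L_{\mu - 1} \oplus L_{\mu + 1}$ with the convention $L_{-1} := 0$. At the Steinberg-type weights $\mu = (\ell + 1)p - 1$, the simple $L_\mu$ is itself tilting and projective by Theorem \ref{thm:tilting_structures}(2), so $\mathbf{X} \otimes L_\mu$ is projective tilting of highest weight $(\ell + 1)p$; matching highest weight, length and character with the candidate $T_{(\ell + 1)p}$ supplied by Theorem \ref{thm:tilting_structures}(3) forces the isomorphism, which is precisely the $m = p-1$ case.

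Second, I would apply the exact functor $\mathbf{X} \otimes \bullet$ to the short exact sequences assembled from the Loewy diagrams of $T_{\ell p + m}$ in Theorem \ref{thm:tilting_structures} (namely the socle $L_{\ell p - m - 2}$, the semisimple middle layer, and the head $L_{\ell p + m}$). Combining the resulting composition factors of $\mathbf{X} \otimes T_{\ell p + m}$ with the fact from \eqref{eqn:X_times_T_lambda} that this module is a direct sum of indecomposable $T_{\mu'}$'s, a head/socle and multiplicity analysis against the Loewy diagrams of the candidate summands on the right-hand side of the formulas pins down the decomposition uniquely. The inductive step on $\ell$ is automatic once the simple tensor products from the first step are in hand. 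The boundary conventions $T_\lambda = 0$ for $\lambda < 0$ handle the small-$\ell$ cases uniformly.

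The main subtlety is the appearance of a ``doubled'' or ``extra'' isolated simple projective summand in the boundary cases $m = 0$ and $m = p - 2$: specifically $2 \cdot T_{\ell p - 1}$ in the former and $T_{(\ell - 1)p - 1}$ in the latter (and $T_{(\ell + 1)p - 1}$ too). These reflect the fact that the simple modules $L_{\ell p - 1}$ and $L_{(\ell \pm 1)p - 1}$ are Steinberg-type simple projective \emph{and} injective tiltings, so any composition factor of that form in $\mathbf{X} \otimes T_{\ell p + m}$ must split off as a direct summand; the multiplicities are then read off from the composition-factor count. The case $p = 2$ of part (1) is handled analogously, with $m = 0$ and $m = p-2$ coalescing to produce exactly the pattern in the formula, mirroring the vertex algebraic computation of Theorem \ref{thm:V12_times_Pr1_p=2}.
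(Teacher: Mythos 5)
Your overall strategy (exactness of $\mathbf{X}\otimes\bullet$, the a priori decomposition \eqref{eqn:X_times_T_lambda} into indecomposable tiltings, composition-factor bookkeeping against the Loewy diagrams of Theorem \ref{thm:tilting_structures}, and splitting off the projective-injective Steinberg simples) is reasonable, but your first step — the computation of $\mathbf{X}\otimes L_\mu$ for all relevant simples — is both mis-stated and incomplete, and the omission lands exactly on the delicate cases. Your ``interior'' condition $p\nmid\mu$, $p\nmid\mu+2$ wrongly includes the Steinberg weights $\mu\equiv -1\pmod p$ (where $\mathbf{X}\otimes L_\mu$ is an indecomposable projective, not $L_{\mu-1}\oplus L_{\mu+1}$) and needlessly excludes $\mu\equiv p-2\pmod p$ (where the two-term decomposition does hold); more seriously, you give no rule at all for $p\mid\mu$, and there the naive Clebsch--Gordan formula fails: one has $\mathbf{X}\otimes L_{\ell p}\cong L_{\ell p+1}$ only, with no $L_{\ell p-1}$ summand. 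This is not an optional refinement: the composition factors of $T_{\ell p+m}=P_{\ell p-m-2}$ include weights divisible by $p$ precisely when $m=0$ or $m=p-2$, i.e.\ in the boundary cases whose answers you are trying to pin down. If one instead feeds the naive guess $\mathbf{X}\otimes L_{\ell p}\cong L_{\ell p-1}\oplus L_{\ell p+1}$ into your multiplicity count, the $m=0$ case comes out as $3\cdot T_{\ell p-1}\oplus T_{(\ell-2)p-1}\oplus T_{\ell p+1}$ rather than $2\cdot T_{\ell p-1}\oplus T_{\ell p+1}$, so the argument as written does not close. The correct fact $\mathbf{X}\otimes L_{\ell p}\cong L_{\ell p+1}$ (equivalently $L_{\ell p}\otimes L_m\cong L_{\ell p+m}$, see \eqref{eqn:Llp_times_Lm} and \eqref{eqn:X_times_L}) comes from Lusztig's quantum Frobenius / Steinberg tensor product, and the same input is what supplies the characters and dimensions you invoke when matching $\mathbf{X}\otimes L_{(\ell+1)p-1}$ with $T_{(\ell+1)p}$ in the $m=p-1$ case; your proposal never cites it or any substitute.

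Once that input is added, your route is viable and genuinely different in organization from the paper's: the paper does not do a pure composition-factor count, but instead translates the fundamental-alcove computations by tensoring with the Frobenius simples $L_{\ell p}$ (e.g.\ $P_{(\ell+1)p-2}\cong L_{\ell p}\otimes P_{p-2}$, $\mathbf{X}\otimes L_{(\ell+1)p-1}\cong L_{\ell p}\otimes\mathbf{X}\otimes L_{p-1}$) and uses $\hom$-space computations via rigidity of $\mathbf{X}$ to see that the relevant complements are indecomposable with the stated Loewy diagrams; in the paper this also \emph{proves} Theorem \ref{thm:tilting_structures} simultaneously, whereas you take it as given. Your observation that a simple composition factor which is both projective and injective necessarily splits off (with its full multiplicity) is correct and is a clean way to handle the $2\cdot T_{\ell p-1}$ and $T_{(\ell\pm1)p-1}$ summands, and the decomposition is indeed determined by the factor multiset since the classes of indecomposable tiltings are linearly independent in the Grothendieck group — but all of this only works after the quantum Frobenius computation of $\mathbf{X}\otimes L_{\ell p}$ is supplied.
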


We can determine all morphisms between indecomposable tilting modules from their Loewy diagrams. In particular, since the non-simple tilting modules $T_{\ell p+m}=P_{\ell p-m-2}$ for $\ell\geq 1$, $0\leq m\leq p-2$ are projective, $\hom(T_{\ell p+m},\bullet)$ is exact, and thus the dimension of $\hom(T_{\ell p+m},T_\mu)$ is equal to the multiplicity of $L_{\ell p-m-2}$ as a composition factor of $T_\mu$. Thus the following are all the non-zero morphism spaces for indecomposable tilting modules:
\begin{enumerate}

\item For $0\leq m\leq p-2$,
\begin{equation*}
\hom(T_m, T_m)=\CC\cdot\Id_{L_m},\qquad \hom(T_m, T_{2p-m-2}) =\CC\cdot f_m^+,
\end{equation*}
where $f_m^+$ is the inclusion of $T_m=L_m$ as the socle of $T_{2p-m-2}=P_m$.

\item For $\ell\geq 1$,
\begin{equation*}
\hom(T_{\ell p-1},T_{\ell p-1}) =\CC\cdot\Id_{L_{\ell p-1}}.
\end{equation*}

\item For $\ell\geq 1$ and $0\leq m\leq p-2$,
\begin{align*}
&\hom(T_{\ell p+m}, T_{(\ell+1\pm 1)p-m-2})  =\CC\cdot f_{\ell p+m}^\pm,\nonumber\\
&\hom(T_{\ell p+m},T_{\ell p+m})  =\CC\cdot\Id_{T_{\ell p+m}}\oplus\CC\cdot f^+_{\ell p-m-2}\circ f_{\ell p+m}^-.
\end{align*}
\end{enumerate}
Here we have introduced notation for the non-zero, non-identity morphisms; for all other $\lambda,\mu\in\ZZ_{\geq 0}$, we have $\hom(T_\lambda, T_\mu)=0$.

In \cite{Os}, Ostrik proved a universal property of the monoidal category $\cT_\zeta$:
\begin{thm}[\cite{Os}, Theorem 2.4]\label{thm:tilt_univ_prop}
Let $\cC$ be an additive monoidal category which is closed under direct summands, and let $(X, e_X, i_X)$ be a rigid self-dual object of $\cC$ with intrinsic dimension $-\zeta-\zeta^{-1}$. Then there is a unique (up to natural isomorphism) additive tensor functor $\cF: \cT_\zeta\rightarrow\cC$ such that $\cF(\mathbf{X}, e_{\mathbf{X}}, i_{\mathbf{X}}) =(X,e_X,i_X)$.
\end{thm}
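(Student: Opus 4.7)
The plan is to exploit the fact that $\cT_\zeta$ is, up to idempotent completion, the free additive monoidal category on a self-dual object of intrinsic dimension $d := -\zeta - \zeta^{-1}$. First I would introduce the Temperley--Lieb category $TL_d$: its objects are non-negative integers $[n]$ with $[m] \otimes [n] = [m+n]$, and $\hom_{TL_d}([m],[n])$ is the $\CC$-span of planar $(m,n)$-tangles modulo isotopy, with each closed loop replaced by the scalar $d$. It is a classical fact that $TL_d$ satisfies the following universal property: for any $\CC$-linear additive monoidal category $\cD$ equipped with a self-dual object $(Y, e_Y, i_Y)$ satisfying $e_Y \circ i_Y = d \cdot \Id_{\vac_\cD}$, there is a unique monoidal functor $\Phi_Y : TL_d \to \cD$ sending $[1]$ to $Y$, the cup to $i_Y$, and the cap to $e_Y$.

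Applying this universal property to $\cD = \cT_\zeta$ with $Y = \mathbf{X}$ (whose self-duality data satisfies the required dimension identity) produces a monoidal functor $\Phi_{\mathbf{X}} : TL_d \to \cT_\zeta$, and applying it to $\cD = \cC$ with $Y = X$ produces $\Phi_X : TL_d \to \cC$. Since $\cC$ is closed under direct summands (hence idempotent complete), $\Phi_X$ extends uniquely to a functor $\overline{\Phi_X}$ on the idempotent completion $\overline{TL_d}$. The desired functor $\cF$ is then obtained as the composition
\begin{equation*}
\cT_\zeta \xrightarrow{\ \sim\ } \overline{TL_d} \xrightarrow{\ \overline{\Phi_X}\ } \cC,
\end{equation*}
where the first arrow is a quasi-inverse of the canonical extension $\overline{\Phi_{\mathbf{X}}} : \overline{TL_d} \to \cT_\zeta$. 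For uniqueness, given any monoidal functor $\cF' : \cT_\zeta \to \cC$ sending $(\mathbf{X}, e_{\mathbf{X}}, i_{\mathbf{X}})$ to $(X, e_X, i_X)$, the composition $\cF' \circ \Phi_{\mathbf{X}}$ again satisfies the universal property of $TL_d$ with respect to $(X, e_X, i_X)$, so it is naturally isomorphic to $\Phi_X$; passing to idempotent completions yields $\cF' \cong \cF$.

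The main obstacle is the assertion that the canonical extension $\overline{\Phi_{\mathbf{X}}} : \overline{TL_d} \to \cT_\zeta$ is an equivalence. Essential surjectivity follows from Theorem \ref{thm:tilting_structures} together with Theorem \ref{thm:tilting_tensor_products}, which imply inductively that every indecomposable tilting module $T_\lambda$ is a direct summand of $\mathbf{X}^{\otimes \lambda}$, hence is the image of some idempotent in $\mathrm{End}_{\cT_\zeta}(\mathbf{X}^{\otimes \lambda})$. The key step is fullness and faithfulness on morphisms: one must show that the map
\begin{equation*}
\hom_{TL_d}([m],[n]) \longrightarrow \hom_{\cT_\zeta}(\mathbf{X}^{\otimes m}, \mathbf{X}^{\otimes n})
\end{equation*}
induced by $\Phi_{\mathbf{X}}$ becomes an isomorphism after idempotent completion. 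Surjectivity here is classical (since every morphism between tensor powers of $\mathbf{X}$ in $\cT_\zeta$ is a linear combination of planar diagrams built from identities, evaluations, and coevaluations), while the kernel is generated by Jones--Wenzl-type idempotents annihilating summands of $\mathbf{X}^{\otimes n}$ that do not occur in $\cT_\zeta$; quotienting by this kernel and taking idempotent completion produces exactly the morphism spaces of $\cT_\zeta$, matching the multiplicities dictated by Theorem \ref{thm:tilting_tensor_products}. Once this equivalence $\overline{TL_d} \simeq \cT_\zeta$ is established, the remaining steps of the construction are essentially formal consequences of the universal property of idempotent completions.
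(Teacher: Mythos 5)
This theorem is not proved in the paper at all; it is quoted from \cite{Os}, and Ostrik's own argument is essentially the route you propose: identify $\cT_\zeta$ with the (additive) Karoubi envelope of the Temperley--Lieb category $TL_d$, $d=-\zeta-\zeta^{-1}$, and invoke the universal property of $TL_d$ as the free monoidal category on a self-dual object of dimension $d$. So your overall architecture matches the cited source.

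However, your justification of the key step --- full faithfulness of $\overline{\Phi_{\mathbf{X}}}$ --- contains a genuine error. You assert that $\hom_{TL_d}([m],[n])\to\hom_{\cT_\zeta}(\mathbf{X}^{\otimes m},\mathbf{X}^{\otimes n})$ has a kernel ``generated by Jones--Wenzl-type idempotents annihilating summands of $\mathbf{X}^{\otimes n}$ that do not occur in $\cT_\zeta$,'' and that $\cT_\zeta$ arises after quotienting by it. This is wrong on two counts. First, by the very definition of $\cT_\zeta$ as the closure of $\mathbf{X}$ under tensor products, direct sums, and direct summands, \emph{every} summand of $\mathbf{X}^{\otimes n}$ lies in $\cT_\zeta$, so the ideal you describe is zero; what you are describing is the passage to the semisimplified (fusion) quotient in which negligible tilting modules are killed, and that is precisely the wrong category here, since the application in this paper (Theorem \ref{thm:Tzeta_Pk_equiv}) depends on $\cT_\zeta$ retaining the non-semisimple indecomposables $T_{\ell p+m}$. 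Second, your own construction requires the kernel to vanish: if $\Phi_{\mathbf{X}}$ were not faithful, then $\overline{\Phi_{\mathbf{X}}}$ would not be an equivalence, the quasi-inverse used to define $\cF$ would not exist, and $\cF$ could only be well defined if the purported kernel also died in an arbitrary target $\cC$, which cannot be guaranteed --- i.e.\ the universal property itself would fail. The correct statement that closes the gap is that $\hom_{TL_d}([m],[n])\to\hom_{\cT_\zeta}(\mathbf{X}^{\otimes m},\mathbf{X}^{\otimes n})$ is an \emph{isomorphism} at every root of unity: surjectivity is (integral-form) quantum Schur--Weyl duality, as you say, and injectivity follows from a dimension count, since tensor powers of $\mathbf{X}$ are tilting and hence $\dim\hom(\mathbf{X}^{\otimes m},\mathbf{X}^{\otimes n})=\sum_\lambda(\mathbf{X}^{\otimes m}:\Delta(\lambda))(\mathbf{X}^{\otimes n}:\nabla(\lambda))$ is independent of the specialization, so it agrees with the generic count, namely the number of planar diagrams. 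Two smaller points: to recover all of $\cT_\zeta$ you need the additive Karoubi envelope (formally adjoining finite direct sums as well as images of idempotents), and in the uniqueness step you should note explicitly that a monoidal natural isomorphism $\cF'\circ\Phi_{\mathbf{X}}\cong\Phi_X$ extends canonically to retracts of the objects $\mathbf{X}^{\otimes n}$, which is what allows you to conclude $\cF'\cong\cF$ on all of $\cT_\zeta$.
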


Now for level $k=-2+p/q$, the full subcategory $\cP^k$ of projective (equivalently rigid) objects in $KL^k(\mathfrak{sl}_2)$ is a monoidal category which contains $\cV_{2}$ and is closed under finite direct sums and direct summands. Moreover, Theorem \ref{thm:V12_rigid} says that $\cV_{2}$ is self-dual with intrinsic dimension $-\zeta-\zeta^{-1}$. Thus by Theorem \ref{thm:tilt_univ_prop}, there is an additive tensor functor $\cF: \mathcal{T}_\zeta\rightarrow\cP^k$ such that $\cF(\mathbf{X})=\cV_{2}$. We will show that $\cF$ is an equivalence of categories and thus a tensor equivalence. We prove essential surjectivity first:
\begin{prop}\label{prop:F_essential_surj}
If $\cF: \cT_\zeta\rightarrow\cP^k$ is an additive tensor functor such that $\cF(\mathbf{X})=\cV_2$, then $\cF(T_\lambda)\cong\cP_{\lambda+1}$ for any $\lambda\in\ZZ_{\geq 0}$, and $\cF$ is essentially surjective.
\end{prop}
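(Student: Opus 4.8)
The plan is to prove this by induction on $\lambda$, using the tensor product formulas for $\mathbf{X}\otimes T_\lambda$ in Theorem \ref{thm:tilting_tensor_products} on the quantum group side and the matching formulas for $\cV_2\tens\cP_{np+r}$ in Theorems \ref{thm:V12_times_Vrs}, \ref{thm:V12_times_Pr1_p=2}, and \ref{thm:Prs_properties} on the vertex algebra side. First I would handle the base cases: $\cF(T_0)=\cF(L_0)=\cF(\vac_{\cC(\zeta,\fsl_2)})\cong\vac_{KL^k(\fsl_2)}=\cV_1=\cP_1$ since $\cF$ is a tensor functor, and $\cF(T_1)=\cF(\mathbf{X})=\cV_2=\cP_2$ by hypothesis. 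If $p=2$, I would also need $\cF(T_2)\cong\cP_3$ directly; this follows since $\mathbf{X}\otimes T_1\cong\mathbf{X}\otimes\mathbf{X}$ contains $T_2$ as its unique indecomposable summand of highest weight $2$, so $\cF(T_2)$ is an indecomposable summand of $\cV_2\tens\cV_2$, which by Theorem \ref{thm:V12_times_Vrs} (if $p\geq 3$) or Theorem \ref{thm:V12_times_Vrp} (if $p=2$) is either $\cV_1\oplus\cV_3$ or $\cP_3$ — and in both cases the summand of "highest weight $2$", i.e.\ lowest conformal weight $h_3$, is $\cP_3$ ($=\cV_3$ when $p\geq 3$).

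For the inductive step, suppose $\cF(T_\mu)\cong\cP_{\mu+1}$ for all $\mu<\lambda$, with $\lambda\geq 2$ (or $\lambda\geq 3$ when $p=2$). Write $\lambda = \ell p + m$. Since tensoring with $\mathbf{X}$ raises the highest weight by exactly one and $T_{\lambda-1}$ has highest weight $\lambda-1$, the module $T_\lambda$ is the unique (up to isomorphism) indecomposable summand of $\mathbf{X}\otimes T_{\lambda-1}$ whose highest weight is $\lambda$. Applying $\cF$ and using that $\cF$ is a tensor functor, $\cF(T_\lambda)$ is an indecomposable summand of $\cV_2\tens\cF(T_{\lambda-1})\cong\cV_2\tens\cP_\lambda$. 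By the relevant tensor product formula for $\cV_2\tens\cP_\lambda$, the indecomposable summand with the correct lowest conformal weight $h_{\lambda+1}$ — which corresponds under $h_r = \frac{q}{4p}(r^2-1)$ to "highest weight $\lambda$" — is precisely $\cP_{\lambda+1}$. The matching is term-by-term: comparing Theorem \ref{thm:tilting_tensor_products}(2) with Theorem \ref{thm:Prs_properties}, the summand of $\mathbf{X}\otimes T_{\ell p + (m-1)}$ (or the appropriate shifted index) of highest weight $\ell p + m$ lines up with the summand of $\cV_2\tens\cP_{\ell p + m}$ of lowest conformal weight $h_{\ell p + m + 1}$, which is $\cP_{\ell p + m + 1}$; one does the analogous check using Theorem \ref{thm:V12_times_Pr1_p=2} and Theorem \ref{thm:tilting_tensor_products}(1) when $p=2$. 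I would also note that distinct summands in these direct sum decompositions have lowest conformal weights that are either non-congruent mod $\ZZ$ or, where congruent, the decomposition data (multiplicities, indices) unambiguously identifies which summand has highest weight $\lambda$, so the induction picks out $\cP_{\lambda+1}$ unambiguously. Since $\cF$ is additive and every indecomposable object of $\cT_\zeta$ is some $T_\lambda$ while every indecomposable projective in $KL^k(\fsl_2)$ is some $\cP_r$, $r\geq 1$ (by Corollary \ref{cor:all_projectives}), essential surjectivity onto $\cP^k$ follows immediately from $\cF(T_\lambda)\cong\cP_{\lambda+1}$.

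The main obstacle I expect is the bookkeeping in the inductive step: making sure that in each case $\lambda = \ell p + m$ (with the several subcases $m=0$, $1\leq m\leq p-3$, $m = p-2$, $m = p-1$, plus the $p=2$ and $p=3$ edge cases where some of these ranges are empty or overlap) the summand of $\cV_2\tens\cP_\lambda$ that is forced to be $\cF(T_\lambda)$ really is $\cP_{\lambda+1}$ and not one of the lower-index summands like $\cP_{(n-1)p}$ appearing in the $r=p-1$ case. This requires carefully tracking highest weights versus conformal weights through the index shift $T_\lambda \leftrightarrow \cP_{\lambda+1}$ and verifying the two families of tensor product formulas are genuinely isomorphic as indexed direct sums. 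The argument is essentially a diagram-chase / dictionary-matching, and none of it is deep, but it must be done case by case to be airtight; the key structural inputs — that $\cF$ respects $\tens$ and direct sums, that indecomposable tilting modules are classified by highest weight, and that $\cV_2$-tensoring raises conformal weight in lockstep with how $\mathbf{X}$-tensoring raises quantum-group highest weight — are exactly what make the matching forced rather than merely plausible.
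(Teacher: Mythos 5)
Your overall structure (induction on $\lambda$, matching the tensor product formulas on the two sides) is the same as the paper's, but the mechanism you use to identify $\cF(T_\lambda)$ in the inductive step has a genuine gap. You argue that since $T_\lambda$ is the unique indecomposable summand of $\mathbf{X}\otimes T_{\lambda-1}$ of highest weight $\lambda$, its image $\cF(T_\lambda)$ must be the summand of $\cV_2\tens\cP_\lambda$ with lowest conformal weight $h_{\lambda+1}$, hence $\cP_{\lambda+1}$. But $\cF$ is just an abstract additive tensor functor $\cT_\zeta\to\cP^k$; it carries no a priori compatibility with quantum-group highest weights on one side or with VOA conformal weights on the other, and conformal weight is not even a notion that exists in $\cT_\zeta$. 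So there is no justification for the claim that $\cF$ sends ``the summand of highest weight $\lambda$'' to ``the summand of lowest conformal weight $h_{\lambda+1}$.'' Relatedly, you assert without proof that $\cF(T_\lambda)$ is an \emph{indecomposable} summand; that too is not automatic for an abstract tensor functor.

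The fix, which is what the paper actually does, is to run the identification categorically via Krull--Schmidt cancellation rather than via weights. Write $\mathbf{X}\otimes T_\lambda \cong \til{T}_{\lambda-1}\oplus T_{\lambda+1}$ where $\til{T}_{\lambda-1}$ is a direct sum of $T_\mu$ with $\mu\leq\lambda-1$, and $\cV_2\tens\cP_{\lambda+1}\cong\til{\cP}_\lambda\oplus\cP_{\lambda+2}$ where $\til{\cP}_\lambda$ is the corresponding direct sum of $\cP_{\mu+1}$. The inductive hypothesis $\cF(T_\mu)\cong\cP_{\mu+1}$ for $\mu\leq\lambda$ plus $\cF$ being additive monoidal gives $\til{\cP}_\lambda\oplus\cF(T_{\lambda+1})\cong\cF(\mathbf{X}\otimes T_\lambda)\cong\cV_2\tens\cP_{\lambda+1}\cong\til{\cP}_\lambda\oplus\cP_{\lambda+2}$, and then the Krull--Schmidt cancellation law in the finite-length category $KL^k(\fsl_2)$ forces $\cF(T_{\lambda+1})\cong\cP_{\lambda+2}$. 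This sidesteps both problematic assumptions in your proposal: it does not assume $\cF(T_{\lambda+1})$ is indecomposable, and it never needs $\cF$ to ``see'' weights; only the indexed match between the two families of tensor-product decompositions (which your proposal correctly observes) is used. Also, with this setup the single base case $\lambda=0$ suffices (the formula $\mathbf{X}\otimes T_0=T_1$ with $\til{T}_{-1}=0$ already handles $\lambda=1$), so your separate base cases at $\lambda=1$ and $\lambda=2$ are unnecessary, though harmless.
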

\begin{proof} 
We prove that $\cF(T_\lambda)\cong\cP_{\lambda+1}$ by induction on $\lambda$. The base case $\lambda =0$ holds because $T_0=L_0$ and $\cP_1=\cV_1$ are the units objects of $\cT_\zeta$ and $\cP^k$, respectively. For the inductive step, Theorem \ref{thm:tilting_tensor_products} shows that for any $\lambda\in\ZZ_{\geq 0}$,
\begin{equation*}
\mathbf{X}\otimes T_\lambda\cong \til{T}_{\lambda-1}\oplus T_{\lambda+1}
\end{equation*}
where $\til{T}_{\lambda-1}$ is a direct sum of $T_\mu$ for $\mu\leq \lambda -1$. Similarly, by Theorems \ref{thm:V12_times_Pr1_p=2} and \ref{thm:Prs_properties}, 
\begin{equation*}
\cV_{2}\tens\cP_{\lambda+1} \cong \til{\cP}_{\lambda}\oplus\cP_{\lambda+2}
\end{equation*}
where $\til{\cP}_{\lambda}$ is the direct sum of $\cP_\mu$ corresponding to $\til{T}_{\lambda-1}$ under the correspondence $T_\mu\mapsto\cP_{\mu+1}$. Thus because $\cF$ is an additive monoidal functor and using the inductive hypothesis,
\begin{align*}
\til{\cP}_{\lambda}\oplus\cF(T_{\lambda+1}) & \cong\cF(\til{T}_{\lambda-1}\oplus T_{\lambda+1}) \cong \cF(\mathbf{X}\otimes T_\lambda) \nonumber\\
&\cong\cF(\mathbf{X})\tens\cF(T_\lambda)\cong\cV_{2}\tens\cP_{\lambda+1}\cong\til{\cP}_{\lambda}\oplus\cP_{\lambda+2}.
\end{align*}
Thus because the indecomposable summands of a finite-length module are unique up to isomorphism by the Krull-Schmidt Theorem, we get $\cF(T_{\lambda+1})\cong\cP_{\lambda+2}$, completing the induction. 

Now because $\cF$ is additive and every object of $\cP^k$ is a direct sum of $\cP_{r}$ for $r\in\ZZ_{\geq 1}$, it follows that $\cF$ is essentially surjective.
\end{proof}

To prove that $\cF:\cT_\zeta\rightarrow\cP^k$ is also fully faithful, and thus an equivalence of categories, we will need a general lemma. Thus let $\cC$ and $\cD$ be monoidal categories and let $\cF: \cC\rightarrow\cD$ be a monoidal functor equipped with isomorphism $\varphi:\cF(\vac_\cC)\rightarrow\vac_\cD$ and natural isomorphism
\begin{equation*}
F: \tens_\cD\circ(\cF\times\cF)\longrightarrow\cF\circ\tens_\cC.
\end{equation*}
Let $X$ be a rigid self-dual object of $\cC$ with evaluation $e_X: X\tens_\cC X\rightarrow\vac_\cC$ and coevaluation $i_X:\vac_\cC\rightarrow X\tens_\cC X$. Then $\cF(X)$ is also rigid and self-dual with evaluation and coevaluation
\begin{equation*}
e_{\cF(X)}=\varphi\circ\cF(e_X)\circ F_{X,X},\quad\qquad i_{\cF(X)}=F_{X,X}^{-1}\circ\cF(i_X)\circ\varphi^{-1},
\end{equation*}
respectively. For objects $W_1$, $W_2$ of $\cC$, we have an isomorphism
\begin{equation*}
\eta_{W_1,W_2}^X: \hom_\cC(W_1, X\tens_\cC W_2)\longrightarrow\hom_\cC(X\tens_\cC W_1, W_2)
\end{equation*}
where for a morphism $f: W_1\rightarrow X\tens_\cC W_2$ in $\cC$, $\eta^X_{W_1,W_2}$ is defined to be the composition
\begin{equation*}
X\tens_\cC W_1\xrightarrow{\Id_X\tens_\cC f} X\tens_\cC(X\tens_\cC W_2)\xrightarrow{\cA_{X,X,W_2}} (X\tens_\cC X)\tens_\cC W_2\xrightarrow{e_X\tens_\cC\Id_{W_2}} \vac_\cC\tens_\cC W_2\xrightarrow{l_{W_2}} W_2.
\end{equation*}
We have a similar isomorphism $\eta^{\cF(X)}_{\cF(W_1),\cF(W_2)}$ between morphism spaces in $\cD$.

\begin{lem}\label{lem:F_and_eta}
For any morphism $f: W_1\rightarrow X\tens_\cC W_2$ in $\cC$,
\begin{equation*}
\cF(\eta^X_{W_1,W_2}(f)) =\eta^{\cF(X)}_{\cF(W_1),\cF(W_2)}(F_{X,W_2}^{-1}\circ\cF(f))\circ F_{X,W_1}^{-1}.
\end{equation*}
In particular, $\cF(f)=0$ if and only if $\cF(\eta^X_{W_1,W_2}(f))=0$.
\end{lem}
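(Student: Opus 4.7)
The plan is to verify the identity by a direct diagram chase, unpacking the definition of $\eta^X$ and pushing $\cF$ through the composition using the three standard coherence axioms that define a monoidal functor $(\cF,\varphi,F)$: naturality of $F$, compatibility of $F$ with the associativity isomorphisms, and compatibility of $\varphi$ together with $F$ with the left unit isomorphisms. Specifically, I would first write
\begin{equation*}
\eta^X_{W_1,W_2}(f)= l_{W_2}\circ(e_X\tens_\cC\Id_{W_2})\circ\cA_{X,X,W_2}\circ(\Id_X\tens_\cC f),
\end{equation*}
apply $\cF$, and then massage the result one factor at a time. Applying $\cF$ to $\Id_X\tens_\cC f$ and using naturality of $F$ replaces $\cF(\Id_X\tens_\cC f)\circ F_{X,W_1}$ by $F_{X,X\tens_\cC W_2}\circ(\Id_{\cF(X)}\tens_\cD\cF(f))$. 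A second application of naturality, with the inverse of $F_{X,W_2}$, moves $F_{X,W_2}^{-1}\circ\cF(f)$ out of $\cF$ and into $\cD$, so that the $\Id_{\cF(X)}\tens_\cD g$ term of $\eta^{\cF(X)}$ appears, where $g=F_{X,W_2}^{-1}\circ\cF(f)$.

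Next I would invoke the hexagon-type axiom
\begin{equation*}
\cF(\cA_{X,X,W_2})\circ F_{X,X\tens_\cC W_2}\circ(\Id_{\cF(X)}\tens_\cD F_{X,W_2}) =F_{X\tens_\cC X,W_2}\circ(F_{X,X}\tens_\cD\Id_{\cF(W_2)})\circ\cA_{\cF(X),\cF(X),\cF(W_2)}
\end{equation*}
to trade $\cF(\cA_{X,X,W_2})$ for $\cA_{\cF(X),\cF(X),\cF(W_2)}$ at the cost of additional factors of $F$, then apply naturality of $F$ once more to pull $\cF(e_X\tens_\cC\Id_{W_2})$ through so that $\cF(e_X)\circ F_{X,X}$ appears: this is exactly the non-unit part of $e_{\cF(X)}=\varphi\circ\cF(e_X)\circ F_{X,X}$. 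Finally, the left-unit axiom $\cF(l_{W_2})\circ F_{\vac_\cC,W_2}=l_{\cF(W_2)}\circ(\varphi\tens_\cD\Id_{\cF(W_2)})$ absorbs the remaining $\varphi$ and assembles the full expression $l_{\cF(W_2)}\circ(e_{\cF(X)}\tens_\cD\Id_{\cF(W_2)})\circ\cA_{\cF(X),\cF(X),\cF(W_2)}\circ(\Id_{\cF(X)}\tens_\cD g)=\eta^{\cF(X)}_{\cF(W_1),\cF(W_2)}(g)$, precomposed (thanks to the initial naturality step) with $F_{X,W_1}^{-1}$.

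For the ``in particular'' statement, since $F_{X,W_1}$ and $F_{X,W_2}$ are isomorphisms and $\eta^{\cF(X)}_{\cF(W_1),\cF(W_2)}$ is a bijection of Hom spaces (by rigidity of $\cF(X)$), the identity just proved shows that $\cF(\eta^X_{W_1,W_2}(f))=0$ if and only if $F_{X,W_2}^{-1}\circ\cF(f)=0$ if and only if $\cF(f)=0$. The main obstacle is purely bookkeeping: making sure the various instances of $F$, $\varphi$, associators, and unitors are inserted and cancelled in a compatible order. Drawing a single large commutative diagram with the four morphisms of $\eta^X$ along one edge, pushing $\cF$ across using the three axioms, should make the cancellations transparent and avoid sign or direction errors.
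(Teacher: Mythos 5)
Your proposal is correct and follows essentially the same route as the paper: the paper's proof is exactly the single large commutative diagram you describe, built from naturality of $F$, the compatibility of $F$ with the associativity isomorphisms, and the compatibility of $\varphi$ and $F$ with the unit isomorphisms, with the ``in particular'' statement following as you say since the $F$'s are isomorphisms and $\eta^{\cF(X)}_{\cF(W_1),\cF(W_2)}$ is a bijection of Hom spaces.
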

\begin{proof}
The desired identity follows from the following commutative diagram (where we suppress subscripts to save space):
\begin{equation*}
\xymatrixcolsep{4pc}
\xymatrix{
\cF(X\tens W_1) \ar[r]^{F^{-1}} \ar[d]^{\cF(\Id\tens f)} & \cF(X)\tens\cF(W_1) \ar[d]^{\Id\tens\cF(f)} & \\
\cF(X\tens(X\tens W_2)) \ar[r]^{F^{-1}} \ar[d]^{\cF(\cA)} & \cF(X)\tens\cF(X\tens W_2) \ar[r]^(.45){\Id\tens F^{-1}} & \cF(X)\tens(\cF(X)\tens\cF(W_2)) \ar[d]^{\cA} \\
\cF((X\tens X)\tens W_2) \ar[r]^{F^{-1}} \ar[d]^{\cF(e_X\tens\Id)} & \cF(X\tens X)\tens\cF(W_2) \ar[r]^(.45){F^{-1}\tens\Id} \ar[d]^{\cF(e_X)\tens\Id} & (\cF(X)\tens\cF(X))\tens\cF(W_2) \ar[ldd]^{e_{\cF(X)}\tens\Id}\\
\cF(\vac_\cC\tens W_2) \ar[r]^{F^{-1}} \ar[d]^{\cF(l)} & \cF(\vac_\cC)\tens\cF(W_2) \ar[d]^{\varphi\tens\Id} & \\
\cF(W_2) & \vac_\cD\tens\cF(W_2) \ar[l]_{l} & \\
}
\end{equation*}
as well as from the definitions of $\eta^X_{W_1,W_2}$ and $\eta^{\cF(X)}_{\cF(W_1),\cF(W_2)}$.
\end{proof}

Using the preceding lemma, we now show that $\cT_\zeta$ and $\cP^k$ are tensor equivalent:
\begin{thm}\label{thm:Tzeta_Pk_equiv}
The additive tensor functor $\cF: \cT_\zeta\rightarrow\cP^k$ such that $\cF(\mathbf{X})=\cV_2$ is an equivalence of categories.
\end{thm}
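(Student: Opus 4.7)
Given essential surjectivity from Proposition \ref{prop:F_essential_surj}, it remains to establish full faithfulness of $\cF$. The plan has three main parts. First, one verifies the dimension match
\begin{equation*}
\dim_\CC\hom_{\cT_\zeta}(T_\lambda,T_\mu) = \dim_\CC\hom_{\cP^k}(\cP_{\lambda+1},\cP_{\mu+1})
\end{equation*}
for all $\lambda,\mu\in\ZZ_{\geq 0}$. The tilting-side dimensions are read off from the explicit enumeration of non-zero morphism spaces between indecomposable tilting modules given just after Theorem \ref{thm:tilting_tensor_products}. On the $\cP^k$-side, since each $\cP_r$ is projective with simple head $\cL_r$ by Theorem \ref{thm:Prs_properties}, one has $\dim\hom(\cP_r,W) = [W:\cL_r]$; applying this with $W=\cP_{\mu+1}$ and combining with the composition-series data of Theorems \ref{thm:gen_Verma_structure} and \ref{thm:Prs}, case analysis confirms the dimensions agree. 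Since the hom spaces are finite-dimensional of equal dimension on both sides, it then suffices to prove $\cF$ is faithful.

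Faithfulness is proved by induction on $\mu$. For the inductive step with $\mu\geq 1$, Theorem \ref{thm:tilting_tensor_products} shows that $T_\mu$ embeds as an indecomposable direct summand via some $\iota\colon T_\mu\hookrightarrow\mathbf{X}\otimes T_{\mu-1}$ (of multiplicity one in the decomposition). Given a non-zero $f\colon T_\lambda\to T_\mu$, the composite $\iota\circ f$ is non-zero, and since $\eta^{\mathbf{X}}_{T_\lambda,T_{\mu-1}}$ is an isomorphism, the corresponding morphism $\eta^{\mathbf{X}}_{T_\lambda,T_{\mu-1}}(\iota\circ f)\colon\mathbf{X}\otimes T_\lambda\to T_{\mu-1}$ is also non-zero. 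Decomposing $\mathbf{X}\otimes T_\lambda$ into indecomposable summands $T_\nu$ and applying the inductive hypothesis summand-by-summand, $\cF$ sends this morphism to a non-zero element of the corresponding hom space in $\cP^k$. Lemma \ref{lem:F_and_eta} then yields $\cF(\iota\circ f)\neq 0$, and since $\cF(\iota\circ f)=\cF(\iota)\circ\cF(f)$ (after identifying $\cF(\mathbf{X}\otimes T_{\mu-1})$ with $\cV_2\tens\cP_{\mu}$ via the coherence data $F$), it follows that $\cF(f)\neq 0$.

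The base case $\mu=0$ reduces, via the enumeration of tilting morphisms, to the two non-trivial hom spaces $\hom(T_0,T_0)=\CC\cdot\Id$ (on which $\cF$ acts trivially as $\cF(\Id_{T_0})=\Id_{\cP_1}$) and $\hom(T_{2p-2},T_0)=\CC\cdot f^-_{2p-2}$, and showing $\cF(f^-_{2p-2})\neq 0$ is the main obstacle of the proof. The strategy I propose is to realize $f^-_{2p-2}$ as the restriction to the indecomposable summand $T_{2p-2}\subseteq\mathbf{X}^{\otimes(2p-2)}$ of a specific iterated evaluation $\mathbf{X}^{\otimes(2p-2)}\to T_0$ built from the rigidity structure of $\mathbf{X}$. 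Because $\cF$ is monoidal, sends $\mathbf{X}$ to $\cV_2$, and preserves evaluation up to the coherence isomorphisms $F$ and $\varphi$, the image $\cF(f^-_{2p-2})\colon\cP_{2p-1}\to\cV_1$ is the analogous iterated evaluation on $\cV_2^{\tens(2p-2)}$ restricted to the summand $\cP_{2p-1}$. One then verifies non-vanishing of this restriction using the explicit tensor product formulas of Theorem \ref{thm:Prs_properties} (which exhibit $\cP_{2p-1}$ as a summand of an iterated tensor power of $\cV_2$), the rigidity of $\cV_2$ from Theorem \ref{thm:V12_rigid}, and the fact that $\dim\hom(\cP_{2p-1},\cV_1)=1$. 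Once faithfulness is established, the dimension match upgrades $\cF$ to a fully faithful, essentially surjective tensor functor, i.e., an equivalence of categories.
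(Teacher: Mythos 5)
Your overall architecture is sound, and it is organized genuinely differently from the paper's proof: the dimension match between $\hom_{\cT_\zeta}(T_\lambda,T_\mu)$ and $\hom_{\cP^k}(\cP_{\lambda+1},\cP_{\mu+1})$ is correct, and your downward induction on the target weight --- composing with the split inclusion $T_\mu\hookrightarrow\mathbf{X}\otimes T_{\mu-1}$, transporting through $\eta^{\mathbf{X}}_{T_\lambda,T_{\mu-1}}$, decomposing the source, and invoking Lemma \ref{lem:F_and_eta} --- is a valid reduction of faithfulness (on all hom spaces, including the two-dimensional endomorphism algebras) to the single statement $\cF(f^-_{2p-2})\neq 0$. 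The paper instead checks $\cF$ on a basis of each $\hom(T_\lambda,T_\mu)$, handling every $f^{\pm}_{\ell p+m}$ separately by the same $\eta$-transport; your induction concentrates all the content in one morphism, which is a nice simplification in principle. (You should also record the routine reduction of full faithfulness for arbitrary objects of $\cT_\zeta$ to indecomposables, which your dimension count and induction implicitly assume.)

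However, the base case --- which you rightly call the main obstacle --- is not actually proved, and the route you sketch has a genuine gap. The assertion that $f^-_{2p-2}$ is (up to scalar) the restriction to the summand $T_{2p-2}\subseteq\mathbf{X}^{\otimes(2p-2)}$ of a specific iterated evaluation is itself equivalent to the non-vanishing of that restriction, a nontrivial root-of-unity statement that is nowhere argued; and even granting it, the VOA-side verification is only gestured at. The ingredients you cite --- the decompositions in Theorem \ref{thm:Prs_properties}, rigidity of $\cV_2$, and $\dim\hom(\cP_{2p-1},\cV_1)=1$ --- do not imply that the composite $\cP_{2p-1}\hookrightarrow\cV_2^{\tens(2p-2)}\rightarrow\cV_1$ is nonzero: one-dimensionality of the target hom space is perfectly consistent with a particular composite vanishing, and such pairings do vanish in closely analogous situations (e.g.\ $e_{\cV_2}\circ i_{\cV_2}=0$ when $p=2$). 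The gap can be closed without any computation by the same device you already use in the inductive step, which is exactly what the paper does: the chain of one-dimensional spaces $\mathrm{End}(T_{p-1})\cong\hom(T_{p-1},\mathbf{X}^{\otimes(p-1)}\otimes T_0)\xrightarrow{\eta}\hom(\mathbf{X}^{\otimes(p-1)}\otimes T_{p-1},T_0)\cong\hom(T_{2p-2},T_0)$ carries $\Id_{T_{p-1}}$ to a nonzero multiple of $f^-_{2p-2}$, so applying Lemma \ref{lem:F_and_eta} with $X=\mathbf{X}^{\otimes(p-1)}$ and using $\cF(\Id_{T_{p-1}})\neq 0$ gives $\cF(f^-_{2p-2})\neq 0$. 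With the base case repaired this way, your induction plus the dimension count does yield the equivalence.
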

\begin{proof}
In view of Proposition \ref{prop:F_essential_surj}, it remains to show that $\cF$ is fully faithful. Thus for any tilting modules $W$ and $X$, we need to show that
\begin{equation*}
\cF: \hom_{\cT_\zeta}(W,X)\longrightarrow\hom_{\cP^k}(\cF(W),\cF(X))
\end{equation*}
is an isomorphism. Since $W$ and $X$ are both isomorphic to finite direct sums of indecomposable tilting modules, we fix isomorphisms $f: W\rightarrow\bigoplus_{i} T_{\lambda_i}$ and $g: X\rightarrow\bigoplus_{j} T_{\mu_j}$. For each $i$, we use $q_i: T_{\lambda_i}\rightarrow\bigoplus_{i} T_{\lambda_i}$ and $p_i:\bigoplus_{i} T_{\lambda_i}\rightarrow T_{\lambda_i}$ to denote the inclusion and projection morphisms. We also use $\lbrace q_j, p_j\rbrace$, $\lbrace \til{q}_i,\til{p}_i\rbrace$, and $\lbrace \til{q}_j,\til{p}_j\rbrace$ to denote the inclusion and projection morphisms for $\bigoplus_{j} T_{\mu_j}$, $\bigoplus_{i} \cF(T_{\lambda_i})$, and $\bigoplus_{j} \cF(T_{\mu_j})$, respectively. Then for any morphism $F: W\rightarrow X$ in $\cT_\zeta$, we have a commutative diagram
\begin{equation*}\label{eqn:direct_sum_diagram}
\begin{matrix}
\xymatrixcolsep{4.5pc}
\xymatrixrowsep{3pc}
\xymatrix{
\cF(W) \ar[r]^(.42){\cF(f)} \ar[d]^{\cF(F)} & \cF\big(\bigoplus_i T_{\lambda_i}\big) \ar[r]^{\sum_i \til{q}_i\circ\cF(p_i)} \ar[d]^{\cF(g\circ F\circ f^{-1})} & \bigoplus_i \cF(T_{\lambda_i}) \ar[d]^{\sum_{i,j} \til{q}_j\circ\cF(p_j\circ g\circ F\circ f^{-1}\circ q_i)\circ \til{p}_i}\\
\cF(X) \ar[r]^(.42){\cF(g)} & \cF\big(\bigoplus_j T_{\mu_j}\big) \ar[r]^{\sum_j \til{q}_j\circ\cF(p_j)} & \bigoplus_j \cF(T_{\mu_j}) \\
}
\end{matrix}
\end{equation*}
where the horizontal arrows are isomorphisms. In particular, $\cF(F)=0$ if and only if $\cF(p_j\circ g\circ F\circ f^{-1}\circ q_i)=0$ for all $i$ and $j$. It follows that $\cF$ is faithful if and only if
\begin{equation}\label{eqn:F_on_indecomposables}
\cF: \hom_{\cT_\zeta}(T_\lambda, T_\mu)\longrightarrow\hom_{\cP^k}(\cF(T_\lambda),\cF(T_\mu))
\end{equation}
is injective for all $\lambda,\mu\in\ZZ_{\geq 0}$. 

For determining whether $\cF$ is full, consider any morphism $G:\cF(W)\rightarrow\cF(X)$ in $\cP^k$. If for all $i$ and $j$ there exists $g_{i,j}: T_{\lambda_i}\rightarrow T_{\mu_j}$ such that 
\begin{equation*}
\cF(g_{i,j})=\cF(p_j\circ g)\circ G\circ\cF(f^{-1}\circ q_i),
\end{equation*}
then it is straightforward to show that
\begin{equation*}
G=\sum_{i,j}\cF\left(g^{-1}\circ q_j\circ g_{i,j}\circ p_i\circ f\right).
\end{equation*}
It follows that $\cF$ is full if and only if \eqref{eqn:F_on_indecomposables} is surjective for all $\lambda,\mu\in\ZZ_{\geq 0}$.

We are now reduced to showing that \eqref{eqn:F_on_indecomposables} is an isomorphism for all $\lambda,\mu\in\ZZ_{\geq 0}$. We have already listed all the non-zero spaces $\hom_{\cT_\zeta}(T_\lambda, T_\mu)$ in the paragraph following Theorem \ref{thm:tilting_tensor_products}. Similarly, projectivity of the $V^k(\mathfrak{sl}_2)$-modules $\cP_{r}$ in $KL^k(\mathfrak{sl}_2)$ imply that the following are all non-zero morphism spaces $\hom_{\cP^k}(\cP_r,\cP_{r'})$:
\begin{enumerate}

\item For $1\leq r\leq p-1$,
\begin{equation*}
\hom(\cP_{r},\cP_{r}) =\CC\cdot\Id_{\cV_{r}},\qquad\hom(\cP_{r},\cP_{2p-r})=\CC\cdot F_{r}^+,
\end{equation*}
where $F_{r}^+: \cV_{r}\hookrightarrow\cP_{2p-r}$ is the inclusion from \eqref{eqn:Prs_exact_seq}.

\item For $n\geq 1$,
\begin{equation*}
\hom(\cP_{np},\cP_{np})=\CC\cdot\Id_{\cV_{np}}.
\end{equation*}

\item For $n\geq 1$ and $1\leq r\leq p-1$,
\begin{align*}
&\hom(\cP_{np+r}, \cP_{(n+1\pm 1)p-r})=\CC\cdot F_{np+r}^\pm,\nonumber\\
&\hom(\cP_{np+r},\cP_{np+r})=\CC\cdot\Id_{\cP_{np+r}}\oplus\CC\cdot F^+_{np-r}\circ F_{np+r}^-.
\end{align*}
\end{enumerate}
Since $\cF(T_{\ell p+m})\cong\cP_{\ell p+m+1}$ for $\ell\in\ZZ_{\geq 0}$ and $0\leq m\leq p-2$, we thus get
\begin{equation*}
\dim\hom_{\cT_\zeta}(T_\lambda, T_\mu) =\dim\hom_{\cP^k}(\cF(T_\lambda),\cF(T_\mu))
\end{equation*}
for all $\lambda,\mu\in\ZZ_{\geq 0}$. Thus we just need to show that $\cF$ acts injectively on all one-dimensional spaces $\hom_{\cT_\zeta}(T_\lambda,T_\mu)$, and that on the two-dimensional spaces $\mathrm{End}_{\cT_\zeta}(T_{\ell p+m})$ for $\ell\geq 1$ and $0\leq m\leq p-2$, $\cF$ maps the non-isomorphism $f_{\ell p-m-2}^+\circ f_{\ell p+m}^-$ to a non-zero non-isomorphism in $\mathrm{End}_{\cP^k}(\cF(T_{\ell p+m}))$.

First, since $\cF(\Id_{T_\lambda})=\Id_{\cF(T_\lambda)}$, $\cF$ is an isomorphism on all one-dimensional endomorphism spaces. We also need to show that $\cF(f_{\ell p+m}^+)\neq 0$ for $\ell\geq 0$, $0\leq m\leq p-2$, and that $\cF(f_{\ell p+m}^-)\neq 0$ for $\ell \geq 1$, $0\leq m\leq p-2$. For the case of $f_{\ell p+m}^+$, the rigidity and self-duality of $\mathbf{X}$ yield an isomorphism of one-dimensional spaces:
\begin{align*}
\hom(T_{\ell p+m}, &\, T_{(\ell+2)p-m-2})  \xrightarrow{\cong} \hom(T_{\ell p+m}, \mathbf{X}^{\otimes(p-m-1)}\otimes T_{(\ell+1)p-1})\nonumber\\
& \xrightarrow{\eta_{T_{\ell p+m},T_{(\ell+1)p-1}}^{\mathbf{X}^{\otimes(p-m-1)}}}\hom(\mathbf{X}^{\otimes(p-m-1)}\otimes T_{\ell p+m}, T_{(\ell+1)p-1}) \xrightarrow{\cong} \mathrm{End}(T_{(\ell+1)p-1}),
\end{align*}
where the first and third isomorphisms come from identifying $T_{(\ell+2)p-m-2}$ as a direct summand of $\mathbf{X}^{\otimes(p-m-1)}\otimes T_{(\ell+1)p-1}$ and $T_{(\ell+1)p-1}$ as a direct summand of $\mathbf{X}^{\otimes(p-m-1)}\otimes T_{\ell p+m}$, respectively, both with multiplicity one. By rescaling if necessary, we may assume $f_{\ell p+m}^+$ maps to $\Id_{T_{(\ell+1)p-1}}$ under this isomorphism. Then by Lemma \ref{lem:F_and_eta}, if $\cF(f_{\ell p+m}^+)$ were $0$, then $\cF(\Id_{T_{(\ell+1)p-1}})$ would also be $0$, which is not the case. Thus $\cF(f_{\ell p+m}^+)\neq 0$. Similarly, $\cF(f_{\ell p+m}^-)\neq 0$ for $\ell\geq 1$, $0\leq m\leq p-2$ as a consequence of the isomorphism of one-dimensional spaces
\begin{align*}
\mathrm{End}(T_{\ell p-1}) \xrightarrow{\cong} &\,\hom(T_{\ell p-1}, \mathbf{X}^{\otimes(m+1)}\otimes T_{\ell p-m-2})\nonumber\\
& \xrightarrow{\eta_{T_{\ell p-1}, T_{\ell p-m-2}}^{\mathbf{X}^{\otimes(m+1)}}} \hom(\mathbf{X}^{\otimes(m+1)}\otimes T_{\ell p-1}, T_{\ell p-m-2}) \xrightarrow{\cong}\hom(T_{\ell p+m}, T_{\ell p-m-2}),
\end{align*}
together with Lemma \ref{lem:F_and_eta}.

We have now shown that $\cF$ defines an isomorphism on all one-dimensional (and all zero-dimensional)  morphism spaces $\hom_{\cT_\zeta}(T_\lambda,T_\mu)$. It remains to consider the two-dimensional endomorphism spaces $\mathrm{End}_{\cT_\zeta}(T_{\ell p+m})$ for $\ell\geq 1$, $0\leq m\leq p-2$. First, we have $\cF(\Id_{T_{\ell p+m}})=\Id_{\cF(T_{\ell p+m)}}$, and then $\cF(f_{\ell p-m-2}^+\circ f_{\ell p+m}^-)$ is the composition of two non-zero morphisms
\begin{align*}
\cF(T_{\ell p+m}) \longrightarrow\cF( T_{\ell p-m-2})\longrightarrow\cF(T_{\ell p+m}).
\end{align*}
So identifying $\cF(T_{\ell p+m})\cong \cP_{\ell p+m+1}$ and $\cF(T_{\ell p -m -2})\cong \cP_{\ell p-m-1}$, $\cF(f_{\ell p-m-2}^+\circ f_{\ell p+m}^-)$ corresponds to a non-zero multiple of the second basis element
\begin{equation*}
F_{\ell p-m-1}^+\circ F_{\ell p+m+1}^-\in\mathrm{End}_{\cP^k}(\cP_{\ell p+m+1}).
\end{equation*}
Thus $\cF$ maps a basis of $\mathrm{End}_{\cT_\zeta}(T_{\ell p+m})$ to a basis of $\mathrm{End}_{\cP^k}(\cF(T_{\ell p+m}))$, completing the proof that $\cF: \cT_\zeta\rightarrow\cP^k$ is an equivalence of categories, and therefore a tensor equivalence.
\end{proof}

\subsection{Tensor functors out of \texorpdfstring{$KL^k(\fsl_2)$}{KLk(sl2)}}\label{subsec:univ_prop}

We will now use Theorems \ref{thm:tilt_univ_prop} and \ref{thm:Tzeta_Pk_equiv} to derive the universal property of $KL^k(\fsl_2)$. But first, we need a general theorem about right exact extensions of functors. For a proof, see for example Theorems A.1 and 4.10 in \cite{McR-Del} (in the notation of these theorems from \cite{McR-Del}, we take $\til{\cD}=\cP$, $P_X=X$, $Q_X=0$, and $\cD_{X_1,X_2}=\cD$):
\begin{thm}\label{thm:extend_to_right_exact}
Let $\cD$ be a $\CC$-linear abelian category with enough projectives and let $\cP$ be the full subcategory of projective objects in $\cD$. Then for any $\CC$-linear functor $\cG: \cP\rightarrow\cC$ where $\cC$ is a $\CC$-linear abelian category, there is a unique (up to natural isomorphism) right exact $\CC$-linear functor $\cF: \cD\rightarrow\cC$ such that $\cF\vert_{\cP}\cong \cG$. If in addition $\cD$ and $\cC$ are (not necessarily rigid) tensor categories with right exact tensor products, $\cP$ is a monoidal subcategory of $\cD$ (in particular, the unit object of $\cD$ is projective), and $\cG:\cP\rightarrow\cC$ is a tensor functor, then the unique right exact extension $\cF:\cD\rightarrow\cC$ is also a tensor functor.
\end{thm}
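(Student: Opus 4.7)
The plan is to construct $\cF$ on objects by resolving them by projectives. Since $\cD$ has enough projectives, for each $X \in \cD$ we fix a partial projective resolution $P_1^X \xrightarrow{d^X} P_0^X \xrightarrow{\pi^X} X \to 0$ and define $\cF(X) := \coker\bigl(\cG(d^X): \cG(P_1^X) \to \cG(P_0^X)\bigr)$. For $P \in \cP$, taking $P_1^P = 0$, $P_0^P = P$, $\pi^P = \Id_P$ gives $\cF(P) \cong \cG(P)$. For a morphism $f: X \to Y$, projectivity of $P_0^X$ and $P_1^X$ lifts $f$ to a chain map between the chosen resolutions, and any two such lifts differ by a chain homotopy through $\cP$; after applying $\cG$, the induced map on cokernels is therefore independent of the lift, yielding a well-defined $\cF(f)$. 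The same homotopy argument applied to $\Id_X$ with two different resolutions produces canonical natural isomorphisms between the resulting cokernels, so $\cF$ is independent of the chosen resolutions up to natural isomorphism; in particular $\cF\vert_\cP \cong \cG$.

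Right exactness and uniqueness come next. Given a short exact sequence $0 \to X' \to X \to X'' \to 0$, the horseshoe lemma produces compatible partial projective resolutions, and applying $\cG$ shows $\cF(X') \to \cF(X) \to \cF(X'') \to 0$ is exact. For uniqueness, if $\til{\cF}$ is any right exact functor with $\til{\cF}\vert_\cP \cong \cG$, then applying $\til{\cF}$ to the resolution of $X$ gives $\til{\cF}(X) \cong \coker(\til{\cF}(d^X)) \cong \coker(\cG(d^X)) = \cF(X)$, naturally in $X$.

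For the monoidal upgrade, with structure isomorphisms $\varphi: \cG(\vac_\cD) \xrightarrow{\sim} \vac_\cC$ and $G: \tens_\cC \circ (\cG \times \cG) \Rightarrow \cG \circ \tens_\cD$, the unit $\vac_\cD$ is projective by hypothesis, so $\cF(\vac_\cD) \cong \vac_\cC$ via $\varphi$. For $X_1, X_2 \in \cD$, right exactness of $\tens_\cD$ in each variable combined with the fact that $P_i^{X_j} \tens_\cD P_{i'}^{X_{j'}} \in \cP$ produces a partial projective resolution
\begin{equation*}
(P_1^{X_1} \tens_\cD P_0^{X_2}) \oplus (P_0^{X_1} \tens_\cD P_1^{X_2}) \longrightarrow P_0^{X_1} \tens_\cD P_0^{X_2} \longrightarrow X_1 \tens_\cD X_2 \longrightarrow 0
\end{equation*}
which computes $\cF(X_1 \tens_\cD X_2)$. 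Two applications of right exactness of $\tens_\cC$ compute $\cF(X_1) \tens_\cC \cF(X_2)$ from the same data via $\cG$, and the components of $G$ assemble into a morphism $F_{X_1,X_2}: \cF(X_1) \tens_\cC \cF(X_2) \to \cF(X_1 \tens_\cD X_2)$, natural in each variable. The main obstacle will be verifying the coherence (pentagon and unit) axioms and that $F$ is an isomorphism. Both reduce to the corresponding facts for $\cG$ on $\cP$: each side of the pentagon diagram for $(\cF, F, \varphi)$ is a component of a natural transformation between functors of three variables that are right exact in each slot, and two such transformations agreeing on all triples of projectives must agree on all objects, by the uniqueness clause of the first statement applied to multivariable functors. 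The same reduction, together with the five lemma applied to the comparison of the two resolutions above, shows that $F_{X_1,X_2}$ is invertible because each $G_{P,Q}$ is.
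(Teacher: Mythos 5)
Your construction is correct, and it is essentially the standard argument: the paper itself does not prove this theorem but defers to Theorems A.1 and 4.10 of \cite{McR-Del}, whose proof proceeds by the same device of presenting each object by projectives, defining $\cF$ as a cokernel, and checking well-definedness, right exactness, uniqueness, and the monoidal structure exactly as you do. The only loose phrasing is in the coherence step: what you need is not literally ``the uniqueness clause applied to multivariable functors'' but the statement that two natural transformations between functors that are right exact in each variable and agree on all tuples of projectives agree everywhere; this follows immediately by precomposing with the epimorphisms such as $H(P_0^{X_1},X_2,X_3)\twoheadrightarrow H(X_1,X_2,X_3)$ supplied by right exactness and using naturality, so your reduction of the pentagon/unit axioms and of the invertibility of $F_{X_1,X_2}$ to the corresponding facts for $\cG$ on $\cP$ goes through as intended.
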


Now the following theorem gives the universal property of $KL^k(\mathfrak{sl}_2)$:

\begin{thm}\label{thm:KLk_univ_prop}
Let $k=-2+p/q$ for relatively prime $p\in\ZZ_{\geq 2}$ and $q\in\ZZ_{\geq 1}$, let $\cC$ be a (not necessarily rigid) tensor category with right exact tensor product $\tens_\cC$, and let $X$ be a rigid self-dual object of $\cC$ with evaluation $e_X: X\tens_\cC X\rightarrow\vac_\cC$ and coevaluation $i_X: \vac_\cC\rightarrow X\tens_\cC X$ such that
\begin{equation*}
e_X\circ i_X = -(e^{\pi i q/p}+e^{-\pi i q/p})\cdot\Id_{\vac_\cC}.
\end{equation*}
Then there is a unique up to natural isomorphism right exact tensor functor $\cF: KL^k(\mathfrak{sl}_2)\rightarrow\cC$, equipped with isomorphism $\varphi:\cF(\cV_{1})\rightarrow\vac_\cC$ and natural isomorphism
\begin{equation*}
F: \tens_\cC\circ(\cF\times \cF)\longrightarrow\cF\circ\tens,
\end{equation*}
 such that $\cF(\cV_{2})=X$ and
 \begin{equation*}
\varphi\circ \cF(e_{\cV_{2}})\circ F_{\cV_{2},\cV_{2}} = e_X, \qquad F_{\cV_{2},\cV_{2}}^{-1}\circ\cF(i_{\cV_{2}})\circ\varphi^{-1}= i_X.
 \end{equation*}
\end{thm}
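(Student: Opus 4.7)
The plan is to build the functor $\cF$ in three stages, matching up the three universal-type tools already established: Ostrik's universal property of tilting modules (Theorem \ref{thm:tilt_univ_prop}), the tensor equivalence $\cT_\zeta \xrightarrow{\sim} \cP^k$ (Theorem \ref{thm:Tzeta_Pk_equiv}), and the general extension result for right exact tensor functors (Theorem \ref{thm:extend_to_right_exact}). First I would apply Theorem \ref{thm:tilt_univ_prop} to the object $(X, e_X, i_X)$ in $\cC$. This is legitimate because $\cC$ is additive and, being a tensor category, is closed under direct summands; moreover $X$ has the required intrinsic dimension $-\zeta-\zeta^{-1}$ by hypothesis. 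This yields an additive tensor functor $\cF_1: \cT_\zeta \rightarrow \cC$, unique up to natural isomorphism, sending the standard object $\mathbf{X}$ (together with its evaluation and coevaluation) to $(X, e_X, i_X)$.

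Next I would compose with a quasi-inverse of the tensor equivalence $\cF_0: \cT_\zeta \xrightarrow{\sim} \cP^k$ from Theorem \ref{thm:Tzeta_Pk_equiv}, which sends $\mathbf{X}$ to $\cV_2$. This produces an additive tensor functor $\cG: \cP^k \rightarrow \cC$ with $\cG(\cV_2) = X$, together with a unit isomorphism and coherent natural isomorphism mediating the tensor structure; the conditions on evaluation/coevaluation for $\cV_2$ are inherited from the corresponding conditions at $\mathbf{X}$ together with the tensor equivalence. Because $\cP^k$ is the full subcategory of projective objects in $KL^k(\mathfrak{sl}_2)$ (Corollary \ref{cor:projective_is_rigid}), because $KL^k(\mathfrak{sl}_2)$ has enough projectives (Theorem \ref{thm:Prs_properties}), and because the unit $\cV_{1}$ is itself projective (Proposition \ref{prop:V11_proj}), the hypotheses of Theorem \ref{thm:extend_to_right_exact} are satisfied. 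I would invoke it to obtain a unique (up to natural isomorphism) right exact tensor functor $\cF: KL^k(\mathfrak{sl}_2) \rightarrow \cC$ extending $\cG$, and record the required $\varphi$ and $F$ as the unit and tensor structure morphisms of this extension, evaluated at projective objects.

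For uniqueness of $\cF$, suppose $\cF': KL^k(\mathfrak{sl}_2) \rightarrow \cC$ is another right exact tensor functor satisfying the stated conditions. Its restriction to $\cP^k$ is an additive tensor functor sending $\cV_2$ to $X$ compatibly with evaluation and coevaluation. Precomposing with $\cF_0: \cT_\zeta \xrightarrow{\sim} \cP^k$ produces an additive tensor functor $\cT_\zeta \rightarrow \cC$ which sends $\mathbf{X}$ to $X$, and Ostrik's uniqueness (Theorem \ref{thm:tilt_univ_prop}) identifies this functor naturally with $\cF_1$. Hence $\cF'\vert_{\cP^k}$ is naturally isomorphic to $\cG$ as tensor functors, and then the uniqueness clause in Theorem \ref{thm:extend_to_right_exact} identifies $\cF'$ with $\cF$.

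The main obstacle is bookkeeping of the monoidal coherence data rather than any genuinely hard content: one has to verify that the functor $\cG$ produced by composing $\cF_1$ with a quasi-inverse of $\cF_0$ really is a tensor functor in the strong sense (with a specified unit isomorphism and a natural tensor isomorphism satisfying the hexagon/pentagon-type compatibilities), and that the evaluation/coevaluation conditions at $\cV_2$ hold on the nose in $\cF$, not merely up to the natural isomorphism $F_{\cV_2,\cV_2}$. This requires carefully chasing the evaluations and coevaluations through the equivalence $\cT_\zeta \cong \cP^k$, using that $\cV_2$ (like $\mathbf{X}$) has a one-dimensional endomorphism algebra so that any rescaling ambiguity in $e_{\cV_2}$ and $i_{\cV_2}$ is forced to match by the intrinsic dimension computation of Theorem \ref{thm:V12_rigid}. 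Once these identifications are pinned down, the remainder is formal from the three theorems quoted above.
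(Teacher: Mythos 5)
Your proposal is correct and follows essentially the same route as the paper: transport Ostrik's universal property of $\cT_\zeta$ to $\cP^k$ via the equivalence of Theorem \ref{thm:Tzeta_Pk_equiv} to obtain the tensor functor on projectives, then extend uniquely to a right exact tensor functor on all of $KL^k(\fsl_2)$ by Theorem \ref{thm:extend_to_right_exact}. The coherence bookkeeping you flag is exactly what the paper absorbs into the statement that $\cP^k$ itself satisfies the universal property of Theorem \ref{thm:tilt_univ_prop}.
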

\begin{proof}
Since the full subcategory $\cP^k\subseteq KL^k(\mathfrak{sl}_2)$ is tensor equivalent to $\cT_\zeta$ with $\zeta=e^{\pi i q/p}$ by Theorem \ref{thm:Tzeta_Pk_equiv}, the category $\cP^k$ satisfies the universal property of Theorem \ref{thm:tilt_univ_prop}. Thus there is a unique (up to natural isomorphism) tensor functor $\cG: \cP^k\rightarrow\cC$ sending $(\cV_{2}, e_{\cV_{2}}, i_{\cV_{2}})$ to $(X, e_X, i_X)$ as specified in the theorem statement. Then by Theorem \ref{thm:extend_to_right_exact}, $\cG$ extends uniquely to a right exact tensor functor $\cF: KL^k(\mathfrak{sl}_2)\rightarrow\cC$.
\end{proof}

In the setting of Theorem \ref{thm:KLk_univ_prop}, we would like to determine conditions under which the right exact tensor functor $\cF: KL^k(\mathfrak{sl}_2)\rightarrow\cC$ is additionally left exact, braided or braid-reversed, and preserves ribbon twists. The last three properties are easy to determine thanks to Proposition \ref{prop:braid_and_twist_from_V12}:
\begin{thm}\label{thm:univ_prop_F_braided}
In the setting of Theorem \ref{thm:KLk_univ_prop}, if $\cC$ is a braided tensor category and 
\begin{equation}\label{eqn:braided_RXX}
\cR_{X,X} =  e^{\pi i q/2p}\cdot\Id_{X\tens_\cC X} + e^{-\pi i q/2p}\cdot (i_X\circ e_X),
\end{equation}
respectively
\begin{equation}\label{eqn:braid-rev_RXX}
\cR_{X,X} =  e^{-\pi i q/2p}\cdot\Id_{X\tens_\cC X} + e^{\pi i q/2p}\cdot (i_X\circ e_X),
\end{equation}
then the tensor functor $\cF: KL^k(\fsl_2)\rightarrow\cC$ is braided, respectively braid-reversed. If in addition $\cC$ has a ribbon twist $\theta$ such that $\theta_X= e^{3\pi iq/2p}\cdot\Id_X$, then $\cF$ preserves twists.
\end{thm}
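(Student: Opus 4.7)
\medskip
\noindent\textbf{Proof plan.} The strategy is to reduce everything to Proposition \ref{prop:braid_and_twist_from_V12}, which says that a right exact tensor functor $\cF$ out of $KL^k(\fsl_2)$ (with any chosen braiding and twist) into a braided tensor category $\cD$ is automatically braided, respectively twist-preserving, once the relevant compatibility is checked on the single generating object $\cV_2$. Thus the whole task reduces to computing $\cF(\cR_{\cV_2,\cV_2})$ and $\cF(\theta_{\cV_2})$ and comparing them to $\cR_{X,X}$ and $\theta_X$.

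First I would handle the braided case. Equip $KL^k(\fsl_2)$ with the official braiding from \cite{HLZ8}; by Proposition \ref{prop:V12_self_braiding} this is
\[
\cR_{\cV_2,\cV_2} = e^{\pi i q/2p}\cdot \Id_{\cV_2\tens\cV_2} + e^{-\pi i q/2p}\cdot f_{\cV_2}, \qquad f_{\cV_2}=i_{\cV_2}\circ e_{\cV_2}.
\]
Applying the additive functor $\cF$ and using the defining identities $\varphi\circ\cF(e_{\cV_2})\circ F_{\cV_2,\cV_2}=e_X$ and $F_{\cV_2,\cV_2}^{-1}\circ\cF(i_{\cV_2})\circ\varphi^{-1}=i_X$ from Theorem \ref{thm:KLk_univ_prop}, one computes
\[
\cF(f_{\cV_2}) = \cF(i_{\cV_2})\circ\cF(e_{\cV_2}) = F_{\cV_2,\cV_2}\circ(i_X\circ e_X)\circ F_{\cV_2,\cV_2}^{-1},
\]
so that $\cF(\cR_{\cV_2,\cV_2})\circ F_{\cV_2,\cV_2} = F_{\cV_2,\cV_2}\circ\bigl(e^{\pi i q/2p}\cdot \Id_{X\tens_\cC X} + e^{-\pi i q/2p}\cdot (i_X\circ e_X)\bigr)$. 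Under hypothesis \eqref{eqn:braided_RXX}, this is exactly $F_{\cV_2,\cV_2}\circ\cR_{X,X}$, and Proposition \ref{prop:braid_and_twist_from_V12}(1) delivers that $\cF$ is braided.

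For the braid-reversed case I would instead equip $KL^k(\fsl_2)$ with the reverse braiding, which is braiding (3) of Theorem \ref{thm:KLk_braidings}, namely $\cR^{\mathrm{rev}}_{\cV_2,\cV_2} = e^{-\pi i q/2p}\cdot\Id + e^{\pi i q/2p}\cdot f_{\cV_2}$. The same computation as above, now with the roles of the two coefficients swapped, shows that under hypothesis \eqref{eqn:braid-rev_RXX} the functor $\cF$ is a braided tensor functor from $(KL^k(\fsl_2),\cR^{\mathrm{rev}})$ to $\cC$; this is precisely what it means for $\cF: (KL^k(\fsl_2),\cR)\to\cC$ to be braid-reversed.

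Finally, for twists, the official twist $\theta=e^{2\pi i L(0)}$ acts on $\cV_2$ by $e^{2\pi i h_2}=e^{3\pi i q/2p}$ since $h_2=\tfrac{3}{4\kappa}=\tfrac{3q}{4p}$ by \eqref{eqn:h_lambda}. Hence $\cF(\theta_{\cV_2})=e^{3\pi i q/2p}\cdot\Id_{\cF(\cV_2)}=\theta_X$ under the stated hypothesis, and Proposition \ref{prop:braid_and_twist_from_V12}(2) extends this equality to all objects. There is no genuine obstacle here: the entire argument is a short bookkeeping check against Proposition \ref{prop:braid_and_twist_from_V12}, with the only mild subtlety being the choice of the reverse braiding on $KL^k(\fsl_2)$ to handle the braid-reversed case uniformly.
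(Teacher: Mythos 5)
Your proposal is correct and follows essentially the same route as the paper: both reduce the claim to Proposition \ref{prop:braid_and_twist_from_V12}, compute $\cF(\cR_{\cV_2,\cV_2}^{\pm 1})\circ F_{\cV_2,\cV_2}$ from Proposition \ref{prop:V12_self_braiding} together with the defining identities for $e_X$, $i_X$ (your observation that the reverse self-braiding of $\cV_2$ is braiding (3) of Theorem \ref{thm:KLk_braidings} is exactly the paper's use of $\cR_{\cV_2,\cV_2}^{-1}$), and settle the twist claim from $\theta_{\cV_2}=e^{2\pi i h_2}\Id_{\cV_2}=e^{3\pi i q/2p}\Id_{\cV_2}$ via part (2) of that proposition.
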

\begin{proof}
Using the braiding in $KL^k(\fsl_2)$ from Proposition \ref{prop:V12_self_braiding} and its inverse, we obtain
\begin{align*}
\cF(\cR_{\cV_2,\cV_2}^{\pm 1})\circ F_{\cV_2,\cV_2} & =\left(e^{\pm\pi i q/2p}\cdot\cF(\Id_{\cV_2\tens \cV_2}) + e^{\mp\pi i q/2p}\cdot \cF(i_{\cV_2})\circ \cF(e_{\cV_2})\right)\circ F_{\cV_2,\cV_2}\nonumber\\
& = F_{\cV_2,\cV_2}\circ\left(e^{\pm\pi iq/2p}\circ\Id_{X\tens_\cC X} + e^{\mp \pi iq/2p}\cdot(i_X\circ e_X)\right).
\end{align*}
Thus
\begin{equation*}
 \cF(\cR_{\cV_2,\cV_2})\circ F_{\cV_2,\cV_2}=F_{\cV_2,\cV_2}\circ\cR_{\cF(\cV_2),\cF(\cV_2)},
\end{equation*}
respectively
\begin{equation*}
 \cF(\cR_{\cV_2,\cV_2}^{-1})\circ F_{\cV_2,\cV_2}=F_{\cV_2,\cV_2}\circ\cR_{\cF(\cV_2),\cF(\cV_2)},
\end{equation*}
if and only if $\cR_{X,X}$ satisfies \eqref{eqn:braided_RXX}, respectively \eqref{eqn:braid-rev_RXX}. Then by Proposition \ref{prop:braid_and_twist_from_V12}, $\cF$ is braided if \eqref{eqn:braided_RXX} holds and braid-reversed if \eqref{eqn:braid-rev_RXX} holds. Similarly, $\cF$ preserves twists if
\begin{equation*}
\theta_X=e^{2\pi i h_2}\cdot\Id_X = e^{3\pi iq/2p}\cdot\Id_X,
\end{equation*}
since $\theta_{\cV_2} =e^{2\pi i L(0)}$.
\end{proof}

We now consider when the functor $\cF:KL^k(\fsl_2)\rightarrow\cC$ in Theorem \ref{thm:KLk_univ_prop} is exact. If $KL^k(\mathfrak{sl}_2)$ and $\cC$ were rigid, we could use the exact contravariant duality functor to show that right exactness of $\cF$ implies left exactness. However, as in \cite[Theorem 2.12]{ALSW}, contragredient modules only give $KL^k(\mathfrak{sl}_2)$ the weaker duality structure of a Grothendieck-Verdier category with dualizing object $\cV_{1}'$. This means that for any object $W$ in $KL^k(\mathfrak{sl}_2)$ there is a map $\varepsilon_W: W'\tens W\rightarrow\cV_{1}'$ (not to be confused with the evaluation in case $W$ happens to be rigid) such that for any homomorphism $f: X\tens W\rightarrow\cV_{1}'$ in $KL^k(\mathfrak{sl}_2)$, there is a unique homomorphism $\varphi: X\rightarrow W'$ such that the diagram
\begin{equation*}
\xymatrixcolsep{3pc}
\xymatrix{
X\tens W \ar[d]_{\varphi\tens\Id_W} \ar[rd]^{f} & \\
W' \tens W \ar[r]_(.55){\varepsilon_W} & \cV_{1}'\\
}
\end{equation*}
commutes. The homomorphisms $\varepsilon_W$ and $\varphi$ can be constructed from symmetries of intertwining operators. In particular, we can take $\varepsilon_W$ to be the unique homomorphism such that $\varepsilon_W\circ\cY_\tens=\cE_W$ where $\cY_\tens$ is the tensor product intertwining operator of type $\binom{W'\tens W}{W\,W'}$ and $\cE_W$ is the intertwining operator of type $\binom{\cV_{1}'}{W\,W'}$ defined by
\begin{equation*}
\cE_W=A_0(\Omega_0(Y_{W'}))\circ(\Id_{W'}\otimes\delta_W).
\end{equation*}
Here we use the notation of \cite[Equations 3.77 and 3.87]{HLZ2}, and $\delta_W: W\rightarrow W''$ is the natural isomorphism defined by
\begin{equation*}
\langle\delta_W(w), w'\rangle =\langle w',w\rangle
\end{equation*}
for $w\in W$, $w'\in W'$. Specifically,
\begin{align}\label{eqn:contragredient_EW}
\langle\cE_W(w',x)w, v\rangle & = \big\langle \delta_W(w), \Omega_0(Y_{W'})(e^{xL(1)} e^{\pi i L(0)} (x^{-L(0)})^2 w',x^{-1})v\big\rangle\nonumber\\
& = \big\langle Y_{W'}(v,-x^{-1}) e^{x L(1)} e^{\pi i L(0)}(x^{-L(0)})^2 w', w\big\rangle
\end{align}
for $w\in W$, $w'\in W'$, and $v\in\cV_1$. Using this formula, we prove:
\begin{lem}\label{eqn:contra_EW_surjective}
The homomorphism $\varepsilon_W: W'\tens W\rightarrow\cV_{1}'$ is surjective if and only if $W$ is not an object of the subcategory $KL_k(\mathfrak{sl}_2)\subseteq KL^k(\mathfrak{sl}_2)$ of $L_k(\mathfrak{sl}_2)$-modules.
\end{lem}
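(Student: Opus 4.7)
The plan is to analyze the structure of $\cV_1'$ and then exploit the formula \eqref{eqn:contragredient_EW} to reduce surjectivity of $\varepsilon_W$ to a question about the action of the maximal proper ideal $J \subset V^k(\fsl_2)$ on $W'$. By Theorem \ref{thm:gen_Verma_structure} (with $n=0$, $r=1$), $\cV_1 = V^k(\fsl_2)$ has $\cL_1$ as its top and $\cL_{2p-1}$ as its socle, so taking contragredients $\cV_1'$ has $\cL_1$ as socle and $\cL_{2p-1}$ as top. In particular, $\cV_1'$ is of length $2$ with exactly three submodules $0 \subsetneq \cL_1 \subsetneq \cV_1'$, and the submodule $\cL_1 \subset \cV_1'$ is precisely the annihilator $J^\perp$ of the maximal proper ideal $J = \cL_{2p-1} \subset V^k(\fsl_2)$. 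Consequently, $\varepsilon_W$ fails to be surjective if and only if $\im \varepsilon_W \subseteq \cL_1$, equivalently, $\langle \cE_W(w',x)w, v\rangle = 0$ for every $w' \in W'$, $w \in W$, and $v \in J$.

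For the ``only if'' direction, I would start from the assumption that $W$ is an object of $KL_k(\fsl_2)$. The formula \eqref{eqn:sl2_contra_structure} for the $\widehat{\fsl}_2$-module structure on $W'$ is defined entirely in terms of the $W$-action, so $W'$ is again an $L_k(\fsl_2)$-module, and Corollary \ref{cor:KL_k_tens_ideal} then places $W' \tens W$ in $KL_k(\fsl_2)$. The image of $\varepsilon_W$ in $\cV_1'$ is therefore an $L_k(\fsl_2)$-submodule, but the length-$1$ quotient $\cV_1'/\cL_1 \cong \cL_{2p-1}$ is not an $L_k(\fsl_2)$-module (since $p \geq 2$). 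Since $\cL_1$ is the unique proper nonzero submodule of $\cV_1'$, this forces $\im \varepsilon_W \subseteq \cL_1 \subsetneq \cV_1'$, as desired.

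For the ``if'' direction, suppose $W$ is not an object of $KL_k(\fsl_2)$. By the same observation, $W'$ is not an $L_k(\fsl_2)$-module either, so there exists $v \in J$ such that $Y_{W'}(v,x): W' \to W'\{x\}[\log x]$ is not identically zero. I would then unpack \eqref{eqn:contragredient_EW}: it expresses $\langle \cE_W(w',x)w, v\rangle$ as $\langle Y_{W'}(v,-x^{-1}) e^{xL(1)} e^{\pi i L(0)} (x^{-L(0)})^2 w', w\rangle$. Since $e^{xL(1)} e^{\pi i L(0)} (x^{-L(0)})^2$ is invertible on each conformal weight space of $W'$ (as a composition of the invertible operators $e^{xL(1)}$, the scalar $e^{\pi i L(0)}$, and $(x^{-L(0)})^2$, with fractional powers of $x$ interpreted via a chosen branch of logarithm), precomposing the nonzero operator $Y_{W'}(v,-x^{-1})$ with it still yields a nonzero linear map $W' \to W'\{x\}[\log x]$. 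Non-degeneracy of the canonical pairing $W' \times W \to \CC$ then produces $w' \in W'$ and $w \in W$ for which $\langle \cE_W(w',x)w, v\rangle \neq 0$, so $\im \varepsilon_W \not\subseteq \cL_1$ and hence $\im \varepsilon_W = \cV_1'$.

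The main obstacle is the technical verification in the ``if'' direction that the composite operator appearing in \eqref{eqn:contragredient_EW} does not secretly kill the non-vanishing of $Y_{W'}(v,x)$; this requires care because $L(0)$ may act non-semisimply on $W'$ and $L(1)$ raises conformal weight. However, once one works degree-by-degree in a formal variable (or within a large enough truncation of the conformal-weight filtration of $W'$), the operator in question is manifestly invertible, and so the argument reduces to the elementary fact that composing a nonzero $\widehat{\fsl}_2$-linear map with an invertible endomorphism remains nonzero. The rest of the proof is essentially bookkeeping on the two-step Loewy structure of $\cV_1'$ and the tensor-ideal property of $KL_k(\fsl_2)$.
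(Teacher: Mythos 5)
Your proof is correct and follows essentially the same route as the paper: non-surjectivity of $\varepsilon_W$ is equivalent to $\im\varepsilon_W\subseteq\cL_1$, the annihilator of the maximal proper ideal $J=\cL_{2p-1}\subseteq\cV_1$, and the formula \eqref{eqn:contragredient_EW} converts this into the statement that $J$ acts trivially on $W$ (equivalently on $W'$), i.e.\ that $W$ lies in $KL_k(\fsl_2)$. The only difference is minor: for the ``only if'' direction you invoke the tensor-ideal property (Corollary \ref{cor:KL_k_tens_ideal}) together with the Loewy structure of $\cV_1'$, whereas the paper simply reads the vanishing of $Y_{W'}(v,x)$ for $v\in J$ off \eqref{eqn:contragredient_EW} in both directions.
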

\begin{proof}
The homomorphism $\varepsilon_W$ is not surjective if and only if $\im\varepsilon_W$ is contained in the maximal proper submodule $\cL_{1}\subseteq\cV_{1}'$. Equivalently, since the tensor product intertwining operator of type $\binom{W'\tens W}{W'\,W}$ is surjective, the definitions imply $\varepsilon_W$ is not surjective if and only if
\begin{equation*}
\langle\cE_W(w',x)w,v\rangle =0
\end{equation*}
for all $v$ in the maximal proper submodule $\cL_{2p-1}\subseteq\cV_{1}$. By \eqref{eqn:contragredient_EW}, this is equivalent to $Y_W(v,x)w=0$ for all $v\in\cL_{2p-1}\subseteq\cV_{1}= V^k(\mathfrak{sl}_2)$. Since $\cL_{2p-1}$ is the maximal proper ideal of $V^k(\mathfrak{sl}_2)$, this is equivalent to $W$ being an $L_k(\mathfrak{sl}_2)$-module in $KL_k(\mathfrak{sl}_2)$.
\end{proof}

In the setting of Theorem \ref{thm:KLk_univ_prop}, we now assume that the (abelian) tensor category $\cC$ is also a Grothendieck-Verdier category with dualizing object $K$. By definition (see \cite{BD, ALSW}), this means there is a contravariant anti-equivalence $\cD$ of $\cC$ and a natural isomorphism
\begin{equation*}
\hom_\cC(X\tens_\cC W, K) \xrightarrow{\cong}\hom_\cC(X, \cD(W))
\end{equation*}
for all objects $W$, $X$ in $\cC$. That is, $\cD(W)$ satisfies the same universal property in $\cC$ as contragredient modules do in $KL^k(\mathfrak{sl}_2)$. The action of $\cD$ on morphisms can be defined using the universal property: given $f: X_1\rightarrow X_2$ in $\cC$, the morphism $\cD(f):\cD(X_2)\rightarrow\cD(X_1)$ is unique such that
\begin{equation*}
\xymatrixcolsep{5pc}
\xymatrix{
\cD(X_2)\tens_\cC X_1 \ar[r]^{\Id_{\cD(X_2)}\tens_\cC f} \ar[d]^{\cD(f)\tens_\cC\Id_{X_1}} & \cD(X_2)\tens_\cC X_2 \ar[d]^{\varepsilon_{X_2}}\\
\cD(X_1)\tens_\cC X_1 \ar[r]^{\varepsilon_{X_1}} & K\\
}
\end{equation*}
commutes. Note that $\cD$ is exact since it is an equivalence between the abelian category $\cC$ and its opposite category.
\begin{thm}\label{thm:right_exact_to_exact}
In the setting of Theorem \ref{thm:KLk_univ_prop}, let $\cC$ be a Grothendieck-Verdier category with dualizing object $K$ and contravariant anti-equivalence $\cD$. Assume also that the right exact tensor functor $\cF$ satisfies $\cF(\cV_{1}')\cong K$, and that $\cF(W)$ is either simple or $0$ for all simple modules $W$ in $KL^k(\mathfrak{sl}_2)$, with $\cF(\varepsilon_W)\neq 0$ whenever $\cF(W)\neq 0$. Then $\cF$ is exact.
\end{thm}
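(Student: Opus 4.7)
The plan is to deduce left exactness of $\cF$ from the construction of a natural isomorphism $\eta_W : \cF(W') \xrightarrow{\cong} \cD(\cF(W))$ intertwining vertex algebraic contragredient with the Grothendieck--Verdier duality $\cD$ on $\cC$. Granting such an $\eta$, a short exact sequence $0 \to W_1 \to W_2 \to W_3 \to 0$ in $KL^k(\fsl_2)$ has exact contragredient sequence $0 \to W_3' \to W_2' \to W_1' \to 0$, and applying the right exact $\cF$ yields exactness of $\cF(W_3') \to \cF(W_2') \to \cF(W_1') \to 0$. Transporting via $\eta$ converts this into exactness of $\cD(\cF(W_3)) \to \cD(\cF(W_2)) \to \cD(\cF(W_1)) \to 0$, and since $\cD$ is a contravariant anti-equivalence (hence exact and reflecting exactness), we recover exactness of $0 \to \cF(W_1) \to \cF(W_2) \to \cF(W_3)$, completing the proof.

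To construct $\eta_W$, let $\psi : \cF(\cV_1') \xrightarrow{\cong} K$ be the hypothesized isomorphism. The composite
\[
\psi \circ \cF(\varepsilon_W) \circ F_{W', W} : \cF(W') \tens_\cC \cF(W) \longrightarrow K
\]
corresponds, under the defining adjunction of $\cD(\cF(W))$, to a unique morphism $\eta_W : \cF(W') \to \cD(\cF(W))$; naturality of $\eta$ follows from naturality of $F$, of the counits $\varepsilon_W$ in $KL^k(\fsl_2)$, and of the Grothendieck--Verdier counits in $\cC$. For the base case, let $W$ be a simple module. All simple modules in $KL^k(\fsl_2)$ are self-contragredient (being characterized by their lowest conformal weight and the $\fsl_2$-structure of their lowest conformal weight space, both of which are preserved by contragredient), so $\cF(W') \cong \cF(W)$. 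If $\cF(W)=0$ there is nothing to show; if $\cF(W) \neq 0$ then by hypothesis $\cF(W)$ is simple, hence so is $\cD(\cF(W))$ since $\cD$ is an anti-equivalence, and $\eta_W$ is a morphism between simples. It cannot vanish: if $\eta_W = 0$, then by the defining adjunction the displayed composite is zero, forcing $\cF(\varepsilon_W) = 0$ (since $\psi$ and $F_{W',W}$ are invertible) and contradicting the hypothesis. Hence $\eta_W$ is an isomorphism by Schur's lemma.

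For the inductive step on length, consider $0 \to W_1 \to W \to W_2 \to 0$ with $\eta_{W_1}$ and $\eta_{W_2}$ already isomorphisms. Applying contragredient then the right exact $\cF$ produces a top row $\cF(W_2') \to \cF(W') \to \cF(W_1') \to 0$ exact at its right two terms, while applying $\cD \circ \cF$ to the original sequence (using right exactness of $\cF$ and exactness of $\cD$) produces a bottom row $0 \to \cD(\cF(W_2)) \to \cD(\cF(W)) \to \cD(\cF(W_1))$ exact at its left two terms; the naturality square for $\eta$ connects them. A short diagram chase---surjectivity of $\eta_W$ uses surjectivity of the rightmost map of the top row together with $\eta_{W_1}$ being an iso and the image of $\cF(W_2') \to \cF(W')$ via $\eta_{W_2}$ supplying any missing preimage, while injectivity uses injectivity of the leftmost map of the bottom row together with $\eta_{W_2}$ being an iso---shows $\eta_W$ is an isomorphism. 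By induction on the finite length of objects in $KL^k(\fsl_2)$, $\eta$ is a natural isomorphism on all of $KL^k(\fsl_2)$. The conceptual heart of the argument is the base case: the hypothesis $\cF(\varepsilon_W) \neq 0$ whenever $\cF(W)\neq 0$ is exactly what promotes the duality morphism from a potentially zero morphism between simples to an actual isomorphism $\cF(W') \cong \cD(\cF(W))$.
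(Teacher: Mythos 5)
Your proposal is correct and follows essentially the same route as the paper: you build the comparison morphism $\cF(W')\rightarrow\cD(\cF(W))$ from $\psi\circ\cF(\varepsilon_W)\circ F_{W',W}$ via the Grothendieck--Verdier universal property, show it is an isomorphism for simple $W$ using the hypothesis $\cF(\varepsilon_W)\neq 0$ together with Schur's lemma, extend to all objects by induction on length with a five-lemma--style chase between the right exact row coming from contragredients and the left exact row coming from $\cD\circ\cF$, and then transport exactness through $\cD$. The only cosmetic difference is that the paper concludes by checking only that $\cF$ preserves monomorphisms (which suffices given right exactness), whereas you transport a whole short exact sequence; both are fine.
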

\begin{proof}
Fix an isomorphism $\psi:\cF(\cV_{1}')\rightarrow K$. Then for any object $W$ of $KL^k(\mathfrak{sl}_2)$, there is a unique morphism $\varphi_W: \cF(W')\rightarrow\cD(\cF(W))$ such that the diagram
\begin{equation*}
\xymatrixcolsep{4pc}
\xymatrix{
\cF(W')\tens_\cC \cF(W) \ar[r]^(.525){F_{W',W}} \ar[d]^{\varphi_W\tens_\cC\Id_{\cF(W)}} & \cF(W'\tens W) \ar[d]^{\psi\circ\cF(\varepsilon_W)} \\
\cD(\cF(W))\tens_\cC\cF(W) \ar[r]^(.6){\varepsilon_{\cF(W)}} & K\\
}
\end{equation*}
commutes. If $W$ is simple, then $W'\cong W$, and thus $\cF(W')=0$ if and only if $\cD(\cF(W))=0$. In case both are zero, $\varphi_W$ is trivially an isomorphism. Otherwise, $\varphi_W\neq 0$ since $\cF(\varepsilon_W)\neq 0$ by hypothesis; moreover, $\cF(W')\cong\cF(W)$ is simple by hypothesis, and then $\cD(\cF(W))$ is also simple since $\cD$ is an anti-equivalence. Thus in this case also, $\varphi_W$ is an isomorphism since it is a non-zero morphism between simple objects of $\cC$.

We will show that $\varphi_W$ is an isomorphism for all modules $W$ in $KL^k(\mathfrak{sl}_2)$ by induction on the length of $W$, but we first need to show that $\varphi_W$ determines a natural transformation. That is, we want to show that 
$$\varphi_{W_1}\circ\cF(f')=\cD(\cF(f))\circ\varphi_{W_2}$$
for all morphisms $f: W_1\rightarrow W_2$ in $KL^k(\mathfrak{sl}_2)$. For this, the uniqueness assertion in the universal property of $\cD(\cF(W_1))$ implies it is enough to show
\begin{equation*}
\varepsilon_{\cF(W_1)}\circ[(\varphi_{W_1}\circ\cF(f'))\tens_\cC\Id_{\cF(W_1)}]=\varepsilon_{\cF(W_1)}\circ[(\cD(\cF(f))\circ\varphi_{W_2})\tens_\cC\Id_{\cF(W_1)}].
\end{equation*}
Indeed, the definitions imply
\begin{align*}
\varepsilon_{\cF(W_1)} & \circ[(\varphi_{W_1}\circ\cF(f'))\tens_\cC\Id_{\cF(W_1)}]  =\psi\circ\cF(\varepsilon_{W_1})\circ F_{W_1',W_1}\circ(\cF(f')\tens_\cC\Id_{\cF(W_1)})\nonumber\\
& =\psi\circ\cF(\varepsilon_{W_1}\circ(f'\tens\Id_{W_1}))\circ F_{W_2',W_1} =\psi\circ\cF(\varepsilon_{W_2})\circ(\Id_{W_2'}\tens f))\circ F_{W_2',W_1}\nonumber\\
& = \psi\circ\cF(\varepsilon_{W_2})\circ F_{W_2',W_2}\circ(\Id_{\cF(W_2')}\tens_\cC\cF(f)) =\varepsilon_{\cF(W_2)}\circ(\varphi_{W_2}\tens_\cC\cF(f))\nonumber\\
& =\varepsilon_{\cF(W_1)}\circ [(\cD(\cF(f))\circ\varphi_{W_2})\tens_\cC\Id_{\cF(W_1)}],
\end{align*}
as desired.

So far, we know that $\varphi_W$ is an isomorphism when the length of $W$ is either zero or one. Now when the length of $W$ is greater than one, there is some exact sequence
\begin{equation*}
0\longrightarrow X_1\xrightarrow{f_1} W\xrightarrow{f_2} X_2\rightarrow 0
\end{equation*}
where the lengths of $X_1$ and $X_2$ are strictly less than the length of $W$, and we assume by induction that $\varphi_{X_1}$ and $\varphi_{X_2}$ are isomorphisms. Using the naturality of $\varphi$, the right exactness of $\cF$, and the exactness of $\cD$ and the contragredient functor on $KL^k(\mathfrak{sl}_2)$, we then get a commutative diagram
\begin{equation*}
\xymatrixcolsep{2pc}
\xymatrix{
& \cF(X_2') \ar[rr]^{\cF(f_2')} \ar[d]^{\varphi_{X_2}} && \cF(W') \ar[rr]^{\cF(f_1')} \ar[d]^{\varphi_W} && \cF(X_1') \ar[r] \ar[d]^{\varphi_{X_1}} & 0\\
0 \ar[r] & \cD(\cF(X_2)) \ar[rr]^{\cD(\cF(f_2))} && \cD(\cF(W)) \ar[rr]^{\cD(\cF(f_1))} && \cD(\cF(X_1)) & \\
}
\end{equation*}
with exact rows. The short five lemma diagram chase now shows that $\varphi_W$ is an isomorphism, completing the induction to show that $\varphi_W$ is an isomorphism for all $W$.

We can now use the natural isomorphisms $\varphi_W$ to prove that $\cF$ is exact. Since $\cF$ is already right exact, it is enough to prove that $\cF(f): \cF(W_1)\rightarrow \cF(W_2)$ is injective whenever $f: W_1\rightarrow W_2$ is injective. Indeed, by right exactness, $\cF(f'):\cF(W_2')\rightarrow\cF(W_1')$ is surjective, and then so is
\begin{equation*}
\cD(\cF(f)) = \varphi_{W_1}\circ\cF(f')\circ\varphi_{W_2}^{-1}.
\end{equation*}
Thus $\cF(f)$ is injective because $\cD$ is an anti-equivalence.
\end{proof}

\section{Applications of the universal property}\label{sec:applications}

We now apply the universal property of $KL^k(\fsl_2)$ from Theorem \ref{thm:KLk_univ_prop} to obtain interesting tensor functors in several examples.

\subsection{Classifying \texorpdfstring{$KL^k(\fsl_2)$}{KLk(sl2)} up to tensor equivalence}\label{subsec:classify}

Here, we take $\cC$ in Theorem \ref{thm:KLk_univ_prop} to be $KL^{k'}(\fsl_2)$ where $k'=-2+p'/q'$ is another admissible level for $\fsl_2$. Note that $KL^k(\fsl_2)$ and $KL^{k'}(\fsl_2)$ satisfy the same universal property if $\cV_2^k$ and $\cV_2^{k'}$ have the same intrinsic dimension, which occurs if and only if $e^{\pi i q'/p'} = e^{\pm\pi i q/p}$. This latter condition holds if and only if $p'=p$ and $q'\in\pm q+2p\ZZ$. Thus from Theorem \ref{thm:KLk_univ_prop}, we obtain:
\begin{thm}\label{thm:KLk_tens_equiv}
For any admissible level $k$, $KL^{k}(\fsl_2)$ is tensor equivalent to $KL^{-2+p/q}(\fsl_2)$ for some $p\in\ZZ_{\geq 2}$ and some $q$ relatively prime to $p$ such that $1\leq q\leq p-1$. Specifically, for all such $p$ and $q$,
\begin{equation*}
KL^{-2+p/q}(\fsl_2)\cong
KL^{-2+p/(\pm q+2np)}(\fsl_2) 
\end{equation*}
as tensor categories, where $n\geq 0$ in the $+$ case, and $n\geq 1$ in the $-$ case.
\end{thm}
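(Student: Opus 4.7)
The plan is to derive both assertions of the theorem directly from the universal property in Theorem \ref{thm:KLk_univ_prop}, exploiting the fact that this universal property depends on $(p,q)$ only through the scalar $-e^{\pi i q/p} - e^{-\pi i q/p}$, which is visibly invariant under $q \mapsto \pm q + 2np$.

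I would first establish the explicit equivalence. Fix $p \geq 2$ and positive integers $q, q'$ both relatively prime to $p$ with $q' = \epsilon q + 2np$ for some $\epsilon \in \{\pm 1\}$ and $n \in \ZZ$ (which forces $\gcd(p, q') = 1$ automatically). By Theorem \ref{thm:V12_rigid}, the generalized Verma module $\cV_{2}$ in $KL^{-2+p/q'}(\fsl_2)$ is rigid and self-dual with intrinsic dimension
\[
-e^{\pi i q'/p} - e^{-\pi i q'/p} = -e^{\pi i q/p} - e^{-\pi i q/p},
\]
since $q'/p \equiv \epsilon\, q/p \pmod{2}$. Applying Theorem \ref{thm:KLk_univ_prop} to the source $KL^{-2+p/q}(\fsl_2)$ with target $\cC = KL^{-2+p/q'}(\fsl_2)$ and $X = \cV_{2}$ together with its own evaluation and coevaluation produces a right exact tensor functor
\[
\cF \colon KL^{-2+p/q}(\fsl_2) \longrightarrow KL^{-2+p/q'}(\fsl_2)
\]
sending $\cV_{2} \mapsto \cV_{2}$ compatibly with the pair $(e_{\cV_{2}}, i_{\cV_{2}})$. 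Exchanging the roles of $q$ and $q'$ and applying Theorem \ref{thm:KLk_univ_prop} once more yields a right exact tensor functor $\cG$ in the opposite direction with the analogous property.

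The compositions $\cG \circ \cF$ and $\cF \circ \cG$ are then right exact tensor endofunctors of $KL^{-2+p/q}(\fsl_2)$ and $KL^{-2+p/q'}(\fsl_2)$ respectively, each sending $\cV_{2}$ to itself compatibly with its evaluation and coevaluation morphisms. The identity functors share this property, so the uniqueness clause of Theorem \ref{thm:KLk_univ_prop} provides natural isomorphisms $\cG \circ \cF \cong \Id$ and $\cF \circ \cG \cong \Id$ as tensor functors. Hence $\cF$ is a tensor equivalence, establishing the specific equivalence $KL^{-2+p/q}(\fsl_2) \cong KL^{-2+p/(\pm q + 2np)}(\fsl_2)$ in the required range of $n$.

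To conclude the first assertion, let $k = -2 + p/q$ be an arbitrary admissible level with $p \geq 2$, $q \geq 1$, and $\gcd(p, q) = 1$. Reducing $q$ modulo $2p$ yields some $\tilde q \in \{1, \ldots, 2p-1\}$; the residues $0$ and $p$ are excluded since $\gcd(p,q)=1$ and $p \geq 2$. Setting $q_0 = \tilde q$ when $\tilde q \leq p-1$ and $q_0 = 2p - \tilde q$ otherwise gives $q_0 \in \{1, \ldots, p-1\}$ with $\gcd(p, q_0) = 1$ and $q = \pm q_0 + 2np$ for a suitable $n \geq 0$ ($+$ case) or $n \geq 1$ ($-$ case), and the equivalence from the previous paragraph supplies $KL^k(\fsl_2) \cong KL^{-2+p/q_0}(\fsl_2)$. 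I do not expect any real obstacle in this argument: once the universal property is in hand, the theorem is essentially formal, with the only nontrivial input being the matching of intrinsic dimensions $-e^{\pi i q/p}-e^{-\pi i q/p}$ under $q \mapsto \pm q + 2np$.
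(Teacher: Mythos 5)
Your proposal is correct and follows essentially the same route as the paper: both deduce the equivalences from the universal property of Theorem \ref{thm:KLk_univ_prop}, using that the intrinsic dimension $-e^{\pi i q/p}-e^{-\pi i q/p}$ of $\cV_2$ is unchanged under $q\mapsto\pm q+2np$, and conclude by running the universal property in both directions and invoking uniqueness (exactly as the paper does explicitly in the proof of Theorem \ref{thm:KLk_tau_equiv}). The elementary reduction of an arbitrary admissible $q$ to some $q_0$ with $1\leq q_0\leq p-1$ is also the paper's implicit argument.
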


Not all the tensor equivalences in the previous theorem are braided tensor equivalences, but it is easy to determine which ones are using Theorems \ref{thm:KLk_braidings} and \ref{thm:univ_prop_F_braided}. Before doing so, we introduce some notation. For $k$ an admissible level for $\fsl_2$, let us now use $KL^k(\fsl_2)_{\pm}$ to denote the braided tensor category with ribbon twist given in Theorem \ref{thm:KLk_braidings}(1), with the $\pm$ indicating which sign to take for $\theta_{\cV_2}$. In particular, $KL^k(\fsl_2)_+$ denotes the tensor category $KL^k(\fsl_2)$ with its official braiding (recall Proposition \ref{prop:V12_self_braiding}) and official ribbon twist $\theta=e^{2\pi i L(0)}$. We then use $KL^k(\fsl_2)^{\mathrm{tw}}_\pm$ to denote the tensor category $KL^k(\fsl_2)$ equipped with the braiding and twist of Theorem \ref{thm:KLk_braidings}(2), $KL^k(\fsl_2)^{\mathrm{rev}}_\pm$ for $KL^k(\fsl_2)$ equipped with the braiding and twist of Theorem \ref{thm:KLk_braidings}(3), and $KL^k(\fsl_2)^{\mathrm{tw},\mathrm{rev}}_\pm$ for $KL^k(\fsl_2)$ equipped with the braiding and twist of Theorem \ref{thm:KLk_braidings}(4).
\begin{thm}\label{thm:KLk_class_braided}
Let $k=-2+p/q$ be an admissible level for $\fsl_2$ such that $\gcd(p,q)=1$ and $1\leq q\leq p-1$. Then for $n\geq 0$,
\begin{equation*}
KL^{-2+p/(q+2np)}(\fsl_2)_+\cong\left\lbrace\begin{array}{lll}
KL^k(\fsl_2)_+ & \text{if} & n\equiv 0\mod 2\\
KL^k(\fsl_2)^{\mathrm{tw}}_- & \text{if} & n\equiv 1\mod 2\\
\end{array}\right. ,
\end{equation*}
and for $n\geq 1$,
\begin{equation*}
KL^{-2+p/(-q+2np)}(\fsl_2)_+\cong\left\lbrace\begin{array}{lll}
KL^k(\fsl_2)^{\mathrm{rev}}_+ & \text{if} & n\equiv 0\mod 2\\
KL^k(\fsl_2)^{\mathrm{tw},\mathrm{rev}}_- & \text{if} & n\equiv 1\mod 2\\
\end{array}\right. ,
\end{equation*}
as braided tensor categories with ribbon twist.
\end{thm}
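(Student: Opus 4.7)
The plan is to promote the tensor equivalence of Theorem \ref{thm:KLk_tens_equiv} to a braided tensor equivalence preserving ribbon twists, by using the braiding and twist criteria from Theorem \ref{thm:univ_prop_F_braided} and matching against the classification in Theorem \ref{thm:KLk_braidings}. Fix admissible $k=-2+p/q$ with $1\leq q\leq p-1$, and set $q''=\pm q+2np$ and $k''=-2+p/q''$. Since $e^{\pi i q''/p}=e^{\pm\pi i q/p}$, the generalized Verma module $\cV_2^{k''}\in KL^{k''}(\fsl_2)$ has the same intrinsic dimension $-e^{\pi i q/p}-e^{-\pi i q/p}$ as $\cV_2^k\in KL^k(\fsl_2)$ by Theorem \ref{thm:V12_rigid}. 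Applying Theorem \ref{thm:KLk_univ_prop} produces a right exact tensor functor $\cF: KL^{k''}(\fsl_2)\to KL^k(\fsl_2)$ that sends $(\cV_2^{k''},e_{\cV_2^{k''}},i_{\cV_2^{k''}})$ to $(\cV_2^k,e_{\cV_2^k},i_{\cV_2^k})$, and by Theorem \ref{thm:KLk_tens_equiv} this $\cF$ is a tensor equivalence.

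Next, equip $KL^{k''}(\fsl_2)$ with its standard braiding and ribbon twist, so that by Proposition \ref{prop:V12_self_braiding} the source self-braiding is determined by the parameter $q''/p$. Applying Theorem \ref{thm:univ_prop_F_braided} with source $KL^{k''}(\fsl_2)$ and target $KL^k(\fsl_2)$, the tensor equivalence $\cF$ becomes braided and twist-preserving with respect to the (yet unspecified) braiding and twist on the target provided
\begin{equation*}
\cR_{\cV_2^k,\cV_2^k}=e^{\pi i q''/2p}\cdot\Id_{\cV_2^k\tens\cV_2^k}+e^{-\pi i q''/2p}\cdot(i_{\cV_2^k}\circ e_{\cV_2^k}),\qquad\theta_{\cV_2^k}=e^{3\pi i q''/2p}\cdot\Id_{\cV_2^k}.
\end{equation*}
By Corollary \ref{cor:braid_and_twist_from_V12}, any braiding and twist on $KL^k(\fsl_2)$ is determined by the images of $\cR_{\cV_2^k,\cV_2^k}$ and $\theta_{\cV_2^k}$, so the above formulas uniquely prescribe a candidate braided tensor structure on the target (up to verifying existence).

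Now a direct calculation gives $e^{\pi i q''/2p}=(-1)^n e^{\pm\pi i q/2p}$ and $e^{3\pi i q''/2p}=(-1)^n e^{\pm 3\pi iq/2p}$, and comparing with the four braidings and the eight braiding–twist pairs enumerated in Theorem \ref{thm:KLk_braidings} (applied to $KL^k(\fsl_2)$, so with parameter $q/p$) identifies the required structure in each case. Specifically: for $q''=q+2np$ with $n$ even we hit braiding (1) with the $+$ twist, giving $KL^k(\fsl_2)_+$; for $q''=q+2np$ with $n$ odd we hit braiding (2) with the $-$ twist, giving $KL^k(\fsl_2)^{\mathrm{tw}}_-$; for $q''=-q+2np$ with $n$ even we hit braiding (3) with the $+$ twist, giving $KL^k(\fsl_2)^{\mathrm{rev}}_+$; and for $q''=-q+2np$ with $n$ odd we hit braiding (4) with the $-$ twist, giving $KL^k(\fsl_2)^{\mathrm{tw},\mathrm{rev}}_-$. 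Since Theorem \ref{thm:KLk_braidings} guarantees the existence of all four braidings and both associated twists on $KL^k(\fsl_2)$, the specified braided tensor structure on the target is realized, and $\cF$ is a braided tensor equivalence preserving ribbon twists.

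There is no serious obstacle beyond the universal property itself: the heavy lifting was done in proving Theorems \ref{thm:KLk_univ_prop}, \ref{thm:univ_prop_F_braided}, and \ref{thm:KLk_braidings}, and the remaining task is a bookkeeping check of four sign/phase cases. The mildly subtle point is to invoke Corollary \ref{cor:braid_and_twist_from_V12} to know that a braiding and twist on $KL^k(\fsl_2)$ is completely determined by its values on $\cV_2^k$, so that matching $\cR_{\cV_2^k,\cV_2^k}$ and $\theta_{\cV_2^k}$ with one of the eight enumerated structures identifies the target category unambiguously.
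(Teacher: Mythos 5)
Your proposal is correct and follows essentially the same route as the paper: apply Theorem \ref{thm:univ_prop_F_braided} to the tensor equivalence of Theorem \ref{thm:KLk_tens_equiv}, compute the $(-1)^n$ phase in $e^{\pi i(\pm q+2np)/2p}$ and $e^{3\pi i(\pm q+2np)/2p}$, and match against the four braidings and two twists of Theorem \ref{thm:KLk_braidings}. The case-by-case identifications you give agree with the paper's, and your extra appeal to Corollary \ref{cor:braid_and_twist_from_V12} for uniqueness of the matched structure is a harmless (and reasonable) addition.
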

\begin{proof}
By Theorem \ref{thm:univ_prop_F_braided}, the tensor equivalence $KL^{-2+p/(\pm q+2np)}\rightarrow KL^k(\fsl_2)$ is braided if
\begin{align*}
\cR_{\cV_2^k,\cV_2^k} & =  e^{\pi i (\pm q+2np)/2p}\cdot\Id_{\cV_2^k\tens\cV_2^k} + e^{-\pi i (\pm q+2np)/2p}\cdot (i_{\cV_2^k}\circ e_{\cV_2^k})\nonumber\\
& = (-1)^n\left( e^{\pm\pi i q/2p}\cdot\Id_{\cV_2^k\tens\cV_2^k} + e^{\mp\pi i q/2p}\cdot (i_{\cV_2^k}\circ e_{\cV_2^k})\right).
\end{align*}
Thus the tensor equivalence is braided in the $+q$ and $n$ even case if we equip $KL^k(\fsl_2)$ with the first braiding in Theorem \ref{thm:KLk_braidings}. Then in the $+q$ and $n$ odd case, we need to equip $KL^k(\fsl_2)$ with the second braiding in Theorem \ref{thm:KLk_braidings}; in the $-q$ and $n$ even case, we need to equip $KL^k(\fsl_2)$ with the third braiding; and in the $-q$ and $n$ odd case, we need to equip $KL^k(\fsl_2)$ with the fourth braiding.

Similarly, the tensor equivalence preserves twists if 
\begin{equation*}
\theta_{\cV_2^k}= e^{3\pi i(\pm q+2np)/2p}\cdot\Id_{\cV_2^k} = (-1)^n e^{\pm 3\pi iq/2p}\cdot\Id_{\cV_2^k}.
\end{equation*}
Comparing with Theorem \ref{thm:KLk_braidings}, we need to give $KL^k(\fsl_2)$ the $+$ twist if $n$ is even and the $-$ twist if $n$ is odd.
\end{proof}

We now return to the cocycle twist $KL^k(\fsl_2)^\tau$ introduced in Section \ref{subsec:cocycle_twist}. As in Theorem \ref{thm:KLk_braidings}, we equip $KL^k(\fsl_2)^\tau$ with the braiding and twist characterized by
\begin{equation*}
\cR^\tau_{\cV_2,\cV_2} = e^{\pi i/2}\cdot\cR_{\cV_2,\cV_2},\qquad\theta_{\cV_2}^\tau =e^{\pi i/2}\cdot\theta_{\cV_2},
\end{equation*}
where $\cR_{\cV_2,\cV_2}$ and $\theta_{\cV_2}$ are the official braiding and twist of Theorem \ref{thm:KLk_braidings}(1).
\begin{thm}\label{thm:KLk_tau_equiv}
Let $k=-2+p/q$ for relatively prime $p\in\ZZ_{\geq 2}$ and $q\in\ZZ_{\geq 1}$. Then
\begin{equation*}
KL^k(\fsl_2)^\tau\cong KL^{-2+p/(p+q)}(\fsl_2)_-
\end{equation*}
as braided tensor categories with ribbon twists.
\end{thm}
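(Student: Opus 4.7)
The strategy is to apply Theorem \ref{thm:KLk_univ_prop} with source $KL^{k'}(\fsl_2)$, where $k'=-2+p/(p+q)$, and target $\cC = KL^k(\fsl_2)^\tau$. Writing $\zeta = e^{\pi iq/p}$ and $\zeta'=-\zeta=e^{\pi i(p+q)/p}$, Theorem \ref{thm:V12_rigid} gives the intrinsic dimension of $\cV_2^{k'}$ as $-\zeta'-(\zeta')^{-1} = e^{\pi iq/p}+e^{-\pi iq/p}$. On the target side, $\cV_2$ lies entirely in the odd graded piece, so the cocycle twist flips the sign of $\cA_{\cV_2,\cV_2,\cV_2}$ and spoils rigidity for the original eval-coeval pair. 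Rescaling by setting $e^\tau := -e_{\cV_2}$ and $i^\tau := i_{\cV_2}$ restores rigidity and yields intrinsic dimension $e^\tau\circ i^\tau = e^{\pi iq/p}+e^{-\pi iq/p}$, matching the source. Theorem \ref{thm:KLk_univ_prop} then produces a right exact tensor functor $\cF: KL^{k'}(\fsl_2)\to KL^k(\fsl_2)^\tau$ sending $\cV_2^{k'}\mapsto\cV_2$ compatibly with $(e^\tau, i^\tau)$.

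\textbf{Equivalence.} To show $\cF$ is a tensor equivalence, I would construct an inverse via a parallel universal property of $KL^k(\fsl_2)^\tau$. The proof of Theorem \ref{thm:Tzeta_Pk_equiv} should adapt essentially verbatim to show $\cT_{\zeta'}$ is tensor equivalent to $\cP^k$ viewed inside $KL^k(\fsl_2)^\tau$: morphism spaces and tensor products of objects in $\cP^k$ are unchanged by the twist, while the Loewy structures and fusion rules (Theorems \ref{thm:Prs}, \ref{thm:V12_times_Pr1_p=2}, \ref{thm:Prs_properties}) depend only on $p$, which also governs $\cT_{\zeta'}$ through Theorems \ref{thm:tilting_structures} and \ref{thm:tilting_tensor_products}. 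Extending via Theorem \ref{thm:extend_to_right_exact} yields the desired universal property and a right exact tensor functor $\cG: KL^k(\fsl_2)^\tau\to KL^{k'}(\fsl_2)$ with $\cG(\cV_2)=\cV_2^{k'}$. Uniqueness in the two universal properties then forces $\cG\circ\cF$ and $\cF\circ\cG$ to be naturally isomorphic to the respective identity functors.

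\textbf{Braiding and twist.} For the braided and ribbon structure I would invoke Proposition \ref{prop:braid_and_twist_from_V12}, reducing the problem to checking on $\cV_2^{k'}$ alone that $\cF$ intertwines $\cR_{\cV_2^{k'},\cV_2^{k'}}$ with $\cR^\tau_{\cV_2,\cV_2} = e^{\pi i/2}\cR_{\cV_2,\cV_2}$ and $\theta_{\cV_2^{k'}}$ with $\theta^\tau_{\cV_2} = e^{\pi i/2}\theta_{\cV_2}$. Using $e^{\pm\pi i(p+q)/2p} = \pm e^{\pi i/2}e^{\pm\pi iq/2p}$ together with the identification $\cF(f_{\cV_2^{k'}}) = i^\tau\circ e^\tau = -f_{\cV_2}$ forced by the sign choice above, both intertwining relations are verified provided the source is equipped with braiding (1) from Theorem \ref{thm:KLk_braidings} and the $-$ twist; the sign flips from the rescaled coevaluation and from $e^{3\pi i/2}=-e^{\pi i/2}$ are precisely what force the $-$ choice. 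Combined with the equivalence established in the previous step, this yields $KL^k(\fsl_2)^\tau \cong KL^{-2+p/(p+q)}(\fsl_2)_-$ as braided tensor categories with ribbon twist. The main anticipated obstacle is the verbatim adaptation of Theorem \ref{thm:Tzeta_Pk_equiv} to the cocycle-twisted setting, where one must confirm that the diagram chase underlying Lemma \ref{lem:F_and_eta} and the subsequent dimension count of $\hom$-spaces between indecomposable tilting modules remain valid once the associativity has been modified by a sign on the odd-graded triple tensor products.
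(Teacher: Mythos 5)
Your construction of $\cF$ via Theorem \ref{thm:KLk_univ_prop}, the sign bookkeeping for the evaluation/coevaluation in the twisted category, and the braiding/twist verification on $\cV_2$ (via Proposition \ref{prop:braid_and_twist_from_V12}, with $e^{\pm\pi i(p+q)/2p}=\pm e^{\pi i/2}e^{\pm\pi i q/2p}$ and $f\mapsto -f$ under the rescaled coevaluation forcing the minus twist) all match the paper's argument. Where you genuinely diverge is in proving that $\cF$ is an equivalence. You propose to build the inverse $\cG$ by first establishing a universal property for the twisted category $KL^k(\fsl_2)^\tau$ itself, i.e.\ by re-running Theorem \ref{thm:Tzeta_Pk_equiv} with $\cT_{\zeta'}$, $\zeta'=-\zeta$, and then extending via Theorem \ref{thm:extend_to_right_exact}. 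This should indeed go through: the hom-space dimensions, Loewy diagrams, and fusion rules among the $\cP_r$ are abelian-category and tensor-product data unchanged by the cocycle twist, $(\zeta')^2=\zeta^2$ so $\cT_{\zeta'}$ is governed by the same $p$, and Lemma \ref{lem:F_and_eta} is associativity-agnostic; but it is real work, not quite "verbatim," since every invocation of rigidity must be re-checked against the sign-adjusted duality data. The paper avoids all of this with a shortcut you did not use: since $\tau$ only modifies associativity isomorphisms and $\tau^2$ is trivial, the functor $\cF: KL^{k'}(\fsl_2)\to KL^k(\fsl_2)^\tau$ can be reread as a tensor functor $KL^{k'}(\fsl_2)^\tau\to KL^k(\fsl_2)$, and Theorem \ref{thm:KLk_univ_prop} applied once more (with untwisted source $KL^k(\fsl_2)$ and twisted target $KL^{k'}(\fsl_2)^\tau$) produces $\cG$; both composites are then endofunctors of \emph{untwisted} Kazhdan-Lusztig categories fixing $\cV_2$, so the uniqueness clause of the already-proven universal property forces them to be naturally isomorphic to the identity. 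The trade-off: the paper's route is much shorter but quietly uses that the functors respect the $\ZZ/2\ZZ$-grading so that the reinterpretation between twisted and untwisted categories is legitimate, whereas your route sidesteps that point at the cost of redoing the tilting-module comparison in the twisted setting. Both are valid; yours is self-contained but heavier.
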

\begin{proof}
As noted at the end of Section \ref{subsec:cocycle_twist}, $\cV_2$ has the same intrinsic dimension $e^{\pi iq/p}+e^{-\pi iq/p}$ in $KL^k(\fsl_2)^\tau$ as it does in the tensor category $KL^{-2+p/(p+q)}(\fsl_2)$. Thus by Theorem \ref{thm:KLk_univ_prop}, there is a unique right exact tensor functor
\begin{equation*}
\cF: KL^{-2+p/(p+q)}(\fsl_2)\longrightarrow KL^k(\fsl_2)^\tau
\end{equation*}
sending the $\cV_2$ in $KL^{-2+p/(p+q)}(\fsl_2)$ to the $\cV_2$ in $KL^k(\fsl_2)^\tau$. Note that since the cocycle twist $\tau$ only affects associativity isomorphisms, we can also view $\cF$ as a tensor functor from $KL^{-2+p/(p+q)}(\fsl_2)^\tau$ to $KL^k(\fsl_2)$. Similarly, Theorem \ref{thm:KLk_univ_prop} yields a right exact tensor functor
\begin{equation*}
\cG: KL^k(\fsl_2)\longrightarrow KL^{-2+p/(p+q)}(\fsl_2)^\tau
\end{equation*}
which we can equivalently view as a tensor functor from $KL^k(\fsl_2)^\tau$ to $KL^{-2+p/(p+q)}(\fsl_2)$. Then $\cF\circ\cG$ is a right exact tensor endofunctor of $KL^k(\fsl_2)$ which preserves $\cV_2$. Since the identity endofunctor also satisfies these properties, Theorem \ref{thm:KLk_univ_prop} implies $\cF\circ\cG\cong\Id_{KL^k(\fsl_2)}$, which we can equivalently view as the identity on $KL^k(\fsl_2)^\tau$. Similarly, $\cG\circ\cF\cong\Id_{KL^{-2+p/(p+q)}(\fsl_2)}$, so $\cF$ is a tensor equivalence.

We now check braidings. First note that the braiding of $KL^k(\fsl_2)$ in Theorem \ref{thm:KLk_braidings}(1) is given in terms of $f_{\cV_2}$, which is defined using the evaluation and coevaluation for $\cV_2$ in $KL^k(\fsl_2)$. However, we need to change either the evaluation or coevaluation by a sign to get the correct evaluation and coevaluation for $\cV_2$ in $KL^k(\fsl_2)^\tau$. Thus
\begin{equation*}
\cR_{\cV_{2},\cV_{2}} = e^{\pi i q/2p}\cdot\Id_{\cV_{2}\tens\cV_{2}} - e^{-\pi i q/2p}\cdot (i_{\cV_2}\circ e_{\cV_2})
\end{equation*}
is the braiding in $KL^k(\fsl_2)$ expressed in terms of the evaluation and coevaluation for $\cV_2$ in $KL^k(\fsl_2)^\tau$. Then
\begin{align*}
\cR_{\cV_{2},\cV_{2}}^\tau & =e^{\pi i/2}\left(e^{\pi i q/2p}\cdot\Id_{\cV_{2}\tens\cV_{2}} - e^{-\pi i q/2p}\cdot (i_{\cV_2}\circ e_{\cV_2})\right)\nonumber\\
& = e^{\pi i (p+q)/2p}\cdot\Id_{\cV_{2}\tens\cV_{2}} + e^{-\pi i (p+q)/2p}\cdot (i_{\cV_2}\circ e_{\cV_2}),
\end{align*}
so $\cF: KL^{-2+p/(p+q)}(\fsl_2)\rightarrow KL^k(\fsl_2)^\tau$ is a braided tensor equivalence by Theorem \ref{thm:univ_prop_F_braided}. Finally, since
\begin{equation*}
\theta_{\cV_2}^\tau = e^{\pi i/2} e^{3\pi i q/2p}\cdot\Id_{\cV_2} = -e^{3\pi i (p+q)/2p}\cdot\Id_{\cV_2},
\end{equation*}
$\cF$ also preserves twist if we equip $KL^{-2+p/(p+q)}(\fsl_2)$ with the $-$ twist in Theorem \ref{thm:KLk_braidings}(1).
\end{proof}

We can combine Theorems \ref{thm:KLk_class_braided} and \ref{thm:KLk_tau_equiv} to obtain:
\begin{cor}\label{cor:class_KLk_tau_braid}
Let $k=-2+p/q$ be an admissible level for $\fsl_2$ such that $\gcd(p,q)=1$ and $1\leq q\leq p-1$. Then for $n\geq 1$,
\begin{equation*}
KL^{-2+p/(q+(2n-1)p)}(\fsl_2)^\tau\cong\left\lbrace\begin{array}{lll}
KL^k(\fsl_2)_- & \text{if} & n\equiv 0\mod 2\\
KL^k(\fsl_2)^{\mathrm{tw}}_+ & \text{if} & n\equiv 1\mod 2\\
\end{array}\right. 
\end{equation*}
and 
\begin{equation*}
KL^{-2+p/(-q+(2n-1)p)}(\fsl_2)^\tau\cong\left\lbrace\begin{array}{lll}
KL^k(\fsl_2)^{\mathrm{rev}}_- & \text{if} & n\equiv 0\mod 2\\
KL^k(\fsl_2)^{\mathrm{tw},\mathrm{rev}}_+ & \text{if} & n\equiv 1\mod 2\\
\end{array}\right. 
\end{equation*}
as braided tensor categories with ribbon twists.
\end{cor}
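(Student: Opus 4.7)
The plan is to deduce Corollary~\ref{cor:class_KLk_tau_braid} as a direct composition of Theorems~\ref{thm:KLk_tau_equiv} and~\ref{thm:KLk_class_braided}, carrying along the braiding and twist data at each step.

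First, I apply Theorem~\ref{thm:KLk_tau_equiv} with $q$ replaced by $\pm q + (2n-1)p$. Since $p + \bigl(\pm q + (2n-1)p\bigr) = \pm q + 2np$, this yields
\begin{equation*}
KL^{-2+p/(\pm q + (2n-1)p)}(\fsl_2)^\tau \;\cong\; KL^{-2+p/(\pm q + 2np)}(\fsl_2)_-
\end{equation*}
as braided tensor categories with ribbon twists. This reduces the corollary to identifying the right-hand side, which lives at one of the levels treated in Theorem~\ref{thm:KLk_class_braided}, but equipped with the $-$ twist rather than the $+$ twist.

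Second, I upgrade Theorem~\ref{thm:KLk_class_braided} to handle the $-$ twist by reinvoking Theorem~\ref{thm:univ_prop_F_braided}. The braiding matching condition does not involve the twist, so the braiding identification is exactly as in Theorem~\ref{thm:KLk_class_braided}: for $+q$, the braiding on $KL^k(\fsl_2)$ is the standard one if $n$ is even and the $\mathrm{tw}$ braiding if $n$ is odd, while for $-q$ it is $\mathrm{rev}$ if $n$ is even and $\mathrm{tw},\mathrm{rev}$ if $n$ is odd. For the twist, Theorem~\ref{thm:univ_prop_F_braided} demands that the twist on the target satisfies $\theta_{\cV_2^k} = \theta_{\cV_2^{k'}}$ under the equivalence. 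With $k' = -2+p/(\pm q + 2np)$ and the $-$ twist on the source, this value is
\begin{equation*}
\theta_{\cV_2^{k'}} \;=\; -\, e^{3\pi i(\pm q + 2np)/2p}\cdot\Id_{\cV_2^{k'}} \;=\; (-1)^{n+1}\, e^{\pm 3\pi iq/2p}\cdot\Id_{\cV_2^{k'}}.
\end{equation*}
Consulting Theorem~\ref{thm:KLk_braidings}, this is the $-$ twist on $KL^k(\fsl_2)$ when $n$ is even and the $+$ twist when $n$ is odd. Combining the braiding and twist data yields the four claimed equivalences.

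The step requiring the most care is coordinating the two sources of signs: the intrinsic $-$ sign coming from Theorem~\ref{thm:KLk_tau_equiv} and the parity sign $(-1)^n$ produced by the shift $q \mapsto \pm q + 2np$ in Theorem~\ref{thm:KLk_class_braided}. Multiplying these yields $(-1)^{n+1}$, which flips the $\pm$ decoration on the twist relative to what appears in Theorem~\ref{thm:KLk_class_braided} (with the braiding unaffected). Beyond this sign bookkeeping, the argument is a routine composition of the two preceding theorems, so I do not expect any serious obstacle.
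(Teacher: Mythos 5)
Your proposal is correct and is essentially the paper's own argument: the paper presents this corollary precisely as the combination of Theorem \ref{thm:KLk_tau_equiv} (applied at the shifted denominator $\pm q+(2n-1)p$, using $p+(\pm q+(2n-1)p)=\pm q+2np$) with the classification of Theorem \ref{thm:KLk_class_braided}, where the only extra input is rerunning the twist-matching via Theorem \ref{thm:univ_prop_F_braided}/Proposition \ref{prop:braid_and_twist_from_V12} for the $-$ twist on the source, exactly as you do. Your sign bookkeeping $(-1)\cdot(-1)^n=(-1)^{n+1}$, with the braiding identification unaffected, reproduces the stated four cases.
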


Note from Theorem \ref{thm:KLk_class_braided} and \ref{cor:class_KLk_tau_braid} that if $k=-2+p/q$ for relatively prime $p\geq 2$ and $1\leq q\leq p-1$, then $KL^k(\fsl_2)$ with any of its braidings and twists (eight possibilities in total) is equivalent to either $KL^{k'}(\fsl_2)$ or $KL^{k'}(\fsl_2)^\tau$ equipped with its official braiding and twist, for suitable admissible levels $k'$. Corollary \ref{cor:class_KLk_tau_braid} also shows that $KL^{-2+p/q}(\fsl_2)$ is tensor equivalent to $KL^{-2+p/(p-q)}(\fsl_2)$ for $1\leq q\leq p-1$; combining this with Theorem \ref{thm:KLk_class_braided} again, we conclude:
\begin{cor}
For any admissible level $k$, the tensor category $KL^k(\fsl_2)$ is equivalent to either $KL^{-2+p/q}(\fsl_2)$ or $KL^{-2+p/q}(\fsl_2)^\tau$ for some relatively prime $p,q\in\ZZ_{\geq 1}$ such that $p\geq 2$ and $1\leq q\leq\frac{p}{2}$.
\end{cor}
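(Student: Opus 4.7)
The strategy is to combine the classification up to tensor equivalence furnished by Theorem \ref{thm:KLk_tens_equiv} with the identification of the cocycle twist provided by Theorem \ref{thm:KLk_tau_equiv}, allowing us to cut the ``fundamental domain'' of representatives in half. Concretely, Theorem \ref{thm:KLk_tens_equiv} already tells us that every $KL^k(\fsl_2)$ is tensor equivalent to $KL^{-2+p/q}(\fsl_2)$ for some unique relatively prime $p \in \ZZ_{\geq 2}$ and $q$ with $1 \leq q \leq p-1$. To sharpen this to $1 \leq q \leq p/2$ (at the cost of possibly a cocycle twist), the plan is to show that $KL^{-2+p/q}(\fsl_2) \cong KL^{-2+p/(p-q)}(\fsl_2)^\tau$ as tensor categories whenever $\gcd(p,q)=1$ and $1 \leq q \leq p-1$; once this is established, the values $q > p/2$ can always be replaced by $p-q < p/2$ at the expense of inserting a $\tau$.

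First I would handle the trivial case: if $1 \leq q \leq p/2$, then $KL^k(\fsl_2) \cong KL^{-2+p/q}(\fsl_2)$ is already in the first form asserted by the corollary. So assume $p/2 < q \leq p-1$, and set $q' = p-q$, which satisfies $1 \leq q' < p/2$ and $\gcd(p, q') = \gcd(p, q) = 1$. The key step is to chain two tensor equivalences:
\begin{equation*}
KL^{-2+p/q'}(\fsl_2)^\tau \;\cong\; KL^{-2+p/(p+q')}(\fsl_2) \;=\; KL^{-2+p/(2p-q)}(\fsl_2),
\end{equation*}
where the first equivalence is the underlying tensor equivalence coming from Theorem \ref{thm:KLk_tau_equiv} applied to the level $-2+p/q'$. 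On the other hand, Theorem \ref{thm:KLk_tens_equiv} applied with the ``$-$'' sign and $n=1$ yields
\begin{equation*}
KL^{-2+p/q}(\fsl_2) \;\cong\; KL^{-2+p/(-q+2p)}(\fsl_2) \;=\; KL^{-2+p/(2p-q)}(\fsl_2).
\end{equation*}
Composing these two equivalences produces the desired tensor equivalence $KL^k(\fsl_2) \cong KL^{-2+p/q'}(\fsl_2)^\tau$ with $1 \leq q' \leq p/2$, giving the second form of the corollary.

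There is no real obstacle here, since both Theorems \ref{thm:KLk_tens_equiv} and \ref{thm:KLk_tau_equiv} are in hand; the whole argument is essentially a bookkeeping exercise with the reparameterization $q \mapsto 2p-q$. The only minor point worth double-checking is the edge case $p = 2$, $q = 1$, where $q = p/2$ and also $p - q = q$; here Theorem \ref{thm:KLk_tau_equiv} reduces to the self-equivalence $KL^0(\fsl_2)^\tau \cong KL^0(\fsl_2)$, so the first form of the corollary applies and no ambiguity arises. Uniqueness of the pair $(p,q)$ is not asserted in the corollary (and indeed fails when both forms apply), so no further work is required on that front.
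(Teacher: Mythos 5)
Your argument is correct and follows essentially the same route as the paper: the paper deduces the reduction $q\mapsto p-q$ (at the cost of a cocycle twist) from Corollary \ref{cor:class_KLk_tau_braid}, which is exactly the combination of Theorem \ref{thm:KLk_tau_equiv} with the classification of Theorems \ref{thm:KLk_tens_equiv}/\ref{thm:KLk_class_braided} that you carry out by hand via the chain through $KL^{-2+p/(2p-q)}(\fsl_2)$. Your handling of the range of $q'$ and of the $p=2$ edge case is consistent with the paper's statement, so no further comment is needed.
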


Finally, we can also classify the tensor categories $KL_k(\fsl_2)$ of $L_k(\fsl_2)$-modules up to braided tensor equivalence.
\begin{thm}
All (braided) tensor equivalences in Theorems \ref{thm:KLk_tens_equiv} and \ref{thm:KLk_class_braided} hold with $KL^k$ replaced everywhere by $KL_k$.
\end{thm}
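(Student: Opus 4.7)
The strategy is to verify that each (braided) tensor equivalence $\cF: KL^k(\fsl_2) \to KL^{k'}(\fsl_2)$ arising from Theorems \ref{thm:KLk_tens_equiv} and \ref{thm:KLk_class_braided} restricts to a (braided) tensor equivalence $\cF|: KL_k(\fsl_2) \to KL_{k'}(\fsl_2)$ with matching braidings and twists. All such $\cF$ send $\cV_2^k$ to $\cV_2^{k'}$ and the unit $\cV_1^k$ to $\cV_1^{k'}$ by construction, and are exact as equivalences between abelian categories.

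The first step is to show inductively that $\cF(\cP_r^k) \cong \cP_r^{k'}$ for every $r \geq 1$. The tensor product formulas in Theorems \ref{thm:V12_times_Vrp}, \ref{thm:V12_times_Pr1_p=2}, and \ref{thm:Prs_properties} depend only on $p$, so applying $\cF$ to a decomposition such as $\cV_2 \tens \cP_{r-1} \cong \cP_{r-2} \oplus \cP_r$ (or its variants) and invoking Krull-Schmidt gives $\cF(\cP_r^k) \cong \cP_r^{k'}$ from the inductive hypothesis for smaller $r$. Next, the Loewy diagrams in Theorem \ref{thm:intro_Pr} exhibit $\cL_r$ as the unique simple quotient of $\cP_r$ in every case (including $\cP_r = \cV_r$ for $1 \leq r \leq p-1$ and $\cP_r = \cL_r$ when $p \mid r$); since $\cF$ is exact, $\cF(\cL_r^k) \cong \cL_r^{k'}$ for all $r \geq 1$. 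By Theorem \ref{thm:adm_level_KL_k}, $KL_k(\fsl_2)$ is semisimple with simple objects $\cL_r^k$ for $1 \leq r \leq p-1$, so $\cF$ indeed restricts to an equivalence of abelian categories $\cF|: KL_k(\fsl_2) \to KL_{k'}(\fsl_2)$ sending $\cL_1^k$ (the unit of $KL_k$) to $\cL_1^{k'}$ (the unit of $KL_{k'}$).

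To upgrade $\cF|$ to a tensor equivalence with respect to $\tens_k$ and $\tens_{k'}$, I would use Theorem \ref{thm:inclusion_is_lax_monoidal}: the inclusions $\iota: KL_k \hookrightarrow KL^k$ and $\iota': KL_{k'} \hookrightarrow KL^{k'}$ carry natural isomorphisms $\iota(W_1) \tens^k \iota(W_2) \cong \iota(W_1 \tens_k W_2)$, and likewise for $\iota'$. Applying $\cF$ and its tensor structure, and using $\cF \circ \iota = \iota' \circ \cF|$ on objects, yields
\begin{equation*}
\iota'(\cF|(W_1 \tens_k W_2)) \cong \iota'(\cF|(W_1)) \tens^{k'} \iota'(\cF|(W_2)) \cong \iota'(\cF|(W_1) \tens_{k'} \cF|(W_2))
\end{equation*}
naturally in $W_1, W_2$. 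Since $\iota'$ is the inclusion of a full subcategory and hence fully faithful, this produces the natural isomorphism $\cF|(W_1 \tens_k W_2) \cong \cF|(W_1) \tens_{k'} \cF|(W_2)$ needed to make $\cF|$ a tensor equivalence. Preservation of braidings and ribbon twists under restriction follows from the lax braided and ribbon compatibility in Theorem \ref{thm:inclusion_is_lax_monoidal}, so the braided versions in Theorem \ref{thm:KLk_class_braided} carry over. For the $3$-cocycle twist instances in Corollary \ref{cor:class_KLk_tau_braid}, note that each $\cL_r$ has definite $h(0)$-parity $r-1 \pmod{2}$; hence the $\ZZ/2\ZZ$-grading of $KL^k(\fsl_2)$ restricts to one on $KL_k(\fsl_2)$, and $KL_k(\fsl_2)^\tau$ is literally the restriction of $KL^k(\fsl_2)^\tau$.

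The main obstacle is the third step: the unit objects of $KL_k$ and $KL^k$ genuinely differ, so $\iota$ is only lax monoidal at the unit component $\varphi: \cV_1 \twoheadrightarrow \cL_1$. What rescues the argument is that the bifunctorial component $\Phi$ of $\iota$ is already a natural isomorphism, combined with the full faithfulness of $\iota'$, which together let the strong monoidal structure on $\cF|$ descend from the one on $\cF$ without any unit-level obstruction.
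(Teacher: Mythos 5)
Your overall strategy coincides with the paper's: show $\cF(\cP_r^k)\cong\cP_r^{k'}$ by the induction of Proposition \ref{prop:F_essential_surj}, deduce $\cF(\cL_r^k)\cong\cL_r^{k'}$ (you do this via exactness of the equivalence together with the projective-cover structure; the paper gets the same conclusion from right exactness and a cokernel diagram built from $\cP_{2p-r}^k\rightarrow\cV_r^k\rightarrow\cL_r^k\rightarrow 0$, a cosmetic difference), conclude that $\cF$ restricts to an equivalence of the semisimple subcategories, and then transport the monoidal, braided, and ribbon structures using Theorem \ref{thm:inclusion_is_lax_monoidal} and full faithfulness of the inclusions. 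All of this matches the paper's proof.

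The one place where your write-up is not yet a proof is exactly where the paper does its real work: the unit constraint. Producing the natural isomorphism $\cF(W_1\tens_k W_2)\cong\cF(W_1)\tens_{k'}\cF(W_2)$ from $\Phi$, $\Phi'$, $F$ and full faithfulness of $\iota'$ is fine, and its compatibility with the associativity isomorphisms of $KL_k(\fsl_2)$ and $KL_{k'}(\fsl_2)$ does follow from Theorem \ref{thm:inclusion_is_lax_monoidal}; but a tensor functor also requires an isomorphism $\varphi_1:\cF(\cL_1^k)\rightarrow\cL_1^{k'}$ compatible with the left and right unit isomorphisms, and your closing claim that there is ``no unit-level obstruction'' is asserted rather than shown --- the facts you cite ($\Phi$ being an isomorphism, $\iota'$ fully faithful) only yield the multiplicative part of the structure. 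The paper closes this gap concretely: it defines $\varphi_1$ through the quotient maps as in \eqref{eqn:phi1_def} and verifies \eqref{eqn:left_unit_compat} by a direct computation with \eqref{eqn:unit_isos}, using that the unit isomorphisms in both $KL^{k'}(\fsl_2)$ and $KL_{k'}(\fsl_2)$ are given by vertex operator actions and that $\ker f_{k'}$ acts trivially on $L_{k'}(\fsl_2)$-modules. Alternatively, you could appeal to the standard categorical fact that a strong ``semigroupal'' functor whose value on the unit object is isomorphic to the unit admits a unique compatible unit isomorphism (you already have $\cF(\cL_1^k)\cong\cL_1^{k'}$ from the simple-object step); either route is fine, but one of them has to be carried out explicitly rather than waved through.
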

\begin{proof}
Set $k=-2+p/q$ for relatively prime $p$ and $q$ such that $p\geq 2$ and $1\leq q\leq p-1$, and set $k'=-2+p/(\pm q+2np)$ for some $n\in\ZZ_{\geq 0}$. Let $\cF: KL^k(\fsl_2)\rightarrow KL^{k'}(\fsl_2)$ be the tensor equivalence of Theorem \ref{thm:KLk_tens_equiv}, such that $\cF(\cV_2^k)\cong\cV_2^{k'}$. Since the tensor products of projective objects in $KL^k(\fsl_2)$ are the same as those in $KL^{k'}(\fsl_2)$, the proof of Proposition \ref{prop:F_essential_surj} shows that $\cF(\cP_r^k)\cong\cP_r^{k'}$ for all $r\geq 0$, where $\cP_r^k$ and $\cP_r^{k'}$ are the indecomposable projective objects of $KL^k(\fsl_2)$ and $KL^{k'}(\fsl_2)$, respectively.

We claim that $\cF(\cL_r^k)\cong\cL_r^{k'}$ for $1\leq r\leq p-1$, so that the image of $\cF\vert_{KL_k(\fsl_2)}$ is $KL_{k'}(\fsl_2)$. Indeed, consider the right exact sequence
\begin{equation*}
\cP_{2p-r}^k\xrightarrow{g_k} \cV_r^k\xrightarrow{f_k} \cL_r^k\longrightarrow 0
\end{equation*}
in $KL^k(\fsl_2)$, where $g_k$ maps the projective cover $\cP_{2p-r}^k$ onto the maximal proper submodule of $\cV_r^k$, which is isomorphic to $\cL_{2p-r}^k$ by Theorem \ref{thm:gen_Verma_structure}. Because $\cF$ is right exact, we get a corresponding commutative diagram with right exact rows in $KL^{k'}(\fsl_2)$:
\begin{equation*}
\xymatrixcolsep{3pc}
\xymatrix{
\cF(\cP_{2p-r}^k) \ar[d]^\cong \ar[r]^{\cF(g_k)} & \cF(\cV_r^k) \ar[d]^\cong \ar[r]^{\cF(f_k)} & \cF(\cL_r^k) \ar@{-->}[d]^{\varphi_r} \ar[r] & 0\\
\cP_{2p-r}^{k'} \ar[r]^{g_{k'}} & \cV_r^{k'} \ar[r]^{f_{k'}} & \cL_r^{k'} \ar[r] & 0\\
}
\end{equation*}
Here $\varphi_r$ is induced by the universal property of cokernels and is an isomorphism because it is a non-zero map between simple modules. Since $KL_k(\fsl_2)$ and $KL_{k'}(\fsl_2)$ are both semisimple with simple objects $\cL_r^k$ and $\cL_r^{k'}$ for $1\leq r\leq p-1$, this shows that $\cF$ restricts to an equivalence of categories between $KL_k(\fsl_2)$ and $KL_{k'}(\fsl_2)$.

To show that $\cF: KL_k(\fsl_2)\rightarrow KL_{k'}(\fsl_2)$ is also a tensor functor, note that by Theorem \ref{thm:inclusion_is_lax_monoidal}, the tensor products and associativity isomorphisms in $KL_k(\fsl_2)$ and $KL_{k'}(\fsl_2)$ are the restrictions of those in $KL^k(\fsl_2)$ and $KL^{k'}(\fsl_2)$. Thus the natural isomorphism
\begin{equation*}
F:\tens\circ(\cF\times\cF)\longrightarrow\cF\circ\tens
\end{equation*}
associated to $\cF$ is compatible with the associativity in $KL_k(\fsl_2)$ and $KL_{k'}(\fsl_2)$. For the unit isomorphisms, there is an isomorphism $\Phi:\cF(\cV_1^k)\rightarrow\cV_1^{k'}$ in $KL^{k'}(\fsl_2)$ such that $\Phi$ and $F$ are compatible with the unit isomorphisms in $KL^k(\fsl_2)$ and $KL^{k'}(\fsl_2)$. We can then define the isomorphism $\varphi_1: \cF(\cL_1^k)\rightarrow\cL_1^{k'}$ of the preceding paragraph such that the diagram
\begin{equation}\label{eqn:phi1_def}
\begin{matrix}
\xymatrixcolsep{3pc}
\xymatrix{
\cF(\cV_1^k) \ar[r]^{\cF(f_k)} \ar[d]^{\Phi} & \cF(\cL_1^k) \ar[d]^{\varphi_1} \\
\cV_1^{k'} \ar[r]^{f_{k'}} & \cL_1^{k'}\\
}
\end{matrix}
\end{equation}
commutes, where $f_k$ and $f_{k'}$ are the quotient maps from the universal affine vertex operator algebras to their simple quotients. Then for any object $W$ of $KL_k{\fsl_2}$, we need to show that
\begin{equation}\label{eqn:left_unit_compat}
l_{\cF(W)}\circ(\varphi_1\tens\Id_{\cF(W)}) = \cF(l_W)\circ F_{\cL_1^k, W}.
\end{equation}
In fact,
\begin{align}\label{eqn:unit_compat_calc}
l_{\cF(W)} \circ(\varphi_1\tens\Id_{\cF(W)})\circ(\cF(f_k)\tens\Id_{\cF(W)})  = l_{\cF(W)}\circ (f_{k'}\tens\Id_{\cF(W)})\circ(\Phi\tens\Id_{\cF(W)})
\end{align}
by \eqref{eqn:phi1_def}, and then $l_{\cF(W)}\circ(f_{k'}\tens\Id_{\cF(W)})$ is simply the left unit isomorphism $l_{\cF(W)}^{KL^{k'}(\fsl_2)}$ in $KL^{k'}(\fsl_2)$, since by \eqref{eqn:unit_isos}, the left unit isomorphisms in $KL^{k'}(\fsl_2)$ and $KL_{k'}(\fsl_2)$ are defined by the vertex operator action of the universal, respectively simple, affine vertex operator algebra on $\cF(W)$. Then because $\Phi$ and $F$ are compatible with left unit isomorphisms, the right side of \eqref{eqn:unit_compat_calc} becomes
\begin{align*}
\cF(l_W^{KL^k(\fsl_2)})\circ F_{\cV_1^k,W} & =\cF(l_W)\circ\cF(f_k\tens\Id_W)\circ F_{\cV_1^k,W}\nonumber\\
&=\cF(l_W)\circ F_{\cL_1^k,W}\circ(\cF(f_k)\tens\Id_{\cF(W)}).
\end{align*}
This proves \eqref{eqn:left_unit_compat} since $\cF(f_k)\tens\Id_{\cF(W)}$ is surjective. Similarly, $\varphi_1$ and $F$ are compatible with the right unit isomorphisms of $KL_k(\fsl_2)$ and $KL_{k'}(\fsl_2)$.

We have now proved that $F$ defines a tensor equivalence from $KL_k(\fsl_2)$ to $KL_{k'}(\fsl_2)$. If moreover $KL^{k}(\fsl_2)$ is equipped with braiding and twist such that $\cF: KL^k(\fsl_2)\rightarrow KL^{k'}(\fsl_2)$ is a braided tensor equivalence and preserves twists, then we can equip $KL_k(\fsl_2)$ with the corresponding braiding and twist so that $KL_k(\fsl_2)\cong KL_{k'}(\fsl_2)$ as braided tensor categories with twists as well.
\end{proof}

\begin{rem}
Tensor equivalences for $KL_k(\fsl_2)$ can also be deduced from \cite[Theorem~$A_{\ell}$]{KWe}, though one first needs to check that the intrinsic dimension of $\cL_2$ in the rigid semisimple tensor category $KL_k(\fsl_2)$ is the same as that of $\cV_2$ in $KL^k(\fsl_2)$.
\end{rem}

\subsection{A weak Kazhdan-Lusztig correspondence}\label{subsec:weak_KL}

We now take the tensor category $\cC$ of Theorem \ref{thm:KLk_univ_prop} to be the quantum group category $\cC(\zeta,\fsl_2)$ at a root of unity. Similar to Kazhdan and Lusztig's results in \cite{KL3, KL4} at irrational and negative rational shifted levels, we will obtain a tensor functor $\cF: KL^k(\fsl_2)\rightarrow\cC(\zeta,\fsl_2)$ for $k$ an admissible level. However, unlike in \cite{KL3, KL4}, $\cF$ cannot be an equivalence because $\cC(\zeta,\fsl_2)$ is rigid, while $KL^k(\fsl_2)$ is not. Thus we call the functor of the next theorem a \textit{weak Kazhdan-Lusztig correspondence}:
\begin{thm}\label{thm:weak_KL_correspondence}
Let $k=-2+p/q$ for relatively prime $p\in\ZZ_{\geq 2}$ and $q\in\ZZ_{\geq 1}$, and let $\zeta=e^{\pi iq/p}$. Then there is an exact essentially surjective tensor functor $\cF: KL^k(\fsl_2)\rightarrow\cC(\zeta,\fsl_2)$ such that $\cF(\cP_r)\cong T_{r-1}$ for $r\in\ZZ_{\geq 1}$.
\end{thm}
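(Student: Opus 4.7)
The plan is to apply Theorem \ref{thm:KLk_univ_prop} with $\cC=\cC(\zeta,\fsl_2)$ and $X$ equal to the standard two-dimensional simple module $\mathbf{X}$, which is rigid self-dual with intrinsic dimension $-\zeta-\zeta^{-1}$. This yields a unique (up to natural isomorphism) right exact tensor functor $\cF: KL^k(\fsl_2)\rightarrow\cC(\zeta,\fsl_2)$ sending $\cV_2$ to $\mathbf{X}$. By construction, the restriction of $\cF$ to $\cP^k$ factors through the equivalence $\cP^k\xrightarrow{\sim}\cT_\zeta$ of Theorem \ref{thm:Tzeta_Pk_equiv}, so $\cF(\cP_r)\cong T_{r-1}$ for every $r\in\ZZ_{\geq 1}$. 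For essential surjectivity, Theorem \ref{thm:tilting_structures} shows that every indecomposable projective of $\cC(\zeta,\fsl_2)$ has the form $T_\mu$ for some $\mu\geq p-1$, so every projective lies in the essential image of $\cF$; given $M$ in $\cC(\zeta,\fsl_2)$, a projective presentation $P_1\to P_0\to M\to 0$ can be lifted, using full faithfulness of $\cF\vert_{\cP^k}$ from Theorem \ref{thm:Tzeta_Pk_equiv}, to a map $Q_1\to Q_0$ in $\cP^k$, and right exactness identifies $M\cong\cF(\coker(Q_1\to Q_0))$.

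Exactness will be upgraded via Theorem \ref{thm:right_exact_to_exact}: since $\cC(\zeta,\fsl_2)$ is rigid, it is Grothendieck-Verdier with dualizing object $K=L_0$ and anti-equivalence the rigid dual, so what remains is to check (i) $\cF(\cV_1')\cong L_0$, (ii) $\cF(\cL_r)$ is simple or zero for every $r\in\ZZ_{\geq 1}$, and (iii) $\cF(\varepsilon_{\cL_r})\neq 0$ whenever $\cF(\cL_r)\neq 0$. Hypothesis (iii) follows from Lemma \ref{eqn:contra_EW_surjective}: granted (ii), $\cF(\cL_r)\neq 0$ forces $\cL_r\notin KL_k(\fsl_2)$, so $\varepsilon_{\cL_r}$ is surjective, and right exactness preserves surjectivity onto the nonzero object $\cF(\cV_1')$. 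Hypothesis (i) follows once (ii) is in hand: the contragredient sequence $0\to\cL_1\to\cV_1'\to\cL_{2p-1}\to 0$ combined with $\cF(\cL_1)=0$ from (ii) yields $\cF(\cV_1')\xrightarrow{\sim}\cF(\cL_{2p-1})$ by right exactness, while right exactness applied to $0\to\cL_{2p-1}\to\cV_1\to\cL_1\to 0$, together with $\cF(\cV_1)=L_0$ and the simplicity of $\cF(\cL_{2p-1})$, identifies $\cF(\cL_{2p-1})$ with $L_0$.

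The main obstacle is (ii). For $1\leq r\leq p-1$, Theorem \ref{thm:L1s_times_Lrs} gives $\cL_r\tens\cL_p=0$, while $\cL_p=\cP_p$ forces $\cF(\cL_p)=T_{p-1}=L_{p-1}$; monoidality then yields $\cF(\cL_r)\otimes L_{p-1}=0$, and since $L_{p-1}$ has nonzero vector-space dimension in $\cC(\zeta,\fsl_2)$, we conclude $\cF(\cL_r)=0$. For $p\mid r$, $\cL_r=\cP_r$ gives $\cF(\cL_r)\cong T_{r-1}\cong L_{r-1}$, already simple. For $r=np+s$ with $n\geq 1$ and $1\leq s\leq p-1$, the Loewy structure of $\cP_{np+s}$ from Theorem \ref{thm:Prs} provides a minimal projective presentation
\begin{equation*}
\cP_{np-s}\oplus\cP_{(n+2)p-s}\xrightarrow{f}\cP_{np+s}\longrightarrow\cL_{np+s}\longrightarrow 0
\end{equation*}
whose image is the radical of $\cP_{np+s}$. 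Applying $\cF$ yields
\begin{equation*}
T_{np-s-1}\oplus T_{(n+2)p-s-1}\xrightarrow{\cF(f)}T_{np+s-1}\longrightarrow\cF(\cL_{np+s})\longrightarrow 0,
\end{equation*}
and using the Loewy data for $T_{np+s-1}$ from Theorem \ref{thm:tilting_structures} together with the one-dimensional $\hom$-computations between indecomposable tilting modules tabulated in the proof of Theorem \ref{thm:Tzeta_Pk_equiv} (which pin down $\cF(f)$ up to scalars on each summand), I expect the image of $\cF(f)$ to coincide with the radical of $T_{np+s-1}$, forcing $\cF(\cL_{np+s})$ to be the simple head of $T_{np+s-1}$.
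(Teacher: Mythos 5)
Your proposal is correct and follows the same overall architecture as the paper's proof: existence and right exactness from Theorem \ref{thm:KLk_univ_prop} applied to $\mathbf{X}$, the identification $\cF(\cP_r)\cong T_{r-1}$ and the equivalence $\cP^k\simeq\cT_\zeta$ from Theorem \ref{thm:Tzeta_Pk_equiv}, essential surjectivity via projective presentations lifted through $\cP^k$, and exactness by verifying the three hypotheses of Theorem \ref{thm:right_exact_to_exact}. You differ from the paper in two sub-steps, both valid. First, to show $\cF(\cL_r)=0$ for $1\leq r\leq p-1$, you tensor with $\cL_p=\cP_p$ and use $\cL_r\tens\cL_p=0$ (Theorem \ref{thm:L1s_times_Lrs}) plus monoidality, whereas the paper applies $\cF$ to the presentation $\cP_{2p-r}\to\cV_r\to\cL_r\to 0$ and recognizes the resulting map $T_{2p-r-1}=P_{r-1}\to T_{r-1}=L_{r-1}$ as the projective cover surjection; your argument is slightly more economical since it bypasses any tilting-module structure for this case. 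Second, for $\cF(\cV_1')\cong L_0$ you use the contragredient sequence $0\to\cL_1\to\cV_1'\to\cL_{2p-1}\to 0$ together with $\cF(\cL_1)=0$ and the surjection $\cF(\cL_{2p-1})\twoheadrightarrow L_0$ coming from $\cL_{2p-1}\to\cV_1\to\cL_1\to 0$, whereas the paper uses the presentation $\cP_{2p+1}\to\cP_{2p-1}\to\cV_1'\to 0$ (which requires the self-contragredience of $\cP_{2p-1}$ from Proposition \ref{prop:Pr_self_contra}) and the structure of $T_{2p-2}$; your route avoids Proposition \ref{prop:Pr_self_contra} entirely, at the cost of making (i) depend on (ii), which is harmless since (ii) is established independently. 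The only hedged point, that the image of $\cF(f)$ in $T_{np+s-1}$ is its radical for the presentation of $\cL_{np+s}$ with $n\geq 1$, is exactly what the paper asserts at the same level of detail: each component of $f$ is non-zero because the presentation covers the two head factors of the radical of $\cP_{np+s}$ (Theorem \ref{thm:Prs}), faithfulness of $\cF\vert_{\cP^k}$ keeps both components non-zero, and since the relevant $\hom$-spaces between indecomposable tiltings are one-dimensional, the images are forced by Theorem \ref{thm:tilting_structures} to sum to the radical of $T_{np+s-1}=P_{np-s-1}$; so this is not a gap.
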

\begin{proof}
Since the standard object $\mathbf{X}$ in $\cC(\zeta,\fsl_2)$ has the same intrinsic dimension as $\cV_2$ in $KL^k(\fsl_2)$, the existence and right exactness of the tensor functor $\cF$ follows immediately from Theorem \ref{thm:KLk_univ_prop}, and the proof of Proposition \ref{prop:F_essential_surj} shows that $\cF(\cP_r)\cong T_{r-1}$ for $r\in\ZZ_{\geq 1}$, where as previously, $T_\lambda$ denotes the indecomposable tilting module for quantum $\fsl_2$ of highest weight $\lambda$. Note also that $\cF$ restricts to an equivalence between the subcategory $\cP^k\subseteq KL^k(\fsl_2)$ of projective objects and the subcategory $\cT_\zeta\subseteq\cC(\zeta,\fsl_2)$ of tilting modules, since $\cP^k\cong\cT_\zeta$ by Theorem \ref{thm:Tzeta_Pk_equiv}, and the identity is the only additive tensor endofunctor of $\cT_\zeta$ that preserves $\mathbf{X}$ by Theorem \ref{thm:tilt_univ_prop}.

To show that $\cF$ is exact, we need to verify the conditions of Theorem \ref{thm:right_exact_to_exact}. First, because $\cC(\zeta,\fsl_2)$ is rigid, it is a Grothendieck-Verdier category whose dualizing object is its unit object $L_0$. Thus we need to show that $\cF(\cV_1')\cong L_0$. To do so, note that by \eqref{eqn:Prs_exact_seq} and Proposition \ref{prop:Pr_self_contra}, there is a right exact sequence
\begin{equation*}
\cP_{2p+1} \longrightarrow \cP_{2p-1} \longrightarrow \cV_{1}'\longrightarrow 0.
\end{equation*}
in $KL^k(\fsl_2)$. Since $\cF$ is right exact and maps $\cP_r$ to $T_{r-1}$, we get a right exact sequence
\begin{equation*}
T_{2p} \longrightarrow T_{2p-2} \longrightarrow \cF(\cV_1')\longrightarrow 0
\end{equation*}
in $\cC(\zeta,\fsl_2)$, where the first arrow is non-zero because $\cF$ restricts to an equivalence between $\cP^k$ and $\cT_\zeta$. Consulting Theorem \ref{thm:tilting_structures}(3), we see that $L_0$ is the cokernel of the unique (up to scale) non-zero map $T_{2p}\rightarrow T_{2p-2}$, so $\cF(\cV_1')\cong L_0$, as required.

We also need to determine how $\cF$ acts on the simple objects of $KL^k(\fsl_2)$. For $1\leq r\leq p-1$, we have a right exact sequence
\begin{equation*}
\cP_{2p-r}\longrightarrow\cV_r\longrightarrow\cL_r\longrightarrow 0
\end{equation*}
in $KL^k(\fsl_2)$, which becomes a right exact sequence
\begin{equation*}
T_{2p-r-1}\longrightarrow T_{r-1}\longrightarrow \cF(\cL_r)\longrightarrow 0
\end{equation*}
in $\cC(\zeta, \fsl_2)$, where the first arrow is non-zero because $\cF$ restricts to an equivalence between $\cP^k$ and $\cT_\zeta$. Since $T_{r-1}= L_{r-1}$ and $T_{2p-r-1}= P_{r-1}$ by Theorem \ref{thm:tilting_structures}, the map $T_{2p-r-1}\rightarrow T_{r-1}$ is the surjection from the projective cover to its simple quotient, and it follows that $\cF(\cL_r)=0$ for $1\leq r\leq p-1$.

Next, for $n\in\ZZ_{\geq 1}$, we have $\cL_{np} =\cP_{np}$, so $\cF(\cL_{np})\cong P_{np-1}= L_{np-1}$. Then for $n\in\ZZ_{\geq 1}$ and $1\leq r\leq p-1$, we have a right exact sequence
\begin{equation*}
\cP_{np-r}\oplus\cP_{(n+2)p-r}\longrightarrow\cP_{np+r}\longrightarrow\cL_{np+r}\longrightarrow 0
\end{equation*}
by Theorem \ref{thm:Prs}, where the first arrow is the sum of two non-zero maps $\cP_{(n+1\pm 1)p-r}\rightarrow\cP_{np+r}$. Since $\cF$ is right exact and restricts to an equivalence between $\cP^k$ and $\cT_\zeta$, we get a right exact sequence
\begin{equation*}
T_{np-r-1}\oplus T_{(n+2)p-r-1} \longrightarrow T_{np+r-1}\longrightarrow \cF(\cL_{np+r})\longrightarrow 0
\end{equation*}
in $\cC(\zeta,\fsl_2)$ where the first arrow is the sum of two non-zero maps $T_{(n+1\pm 1)np-r-1}\rightarrow T_{np-r-1}$. Using Theorem \ref{thm:tilting_structures}, the image of the first map in the above right exact sequence is the maximal proper submodule of $T_{np+r-1}=P_{np-r-1}$, and thus $\cF(\cL_{np+r})\cong L_{np-r-1}$.

We have now shown that $\cF(\cL_r)$ is either simple or $0$ for all $r\in\ZZ_{\geq 1}$. In particular, $\cF(\cL_r)$ is non-zero for $r\geq p$, in which case the map $\varepsilon_{\cL_r}: \cL_r\boxtimes\cL_r\rightarrow\cV_1'$ is surjective by Lemma \ref{eqn:contra_EW_surjective}. Then because $\cF$ is right exact, $\cF(\varepsilon_{\cL_r})$ is also surjective and thus non-zero for $r\geq p$. It now follows from Theorem \ref{thm:right_exact_to_exact} that $\cF: KL^k(\fsl_2)\rightarrow\cC(\zeta,\fsl_2)$ is exact.

Finally, to prove that $\cF$ is essentially surjective, we note that because $\cC(\zeta,\fsl_2)$ has enough projectives, every object of $\cC(\zeta,\fsl_2)$ is the cokernel of some morphism between projective objects. Moreover, every projective object of $\cC(\zeta,\fsl_2)$ is an object of $\cT_\zeta$ from Theorem \ref{thm:tilt_univ_prop}, and $\cF$ restricts to an equivalence between $\cP^k$ and $\cT_\zeta$. It follows that for any object $X$ of $\cC(\zeta,\fsl_2)$, there is a right exact sequence
\begin{equation*}
\cF(\mathcal{Q})\xrightarrow{\cF(f)} \cF(\cP)\longrightarrow X\longrightarrow 0
\end{equation*}
in $\cC(\zeta,\fsl_2)$ such that $\cP$ and $\mathcal{Q}$ are projective in $KL^k(\fsl_2)$. Since $\cF$ is right exact, it follows that $X\cong\cF(\mathrm{Coker}\,f)$, and thus $\cF$ is essentially surjective.
\end{proof}

\begin{rem}
It is not so meaningful to consider whether the functor $\cF$ of Theorem \ref{thm:weak_KL_correspondence} is braided, since just like $KL^k(\fsl_2)$, $\cC(\zeta,\fsl_2)$ admits four braidings (see for example \cite[Remark 3.1]{GN}), and none of them seems to be more canonical than the others. However, it is clear from \cite[Lemma 6.4]{GN} and Theorem \ref{thm:univ_prop_F_braided} that $\cF$ is braided with respect to some choice of braiding on $\cC(\zeta,\fsl_2)$ (though which one precisely depends on one's convention for taking square roots of $\zeta$ as well as on the denominator $q$ of the admissible level $k$).
\end{rem}

Although the functor $\cF$ in Theorem \ref{thm:weak_KL_correspondence} is essentially surjective, it is neither full nor faithful. We now characterize the lack of faithfulness in the next two results:
\begin{lem}\label{lem:weak_KL_lack_of_faithful_objects}
In the setting of Theorem \ref{thm:weak_KL_correspondence}, $\cF(W)=0$ for an object $W$ of $KL^k(\fsl_2)$ if and only if $W$ is an object of $KL_k(\fsl_2)$.
\end{lem}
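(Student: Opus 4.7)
The plan is to prove both directions of the equivalence using the explicit computation of $\cF$ on simple objects from the proof of Theorem \ref{thm:weak_KL_correspondence}, together with the projectivity of the generalized Verma modules $\cV_r$ for $1\leq r\leq p-1$.

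The easy direction is that $W\in KL_k(\fsl_2)$ implies $\cF(W)=0$: by Theorem \ref{thm:adm_level_KL_k}(1) such a $W$ is a finite direct sum of simples $\cL_r$ with $1\leq r\leq p-1$, and the proof of Theorem \ref{thm:weak_KL_correspondence} established $\cF(\cL_r)=0$ for precisely these $r$, so $\cF(W)=0$ by additivity.

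For the converse, suppose $\cF(W)=0$. Since $\cF$ is exact, applying it to a composition series of $W$ shows every composition factor of $W$ has vanishing $\cF$-image, and the computation of $\cF$ on simples in the proof of Theorem \ref{thm:weak_KL_correspondence} shows this occurs precisely when the composition factor is $\cL_r$ with $1\leq r\leq p-1$. It then suffices to show that any such $W$ is semisimple as an object of $KL^k(\fsl_2)$, because a finite direct sum of simple $L_k(\fsl_2)$-modules lies in $KL_k(\fsl_2)$. By induction on length of $W$, this reduces to proving $\mathrm{Ext}^1_{KL^k(\fsl_2)}(\cL_r,\cL_{r'})=0$ for all $1\leq r,r'\leq p-1$.

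To establish this Ext vanishing, I would take an extension $0\to\cL_{r'}\to E\to\cL_r\to 0$ and use that $\cP_r=\cV_r$ is a projective cover of $\cL_r$ (Proposition \ref{prop:V1s_rigid} together with Theorem \ref{thm:Prs_properties}) to lift $\cV_r\twoheadrightarrow\cL_r$ to a map $\pi:\cV_r\to E$. By Theorem \ref{thm:gen_Verma_structure}, the kernel of $\cV_r\twoheadrightarrow\cL_r$ is the simple module $\cL_{2p-r}$, and the composite $\cL_{2p-r}\hookrightarrow\cV_r\xrightarrow{\pi}E\twoheadrightarrow\cL_r$ vanishes, so $\pi$ restricts to a map $\cL_{2p-r}\to\cL_{r'}$. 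The crucial arithmetic is that $2p-r\geq p+1>p-1\geq r'$, so $\cL_{2p-r}\not\cong\cL_{r'}$ and this restricted map must be zero; consequently $\pi$ factors through $\cL_r$, producing a section of $E\twoheadrightarrow\cL_r$ and splitting the extension. The main conceptual obstacle is recognizing that one should pass through Ext vanishing in $KL^k(\fsl_2)$ rather than trying to directly verify that the maximal proper ideal of $V^k(\fsl_2)$ annihilates $W$; the Ext vanishing in turn relies on the strict inequality $2p-r>p-1$, which places the bottom composition factor of $\cV_r$ outside the admissible range.
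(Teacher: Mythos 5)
Your proof is correct and follows essentially the same route as the paper: both directions rest on the computation of $\cF$ on simple objects, the exactness of $\cF$, and a splitting argument that lifts through the projective cover $\cV_r$ and uses that its maximal proper submodule $\cL_{2p-r}$ lies outside the range $1\leq r'\leq p-1$. Your repackaging of the splitting step as the vanishing of $\mathrm{Ext}^1_{KL^k(\fsl_2)}(\cL_r,\cL_{r'})$ is only a cosmetic reorganization of the paper's induction on the length of $W$.
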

\begin{proof}
In the proof of Theorem \ref{thm:weak_KL_correspondence}, we showed that $\cF(\cL_r)=0$ for $1\leq r\leq p-1$. Thus because $KL_k(\fsl_2)$ is semisimple with simple objects $\cL_r$ for $1\leq r\leq p-1$, and because $\cF$ is additive, it follows that $\cF(W)=0$ if $W$ is an object of $KL_k(\fsl_2)$.

Conversely, suppose $\cF(W)=0$. We will show that $W$ is an object of $KL_k(\fsl_2)$ by induction on the length $\ell(W)$. The base case $\ell(W)=0$ is trivial, and the $\ell(W)=1$ case follows from the proof of Theorem \ref{thm:weak_KL_correspondence}. Now suppose $\ell(W)>1$; then $W$ has some simple quotient $L$, and we have an exact sequence
\begin{equation}\label{eqn:F(W)=0_exact_seq}
0\longrightarrow K \longrightarrow W\longrightarrow L\longrightarrow 0
\end{equation}
for some maximal proper submodule $K\subseteq W$. Since $\cF$ is exact by Theorem \ref{thm:weak_KL_correspondence}, and since $\cF(W)=0$, we get $\cF(K)=\cF(L)=0$. Thus by induction on length, $L\cong\cL_r$ for some $1\leq r\leq p-1$, and $K$ is a finite direct sum of such $\cL_r$.

To complete the argument that $W$ is an object of $KL_k(\fsl_2)$, it remains to show that the exact sequence \eqref{eqn:F(W)=0_exact_seq} splits. Indeed, since $L$ has projective cover $\cV_r$ for some $1\leq r\leq p-1$, there is a map $f:\cV_r\rightarrow W$ such that the diagram
\begin{equation*}
\xymatrixcolsep{3pc}
\xymatrix{
& \cV_r \ar[ld]_{f} \ar@{->>}[d] \\
W \ar@{->>}[r] & L\\
}
\end{equation*}
commutes. The map $f$ is not injective because the maximal proper submodule $\cL_{2p-r}\subseteq\cV_r$ cannot be a composition factor of $K$, so $f$ descends to an injection $L\hookrightarrow W$ splitting the exact sequence.
\end{proof}

\begin{prop}\label{prop:weak_KL_lack_of_faithful}
In the setting of Theorem \ref{thm:weak_KL_correspondence}, $\cF(f)=0$ for a morphism $f$ in $KL^k(\fsl_2)$ if and only if $\mathrm{Im}\,f$ is an object of $KL_k(\fsl_2)$.
\end{prop}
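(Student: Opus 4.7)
The plan is to reduce the claim about morphisms to the object-level statement already established in Lemma \ref{lem:weak_KL_lack_of_faithful_objects}, by exploiting the exactness of $\cF$ proved in Theorem \ref{thm:weak_KL_correspondence}. Given a morphism $f : W_1 \to W_2$ in $KL^k(\fsl_2)$, I would factor it through its image as
\begin{equation*}
f : W_1 \xrightarrow{\pi} \mathrm{Im}\,f \xrightarrow{\iota} W_2,
\end{equation*}
where $\pi$ is surjective and $\iota$ is injective. Applying the functor $\cF$ yields the factorization $\cF(f) = \cF(\iota)\circ\cF(\pi)$, and by exactness of $\cF$, the morphism $\cF(\pi)$ remains surjective while $\cF(\iota)$ remains injective.

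From this factorization it follows immediately that $\cF(f) = 0$ if and only if $\cF(\mathrm{Im}\,f) = 0$: the forward direction holds because any morphism factoring through an injection followed by surjection of a nonzero object with a nonzero composition must itself be nonzero (precisely, if $\cF(\mathrm{Im}\,f)\neq 0$ then since $\cF(\pi)$ is surjective the composition $\cF(\iota)\circ\cF(\pi)$ equals zero only when $\cF(\iota)=0$, but $\cF(\iota)$ is injective on a nonzero object, contradiction); the reverse direction is trivial.

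Finally, I would invoke Lemma \ref{lem:weak_KL_lack_of_faithful_objects} to conclude $\cF(\mathrm{Im}\,f) = 0$ if and only if $\mathrm{Im}\,f$ is an object of $KL_k(\fsl_2)$, which gives the desired characterization. There is no real obstacle here: the entire argument is a routine consequence of exactness plus the object-level lemma, so the proof amounts to a couple of lines once these ingredients are in place.
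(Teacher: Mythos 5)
Your proposal is correct and follows essentially the same argument as the paper: factor $f$ through its image, use the exactness of $\cF$ from Theorem \ref{thm:weak_KL_correspondence} to see that $\cF(\pi)$ stays surjective and $\cF(\iota)$ stays injective, deduce that $\cF(f)=0$ if and only if $\cF(\mathrm{Im}\,f)=0$, and then apply Lemma \ref{lem:weak_KL_lack_of_faithful_objects}. No gaps.
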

\begin{proof}
Given $f: W_1\rightarrow W_2$ in $KL^k(\fsl_2)$, we factorize $f=\iota\circ\pi$ where $\pi: W_1\rightarrow\mathrm{Im}\,f$ is surjective and $\iota:\mathrm{Im}\,f\rightarrow W_2$ is injective. Because $\cF$ is exact by Theorem \ref{thm:weak_KL_correspondence}, $\cF(\pi)$ is also surjective and $\cF(\iota)$ is also injective. Thus $\cF(f)=0$ if and only if $\cF(\mathrm{Im}\,f)=0$, which by Lemma \ref{lem:weak_KL_lack_of_faithful_objects}  occurs if and only if $\mathrm{Im}\,f$ is an object of $KL_k(\fsl_2)$.
\end{proof}

We can now see that $\cF$ is also not full. For example, from the proof of Theorem \ref{thm:weak_KL_correspondence}, $\cF(\cV_1')\cong L_0\cong\cF(\cV_1)$. Thus $\cF$ maps $\hom_{KL^k(\fsl_2)}(\cV_1,\cV_1')$ to $\mathrm{End}_{\cC(\zeta,\fsl_2)}(L_0)=\CC\cdot\Id_{L_0}$. However, $\hom_{KL^k(\fsl_2)}(\cV_1,\cV_1')$ is spanned by the map $f$ obtained by composing the surjection $\cV_1\twoheadrightarrow\cL_1$ with the inclusion $\cL_1\rightarrow\cV_1'$. Since $\mathrm{Im}\,f\cong\cL_1$ is an object of $KL_k(\fsl_2)$, $\cF(f)=0$ by Proposition \ref{prop:weak_KL_lack_of_faithful}, and thus
\begin{equation*}
\cF: \hom_{KL^k(\fsl_2)}(\cV_1,\cV_1')\longrightarrow\mathrm{End}_{\cC(\zeta,\fsl_2)}(L_0)
\end{equation*}
is not surjective. But most of the maps between morphism spaces that $\cF$ induces  are surjective:
\begin{prop}\label{prop:weak_KL_full_condition}
In the setting of Theorem \ref{thm:weak_KL_correspondence},  let $W$ be an object of $KL^k(\fsl_2)$ such that there is a right exact sequence
\begin{equation*}
Q_W\xrightarrow{q_W} P_W\xrightarrow{p_W} W\longrightarrow 0
\end{equation*}
with $P_W$, $Q_W$ objects of $\cP^k$ such that $\cF(P_W)$ and $\cF(Q_W)$ are projective in $\cC(\zeta,\fsl_2)$. Then
\begin{equation*}
\cF: \hom_{KL^k(\fsl_2)}(W,X)\longrightarrow\hom_{\cC(\zeta,\fsl_2)}(\cF(W),\cF(X))
\end{equation*}
is surjective for any object $X$ in $KL^k(\fsl_2)$.
\end{prop}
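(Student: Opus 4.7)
The plan is to prove the stronger statement that under the hypothesis, $\cF$ in fact induces a bijection $\hom_{KL^k(\fsl_2)}(W,X)\to\hom_{\cC(\zeta,\fsl_2)}(\cF(W),\cF(X))$ for every object $X$, by a two-step diagram chase. In the first step, I would show that for any object $X$ of $KL^k(\fsl_2)$, the maps
\begin{equation*}
\cF:\hom_{KL^k(\fsl_2)}(P_W,X)\longrightarrow\hom_{\cC(\zeta,\fsl_2)}(\cF(P_W),\cF(X))
\end{equation*}
and the corresponding map for $Q_W$ are isomorphisms. In the second step, I would apply the left exact functors $\hom(-,X)$ and $\hom(-,\cF(X))$ to the given right exact sequence and its $\cF$-image, and invoke the five lemma.

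For the first step, I would use that $KL^k(\fsl_2)$ has enough projectives (Theorem \ref{thm:Prs_properties}) to choose a right exact sequence $Q_X\to P_X\to X\to 0$ with $P_X,Q_X\in\cP^k$, and apply $\cF$ to get a right exact sequence $\cF(Q_X)\to\cF(P_X)\to\cF(X)\to 0$ by exactness of $\cF$ (Theorem \ref{thm:weak_KL_correspondence}). Then I would apply the exact functor $\hom(P_W,-)$ (exact because $P_W$ is projective in $KL^k(\fsl_2)$) to the first sequence and the exact functor $\hom(\cF(P_W),-)$ (exact because $\cF(P_W)$ is projective in $\cC(\zeta,\fsl_2)$ by hypothesis) to the second, obtaining a commutative diagram with right exact rows
\begin{equation*}
\xymatrix{
\hom(P_W,Q_X) \ar[r] \ar[d]^{\cong} & \hom(P_W,P_X) \ar[r] \ar[d]^{\cong} & \hom(P_W,X) \ar[r] \ar[d]^{\cF} & 0 \\
\hom(\cF(P_W),\cF(Q_X)) \ar[r] & \hom(\cF(P_W),\cF(P_X)) \ar[r] & \hom(\cF(P_W),\cF(X)) \ar[r] & 0
}
\end{equation*}
in which the first two vertical maps are isomorphisms because $\cF$ restricts to the tensor equivalence $\cP^k\xrightarrow{\sim}\cT_\zeta$ of Theorem \ref{thm:Tzeta_Pk_equiv}. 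Taking cokernels (equivalently, the right exact version of the five lemma) then shows the third vertical map is also an isomorphism, and the same argument applies with $Q_W$ in place of $P_W$.

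For the second step, I would apply the left exact functors $\hom(-,X)$ and $\hom(-,\cF(X))$ respectively to the given right exact sequence $Q_W\to P_W\to W\to 0$ and to its image $\cF(Q_W)\to\cF(P_W)\to\cF(W)\to 0$ under $\cF$, and use that $\cF$ is functorial to obtain a commutative diagram
\begin{equation*}
\xymatrix{
0 \ar[r] & \hom(W,X) \ar[r] \ar[d]^{\cF} & \hom(P_W,X) \ar[r] \ar[d]^{\cong} & \hom(Q_W,X) \ar[d]^{\cong} \\
0 \ar[r] & \hom(\cF(W),\cF(X)) \ar[r] & \hom(\cF(P_W),\cF(X)) \ar[r] & \hom(\cF(Q_W),\cF(X))
}
\end{equation*}
with left exact rows. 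The middle and right vertical maps are isomorphisms by Step 1, so the five lemma yields that the left vertical map is also an isomorphism, proving in particular the surjectivity claimed by the proposition.

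The only genuine point where the hypothesis of the proposition is used is in Step 1, where I needed the exactness of $\hom(\cF(P_W),-)$ and $\hom(\cF(Q_W),-)$ in order to produce the right exact bottom row of the first diagram. Without the assumption that $\cF(P_W)$ and $\cF(Q_W)$ are projective in $\cC(\zeta,\fsl_2)$ one would at best have a four-term exact sequence at the bottom, and the diagram chase would fail; the preceding discussion, showing that $\cF$ is in general not full, reflects precisely this obstruction. Thus the work in the proof is essentially bookkeeping around projectivity and the previously established equivalence $\cP^k\cong\cT_\zeta$.
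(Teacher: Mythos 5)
Your proof is correct, and it rests on the same ingredients as the paper's argument: a projective presentation of $X$, exactness of $\hom(P_W,-)$ and $\hom(\cF(P_W),-)$ (the latter being exactly where the hypothesis that $\cF(P_W)$, $\cF(Q_W)$ are projective in $\cC(\zeta,\fsl_2)$ enters), right exactness of $\cF$, and full faithfulness of $\cF\vert_{\cP^k}$ coming from the equivalence $\cP^k\simeq\cT_\zeta$. The packaging differs: the paper argues by an explicit lifting, taking a given $\til{f}:\cF(W)\rightarrow\cF(X)$, lifting it through the presentations $Q_W\rightarrow P_W\rightarrow W\rightarrow 0$ and $Q_X\rightarrow P_X\rightarrow X\rightarrow 0$ using projectivity of $\cF(P_W)$ and $\cF(Q_W)$, transporting the lifts back along $\cF\vert_{\cP^k}$, and then descending to $f:W\rightarrow X$ via the universal property of cokernels and the faithfulness of $\cF$ on $\cP^k$; you instead encode the same data in two $\hom$-sequences and invoke the four/five lemma. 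One genuine dividend of your route is that it proves the stronger statement that $\cF$ is \emph{bijective} on $\hom(W,X)$ under the hypothesis, not just surjective; this is consistent with Proposition \ref{prop:weak_KL_lack_of_faithful}, since the hypothesis forces $P_W$ to have no summands $\cP_r$ with $r\leq p-1$, whence $\hom(W,\cL_s)=0$ for $1\leq s\leq p-1$ and no morphism out of $W$ can have image in $KL_k(\fsl_2)$. Your closing remark correctly identifies the role of the projectivity hypothesis: without it the bottom row of your first diagram need not be right exact, which is precisely the obstruction illustrated by the failure of fullness on $\hom(\cV_1,\cV_1')$.
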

\begin{proof}
First, since $KL^k(\fsl_2)$ has enough projectives, we may fix a right exact sequence
\begin{equation*}
Q_X\xrightarrow{q_X} P_X\xrightarrow{p_X} X\longrightarrow 0
\end{equation*}
where $P_X$ and $Q_X$ are objects of $\cP^k$. Then both $\cF(P_W)$ and $\cF(P_X)$ are objects of $\cT_\zeta$, with $\cF(P_W)$ in addition projective, and $\cF(p_W)$ and $\cF(p_X)$ are both surjective because $\cF$ is right exact. Now because $\cF(P_W)$ is projective in $\cC(\zeta,\fsl_2)$, for any morphism $\til{f}: \cF(W)\rightarrow\cF(X)$ in $\cC(\zeta,\fsl_2)$, there is a morphism $\til{g}: \cF(P_W)\rightarrow\cF(p_X)$ such that the diagram
\begin{equation*}
\xymatrixcolsep{3pc}
\xymatrix{
\cF(P_W) \ar@{->>}[r]^{\cF(p_W)} \ar[d]^{\til{g}} & \cF(W) \ar[d]^{\til{f}} \\
\cF(P_X) \ar@{->>}[r]^{\cF(p_X)} & \cF(X)\\
}
\end{equation*}
commutes. Since $\cF$ restricts to an equivalence between $\cP^k$ and $\cT_\zeta$, we have $\til{g}=\cF(g)$ for some map $g: P_W\rightarrow P_X$ in $KL^k(\fsl_2)$. Next, we can extend the commutative diagram to
\begin{equation*}
\xymatrixcolsep{3pc}
\xymatrix{
\cF(Q_W) \ar[r]^{\cF(q_W)} \ar[d]^{\cF(h)} & \cF(P_W) \ar@{->>}[r]^{\cF(p_W)} \ar[d]^{\cF(g)} & \cF(W) \ar[d]^{\til{f}} \\
\cF(Q_X) \ar[r]^{\cF(q_X)} & \cF(P_X) \ar@{->>}[r]^{\cF(p_X)} & \cF(X)\\
}
\end{equation*} 
Indeed, the projectivity of $\cF(Q_W)$ in $\cC(\zeta,\fsl_2)$ and the fact that 
\begin{equation*}
\mathrm{Im}\,\cF(g)\circ\cF(q_W)\subseteq\mathrm{Ker}\,\cF(p_X) =\mathrm{Im}\,\cF(q_X).
\end{equation*}
implies that there is a map $\til{h}: \cF(Q_W)\rightarrow\cF(Q_X)$ such that $\cF(q_X)\circ\til{h}=\cF(g)\circ\cF(q_W)$.
Then $\til{h}=\cF(h)$ for some $h: Q_W\rightarrow Q_X$ again because $\cF$ restricts to an equivalence between $\cP^k$ and $\cT_\zeta$.

Now because $\cF\vert_{\cP^k}$ is faithful, we have $q_X\circ h=g\circ q_W$. Thus by the universal property of cokernels, we get a unique map $f: W\rightarrow X$ such that the diagram
\begin{equation*}
\xymatrixcolsep{3pc}
\xymatrix{
Q_W \ar[r]^{q_W} \ar[d]^{h} & P_W \ar@{->>}[r]^{p_W} \ar[d]^{g} & W \ar[d]^f \\
Q_X \ar[r]^{q_X} & P_X \ar@{->>}[r]^{p_X} & X\\
}
\end{equation*} 
commutes. Then
\begin{equation*}
\cF(f)\circ\cF(p_W) =\cF(p_X)\circ\cF(g) = \til{f}\circ\cF(p_W),
\end{equation*}
so $\til{f}=\cF(f)$ by the surjectivity of $\cF(p_W)$. This proves the proposition.
\end{proof}

Although $KL^k(\fsl_2)$ and $\cC(\zeta,\fsl_2)$ are not equivalent when $k$ is an admissible level, it was pointed out to us by Cris Negron that there is a tensor equivalence when $KL^k(\fsl_2)$ and $\cC(\zeta,\fsl_2)$ are replaced with suitable derived categories. Before stating the result, we introduce some notation: For $\cC$ a full additive subcategory of some abelian category, $K^b(\cC)$ denotes the bounded homotopy category of $\cC$ \cite[Section 10.1]{We}; its objects are bounded cochain complexes of objects in $\cC$, and its morphisms are cochain homotopy equivalence classes of cochain maps.  If $\cC$ is itself abelian, $D^b(\cC)$ denotes the bounded derived category of $\cC$ \cite[Section 10.4]{We}; it is the localization of $K^b(\cC)$ at the collection of quasi-isomorphisms. We also use $D(\cC)$ for the unbounded derived category of the Ind-category $\ind\,\cC$ of $\cC$. In the case $\cC=KL^k(\fsl_2)$, $\ind\,\cC$ is the category of generalized $V^k(\fsl_2)$-modules which are the unions of their $KL^k(\fsl_2)$-submodules; by \cite{CMY-completions}, it has the vertex algebraic braided tensor category structure of \cite{HLZ1}-\cite{HLZ8}.

 The category $D(\cC)$ is one natural cocompletion of $D^b(\cC)$. Alternatively, one can take the Ind-category of $D^b(\cC)$, but since $D^b(\cC)$ is a triangulated category and not abelian, one first needs to replace $D^b(\cC)$ with its $\infty$-category version (see \cite[Section 1.3]{Lur2}) and then take the Ind-category in the sense of \cite[Definition 5.3.5.1]{Lur1}. One can then take the homotopy category (in the sense of \cite[Definition 1.1.3.2]{Lur1}) to get a triangulated category; this is what we mean by $\ind\,D^b(\cC)$. Now the following result and its proof were communicated to us by Cris Negron; it is mainly a consequence of Theorem \ref{thm:Tzeta_Pk_equiv}:
\begin{thm}\label{thm:derived_KL_corr}
Let $k=-2+p/q$ for relatively prime $p\in\ZZ_{\geq 2}$ and $q\in\ZZ_{\geq 1}$, and let $\zeta=e^{\pi i q/p}$. Then there is a monoidal equivalence $\cF: \ind\,D^b(\cC(\zeta,\fsl_2))\rightarrow D(KL^k(\fsl_2))$ extending the equivalence $K^b(\cT_\zeta)\xrightarrow{\sim} K^b(\cP^k)$ induced by Theorem \ref{thm:Tzeta_Pk_equiv}.
\end{thm}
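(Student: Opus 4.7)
The plan is to leverage Theorem \ref{thm:Tzeta_Pk_equiv}, which provides a monoidal equivalence $\cT_\zeta \xrightarrow{\sim} \cP^k$ between the additive monoidal subcategories of projective objects in $\cC(\zeta,\fsl_2)$ and $KL^k(\fsl_2)$. Since forming bounded homotopy categories is a $2$-functorial construction, this equivalence upgrades automatically to a monoidal equivalence $K^b(\cT_\zeta) \xrightarrow{\sim} K^b(\cP^k)$ of triangulated (or stable $\infty$-) categories, where the tensor product is defined termwise on complexes and descends to the derived tensor product because tensoring with any rigid object of $\cT_\zeta$ or $\cP^k$ is exact.

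The desired equivalence then follows by applying $\ind$-completion, in the stable $\infty$-categorical sense, to both sides. On the left, I would identify $\ind\,D^b(\cC(\zeta,\fsl_2))$ with $\ind\,K^b(\cT_\zeta)$. Indeed, $\cC(\zeta,\fsl_2)$ is a finite abelian category with enough projectives, and its projectives form exactly $\cT_\zeta$. The natural functor $K^b(\cT_\zeta)\rightarrow D^b(\cC(\zeta,\fsl_2))$ is fully faithful, and after $\ind$-completion every bounded complex over $\cC(\zeta,\fsl_2)$ is realized as a filtered colimit of bounded complexes of tilting modules coming from projective resolutions truncated at increasing depth, so the induced functor $\ind\,K^b(\cT_\zeta) \rightarrow \ind\,D^b(\cC(\zeta,\fsl_2))$ is essentially surjective as well.

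On the right, I would identify $D(KL^k(\fsl_2))$ with $\ind\,K^b(\cP^k)$ by compact generation. By Theorem \ref{thm:Prs_properties} and Corollary \ref{cor:all_projectives}, the modules $\cP_r$, $r\in\ZZ_{\geq 1}$, exhaust (up to direct sums and summands) the projective objects of $KL^k(\fsl_2)$, and they serve as compact generators of the locally finite category $\ind\,KL^k(\fsl_2)$: every object admits a (generally unbounded) projective resolution by direct sums of $\cP_r$'s. Neeman's theory of compactly generated triangulated categories then identifies the compact objects of $D(KL^k(\fsl_2))$ with $K^b(\cP^k)$, and hence $D(KL^k(\fsl_2)) \simeq \ind\,K^b(\cP^k)$. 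The tensor structure passes through $\ind$-completion on both sides because tensoring with a rigid projective is exact and colimit-preserving, so the symmetric monoidal structure on $K^b(\cT_\zeta) \simeq K^b(\cP^k)$ extends continuously.

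The main obstacle is precisely the $\infty$-categorical bookkeeping: one must enhance each of the bounded homotopy and derived categories to symmetric monoidal stable $\infty$-categories so that the $\ind$-completion is formed coherently and is compatible with the monoidal structures. This is technical but standard once a dg-enhancement (or a model structure on the category of complexes) is fixed. The essential mathematical content of the derived correspondence is fully contained in Theorem \ref{thm:Tzeta_Pk_equiv} and the compact generation of $D(KL^k(\fsl_2))$ by the projectives $\cP_r$.
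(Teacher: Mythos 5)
Your overall architecture matches the paper's: identify $K^b(\cT_\zeta)\simeq K^b(\cP^k)$ via Theorem \ref{thm:Tzeta_Pk_equiv}, identify $D(KL^k(\fsl_2))$ with the Ind-completion of $K^b(\cP^k)$ by compact generation (this half is essentially what the paper does, citing Lurie and \cite[Lemma 2.2.1]{SS}), and then compare Ind-completions. But there is a genuine gap on the left-hand side. Your identification of $\ind\,K^b(\cT_\zeta)$ with $\ind\,D^b(\cC(\zeta,\fsl_2))$ rests on the claim that ``the projectives of $\cC(\zeta,\fsl_2)$ form exactly $\cT_\zeta$,'' which is false: by Theorem \ref{thm:tilting_structures}, the tilting modules $T_m=L_m$ for $0\leq m\leq p-2$ are simple and \emph{not} projective (their projective covers are $P_m=T_{2p-m-2}$), so $\cT_\zeta$ strictly contains the projectives. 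Consequently, full faithfulness of $K^b(\cT_\zeta)\rightarrow D^b(\cC(\zeta,\fsl_2))$ is not the standard statement about bounded complexes of projectives; it requires the vanishing of higher $\mathrm{Ext}$ groups between tilting modules, which is precisely the nontrivial input the paper takes from \cite[Proposition 2.7]{Os} (based on \cite[Section 1.5]{BBM}). (A further minor error: $\cC(\zeta,\fsl_2)$ has infinitely many simple objects, so it is locally finite but not a finite abelian category.)

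Your essential-surjectivity argument after Ind-completion also does not work as stated. In $\ind\,D^b(\cC(\zeta,\fsl_2))$ the objects of $D^b(\cC(\zeta,\fsl_2))$ are compact, so if a bounded complex $M$ were a filtered colimit of bounded tilting complexes (e.g.\ truncated projective resolutions), the identity of $M$ would factor through one term, forcing $M$ to be a retract of a bounded tilting complex --- which is essentially the statement you are trying to prove, not a consequence of formal colimit manipulations. In other words, one must know that $K^b(\cT_\zeta)\rightarrow D^b(\cC(\zeta,\fsl_2))$ is an equivalence (up to retracts) \emph{before} Ind-completing; passing to $\ind$ cannot create this. The paper's proof supplies exactly this equivalence via Ostrik/BBM, then composes with $K^b(\cT_\zeta)\xrightarrow{\sim}K^b(\cP^k)\hookrightarrow D(KL^k(\fsl_2))$ and concludes with Lurie's compact-generation criteria \cite[Propositions 5.3.5.10 and 5.3.5.11]{Lur1}. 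So the missing ingredient in your proposal is the tilting-generation theorem for $D^b(\cC(\zeta,\fsl_2))$; once that is invoked, the rest of your outline aligns with the paper's argument.
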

\begin{proof}
The inclusion $\cT_\zeta\hookrightarrow\cC(\zeta,\fsl_2)$ induces a monoidal equivalence $K^b(\cT_\zeta)\rightarrow D^b(\cC(\zeta,\fsl_2))$ by \cite[Proposition 2.7]{Os}, which is based on \cite[Section 1.5]{BBM}. Since $KL^k(\fsl_2)$ has enough projectives, there is also a fully faithful embedding $K^b(\cP^k)\hookrightarrow D(KL^k(\fsl_2))$ that identifies $K^b(\cP^k)$ as a subcategory of compact objects in $D(KL^k(\fsl_2))$. Thus using also Theorem \ref{thm:Tzeta_Pk_equiv}, we get a monoidal embedding
\begin{equation*}
\cG: D^b(\cC(\zeta,\fsl_2)) \xrightarrow{\sim} K^b(\cT_\zeta)\xrightarrow{\sim} K^b(\cP^k)\hookrightarrow D(KL^k(\fsl_2)).
\end{equation*}
The $\infty$-category lift of $\cG$ is still fully faithful by \cite[Proposition 5.10]{BGT}. Now \cite[Proposition 5.3.5.10]{Lur1} yields a functor $\cF: \ind\,D^b(\cC(\zeta,\fsl_2))\rightarrow D(KL^k(\fsl_2))$. By \cite[Proposition 5.3.5.11]{Lur1}, $\cF$ is an equivalence because $\cG$ is fully faithful with compact image in $D(KL^k(\fsl_2))$, and this compact image generates $D(KL^k(\fsl_2))$ (which follows from \cite[Lemma 2.2.1]{SS}, for example). Taking homotopy categories (in the sense of \cite[Definition 1.1.3.2]{Lur1}) then gives a monoidal equivalence of triangulated categories (see \cite[Corollary 5.11]{BGT}).
\end{proof}

\begin{rem}
We call the monoidal equivalence of Theorem \ref{thm:derived_KL_corr} the \textit{derived Kazhdan-Lusztig correspondence} for $\fsl_2$ at admissible levels. 
\end{rem}

\subsection{A tensor-categorical version of quantum Drinfeld-Sokolov reduction}\label{subsec:Vir}

We now take $\cC$ in Theorem \ref{thm:KLk_univ_prop} to be the category $\cO_{c_{p,q}}$ of $C_1$-cofinite grading-restricted generalized modules for the universal Virasoro vertex operator algebra $V_{c_{p,q}}$ at central charge $c_{p,q}=1-\frac{6(p-q)^2}{pq}$ (note that $c_{p,q}=c_{q,p}$). The vertex operator algebra $V_{c_{p,q}}$ is the affine $W$-algebra obtained from $V^k(\fsl_2)$ by quantum Drinfeld-Sokolov reduction \cite{FF}; see also \cite[Chapter 15]{FB}. Moreover, quantum Drinfeld-Sokolov reduction defines an exact functor from suitable categories of $V^k(\fsl_2)$-modules to $V_{c_{p,q}}$-modules \cite{FKW, Ar}. We now use Theorem \ref{thm:KLk_univ_prop} to obtain a right exact tensor functor from $KL^k(\fsl_2)$ to $\cO_{c_{p,q}}$; we expect that this functor is exact and naturally isomorphic to quantum Drinfeld-Sokolov reduction, although we are not able to prove this at the moment. Thus for now, we refer to our functor as a ``tensor-categorical version'' of quantum Drinfeld-Sokolov reduction.

It was shown in \cite{CJORY} that the category $\cO_{c_{p,q}}$ has the vertex algebraic braided tensor category structure of \cite{HLZ1}-\cite{HLZ8}, and that its simple objects are the simple quotients of the Virasoro Verma modules $\cV_{r,s}$ of central charge $c_{p,q}$ and lowest conformal weight
\begin{equation*}
h_{r,s} =\frac{(qr-ps)^2-(p-q)^2}{4pq}
\end{equation*}
for $r,s\in\ZZ_{\geq 1}$. For $q=1$, the detailed structure of the tensor category $\cO_{c_{p,1}}$ was determined in \cite{MY2}, and these results were used in \cite{GN} to show that $\cO_{c_{p,1}}$ is tensor equivalent to the quantum group category $\cC(e^{\pi i/p},\fsl_2)$. Thus in this case, Theorem \ref{thm:weak_KL_correspondence} already provides a tensor-categorical version of quantum Drinfeld-Sokolov reduction which is exact.

In the case $p,q\geq 2$, some details of the tensor category structure on $\cO_{c_{p,q}}$ were obtained in \cite{MS}. In particular, Theorem 5.7 and Proposition 5.8 of \cite{MS} show that one of the two length-$2$ quotients of $\cV_{2,1}$, denoted $\mathcal{K}_{2,1}$, is rigid and self-dual with intrinsic dimension $-e^{\pi i q/p}-e^{-\pi i q/p}$, and that one of the two length-$2$ quotients of $\cV_{1,2}$, denoted $\mathcal{K}_{1,2}$, is rigid and self-dual with intrinsic dimension $-e^{\pi ip/q} - e^{-\pi i p/q}$. Then we have:
\begin{thm}\label{thm:tens_cat_qDS_red}
Let $p,q\in\ZZ_{\geq 2}$ be relatively prime. Then there are unique right exact braided tensor functors $\cF_{p,q}: KL^{-2+p/q}(\fsl_2)\rightarrow\cO_{c_{p,q}}$ and $\cF_{q,p}: KL^{-2+q/p}(\fsl_2)\rightarrow\cO_{c_{p,q}}$ such that $\cF_{p,q}(\cV_2, e_{\cV_2}, i_{\cV_2})=(\mathcal{K}_{2,1}, e_{\mathcal{K}_{2,1}}, i_{\mathcal{K}_{2,1}})$ and $\cF_{q,p}(\cV_2, e_{\cV_2}, i_{\cV_2})=(\mathcal{K}_{1,2}, e_{\mathcal{K}_{1,2}}, i_{\mathcal{K}_{1,2}})$. Moreover, $\cF_{p,q}$ and $\cF_{q,p}$ preserve twists if $KL^{-2+p/q}(\fsl_2)$ and $KL^{-2+q/p}(\fsl_2)$ are equipped with the (non-standard) minus sign twists of Theorem \ref{thm:KLk_braidings}(1).
\end{thm}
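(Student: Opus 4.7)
The plan is to apply the universal property of Theorem \ref{thm:KLk_univ_prop} twice, once for each functor. By the results of \cite{MS} cited just before the theorem statement, $\mathcal{K}_{2,1}$ is a rigid self-dual object of $\cO_{c_{p,q}}$ whose intrinsic dimension $-e^{\pi iq/p}-e^{-\pi iq/p}$ coincides with that of $\cV_2$ in $KL^{-2+p/q}(\fsl_2)$ computed in Theorem \ref{thm:V12_rigid}, and analogously for $\mathcal{K}_{1,2}$ in $\cO_{c_{p,q}}=\cO_{c_{q,p}}$ against $\cV_2$ in $KL^{-2+q/p}(\fsl_2)$. Since $\cO_{c_{p,q}}$ is a braided tensor category with right exact tensor product by \cite{CJORY}, Theorem \ref{thm:KLk_univ_prop} applies directly to produce the two unique right exact tensor functors $\cF_{p,q}$ and $\cF_{q,p}$ with the prescribed actions on $(\cV_2,e_{\cV_2},i_{\cV_2})$.

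To upgrade these to braided tensor functors, I will invoke Theorem \ref{thm:univ_prop_F_braided}. This requires showing that the official self-braiding $\cR_{\mathcal{K}_{2,1},\mathcal{K}_{2,1}}$ in $\cO_{c_{p,q}}$ satisfies
\begin{equation*}
\cR_{\mathcal{K}_{2,1},\mathcal{K}_{2,1}} = e^{\pi iq/2p}\cdot\Id_{\mathcal{K}_{2,1}\tens\mathcal{K}_{2,1}} + e^{-\pi iq/2p}\cdot(i_{\mathcal{K}_{2,1}}\circ e_{\mathcal{K}_{2,1}}),
\end{equation*}
and similarly for $\mathcal{K}_{1,2}$ with $q$ and $p$ exchanged. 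The approach mirrors the proof of Proposition \ref{prop:V12_self_braiding}: the fusion calculations in \cite{MS} show that $\mathcal{K}_{2,1}\tens\mathcal{K}_{2,1}$ has a two-dimensional endomorphism algebra spanned by $\Id$ and $i_{\mathcal{K}_{2,1}}\circ e_{\mathcal{K}_{2,1}}$, so $\cR_{\mathcal{K}_{2,1},\mathcal{K}_{2,1}} = a\cdot\Id + b\cdot (i_{\mathcal{K}_{2,1}}\circ e_{\mathcal{K}_{2,1}})$ for some $a,b\in\CC$; the hexagon axioms together with the intrinsic dimension constraint force $(a,b)$ into one of the four pairs listed in the proof of Proposition \ref{prop:V12_self_braiding}, and the correct branch is pinned down by computing $e_{\mathcal{K}_{2,1}}\circ\cR_{\mathcal{K}_{2,1},\mathcal{K}_{2,1}}$ via an analysis of the Virasoro intertwining operator that defines $e_{\mathcal{K}_{2,1}}$ (of which the sign and phase exactly parallel the verification carried out for $\cV_2$ in equations \eqref{eqn:twist_with_braiding}--\eqref{eqn:c_for_i} and the corresponding $p=2$ calculation).

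For the compatibility with twists, I will use that the official Virasoro twist $\theta = e^{2\pi iL(0)}$ acts on the simple summand of $\cV_{2,1}$ (hence on $\mathcal{K}_{2,1}$'s unique cyclic generator at its lowest conformal weight) by the scalar $e^{2\pi ih_{2,1}}$, and compute
\begin{equation*}
h_{2,1} = \frac{(2q-p)^2-(p-q)^2}{4pq} = \frac{3q}{4p}-\frac{1}{2},
\end{equation*}
so $\theta_{\mathcal{K}_{2,1}} = -e^{3\pi iq/(2p)}\cdot\Id_{\mathcal{K}_{2,1}}$, which is exactly the minus-sign option in Theorem \ref{thm:KLk_braidings}(1). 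Proposition \ref{prop:braid_and_twist_from_V12}(2) then propagates this from the generator $\cV_2$ to all of $KL^{-2+p/q}(\fsl_2)$, yielding the twist-preservation claim for $\cF_{p,q}$; the symmetric computation $h_{1,2} = 3p/(4q) - 1/2$ handles $\cF_{q,p}$.

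I expect the main obstacle to be the direct determination of the sign and phase of $\cR_{\mathcal{K}_{2,1},\mathcal{K}_{2,1}}$, since this is the step that cannot be reduced purely to formal hexagon/rigidity manipulations and genuinely needs input from the Virasoro intertwining operator analysis. A cleaner alternative, should the direct calculation be awkward, is to combine the balancing equation $\cR^2_{\mathcal{K}_{2,1},\mathcal{K}_{2,1}}\circ(\theta_{\mathcal{K}_{2,1}}\tens\theta_{\mathcal{K}_{2,1}}) = \theta_{\mathcal{K}_{2,1}\tens\mathcal{K}_{2,1}}$ (where the right side is computable from the twist values on composition factors of $\mathcal{K}_{2,1}\tens\mathcal{K}_{2,1}$ as listed in \cite{MS}) with the relation $e_{\mathcal{K}_{2,1}}\circ\cR_{\mathcal{K}_{2,1},\mathcal{K}_{2,1}} = c\cdot e_{\mathcal{K}_{2,1}}$ to fix both $a^2$ and the sign of $a$, after which $b$ is determined by the intrinsic dimension. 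Either route yields the required self-braiding formula, completing the verification that $\cF_{p,q}$ and $\cF_{q,p}$ are braided tensor functors preserving the indicated twists.
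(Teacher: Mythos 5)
Your proposal is correct and follows essentially the same route as the paper: apply Theorem \ref{thm:KLk_univ_prop} using the rigidity and intrinsic dimension of $\mathcal{K}_{2,1}$ and $\mathcal{K}_{1,2}$ from \cite{MS}, pin down the self-braiding $\cR_{\mathcal{K}_{2,1},\mathcal{K}_{2,1}}=e^{\pi iq/2p}\cdot\Id+e^{-\pi iq/2p}\cdot(i_{\mathcal{K}_{2,1}}\circ e_{\mathcal{K}_{2,1}})$ as in Proposition \ref{prop:V12_self_braiding} via the two-dimensional endomorphism algebra, the hexagon constraint, and the relation $e_{\mathcal{K}_{2,1}}\circ\cR_{\mathcal{K}_{2,1},\mathcal{K}_{2,1}}=-e^{-3\pi iq/2p}\cdot e_{\mathcal{K}_{2,1}}$, then invoke Theorem \ref{thm:univ_prop_F_braided} together with $\theta_{\mathcal{K}_{2,1}}=e^{2\pi ih_{2,1}}\cdot\Id=-e^{3\pi iq/2p}\cdot\Id$ for the twist claim. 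The only cosmetic difference is that the paper simply cites the proof of \cite[Theorem 5.7]{MS} for the evaluation-braiding constraint rather than re-deriving it by intertwining-operator analysis as you sketch, and your computation $h_{2,1}=\frac{3q}{4p}-\frac{1}{2}$ matches the paper's value.
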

\begin{proof}
Since $\mathcal{K}_{1,2}$ corresponds to $\mathcal{K}_{2,1}$ under the identification $\cO_{c_{p,q}}=\cO_{c_{q,p}}$, it is enough to prove the statements about $\cF_{p,q}$. The existence and uniqueness of the right exact tensor functor $\cF_{p,q}$ are immediate from \cite[Proposition 5.8]{MS} and Theorem \ref{thm:KLk_univ_prop}. 

To show that $\cF_{p,q}$ is braided, we can determine the self-braiding $\cR_{\mathcal{K}_{2,1},\mathcal{K}_{2,1}}$ similar to Proposition \ref{prop:V12_self_braiding} and then apply Theorem \ref{thm:univ_prop_F_braided}. In more detail, the structure of $\mathcal{K}_{2,1}\boxtimes\mathcal{K}_{2,1}$ from \cite[Theorem 1.1(2)]{MS} implies that $\mathrm{End}_{\cO_{c_{p,q}}}(\mathcal{K}_{2,1}\boxtimes\mathcal{K}_{2,1})$ is spanned by $\Id_{\mathcal{K}_{2,1}\boxtimes\mathcal{K}_{2,1}}$ and $f_{\mathcal{K}_{2,1}}:=i_{\mathcal{K}_{2,1}}\circ e_{\mathcal{K}_{2,1}}$, so
\begin{equation*}
\cR_{\mathcal{K}_{2,1},\mathcal{K}_{2,1}} = a\cdot\Id_{\mathcal{K}_{2,1}\boxtimes\mathcal{K}_{2,1}}+ b\cdot f_{\mathcal{K}_{2,1}}
\end{equation*}
for some $a,b\in\CC$. Then exactly as in \cite[Lemma 6.1]{GN} and Proposition \ref{prop:V12_self_braiding}, there are only four possibilities for $(a,b)$ consistent with the hexagon axioms, and the correct possibility can be determined from the constraint
\begin{equation*}
e_{\mathcal{K}_{2,1}}\circ\cR_{\mathcal{K}_{2,1},\mathcal{K}_{2,1}} = e^{-2\pi i h_{2,1}}\cdot e_{\mathcal{K}_{2,1}} = -e^{-3\pi i q/2p}\cdot e_{\mathcal{K}_{2,1}},
\end{equation*}
which was obtained in the proof of \cite[Theorem 5.7]{MS}. Thus as in Proposition \ref{prop:V12_self_braiding}, the conclusion is that $a=b^{-1}=e^{\pi i q/2p}$, and it follows from Theorem \ref{thm:univ_prop_F_braided} that $\cF_{p,q}$ is braided.

The statement about twists follows from the fact that
\begin{equation*}
\theta_{\mathcal{K}_{2,1}} =e^{2\pi i L(0)} = e^{2\pi i h_{2,1}}\cdot\Id_{\mathcal{K}_{2,1}}= -e^{3\pi iq/2p}\cdot\Id_{\mathcal{K}_{2,1}},
\end{equation*}
combined with Theorem \ref{thm:univ_prop_F_braided}.
\end{proof}

\begin{conj}\label{conj:tens_cat_qDS_red}
The braided tensor functors $\cF_{p,q}$ and $\cF_{q,p}$ of Theorem \ref{thm:tens_cat_qDS_red} are exact and are naturally isomorphic to the restrictions of the quantum Drinfeld-Sokolov reduction functors of \cite{FKW} to $KL^{-2+p/q}(\fsl_2)$ and $KL^{-2+q/p}(\fsl_2)$.
\end{conj}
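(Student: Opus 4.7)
The strategy is to exploit the uniqueness part of Theorem \ref{thm:KLk_univ_prop}. If we can show that the quantum Drinfeld--Sokolov (qDS) reduction functor of \cite{FKW, Ar}, restricted to $KL^{-2+p/q}(\fsl_2)$, takes values in $\cO_{c_{p,q}}$ and carries the structure of a right exact tensor functor which sends the rigid self-dual triple $(\cV_2, e_{\cV_2}, i_{\cV_2})$ to a triple isomorphic to $(\mathcal{K}_{2,1}, e_{\mathcal{K}_{2,1}}, i_{\mathcal{K}_{2,1}})$, then Theorem \ref{thm:KLk_univ_prop} forces a natural isomorphism with $\cF_{p,q}$, and the exactness assertion of the conjecture follows from Arakawa's exactness result \cite{Ar} for $H^0_{DS}$. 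The same argument, with $p$ and $q$ swapped, handles $\cF_{q,p}$.

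First, I would check that $H^0_{DS}$ sends $KL^{-2+p/q}(\fsl_2)$ into $\cO_{c_{p,q}}$. By Arakawa's results, $H^0_{DS}$ is an exact functor from category $\mathcal{O}$ at level $k$ to Virasoro modules at central charge $c_{p,q}$, and on the generalized Verma modules $\cV_r^k$ it produces Virasoro modules whose structure matches the Kac modules (quotients of Verma modules) appearing in \cite{MS}; in particular one expects $H^0_{DS}(\cV_r^k)\cong\mathcal{K}_{r,1}$ using the character formulas of \cite{FKW} and the known Jantzen-style filtration of Virasoro Verma modules at rational central charge. Since every object of $KL^{-2+p/q}(\fsl_2)$ has finite length with composition factors among the simple quotients of the $\cV_r^k$, and since by \cite{Ar} $H^0_{DS}$ sends simple admissible modules to simple Virasoro modules (or zero), exactness then lands $H^0_{DS}$ inside the $C_1$-cofinite category $\cO_{c_{p,q}}$, and identifies $H^0_{DS}(\cV_2)$ as $\mathcal{K}_{2,1}$ up to a canonical isomorphism.

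The main obstacle is promoting $H^0_{DS}$ to an honest tensor functor with an explicit monoidal structure natural isomorphism $\tens_{\mathrm{Vir}}\circ(H^0_{DS}\times H^0_{DS})\Rightarrow H^0_{DS}\circ\tens$ compatible with unit and associativity constraints. At the level of characters this is well known, but upgrading it to vertex-algebraic tensor categories requires comparing fusion products on both sides, which amounts to a chiral-algebra statement that qDS reduction intertwines the compositions of intertwining operators in a way governed by the reduction of KZ equations to BPZ equations. This is the content under development in works such as those of Creutzig--Linshaw and in \cite{CLR}; for now one could proceed more indirectly by using the restriction functor to a Heisenberg coset together with Wakimoto realizations to identify $H^0_{DS}$ on tensor products with the corresponding tensor products in $\cO_{c_{p,q}}$, for which some of the necessary convergence and associativity properties are available through the differential-equations machinery of \cite{HLZ7}.

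Once qDS reduction is known to be a right exact tensor functor into $\cO_{c_{p,q}}$ with $H^0_{DS}(\cV_2)\cong\mathcal{K}_{2,1}$, the final step is straightforward: the evaluation $e_{\cV_2}$ and coevaluation $i_{\cV_2}$ for $\cV_2$ are unique up to a rescaling pair $(c, c^{-1})$ because $\hom(\cV_2\tens\cV_2,\cV_1)$ and $\hom(\cV_1,\cV_2\tens\cV_2)$ are both one-dimensional, and the same holds for $\mathcal{K}_{2,1}$ in $\cO_{c_{p,q}}$. Therefore $H^0_{DS}(e_{\cV_2})$ and $H^0_{DS}(i_{\cV_2})$ must agree with $e_{\mathcal{K}_{2,1}}$ and $i_{\mathcal{K}_{2,1}}$ after an appropriate rescaling of the identification $H^0_{DS}(\cV_2)\xrightarrow{\sim}\mathcal{K}_{2,1}$; the rescaling is pinned down by matching intrinsic dimensions, which coincide by Theorem \ref{thm:V12_rigid} and \cite[Theorem 5.7]{MS}. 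Invoking the uniqueness clause of Theorem \ref{thm:KLk_univ_prop} then produces the desired natural isomorphism $\cF_{p,q}\cong H^0_{DS}\vert_{KL^{-2+p/q}(\fsl_2)}$, and exactness of $\cF_{p,q}$ is inherited from that of $H^0_{DS}$.
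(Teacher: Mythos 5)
The statement you are addressing is Conjecture \ref{conj:tens_cat_qDS_red}; the paper offers no proof of it, and indeed explicitly records the obstruction: ``it is not known in these cases whether quantum Drinfeld-Sokolov reduction is a tensor functor.'' Your proposal does not close that gap. The reduction to the uniqueness clause of Theorem \ref{thm:KLk_univ_prop} is the right conditional framework --- if $H^0_{DS}$ restricted to $KL^{-2+p/q}(\fsl_2)$ were known to be a right exact \emph{tensor} functor into $\cO_{c_{p,q}}$ sending $(\cV_2,e_{\cV_2},i_{\cV_2})$ to (a rescaling of) $(\mathcal{K}_{2,1},e_{\mathcal{K}_{2,1}},i_{\mathcal{K}_{2,1}})$, then the natural isomorphism with $\cF_{p,q}$ and the exactness of $\cF_{p,q}$ would follow essentially as you say, and your handling of the scalar ambiguity in the duality data via intrinsic dimensions is fine. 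But your middle paragraph concedes rather than proves the crucial step: constructing a monoidal structure isomorphism for $H^0_{DS}$ compatible with the vertex-algebraic fusion products, unit, and associativity on both sides. Pointing to work ``under development'' such as \cite{CLR}, or gesturing at Heisenberg cosets and Wakimoto realizations, is not an argument; this compatibility of the reduction with compositions of intertwining operators (KZ versus BPZ systems) is precisely the open problem that makes the statement a conjecture rather than a theorem.

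There are also secondary gaps you would need to fill even in the conditional part. Arakawa's exactness and simple-preservation results are formulated for appropriate categories of modules over the (simple or admissible-highest-weight) affine algebra, and transporting them to all of $KL^{-2+p/q}(\fsl_2)$ --- which consists of modules for the \emph{universal} affine vertex algebra, including logarithmic objects such as $\cP_{np+r}$ --- requires justification: one must check that $H^0_{DS}$ of such modules is grading-restricted, $C_1$-cofinite, and nonvanishing where expected, and the identification $H^0_{DS}(\cV_r)\cong\mathcal{K}_{r,1}$ (rather than some other quotient of the Virasoro Verma module) is asserted from character considerations but not established. None of these points is fatal to the strategy, but as written the proposal is a plan contingent on exactly the unproven input the paper identifies, not a proof.
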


It is probably possible to show that $\cF_{p,q}$ and $\cF_{q,p}$ are exact by verifying the conditions of Theorem \ref{thm:right_exact_to_exact}, as in the proof of Theorem \ref{thm:weak_KL_correspondence}. However, it would be necessary to determine the images of all projective modules in $KL^{-2+p/q}(\fsl_2)$ and $KL^{-2+q/p}(\fsl_2)$ under $\cF_{p,q}$ and $\cF_{q,p}$, and these would mostly be logarithmic $V_{c_{p,q}}$-modules which were not all constructed in \cite{MS}. It would probably also be necessary to show that $\cF_{p,q}\vert_{\cP^{-2+p/q}}$ and $\cF_{q,p}\vert_{\cP^{-2+q/p}}$ are fully faithful, similar to the proof of Theorem \ref{thm:Tzeta_Pk_equiv}.

\end{document}